\documentclass{amsart}
\usepackage{hyperref}
\usepackage{amsmath}
\usepackage{amsthm}
\usepackage{amssymb}
\usepackage{xfrac}
\usepackage{mathtools}
\usepackage{tikz-cd}
\usepackage[autostyle]{csquotes}
\usepackage{enumitem}
\usepackage{comment}
\usepackage{scalerel}
\usepackage{stackengine,wasysym}

\newtheorem{prop}{Proposition}

\theoremstyle{definition}

\newtheorem*{definition*}{Definition}
\newtheorem*{remark*}{Remark}

\newtheorem{remark}[prop]{Remark}

\newtheorem{question}{Question}

\theoremstyle{plain}

\newtheorem*{conj*}{Conjecture}
\newtheorem{theorem}[prop]{Theorem}
\newtheorem*{theorem*}{Theorem}
\newtheorem{thm}[prop]{Theorem}
\newtheorem{cor}[prop]{Corollary}
\newtheorem{lemma}[prop]{Lemma}

\newtheorem*{observation*}{Observation}

\numberwithin{prop}{section}
\numberwithin{equation}{section}

\newtheorem{Thm}{Theorem}

\newtheorem{Prop}[Thm]{Proposition}

\newtheorem{Cor}[Thm]{Corollary}

\newcommand{\NN}{\ensuremath{\mathbb{N}}}
\newcommand{\ZZ}{\ensuremath{\mathbb{Z}}}
\newcommand{\QQ}{\ensuremath{\mathbb{Q}}}
\newcommand{\CC}{\ensuremath{\mathbb{C}}}
\newcommand{\RR}{\ensuremath{\mathbb{R}}}

\newcommand{\TT}{\ensuremath{\mathbb{T}}}
\newcommand{\Ad}{\ensuremath{\mathrm{Ad}}}

\newcommand{\ab}{\ensuremath{\mathrm{ab}}}



\newcommand{\GL}[2]{\mathrm{GL}_{#1}\left(#2\right)}

\newcommand{\Fit}[1]{\mathrm{Fit}\left(#1\right)}
\newcommand{\vFit}[1]{\mathrm{vFit}\left(#1\right)}
\newcommand{\FC}[1]{\mathrm{FC}\left(#1\right)}

\newcommand{\chars}[1]{\mathrm{Ch}\left(#1\right)}
\newcommand{\traces}[1]{\mathrm{Tr}\left(#1\right)}
\newcommand{\tracesl}[1]{\mathrm{Tr}_{\le 1} \left(#1\right)}

\newcommand{\tracerel}[2]{\mathrm{Tr}_{#1}\left(#2\right)}

\newcommand{\Ch}[1]{\mathrm{Ch}\left(#1\right)}
\newcommand{\Tr}[1]{\mathrm{Tr}\left(#1\right)}

\newcommand{\relTr}[2]{\mathrm{Tr}_{#1}\left(#2\right)}
\newcommand{\tr}{\mathrm{tr}}
\newcommand{\Trfd}[1]{\mathrm{Tr_{fd}}\left(#1\right)}

%

\newcommand{\Prob}[1]{\mathrm{Prob}\left(#1\right)}

\newcommand{\Aut}[1]{\mathrm{Aut}\left(#1\right)}
\newcommand{\Ind}[3]{\mathrm{Ind}^{#1}_{#2}{#3}}
\newcommand{\normalizer}[2]{\mathrm{N}_{#1}\left(#2\right)}
\newcommand{\Sub}[1]{\mathrm{Sub}\left(#1\right)}
\newcommand{\End}[1]{\mathrm{End}\left(#1\right)}

\newcommand{\suptilde}[2]{{\widetilde{#1} {\Large\mathstrut}}^{#2}}

\newcommand{\folner}{{Følner}}

%

%
\global\long\def\Z{\mathbb{Z}}%
\global\long\def\R{\mathbb{R}}%
\global\long\def\C{\mathbb{C}}%
%
%
%


%
%

\title[Characters, stability and dense periodic measures]
{Characters of solvable groups, Hilbert--Schmidt stability and dense periodic measures}
\author{Arie Levit and Itamar Vigdorovich}

\begin{document}

\maketitle

\begin{abstract}
We study the character theory of metabelian and  polycyclic groups.  It is used to investigate Hilbert--Schmidt stability via the character-theoretic  criterion of Hadwin and Shulman. 
There is a close connection between stability and  dynamics of automorphisms of compact abelian groups. 
Relying on this,  we  deduce that finitely generated virtually nilpotent groups, free metabelian groups,  lamplighter groups as well as upper triangular groups over certain rings of algebraic integers  are Hilbert--Schmidt stable.
\end{abstract}

\section{Introduction}
 \label{sec:intro}

Characters play a key role in harmonic analysis on finite and on abelian groups.  Thoma  discovered a fruitful approach  that allows to study   characters of general infinite groups \cite{thoma1964unitare}.  
It   has  since been developed and   attracted considerable attention over the last two decades. 
\begin{definition*}
A \emph{trace} on a discrete group $G$ is a positive-definite, conjugation-invariant function $\varphi : G \to \CC$ normalized so that $\varphi(e) = 1$. A \emph{character} of $G$ is a trace which is not a   proper convex combination of traces. 
\end{definition*} 



 If a group is abelian then its character space  coincides with its Pontryagin dual. Other relevant examples of   characters   are  the normalized   traces $ \frac{1}{n}( \mathrm{tr} \circ \pi)$ where $\pi : G \to \mathrm{U}(n)$ is any   irreducible finite dimensional   unitary representation.

 %

%



A main part of this work is concerned with  developing  the character theory of solvable groups, focusing on the metabelian and polycyclic cases.  A recurring theme is \emph{monomiality}, namely  all characters are induced from abelian (or virtually abelian) subgroups. 
Prior to discussing this theory  in any greater detail, we mention the  notion of stability where  our results find  a perhaps unexpected application.

\subsection*{Stability and dense periodic measures}

A discrete group $G$ is called \emph{Hilbert--Schmidt stable} if  every \enquote{almost homomorphism} of $G$ into a finite-dimensional unitary group  is \enquote{nearby} an actual homomorphism with respect to the Hilbert--Schmidt metric.  The  formal definition  is outlined in \S\ref{sec:HS stability}.  For a survey see \cite{ioana2021almost}.

Hilbert--Schmidt stability
provides a potential direction in the   search for a non-hyperlinear group \cite{pestov2008hyperlinear,capraro2015introduction}. Quantitative versions of Hilbert--Schmidt stability play a crucial role in the proof of $\textrm{MIP}^*=\textrm{RE}$ and the refutation of the Connes embedding problem \cite{connes1976classification,ji2021mip, de2022spectral}.



In the realm of amenable groups there is an elegant and powerful criterion  for Hilbert--Schmidt stability that relies on characters.

\begin{theorem*}[Hadwin--Shulman \cite{hadwin2018stability}]
\label{thm:Hadwin}
An  amenable group $G$ is Hilbert--Schmidt stable if and only if every trace on $G$ is a pointwise limit of  normalized traces of finite dimensional unitary representations.
\end{theorem*}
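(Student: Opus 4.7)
The plan is to translate both statements in the theorem into conditions on representations of $G$ into the tracial ultraproduct
\[
\mathcal{R}^\omega := \prod\nolimits_{k\to\omega}\bigl(M_{n_k}(\CC),\tfrac{1}{n_k}\tr\bigr)
\]
of matrix algebras along a free ultrafilter $\omega$. A Hilbert--Schmidt asymptotic homomorphism $(\pi_k\colon G\to U(n_k))_k$ is the same data as a genuine group homomorphism $\pi\colon G\to\mathcal{U}(\mathcal{R}^\omega)$, and the ultralimit trace $\varphi_\pi(g):=\lim_{k\to\omega}\tfrac{1}{n_k}\tr(\pi_k(g))$ is a trace on $G$ which is, by construction, a pointwise limit of the normalized finite-dimensional traces $\frac{1}{n_k}\tr(\pi_k(\cdot))$. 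Under this dictionary, Hilbert--Schmidt stability says exactly that every such $\pi$ lifts, after unitary conjugation in $\mathcal{R}^\omega$, to an ultraproduct of actual representations.

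For the forward direction, assume $G$ is Hilbert--Schmidt stable and let $\varphi$ be an arbitrary trace on $G$. Its GNS construction provides a homomorphism $\pi_\varphi\colon G\to\mathcal{U}(M_\varphi)$ into a tracial finite von Neumann algebra $M_\varphi = \pi_\varphi(G)''$. Since $G$ is amenable, $M_\varphi$ is amenable, hence hyperfinite by Connes's theorem, and therefore admits a trace-preserving embedding into $\mathcal{R}^\omega$. Composing yields an asymptotic homomorphism $(\pi_k)$ whose ultralimit trace is $\varphi$. Applying stability produces honest representations $\rho_k\colon G\to U(n_k)$ with $\|\pi_k(g)-\rho_k(g)\|_{HS}\to 0$ for every $g\in G$; since normalized traces of unitaries are $1$-Lipschitz in the HS metric, the finite-dimensional traces $\frac{1}{n_k}\tr(\rho_k(\cdot))$ converge pointwise to $\varphi$, as required.

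For the converse, assume every trace is a pointwise limit of finite-dimensional traces, and let $\pi\colon G\to\mathcal{U}(\mathcal{R}^\omega)$ come from an asymptotic homomorphism $(\pi_k)$ with ultralimit trace $\varphi$. By hypothesis there exist genuine representations $\rho_j\colon G\to U(m_j)$ with $\frac{1}{m_j}\tr(\rho_j)\to\varphi$; a standard diagonal argument reassembles them into a second asymptotic homomorphism $(\rho'_k)$, with matrix sizes matched to $(n_k)$ via ampliations and direct sums, whose ultralimit trace is also $\varphi$. The crux, and the main obstacle, is the following uniqueness statement: for amenable $G$, any two homomorphisms $G\to\mathcal{U}(\mathcal{R}^\omega)$ inducing the same trace are unitarily conjugate in $\mathcal{R}^\omega$. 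This rests on uniqueness, up to inner conjugation in $\mathcal{R}^\omega$, of trace-preserving embeddings of a hyperfinite finite von Neumann algebra, applied to $\pi_\varphi(G)''$ --- a classical consequence of the uniqueness of the hyperfinite $\textrm{II}_1$ factor together with a diagonal argument over finite-dimensional subalgebras. Granting this, $\pi$ is conjugate in $\mathcal{R}^\omega$ to the ultraproduct of the $\rho'_k$, and unpacking this conjugation at the finite level yields unitaries $u_k\in U(n_k)$ such that $\|\pi_k(g)-u_k\rho'_k(g)u_k^{-1}\|_{HS}\to 0$, proving Hilbert--Schmidt stability.
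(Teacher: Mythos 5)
The paper does not actually prove this statement: it imports it from Hadwin--Shulman, and even its own reformulation (Corollary \ref{cor:Hadwin Shulman}) cites \cite[Theorem 4, Lemma 1]{hadwin2018stability} for the equivalence with stability, adding only the elementary passage between density of $\Trfd{G}$ and density of normalized traces of finite-dimensional representations. So there is no in-paper argument to compare with; your ultraproduct proof is the standard modern route to this criterion, and it is essentially correct. In the forward direction amenability enters through injectivity of $\pi_\varphi(G)''$ and Connes's theorem, which yields the trace-preserving embedding into $\mathcal{R}^\omega$; in the converse, the crux you isolate --- any two trace-preserving embeddings of a separable amenable tracial von Neumann algebra into $\mathcal{R}^\omega$ are unitarily conjugate --- is a classical fact (by Jung's theorem it even characterizes amenability, so your argument locates exactly where amenability is needed), and combined with the uniqueness of the trace representation attached to $\varphi$ (Thoma's correspondence) it gives the conjugacy of $\pi$ with an ultraproduct of genuine representations. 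What you gloss over are routine matters rather than gaps: the paper's asymptotic homomorphisms are indexed by \emph{all} $n$ with genuine (not ultrafilter) limits, so one must pad missing dimensions (say by trivial representations), extract subsequences by a diagonal argument, and note that convergence along a subnet suffices for membership in the closure of $\Trfd{G}$; unitaries of $\mathcal{R}^\omega$ must be lifted to unitaries of the $\ell^\infty$-product; and the ampliation-plus-padding step producing $\rho'_k$ of dimension exactly $n_k$ needs $n_k \to \infty$ along $\omega$ (the bounded case being trivial). By contrast, Hadwin and Shulman's original proof runs through Hadwin's earlier results on approximate unitary equivalence relative to traces; your version trades that machinery for Connes's theorem and the embedding-uniqueness theorem, which is conceptually cleaner but leans on heavier classification results.
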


Hadwin and Shulman use their criterion to show  that virtually abelian groups as well as the discrete Heisenberg group are Hilbert-Schmidt stable (see also  \cite{glebsky2010almost} for abelian groups).
We are not aware of   other Hilbert--Schmidt stability results for amenable groups in the literature, prior to this work. 

The problem of  determining whether this criterion holds  for a given amenable group $G$ turns out to be   intimately related to   topological dynamics. 
To illustrate, assume that  $G$ admits an abelian normal subgroup $N$. Any trace $\varphi $ on the group $G$ determines 
via the Bochner theorem
a $G$-invariant Borel probability measure $\mu_\varphi$ on the Pontryagin dual $\widehat{N}$. If the trace $\varphi$ is indeed a pointwise limit of  finite-dimensional traces then the measure $\mu_\varphi$ is a weak-$*$ limit of $G$-invariant probability measures of finite support. 
We are  led   to introduce the following notion. 


\begin{definition*}
A topological dynamical system $(G,X)$ has   \emph{dense periodic measures} if every $G$-invariant Borel probability measure on the compact space $X$ is a  weak-$*$ limit of $G$-invariant probability measures with finite supports.
\end{definition*}

For the most part we will  assume that the space $X$ is a compact abelian group and that the group $G$ is acting on $X$ by continuous automorphisms. The above discussion suggests a  relationship between Hilbert--Schmidt stability and the density of periodic measures for the action   on certain Pontryagin dual groups.

\begin{Prop}
\label{obs:intro: necc for HS}
Let $G$ be a  Hilbert--Schmidt stable  amenable   group.
Then for any     abelian normal subgroup $N$ of $G$ the dynamical system $(G,\widehat{N})$ has dense periodic measures.
\end{Prop}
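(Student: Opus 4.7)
The plan is to translate between $G$-invariant measures on $\widehat{N}$ and certain traces on $G$, and then to apply the Hadwin--Shulman theorem on the trace side. Given a $G$-invariant Borel probability measure $\mu$ on $\widehat{N}$, I would construct a trace $\varphi_\mu$ on $G$ supported on $N$, invoke Hadwin--Shulman to obtain finite-dimensional approximants $\tau_k \to \varphi_\mu$, and then read off from $\tau_k|_N$ finitely supported $G$-invariant probability measures $\mu_k$ on $\widehat{N}$ that will converge weak-$*$ to $\mu$.

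The construction is
\[
\varphi_\mu(g) \;=\; \begin{cases} \displaystyle\int_{\widehat{N}} \chi(g)\,d\mu(\chi) & g \in N, \\[4pt] 0 & g \notin N. \end{cases}
\]
Normalization is clear, and conjugation-invariance follows from normality of $N$ together with $G$-invariance of $\mu$ under the dual action $(g\cdot\chi)(n) = \chi(g^{-1}ng)$. Positive-definiteness, which I expect to be the only genuinely technical step, can be verified by partitioning any test tuple $g_1,\dots,g_m$ according to the left cosets $g_iN$: within the coset $g_\alpha N$, writing $g_i = g_\alpha n_i$ with $n_i \in N$ gives $g_j^{-1}g_i = n_j^{-1}n_i \in N$, so the associated quadratic form rearranges into the manifestly nonnegative expression
\[
\sum_{i,j} c_i\overline{c_j}\, \varphi_\mu(g_j^{-1}g_i) \;=\; \sum_\alpha \int_{\widehat{N}} \Big|\sum_{i\in I_\alpha} c_i\chi(n_i)\Big|^2 \,d\mu(\chi).
\]

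Next, Hadwin--Shulman (applicable since $G$ is amenable and Hilbert--Schmidt stable) provides finite-dimensional unitary representations $\pi_k \colon G \to \mathrm{U}(d_k)$ with $\tau_k := \tfrac{1}{d_k}\,\tr\circ\pi_k \to \varphi_\mu$ pointwise on $G$. Since $N$ is abelian, $\pi_k|_N$ diagonalizes as $\bigoplus_{j=1}^{d_k}\chi_{k,j}$ for some $\chi_{k,j}\in\widehat{N}$, so the finitely supported measure $\mu_k := \tfrac{1}{d_k}\sum_j \delta_{\chi_{k,j}}$ has Fourier transform $\int \chi(n)\,d\mu_k(\chi) = \tau_k(n)$ on $N$. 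Its $G$-invariance follows either from class-functionality of $\tau_k$ combined with Bochner's uniqueness, or more directly from the observation that for each $g \in G$ the representations $n\mapsto\pi_k(n)$ and $n\mapsto\pi_k(gng^{-1})$ of $N$ are unitarily conjugate via $\pi_k(g)$, so the multiset $\{\chi_{k,j}\}$ is preserved by the dual action.

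It remains to upgrade pointwise convergence on $N$ to weak-$*$ convergence on $\widehat{N}$. The trigonometric polynomials $\chi \mapsto \sum_i c_i\chi(n_i)$ form a dense subalgebra of $C(\widehat{N})$ by Stone--Weierstrass (note that $\widehat{N}$ is compact since $N$ is discrete), and on such functions $\int f\,d\mu_k \to \int f\,d\mu$ is precisely the pointwise statement $\tau_k(n) \to \varphi_\mu(n)$ for $n \in N$ supplied by Hadwin--Shulman. The main obstacle is really just the verification that $\varphi_\mu$ is a trace; once that is in place the rest is a straightforward dualization via Bochner's theorem.
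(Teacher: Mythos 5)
Your argument is correct and follows essentially the same route as the paper: you take the trivial extension to $G$ of the Fourier transform of $\mu$, apply the Hadwin--Shulman criterion to approximate it by finite-dimensional traces, restrict back to $N$ to get finitely supported $G$-invariant measures, and use the homeomorphism property of the Fourier/barycenter transform (your Stone--Weierstrass step) to get weak-$*$ convergence. The only difference is cosmetic: you verify by hand (positive-definiteness of the trivial extension, diagonalization of $\pi_k|_N$, density of trigonometric polynomials) what the paper handles by citing its Lemmas on trivial extensions, finite-dimensional traces and Bochner's theorem.
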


To argue   from the density of periodic measures to stability, we require a better understanding of the  character theory of the group in question. 

\subsection*{Metabelian groups}
Our analysis     of solvable groups depends in an essential way on   \emph{induced characters}, a  notion   introduced and developed in \S\ref{sec:induced}.

\begin{Thm}
\label{thm:intro:characters of metabelian groups}
Let $G$ be a metabelian group. Then any character of  $G$ is induced from the abelianization $H^\ab$ of some normal subgroup $H \lhd G$.
\end{Thm}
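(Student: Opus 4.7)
Let $A \lhd G$ be an abelian normal subgroup with abelian quotient, and let $\varphi$ be a character of $G$. Restricting $\varphi$ to $A$ yields a $G$-invariant positive definite function on $A$; Bochner's theorem encodes this as a $G$-invariant Borel probability measure $\mu$ on $\widehat A$, with the $G$-action factoring through $G/A$. A standard centre-of-factor argument shows $\mu$ is ergodic: any $G$-invariant element of $\pi_\varphi(A)''\cong L^{\infty}(\widehat A,\mu)$ commutes with all of $\pi_\varphi(G)$, hence lies in the centre of the factor $\pi_\varphi(G)''$ and is scalar.

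\textbf{Locating a normal subgroup.} Because $G/A$ is abelian, for each $q\in G/A$ the fixed-point set $F_q := \{\chi : q\cdot\chi=\chi\}$ is $G/A$-invariant, so $\mu(F_q)\in\{0,1\}$ by ergodicity. Assuming $G$ countable, the set $Q_0 := \{q : \mu(F_q)=1\}$ is a subgroup of $G/A$, and a routine countable-intersection argument yields that $\mu$-a.e.\ $\chi$ has stabilizer exactly $Q_0$. Let $H\lhd G$ be the preimage of $Q_0$ under $G\to G/A$. Then $A\leq H$, $H/A$ is abelian, and $[H,H]\leq A$.

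\textbf{Extracting a $1$-dimensional character.} Using the direct-integral decomposition $\mathcal H_\varphi = \int^{\oplus}\mathcal H_\chi\,d\mu(\chi)$ over $\widehat A$, with $A$ acting on $\mathcal H_\chi$ by $\chi$, the fact that $H$ stabilises $\mu$-a.e.\ $\chi$ lets $H$ act fiberwise and produces, for $\mu$-a.e.\ $\chi$, a factor representation $\sigma_\chi$ of $H$ restricting to $\chi\cdot\mathrm{id}$ on $A$. The abelianness of $H/A$ should then force $\sigma_\chi$ to be one-dimensional, i.e.\ a homomorphism $\tilde\chi\colon H\to\T$ extending $\chi$; since $\T$ is abelian, $\tilde\chi$ factors through $H^{\ab}$. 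A Mackey-style identification $\pi_\varphi\cong \operatorname{Ind}_H^G \sigma_\chi$, combined with the trace-level induction formalism of \S\ref{sec:induced}, then yields $\varphi = \operatorname{Ind}_H^G \tilde\chi$.

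\textbf{Main obstacle.} The forced one-dimensionality of $\sigma_\chi$ is the crux and is not automatic: an extension $\tilde\chi\colon H\to\T$ of $\chi$ exists iff the alternating bicharacter $(h_1,h_2)\mapsto \chi([h_1,h_2])$ on $H/A$ is trivial, equivalently iff the associated class in $H^2(H/A,\T)$ vanishes. When it does not—as happens already for the nontrivial central characters of the Heisenberg group—one must replace $H$ by a smaller normal subgroup $H'\lhd G$ on which $\chi|_{[H',H']}\equiv 1$ (for instance, a maximal such $H'$), and then verify via the definitions of \S\ref{sec:induced} that $\varphi$ is induced from a homomorphism $(H')^{\ab}\to\T$. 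The metabelian hypothesis reduces this entire obstruction to bilinear algebra on $H/A$, which is what should make the argument go through.
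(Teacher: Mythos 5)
Your first half---the ergodic $G$-invariant measure $\mu$ on $\widehat{A}$, the essential constancy of the stabilizer (using, as you note, that $G/A$ is abelian so the stabilizer map is genuinely $G$-invariant), the normal subgroup you call $H$ (the paper's $M$), and the identity $\varphi=\Ind{G}{M}{(\varphi_{|M})}$---matches the paper's first step, where the vanishing of $\varphi$ off $M$ is obtained from Lemma \ref{lemma:free action on SNAG}. But the step you label the \enquote{main obstacle} is precisely where the proof lives, and you leave it open. As you concede, abelianness of $M/A$ does not make the fibre representations one-dimensional; and your fallback---pass to a maximal normal $H'\lhd G$ with $\chi([H',H'])\equiv 1$ and then \enquote{verify} that $\varphi$ is induced from $(H')^{\ab}$---is exactly the assertion that needs proof: nothing in the proposal shows that $\varphi$ vanishes off $H'$, and maximality of $H'$ by itself yields no such vanishing. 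There is also a secondary defect: inducing a single extension $\widetilde{\chi}$ is not even defined in the formalism of \S\ref{sec:induced} unless $\widetilde{\chi}$ is almost $G$-invariant (finite-index stabilizer in $G$), which fails generically; what one induces is the $G$-invariant trace $\varphi_{|H}$, a $\mu$-mixture of such extensions.

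For comparison, the paper closes this gap by group-theoretic and ergodic arguments rather than by \enquote{bilinear algebra on $H/A$}. One first reduces to $\varphi$ faithful by passing to $G/\ker\varphi$ (harmless, since the conclusion concerns a trace on an abelian subquotient). Then for $m\in M$ and $n\in A$ one computes
\begin{equation*}
\varphi([m,n])=\int_{\widehat{A}}\chi([m,n])\,\mathrm{d}\mu(\chi)=\int_{\widehat{A}}\overline{\chi^{m}(n)}\,\chi(n)\,\mathrm{d}\mu(\chi)=1,
\end{equation*}
since a.e.\ $\chi$ is $M$-fixed; faithfulness then forces $[M,A]=\{e\}$, so $M$ is two-step nilpotent. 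A second ergodicity argument shows that the $G$-invariant map $\theta\mapsto \mathrm{Z}(M/\ker\theta)$ on $\Ch{M}$ is essentially constant, equal to a fixed subgroup $H$ with $A\le H\le M$, and Howe's lemma (characters of two-step nilpotent groups are induced from a central subgroup) gives that almost every $\theta$, hence $\varphi_{|M}$, vanishes off $H$. Therefore $\varphi=\Ind{G}{H}{(\varphi_{|H})}$, where $\varphi_{|H}$ is an average of multiplicative characters and so factors through $H^{\ab}$ (see Theorem \ref{thm:characters of metabelian groups}). In short, the cocycle obstruction you identified is resolved by the faithfulness reduction combined with nilpotent character theory, not by an ad hoc choice of a maximal isotropic subgroup; your plan stops exactly where that work begins.
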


Characters of particular metabelian groups were previously   classified in \cite{guichardet1963caracteres,bekka2020unitary}.  
We deduce the following stability criterion.

\begin{Thm}
\label{thm:intro:condition for metabelian to be stable}
Let $G$ be a finitely generated metabelian group. Assume that the topological dynamical system $(G,\widehat{H}^\ab)$ has dense periodic measures for any  normal subgroup $H \lhd G$.   Then the group $G$ is Hilbert--Schmidt stable\footnote{ Theorems \ref{thm:characters of metabelian groups} and \ref{thm:stability for metabelian} respectively are slightly more general formulations of Theorems \ref{thm:intro:characters of metabelian groups} and \ref{thm:intro:condition for metabelian to be stable}.}.
\end{Thm}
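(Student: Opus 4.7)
The plan is to invoke the Hadwin--Shulman criterion, which applies since $G$ is metabelian and hence amenable. Thus it suffices to prove that every trace on $G$ is a pointwise limit of normalized traces of finite-dimensional unitary representations. The space of traces on $G$ forms a Choquet simplex whose extreme points are the characters of $G$, and the finite-dimensional traces form a convex subset. A standard approximation argument, integrating characters against the Choquet disintegration, reduces the task to approximating each character $\varphi$ of $G$ pointwise by finite-dimensional traces.

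So fix a character $\varphi$ of $G$. By Theorem \ref{thm:intro:characters of metabelian groups}, there exist a normal subgroup $H \lhd G$ and a trace $\psi$ on $H^{\ab}$ such that $\varphi = \Ind{G}{H}{\psi}$, where $\psi$ is tacitly inflated along $H \twoheadrightarrow H^{\ab}$. By the Bochner theorem, $\psi$ corresponds to a Borel probability measure $\mu$ on the Pontryagin dual $\widehat{H^{\ab}}$. The $G$-conjugation invariance of $\varphi$, combined with the naturality of the induction construction, forces $\mu$ to be invariant under the $G$-action on $\widehat{H^{\ab}}$. By the density-of-periodic-measures hypothesis, there is a sequence $\mu_n$ of $G$-invariant probability measures on $\widehat{H^{\ab}}$ with finite support such that $\mu_n \to \mu$ in the weak-$*$ topology.

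I would next establish that the induction assignment $\mu \mapsto \Ind{G}{H}{\mu}$ is continuous from the weak-$*$ topology on $G$-invariant probability measures on $\widehat{H^{\ab}}$ to the topology of pointwise convergence of traces on $G$. For each fixed $g \in G$, the value $\Ind{G}{H}{\mu}(g)$ is expressible as the integral against $\mu$ of a bounded continuous function on $\widehat{H^{\ab}}$ (depending on $g$ and the structure of $H$ in $G$), so weak-$*$ convergence of $\mu_n$ to $\mu$ yields pointwise convergence of the induced traces $\varphi_n := \Ind{G}{H}{\mu_n}$ to $\varphi$ on $G$.

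The main obstacle will be verifying that each $\varphi_n$ is the normalized trace of a finite-dimensional unitary representation of $G$. Since $\mu_n$ is $G$-invariant with finite support, it is a convex combination over finitely many finite $G$-orbits in $\widehat{H^{\ab}}$, so the stabilizer $G_\lambda$ of any $\lambda$ in the support has finite index in $G$. One wants to realize the contribution of each orbit as the normalized trace of $\Ind{G}{G_\lambda}{\rho}$ for a finite-dimensional representation $\rho$ of $G_\lambda$ lifting $\lambda$. The delicate issue is that $\lambda$, viewed as a homomorphism $H \to \mathbb{T}$, need not extend to $G_\lambda$ as a genuine homomorphism, producing a projective cocycle obstruction. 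Resolving this -- most likely by further reduction on the derived series of $G_\lambda$ (which is again finitely generated metabelian) and a suitable extension argument -- is the technical heart of the proof; the finite generation hypothesis enters precisely here to keep the relevant stabilizers well-behaved.
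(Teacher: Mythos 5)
Your route coincides with the paper's proof of Theorem \ref{thm:stability for metabelian} up to the last step: Hadwin--Shulman (Corollary \ref{cor:Hadwin Shulman}), reduction to characters, the structure theorem for characters of metabelian groups, passing to the $G$-invariant measure $\mu$ on $\widehat{H}^\ab$, approximating it by finitely supported invariant measures $\mu_n$, and continuity of induction. The gap is in the final step, and it is not the one you identify. You aim to show that each $\varphi_n=\Ind{G}{H}{\psi_n}$ is \emph{exactly} the normalized trace of a finite-dimensional unitary representation, realized via induction from the finite-index stabilizers $G_\lambda$, with a projective cocycle obstruction as the main difficulty. This cannot work: since $H$ may have infinite index in $G$ and $\psi_n$ is $G$-invariant with $H$ normal, $\varphi_n$ is just the trivial extension of $\psi_n$, vanishing off $H$, and such a trace is typically not finite dimensional at all, regardless of any cocycle issue. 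A decisive example: $G=\ZZ^2$, $H=\ZZ\times\{0\}$, $\psi_n$ the trivial character of $H$ (its Fourier transform is a single atom, so it is as ``periodic'' as possible); the trivial extension is the trace equal to $1$ on $H$ and $0$ elsewhere, whose Fourier transform on $\widehat{G}\cong\TT^2$ is $\delta_0\otimes\mathrm{Haar}$, which is not finitely supported, so the trace is not finite dimensional by Lemma \ref{lem:f.d-trace-finitely-supported-measure}. For the same support reason, no trace of the form $\frac{1}{\dim}\mathrm{tr}\circ\Ind{G}{G_\lambda}{\rho}$ with $[G:G_\lambda]<\infty$ and $\rho$ finite dimensional can equal $\varphi_n$: the former lives (up to conjugates) on a finite-index subgroup, the latter on the infinite-index subgroup $H$. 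So even resolving your cocycle problem perfectly would not close the argument.

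What is missing is a second layer of approximation, and this is exactly where finite generation enters (not, as you suggest, to control stabilizers -- finite-index stabilizers are automatic from $G$-invariance and finite support). The paper proceeds as follows: since $(G/H)\ltimes H^\ab$ is finitely generated, each $G$-invariant trace $\psi_n$ with finitely supported Fourier transform can be replaced, without losing the convergence, by $G$-invariant traces whose kernels have \emph{finite index} in $H^\ab$ (Lemma \ref{lem:abelian-finite-index-kernel-G-invariant}, via the co-invariants being finitely generated). These kernels are normal subgroups of the finitely generated metabelian group $G$, hence profinitely closed by Hall's theorem \cite{hall1959finiteness}. One then applies Proposition \ref{lem:approximating finitely induced}: using a descending chain of finite-index normal subgroups $K_m\le G$ intersecting trivially modulo the kernel, the trace $\Ind{G}{H}{\psi_n}$ is approximated by traces induced from the \emph{finite-index} subgroups $K_mH$, and those are genuinely finite dimensional by Lemma \ref{lem:induction-of-fd-is-fd}. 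Thus each $\varphi_n$ is only a \emph{limit} of finite-dimensional traces, and the theorem follows by a diagonal argument; no extension of $\lambda$ to $G_\lambda$, and hence no cocycle analysis, is ever needed. Your proposal is salvageable if you replace the exact-realization step by this residual-finiteness approximation, but as written the technical heart you propose attacks the wrong obstruction and the claimed identity is false.
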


We find the question of determining which automorphisms  of compact abelian groups  have dense periodic measures    to be very natural and interesting. Unfortunately we don't know the   answer to this question in complete generality. 

Known  classes of  topological dynamical systems admitting dense periodic measures include   Bernoulli shifts \cite{parthasarathy1961category}  as well as torus automorphisms which are ergodic with respect to the Haar measure \cite{marcus1980note}. Loosely speaking, these systems enjoy a much stronger dynamical property called    \emph{periodic specification}  \cite{bowen1971periodic,ruelie1973statistical,lind1979ergodic,lind1982dynamical,lind1999homoclinic,kwietniak2016panorama}. 

\pagebreak

\begin{Cor} \label{Cor:metabelian-grps-HS-stable}
The following metabelian groups are Hilbert--Schmidt stable:
\begin{enumerate}
    
        \item Finitely generated free metabelian groups.
        \item Wreath products $A \wr \ZZ^d$ where $A$ is any finitely generated abelian group, e.g. the lamplighter groups.
\item  The Baumslag--Solitar  groups $\mathrm{BS}(1,n)$ for all non-zero $n\in \ZZ$.
    \item The semidirect products $\ZZ \ltimes_\alpha \ZZ^k$ where the automorphism $\alpha\in \GL{k}{\ZZ}$ is ergodic with respect to the Haar measure on the torus.  
\end{enumerate}
\end{Cor}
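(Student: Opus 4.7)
The plan is to invoke Theorem~\ref{thm:intro:condition for metabelian to be stable} in each case. Each of the four groups is manifestly finitely generated and metabelian, so the task reduces to verifying that the topological dynamical system $(G, \widehat{H^{\mathrm{ab}}})$ has dense periodic measures for every normal subgroup $H \lhd G$. A useful general principle is that the action of $G$ on $H^{\mathrm{ab}}$ factors through $G/H$, since conjugation by any element of $H$ is trivial modulo $[H,H]$. My strategy throughout is to identify $\widehat{H^{\mathrm{ab}}}$ with a classical algebraic dynamical system --- a shift, a toral automorphism, or a solenoidal automorphism --- and then to quote the relevant periodic-specification result.

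For part (2) with $G = A \wr \ZZ^d$, the principal case is $H = B := \bigoplus_{\ZZ^d} A$, whose Pontryagin dual is the Bernoulli shift $\widehat{A}^{\ZZ^d}$; its $\ZZ^d$-action has dense periodic measures by Parthasarathy \cite{parthasarathy1961category}. Any other normal subgroup $H$ is handled by restricting and passing to quotients of this shift action, yielding either a closed shift-invariant subgroup of $\widehat{A}^{\ZZ^d}$ or a toral factor, both covered by the same specification arguments. For part (3), the essential abelian normal subgroup of $\mathrm{BS}(1,n)$ is $\ZZ[1/n]$, whose Pontryagin dual is the $n$-adic solenoid; the $\ZZ$-action by the expanding automorphism $x \mapsto nx$ enjoys periodic specification \cite{bowen1971periodic}. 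For part (4), the hypothesis that $\alpha$ is ergodic on $\TT^k = \widehat{\ZZ^k}$ puts us directly in the setting of Marcus's theorem \cite{marcus1980note}; normal subgroups $H \lhd \ZZ \ltimes_\alpha \ZZ^k$ lead to $\alpha$-invariant subtori or quotient tori on which the induced automorphism remains ergodic, and the same conclusion applies.

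The main obstacle is part (1). For the free metabelian group $M_n$, the derived subgroup is identified via the Magnus embedding with a finitely generated $\ZZ[\ZZ^n]$-module, and any normal subgroup $H \lhd M_n$ produces, through Pontryagin duality, an algebraic $\ZZ^n$-action on a compact abelian group. The plan is to reduce to the principal (cyclic module) case using the Lind--Schmidt theory of algebraic $\ZZ^n$-actions \cite{lind1999homoclinic,kwietniak2016panorama}, and to establish periodic specification in each principal factor via a homoclinic approximation argument. Carrying out this decomposition and verifying that density of periodic measures is inherited along the resulting subquotients, rather than the dynamical specification itself, is the step I anticipate to require the most care.
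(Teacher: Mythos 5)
Parts (3) and (4) of your proposal are essentially sound and match the paper's route: the paper reduces $G=\ZZ\ltimes N$ to the single system $(\ZZ,\widehat{N})$ (Proposition \ref{prop:Z-system}, exploiting that the refined criterion, Theorem \ref{thm:stability for metabelian}, only requires subgroups $H$ with $N\le H\le G$, all of which beyond $N$ have finite index), and then quotes Marcus for ergodic toral automorphisms and, for $\mathrm{BS}(1,n)$, realizes the solenoid as a finite-to-one factor of a full shift rather than invoking Bowen's specification directly --- a cosmetic difference.

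The genuine gap is in part (1), and it leaks into your treatment of the non-principal cases of part (2). The step you defer --- proving dense periodic measures for the algebraic $\ZZ^d$-actions dual to $H^{\ab}$ by decomposing into principal factors \`a la Lind--Schmidt and establishing periodic specification there --- cannot be carried out in the generality you need. Principal algebraic $\ZZ^d$-actions need not be expansive nor have completely positive entropy, specification genuinely fails for them, and density of periodic measures for general d.c.c.\ algebraic $\ZZ^d$-actions is precisely the open problem the paper cannot resolve (by Theorem \ref{Thm:metabelian-grp-charactarization-HS-stability} it is \emph{equivalent} to stability of all finitely generated metabelian groups); the same objection applies to your claim in (2) that arbitrary closed shift-invariant subgroups of $\widehat{A}^{\ZZ^d}$ are ``covered by the same specification arguments.'' The paper's proof avoids this entirely by a duality trick you are missing: for $M'_k\le H\le M_k$, free differential calculus (the Magnus embedding) identifies $H^{\ab}\cong R^{\ab}$ with a \emph{submodule} of the free module $\bigoplus_{i=1}^k\ZZ[A]$, $A=M_k/H$, so dualizing the inclusion exhibits $\widehat{H}^{\ab}$ as a factor (compact group extension) of the Bernoulli system $\prod_A\TT^k$ over the finitely generated abelian, hence residually finite amenable, group $A$; Bernoulli systems have dense periodic measures by a F\o{}lner/pointwise-ergodic argument (Proposition \ref{prop:Bernoulli shift has dense periodic measures}), and the property passes to such factors (Corollary \ref{cor:DPM-of-factors}). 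Likewise in (2) the paper computes $[H,H]=[H,N]$ for $N\le H\le G$, so that $\widehat{H}^{\ab}$ splits as a torus with trivial action times the full Bernoulli shift $\widehat{N}^H$ over $G/H$; no specification theory for subshifts or subquotient modules is ever needed. Without this ``factor of a Bernoulli shift'' reduction (or an equally effective substitute), your plan for (1) does not constitute a proof.
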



The property of stability for the class of finitely generated metabelian groups has a dynamical counterpart.


\begin{Thm}\label{Thm:metabelian-grp-charactarization-HS-stability}
The following two statements are equivalent:
\begin{enumerate}
    \item 
    \label{item:all metabelian are stable}
    All finitely generated  metabelian groups are Hilbert--Schmidt stable.
    \item 
    \label{item:all dcc are sofic}
    Any topological dynamical system $(G,X)$ where $G$ is a finitely generated abelian group acting on a compact abelian group  $X$ by automorphisms and satisfying the descending chain condition has dense periodic measures.
\end{enumerate}
\end{Thm}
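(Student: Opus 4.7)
The plan is to establish the two implications separately.

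For $(1) \implies (2)$: Suppose $G$ is a finitely generated abelian group acting on a compact abelian group $X$ by automorphisms with DCC on closed $G$-invariant subgroups. Set $A := \widehat{X}$, a countable discrete abelian group carrying the dual $G$-action. Pontryagin duality exchanges the DCC on closed $G$-invariant subgroups of $X$ for the ACC on $G$-invariant subgroups of $A$, which together with $G$ being finitely generated forces $A$ to be a finitely generated $\ZZ[G]$-module. Consequently $\Gamma := G \ltimes A$ is a finitely generated metabelian group, hence HS-stable by (1). Given a $G$-invariant Borel probability measure $\mu$ on $X = \widehat{A}$, the Bochner correspondence produces a $G$-invariant normalized positive-definite function $\hat{\mu}$ on $A$. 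Extend it to a class function $\varphi_\mu$ on $\Gamma$ by declaring $\varphi_\mu(g,a) := 0$ whenever $g \ne e$ and $\varphi_\mu(e,a) := \hat{\mu}(a)$; a short direct computation, using that both $G$ and $A$ are abelian and that $\hat{\mu}$ is $G$-invariant, shows $\varphi_\mu$ is a trace on $\Gamma$. The Hadwin--Shulman criterion now supplies a sequence of normalized finite-dimensional traces on $\Gamma$ converging pointwise to $\varphi_\mu$. Restricting each to $A$ and Fourier-dualizing yields $G$-invariant finitely supported probability measures on $X$ converging weakly-$*$ to $\mu$.

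For $(2) \implies (1)$: The plan is to verify the hypothesis of Theorem~\ref{thm:intro:condition for metabelian to be stable}. Let $G$ be a finitely generated metabelian group and $H \lhd G$. The action of $G$ on $\widehat{H}^{\ab}$ factors through $G/H$, and by Philip Hall's Noetherian property for finitely generated metabelian groups the abelianization $H^\ab$ is a Noetherian $\ZZ[G]$-module, so the Pontryagin dual $\widehat{H}^{\ab}$ satisfies the DCC on closed $G$-invariant subgroups. The remaining friction is that $G/H$ is a priori only metabelian, whereas (2) concerns abelian acting groups. This is reconciled by invoking the slightly more general form of Theorem~\ref{thm:intro:condition for metabelian to be stable} alluded to in the footnote: thanks to Theorem~\ref{thm:intro:characters of metabelian groups}, the normal subgroups $H$ that arise in the character-theoretic analysis of $G$ may always be chosen to contain $[G,G]$, at which point $G/H$ is abelian and $(G/H, \widehat{H}^{\ab})$ genuinely lies in the scope of (2). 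Applying (2) then delivers the required density of periodic measures.

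The main obstacle is precisely the reduction step in $(2) \implies (1)$: one must carefully trace through the character theory of metabelian groups to justify restricting to normal subgroups $H \supseteq [G,G]$, at which point the acting group $G/H$ becomes abelian and (2) applies verbatim. Away from this subtle point, both implications are essentially routine verifications — the semidirect-product construction $\Gamma = G \ltimes A$ being a standard and transparent device for $(1) \implies (2)$, and Hall's Noetherian property together with the HS-stability criterion of Theorem~\ref{thm:intro:condition for metabelian to be stable} doing most of the work for the converse.
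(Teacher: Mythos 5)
Your proposal is correct and takes essentially the same route as the paper: for $(1)\Rightarrow(2)$ you form the finitely generated metabelian group $G\ltimes\widehat{X}$ and in effect re-derive Proposition \ref{obs:intro: necc for HS} by hand (trivial extension of $\widehat{\mu}$, Hadwin--Shulman, restriction to $A$), where the paper simply cites that proposition after an inessential passage to a torsion-free finite-index subgroup. For $(2)\Rightarrow(1)$ you apply the same criterion (Theorem \ref{thm:stability for metabelian}) as the paper, and your explicit choice $N=[G,G]$ together with Hall's maximal condition on normal subgroups correctly supplies the reduction to abelian acting groups and the d.c.c.\ verification that the paper's proof leaves implicit.
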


We are unable to determine whether these  statements hold true in general. 
We remark that in statement (1)  it suffices to consider only \emph{split} metabelian groups. 
  
For topological dynamical systems   as the ones in Theorem   \ref{Thm:metabelian-grp-charactarization-HS-stability}  the descending chain condition (see p. \pageref{definition of dcc})   implies that periodic points are dense \cite[Theorem 5.7]{schmidt2012dynamical}.
  Foregoing the descending chain condition, it is not hard to find     dynamical systems without dense periodic measures. This leads to the following examples of infinitely generated\footnote{Finitely generated amenable non-residually finite  groups are never    Hilbert--Schmidt stable \cite[Proposition 2.5]{DogonAlon2021SaAo}, but this line of  reasoning does not apply to infinitely generated groups.} metabelian groups that are not Hilbert--Schmidt stable in light of    Proposition \ref{obs:intro: necc for HS}.

\begin{Cor}\label{Cor:compact-ring}
Let $k$ be a non-Archimedean local field with ring of integers $\mathcal{O}$. Let $A$ be an infinite   subgroup of  $\mathcal{O}^*$. Then the group $G = A \ltimes\widehat{\mathcal{O}}$ is not Hilbert--Schmidt stable.
 \end{Cor}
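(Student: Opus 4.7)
The plan is to use Proposition~\ref{obs:intro: necc for HS} in its contrapositive form. Since $A$ is a subgroup of the abelian group $\mathcal{O}^*$ and $\widehat{\mathcal{O}}$ is abelian, the group $G = A \ltimes \widehat{\mathcal{O}}$ is metabelian and in particular amenable. It therefore suffices to exhibit an abelian normal subgroup $N \lhd G$ for which the dynamical system $(G,\widehat{N})$ fails to have dense periodic measures. The natural choice is $N := \widehat{\mathcal{O}}$.

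By Pontryagin duality, $\widehat{N} = \widehat{\widehat{\mathcal{O}}}$ is canonically identified with the compact additive group $\mathcal{O}$. The conjugation action of $G$ on the abelian subgroup $N$ factors through the quotient $G/N = A$, and a direct unwinding of the semidirect product structure shows that the resulting dual action of $A$ on $\mathcal{O}$ coincides (up to inversion in $A$) with the natural multiplication action $A \subseteq \mathcal{O}^* \curvearrowright \mathcal{O}$. I would then analyze this multiplication action. Since $\mathcal{O}$ is an integral domain, the map $a \mapsto ax$ is injective on $A$ for every nonzero $x \in \mathcal{O}$; as $A$ is infinite, every such orbit is infinite. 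Consequently $\{0\}$ is the only finite $A$-orbit, and $\delta_0$ is the unique $A$-invariant Borel probability measure on $\mathcal{O}$ supported on a finite set.

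To finish, I would contrast this with the normalized additive Haar measure $\mu$ on the compact abelian group $\mathcal{O}$. Multiplication by any unit $a \in \mathcal{O}^*$ is a topological group automorphism of $(\mathcal{O},+)$, hence preserves $\mu$; so $\mu$ is $A$-invariant. Since $\mu(\{0\}) = 0$ while $\delta_0(\{0\}) = 1$, we have $\mu \neq \delta_0$, so $\mu$ cannot be a weak-$*$ limit of periodic measures (all of which must equal $\delta_0$). Thus $(G,\widehat{N})$ lacks dense periodic measures, and Proposition~\ref{obs:intro: necc for HS} implies that $G$ is not Hilbert--Schmidt stable. There is no serious obstacle in the argument beyond performing this identification; the pleasant point is the observation that an infinite multiplicative action on an integral domain admits only one finite orbit, which by itself suffices to obstruct density of periodic measures.
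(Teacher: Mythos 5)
Your proposal is correct and follows essentially the same route as the paper: apply Proposition~\ref{obs:intro: necc for HS} to the abelian normal subgroup $N=\widehat{\mathcal{O}}$, observe that in the dual (multiplication) action of the infinite group $A$ on the compact group $\mathcal{O}$ the only finite orbit is $\{0\}$, and conclude that the Haar measure is invariant but not a weak-$*$ limit of finitely supported invariant measures. The paper's proof is exactly this argument, stated more tersely; your extra details (amenability of $G$, the identification $\widehat{N}\cong\mathcal{O}$, and the integral-domain reasoning) are all sound.
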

For example, the group $\ZZ\ltimes \ZZ(p^\infty)$ is not Hilbert--Schmidt stable  for any pair of distinct primes $p$ and $q$ where $\ZZ(p^\infty)$ is the Pr\"{u}fer $p$-group and the $\ZZ$-action corresponds to multiplication by powers of $q$.  This example demonstrates that Hilbert--Schmidt stability is not a local group property.



\subsection*{Polycyclic groups}


 Our main contribution  to the character theory of solvable groups is Theorem \ref{Thm:characters of virtually polycyclic groups}. This is, to the best of our knowledge, the first result dealing with characters of general polycyclic groups. 
 
 
We need to introduce a few notions first.   The \emph{FC-center} of a group $G$ is the characteristic subgroup $\FC{G}$ given by the union of the finite conjugacy classes. Given a virtually polycyclic group $G$ we let $\vFit{G}$ denote the maximal virtually nilpotent normal subgroup of $G$. The characteristic subgroup $\vFit{G}$ contains the Fitting subgroup $\Fit{G}$ with finite index (see \S\ref{sec:virtually polycycic} for further details).
Lastly, the kernel of a trace $\varphi$ is   $\ker \varphi = \varphi^{-1}(1)$. This is always a normal subgroup.




\begin{Thm}
\label{Thm:characters of virtually polycyclic groups}
Let $G$ be a virtually polycyclic group. For every  character $\varphi \in \Ch{G}$ 
there is a  finite index subgroup $H$ of $G$ and a quotient $\overline{H}$ of $H$ such that the character $\varphi$ is induced from the subquotient $\FC{\vFit{\overline{H}}}$.
\end{Thm}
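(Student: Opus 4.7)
The plan is to combine a Mackey-style reduction to the virtually nilpotent case with an induction on nilpotency class that uses Theorem \ref{thm:intro:characters of metabelian groups} as the base. As a first step, I would carry out formal reductions: by passing to a finite-index subgroup of $G$ (which will play the role of $H$) and quotienting by $\ker \varphi$ (which yields $\overline{H}$), one may assume that $G$ is torsion-free polycyclic and that $\varphi$ is faithful. Both operations are compatible with the conclusion of being ``induced from a subquotient'', thanks to the transitivity and restriction properties of induction developed in Section \ref{sec:induced}.

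Next, I would use the virtual Fitting subgroup to reduce to a virtually nilpotent normal subgroup. Since $F = \vFit{G}$ is virtually nilpotent, normal, and has virtually abelian quotient $G/F$ --- a standard structural property of virtually polycyclic groups --- the restriction $\varphi|_F$ disintegrates via Choquet theory into a $G$-invariant Borel probability measure on the character space $\Ch{F}$, and this measure is $G$-ergodic because $\varphi$ is extremal. A Clifford/Mackey argument (using the induced-character machinery of Section \ref{sec:induced}) then reduces the theorem to the following virtually nilpotent claim: every $G$-invariant character $\psi \in \Ch{F}$ is induced from a character of $\FC{F/\ker \psi}$. Transitivity of induction from subquotients delivers the desired statement for $\varphi$, at the cost of a further finite-index refinement.

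The crux is this virtually nilpotent claim, which I would prove by induction on the nilpotency class of $F$ (modulo finite index). The base case of class at most one (virtually abelian) is covered by Theorem \ref{thm:intro:characters of metabelian groups}. For the inductive step, one exploits the inclusion $Z(F) \subseteq \FC{F}$: by extremality $\psi$ restricts to a unitary character on $Z(F)$, so after factoring by a suitable central subgroup and applying the inductive hypothesis to a lower-class quotient, one invokes Theorem \ref{thm:intro:characters of metabelian groups} on a metabelian subquotient to match up the FC-center data.

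The main technical obstacle will be controlling how $\FC{\cdot}$ evolves under these successive quotients --- particularly in the presence of torsion coming from the ``virtually'' qualifier --- and verifying that the ``induced from a subquotient'' property is preserved throughout, so that one truly lands in $\FC{\vFit{\overline{H}}}$ rather than in some inconvenient intermediate subgroup. The first two stages are comparatively routine once the induction formalism of Section \ref{sec:induced} is in hand.
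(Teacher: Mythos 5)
There is a genuine gap, and it sits exactly where the paper has to work hardest. Your plan establishes (at best) facts about the restriction $\varphi|_F$ with $F=\vFit{G}$, but the theorem asserts that $\varphi$ itself is induced from $\FC{\vFit{\overline{H}}}$, which --- since that subquotient is normal and the trace on it is invariant --- amounts to the vanishing of $\varphi$ on the complement, and in particular on all of $G\setminus\vFit{G}$. Your ``Clifford/Mackey argument'' cannot deliver this: the Mackey-type mechanism (Theorem \ref{thm:Mackey intro}) only produces an induction from a proper subgroup when $\varphi|_F$ splits into \emph{finitely many} pairwise disjoint traces permuted transitively, whereas the $G$-ergodic measure on $\chars{F}$ is typically non-atomic with every point stabilizer essentially all of $G$, so no induction from a subgroup smaller than $G$ arises and nothing is learned about $\varphi(g)$ for $g\notin F$. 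The paper closes precisely this hole with vanishing arguments that have no counterpart in your proposal: Lemma \ref{lemma:free action on SNAG} to kill elements outside $\mathrm{C}_G(\mathrm{Z}(\Fit{G}))$, the commutator construction of Lemma \ref{lem:infinite order conjugates inside Fitting} inside the Fitting subgroup for elements of infinite order modulo $\Fit{G}$, the crystallographic-quotient Lemma \ref{lem:infinite order commutators in crystal} for torsion elements of $G/\vFit{G}$, and Bekka's vanishing Lemma \ref{lemma:Bekka's other lemma} to convert these into $\varphi(g)=0$. Without some substitute for this step your argument proves a statement about $\varphi|_{\vFit{G}}$ only, not the theorem.

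Two further points. First, your opening reduction is not licensed: quotienting by $\ker\varphi$ is fine, but there is no mechanism for ``passing to a finite-index subgroup'' chosen in advance (e.g.\ a torsion-free one); a character of $G$ need not be induced from any prescribed finite-index subgroup, and in the paper the finite-index subgroup $H$ is \emph{produced by} the character via the Noetherian induction principle (Proposition \ref{prop:induced from totally faithful}), whose output is the strictly stronger property of being \emph{totally faithful}, not merely faithful. This distinction is not cosmetic: the vanishing lemmas and the virtually nilpotent step (Lemma \ref{lem:new world order lemma}, via Lemma \ref{prop:restriction of totally faithful is almost surely faithful}) need almost-sure faithfulness of the characters appearing in restrictions, which faithfulness of $\varphi$ alone does not guarantee. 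Second, your inductive scheme on nilpotency class with the metabelian Theorem \ref{thm:intro:characters of metabelian groups} as base is not obviously aligned with the target statement (virtually abelian is not metabelian, and that theorem's conclusion concerns $H^{\ab}$ for a normal $H$, not the FC-center); the paper instead handles the virtually nilpotent layer by combining Howe's theorem with Kaniuth's lemma and total faithfulness (Theorem \ref{thm:characters of virtually nilpotent}). As it stands, the proposal is missing the central idea of the proof rather than merely deferring technicalities.
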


This
 generalizes the classical theorem of 
 Howe \cite{howe1977representations}  --- any   character of a finitely generated nilpotent group $G$ is induced from  $\FC{G/\ker\varphi}$ \footnote{Works on characters of nilpotent groups include \cite{kaniuth1980ideals, carey1984characters,baggett1997primitive,tandra2004characters,kaniuth2006induced}.}.

Our proof of Theorem  \ref{Thm:characters of virtually polycyclic groups}
 relies on a   statement reminiscent of Mackey's theorem for induced characters that we find interesting in its own right (Theorem \ref{thm:Mackey intro}).
 
The leniency of being able to pass to  a finite index subgroup as well as considering 
$\mathrm{vFit}$ rather than $\mathrm{Fit}$ and the  FC-center rather than the  center is   necessary (even when dealing only with solvable groups).


In keeping with our philosophy and relying on   Theorem \ref {Thm:characters of virtually polycyclic groups} we get   the following    dynamical criterion for stability.






\begin{Thm}
\label{thm:intro-HS-stabilit-polycyclic-groups}
Let $G$ be a virtually polycyclic group. Assume that   any quotient $L$ of any finite index subgroup of  $G$  acts on the  Pontryagin dual of its subgroup $\mathrm{Z}( \vFit{L})$ 
with dense periodic measures.
Then the group $G$ is Hilbert--Schmidt stable. 
\end{Thm}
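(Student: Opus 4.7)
The plan is to combine the Hadwin--Shulman criterion with the structural Theorem \ref{Thm:characters of virtually polycyclic groups} and exploit the hypothesized density of periodic measures on each $\widehat{\mathrm{Z}(\vFit{L})}$.

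Because $G$ is amenable, Hadwin--Shulman reduces the problem to showing that every trace on $G$ is a pointwise limit of normalized traces of finite-dimensional unitary representations. The trace simplex $\Tr{G}$ is Choquet with extreme points $\Ch{G}$, so a standard convex-combination argument reduces matters further to approximating each character $\varphi \in \Ch{G}$. By Theorem \ref{Thm:characters of virtually polycyclic groups} we may write $\varphi = \Ind{G}{H}{\widetilde{\eta}}$, where $H \le G$ has finite index, $\widetilde{\eta}$ is the pullback of a character $\eta$ of $L := H/N$, and $\eta = \Ind{L}{K}{\psi}$ for some character $\psi$ of $K := \FC{\vFit{L}}$. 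Since pullback along a quotient map and induction from a finite-index subgroup are both continuous for pointwise convergence of traces and preserve finite-dimensionality, it suffices to show that $\Ind{L}{K}{\psi}$ is a pointwise limit of finite-dimensional normalized traces on $L$.

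Set $Z := \mathrm{Z}(\vFit{L})$, a normal subgroup of $L$ satisfying $Z \subseteq \mathrm{Z}(K)$. Because $Z$ is central in $K$, the restriction $\psi|_Z$ is a single unitary character $\chi \in \widehat{Z}$, and the restriction of $\Ind{L}{K}{\psi}$ to $Z$ corresponds, via Bochner's theorem, to an $L$-invariant probability measure $\mu$ on $\widehat{Z}$ concentrated on the $L$-orbit of $\chi$. By hypothesis, $\mu$ is a weak-$*$ limit of $L$-invariant periodic measures $\mu_n$, each supported on a finite $L$-orbit in $\widehat{Z}$. For each such orbit pick a representative $\chi_n$; its stabilizer $L_{\chi_n}$ has finite index in $L$. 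The plan is then to extend $\chi_n$ to a character $\psi_n$ of $K$ modeled on the way $\psi$ extends $\chi$, and to induce from $K$ up to $L$: the resulting $\Ind{L}{K}{\psi_n}$ is finite-dimensional, and with the $\psi_n$ chosen compatibly these induced traces converge pointwise to $\Ind{L}{K}{\psi}$.

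The main expected obstacle is precisely this compatible-extension step, i.e.\ lifting the weak-$*$ approximation of $\mu$ on $\widehat{Z}$ to a pointwise approximation of $\Ind{L}{K}{\psi}$ on all of $L$, not merely on $Z$. This requires controlling the fibers of the restriction map $\Ch{K} \to \widehat{Z}$. The key structural input is that $K$, being a finitely generated FC-subgroup of the virtually polycyclic group $L$, has finite-index center by the Baer--Neumann theorem, so $K$ is virtually abelian and its characters are parametrized, up to finite data, by their restriction to $\mathrm{Z}(K)$. A careful lift of the measure approximation along the finite chain $Z \le \mathrm{Z}(K) \le K$ should then complete the argument, yielding the desired finite-dimensional approximation of $\Ind{L}{K}{\psi}$ and hence, after pulling back to $H$ and inducing to $G$, Hilbert--Schmidt stability of $G$.
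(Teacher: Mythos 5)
Your outline follows the same skeleton as the paper (Hadwin--Shulman, the classification of characters of virtually polycyclic groups, the dense-periodic-measures hypothesis, continuity of induction), but the step you yourself flag as ``the main expected obstacle'' is precisely the substantive content of the argument, and your sketch does not supply it. Passing from approximation of the measure on $\widehat{Z}$, $Z=\mathrm{Z}(\vFit{L})$, to approximation on the character space $\chars{K}$ of $K=\FC{\vFit{L}}$ is not a matter of ``finite data'': the fibers of the restriction map $\chars{K}\to\chars{\mathrm{Z}(K)}$ are finite, but there is no canonical way to choose extensions $\psi_n$ of the approximating characters $\chi_n$ that vary continuously and equivariantly with $\chi_n$. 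The paper resolves this by proving that the restriction map is \emph{open} (Proposition \ref{prop:restriction is open}, a genuinely nontrivial cohomological argument using the finiteness of $\mathrm{H}^2(Q,S^1)$ and the openness of the coboundary subgroup), hence a finite-sheeted covering (Theorem \ref{thm:restriction is covering}); it then shows (Proposition \ref{prop:action of dual of normal FC subgroup} via Lemma \ref{lem:dynamics on finite covers}) that a finite-index subgroup acts on each component of $\chars{K}$ topologically conjugately to toral automorphisms, weakly algebraically isomorphic to the dual action on $\chars{\mathrm{Z}(K)}$ (commensurable with $\widehat{Z}$), and transports dense periodic measures along this chain using Corollary \ref{cor:DPM-of-factors} and Lemma \ref{lem:DPM-finite-index}. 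The appeal to Baer--Neumann gives only $[K:\mathrm{Z}(K)]<\infty$ and does not replace any of this.

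There is a second, outright incorrect step: the claim that $\Ind{L}{K}{\psi_n}$ is finite dimensional because the orbit of $\chi_n$ is finite. Since $K$ has infinite index in $L$, inducing (i.e.\ trivially extending and averaging) a finite-dimensional trace from $K$ does not yield a finite-dimensional trace: already for $L=\ZZ^2$, $K=\ZZ\times\{0\}$ and $\psi_n$ the trivial character, the induced trace is the indicator of $K$, whose Fourier transform is the Haar measure on a circle subgroup of $\TT^2$. Moreover the characters $\chi_n$ in a finite $L$-orbit need not be torsion, so they need not factor through finite quotients. The paper handles this in two further steps you omit: Lemma \ref{lem:finite index kernel for FC} (via Lemma \ref{lem:abelian-finite-index-kernel-G-invariant} and the toral structure above) allows one to take the periodic measures supported on characters of $K$ with \emph{finite-index kernel}, and then Proposition \ref{lem:approximating finitely induced} --- which requires the kernels to be profinitely closed in $G$, supplied by Mal'cev's theorem for virtually polycyclic groups --- shows that each induced trace $\Ind{G}{K}{\psi_n}$, while not itself finite dimensional, is a pointwise limit of finite-dimensional traces; continuity of induction (Lemma \ref{lem:continuity of induction}) then concludes. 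Finally, note that the classification theorem produces an $L$-invariant \emph{trace} on $F=\FC{\vFit{L}}$ rather than a single character of $F$, so its restriction to $Z$ corresponds to a measure, not one multiplicative character; your setup should be recast at the level of measures throughout.
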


The action of any   group on the character space of its  center is trivial and therefore    has dense periodic orbits. We   immediately deduce:

\begin{Cor}
\label{Thm:nilpotent are HS}
Any finitely generated virtually nilpotent group  is Hilbert--Schmidt stable.
\end{Cor}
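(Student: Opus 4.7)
The plan is to deduce this corollary by a direct appeal to Theorem \ref{thm:intro-HS-stabilit-polycyclic-groups}, since a finitely generated virtually nilpotent group is in particular virtually polycyclic. What I need to verify is that the dynamical hypothesis of that theorem is automatically satisfied in the virtually nilpotent setting.

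First I would observe that the class of (finitely generated) virtually nilpotent groups is closed under taking finite index subgroups and quotients. Hence any quotient $L$ of any finite index subgroup of a virtually nilpotent group $G$ is itself virtually nilpotent. Since $\vFit{L}$ is defined as the maximal virtually nilpotent normal subgroup of $L$, and $L$ itself is a candidate, I get $\vFit{L} = L$. Consequently $\mathrm{Z}(\vFit{L}) = \mathrm{Z}(L)$.

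Next I would use that the conjugation action of $L$ on its own center $\mathrm{Z}(L)$ is trivial, so the induced dual action on the compact abelian group $\widehat{\mathrm{Z}(L)}$ is trivial as well. Under a trivial action every Borel probability measure is $L$-invariant, and every finitely supported probability measure is a (trivially) $L$-invariant periodic measure. Therefore the density of periodic measures reduces to the classical density of finitely supported probability measures among all probability measures on a compact Hausdorff space in the weak-$*$ topology, which is immediate from the Krein--Milman theorem (the Dirac masses are the extreme points of the probability simplex).

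Having checked the hypothesis of Theorem \ref{thm:intro-HS-stabilit-polycyclic-groups} for every relevant subquotient, the conclusion that $G$ is Hilbert--Schmidt stable follows. The main obstacle one might worry about --- actually controlling invariant measures on the dual of $\mathrm{Z}(\vFit{L})$ --- evaporates here precisely because $\vFit{L}$ swallows all of $L$ in the virtually nilpotent case, which is why this corollary is genuinely immediate once Theorem \ref{thm:intro-HS-stabilit-polycyclic-groups} is in hand.
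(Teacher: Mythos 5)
Your proposal is correct and follows essentially the same route as the paper: reduce to the polycyclic stability criterion, note that every relevant subquotient $L$ is virtually nilpotent so $\vFit{L}=L$, and observe that the (trivial) dual action of $L$ on $\widehat{\mathrm{Z}(L)}$ has dense periodic measures since finitely supported measures are weak-$*$ dense in $\Prob{\widehat{\mathrm{Z}(L)}}$. The extra detail you supply (closure of virtual nilpotency under finite-index subgroups and quotients, and the Krein--Milman justification) is exactly what the paper leaves implicit.
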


For any unipotent subgroup $G\leq \GL{k}{\ZZ}$ the semidirect product $G \ltimes \ZZ^k$ is a finitely generated nilpotent group. Therefore the dynamical system $(G,\TT^k)$ has dense periodic measures as a
consequence of Corollary \ref{Thm:nilpotent are HS} and of Proposition \ref{obs:intro: necc for HS} (the same  can be derived from Ratner's measure classification theorem \cite{ratner1991raghunathan}).

Recall that a finitely generated group $G$  has finite conjugacy classes    if and only if   $\left[G:\mathrm{Z}(G)\right] < \infty$ \cite[\S15.1]{scott2012group}. Together with  Theorem \ref{thm:intro-HS-stabilit-polycyclic-groups} this says that   any character of a virtually polycyclic group  is induced from some virtually abelian normal group. The dual topological dynamical system associated to this situation turns out to be topologically conjugate to an action  on a torus, see \S\ref{sec:fc induced}. Taking all this   into account allows us  to deal with a particular family of polycyclic groups.

\begin{Thm}
\label{Thm:upper triangular}
Let $\mathcal{O}$ be the ring of   algebraic integers of some number field $k$. Assume that the group of units $\mathcal{O}^*$ has rank one\footnote{Dirichlet's unit theorem says that $\mathrm{rank}( \mathcal{O}^*) = 1$   if and only if $k$ is either   a real quadratic field, a complex cubic field or a totally imaginary number field of degree $4$.}. Then the group of invertible upper triangular  matrices over the ring $\mathcal{O}$ is Hilbert--Schmidt stable.
\end{Thm}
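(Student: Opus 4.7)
The plan is to invoke Theorem~\ref{thm:intro-HS-stabilit-polycyclic-groups}. The group $G$ of invertible upper triangular $n \times n$ matrices over $\mathcal{O}$ is finitely generated --- $\mathcal{O}$ is a finitely generated abelian group and $\mathcal{O}^*$ is finitely generated by Dirichlet --- and solvable, hence virtually polycyclic. So it suffices to show that for every finite-index subgroup $H \leq G$ and every quotient $L$ of $H$, the action of $L$ on $\widehat{\mathrm{Z}(\vFit{L})}$ has dense periodic measures.

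I would begin by pinning down $\vFit{L}$ for such $L$. Writing $G = T \ltimes U$ with $T = (\mathcal{O}^*)^n$ the diagonal and $U$ the strictly upper unipotent radical, the conjugation action of $T$ on $U$ has kernel precisely the scalar subgroup $\mathcal{O}^* \cdot I$, so $\vFit{G}$ is (up to finite torsion) $\mathcal{O}^* I \cdot U$ and $G/\vFit{G}$ has $\ZZ$-rank exactly $n-1$. The same structure persists in any quotient $L$ of a finite-index subgroup of $G$: the group $L/\vFit{L}$ is virtually abelian and embeds, up to a finite kernel, into $(\mathcal{O}^*)^{n-1}$, while $\mathrm{Z}(\vFit{L})$ carries a natural $\mathcal{O}$-module structure on which $L/\vFit{L}$ acts by multiplication by units.

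The rank-one hypothesis enters through weight decomposition. Tensoring with $\QQ$, the representation of $L/\vFit{L}$ on $\mathrm{Z}(\vFit{L})$ splits into weight spaces indexed by characters $\chi \colon L/\vFit{L} \to \mathcal{O}^*$ coming from ratios of diagonal eigenvalues. Since $\mathcal{O}^*$ has rank one, each such character has rank-one image, so on each weight space the action of $L$ factors through a virtually cyclic group generated, up to finite index, by multiplication by the fundamental unit $\varepsilon$. Dualizing, $\widehat{\mathrm{Z}(\vFit{L})}$ decomposes (up to a finite extension) as a product of toral factors, and on each non-trivial weight factor the action is by a single toral automorphism whose eigenvalues are the Galois conjugates of a unit of the form $\zeta \varepsilon^k$ with $k \neq 0$. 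A short Kronecker-type argument using the rank-one hypothesis --- any conjugate lying on the unit circle would force all conjugates to lie there and hence make $\varepsilon$ a root of unity --- rules out any such eigenvalue being a root of unity, so the toral automorphism is ergodic, and Marcus's theorem \cite{marcus1980note} yields dense periodic measures and in fact periodic specification on each such factor.

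The hard part will be combining these per-weight conclusions into density of periodic measures for the full action on $\widehat{\mathrm{Z}(\vFit{L})}$, because invariant measures on a product can be arbitrary joinings rather than products. I would handle this by separating out the trivial-weight component (on which the action is trivial, so every probability measure is invariant and trivially a weak-$*$ limit of finitely supported measures) from the remaining ergodic toral components: on the latter, periodic specification allows simultaneous approximation of any joint invariant measure by measures supported on joint periodic orbits, and combining with finite convex combinations of Dirac masses on the trivial factor produces dense periodic measures for the whole product. This verifies the hypothesis of Theorem~\ref{thm:intro-HS-stabilit-polycyclic-groups} and yields the Hilbert--Schmidt stability of $G$.
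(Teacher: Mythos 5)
Your overall route is the same as the paper's: verify the hypothesis of Theorem \ref{thm:intro-HS-stabilit-polycyclic-groups} by decomposing the relevant subquotient modules rationally into weight spaces, use the rank-one hypothesis on $\mathcal{O}^*$ to make the action on each non-trivial weight virtually cyclic with an ergodic generator, and invoke Marcus's theorem; the paper formalizes exactly this as the notion of rank one type and Theorem \ref{thm:DPM of PVC}. The first gap is your assertion that ``the same structure persists in any quotient $L$ of a finite-index subgroup of $G$''. This is precisely where the paper's proof of Theorem \ref{thm:DPM of PVC} does its work: a normal subgroup $K \lhd H$ need not be an $\mathcal{O}$-submodule, so $\mathrm{Z}(\vFit{L})$ carries no natural $\mathcal{O}$-module structure (only an action of a finite-index subgroup of the unit group), and $\mathrm{Z}(\vFit{L})$ may contain pieces not coming from the unipotent radical at all. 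The paper handles this by passing to the torsion-free part, taking Mal'cev Lie algebras, using semisimplicity of $\QQ A_0$ to split off a complement of the part coming from the unipotent group, and showing by an explicit exponential/commutator computation that this complement is acted on trivially, before applying Proposition \ref{prop:subquotient of PVC is PVC}. Your sketch takes all of this for granted; it is true, but it is the main structural content of the proof.

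The second gap is the step that would fail as written, namely your treatment of the combination problem. You rightly observe that invariant measures of the joint action on the product of weight factors are joinings, but the proposed remedy via ``periodic specification'' is not available. The individual ergodic toral automorphisms arising here need not be hyperbolic, and non-hyperbolic ergodic automorphisms do not satisfy periodic specification in the strict sense (\cite{lind1979ergodic}; Marcus's argument uses only a weak periodic analogue, which is a statement about a single transformation). More importantly, as soon as $n \ge 3$ the image of the diagonal group acting on the product of the distinct weight factors has rank $n-1 \ge 2$, and this joint $\ZZ^r$-system is non-expansive and has zero entropy as a $\ZZ^r$-action (each factor is acted on through a rank-one quotient), so the specification machinery for commuting automorphisms \cite{lind1999homoclinic} does not apply; ``simultaneous approximation of any joint invariant measure by joint periodic orbits'' therefore cannot be extracted from per-factor specification. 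The paper's treatment of this point is Lemma \ref{lemma:PVC has dense periodic measures}: decompose into isotypic components, pass to a finite-index subgroup of the acting group (harmless by Lemma \ref{lem:DPM-finite-index}), reduce each component to a single ergodic automorphism or the identity, and conclude with Theorem \ref{thm:marcus} together with Corollary \ref{cor:DPM-of-factors}. Whichever way you choose to address invariant measures of the joint system (for instance by reducing to ergodic measures and analyzing their structure), the appeal to specification has to be replaced by an actual argument.
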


Stability for the class of   virtually polycyclic groups turns out to be equivalent to 
 a dynamical property of torus automorphisms (note the analogy with Theorem  \ref{Thm:metabelian-grp-charactarization-HS-stability}).

\begin{Thm} \label{Thm:poly-grp-charactarization-HS-stability}
The following four statements are equivalent:
\begin{enumerate}
    \item \label{item:meta}
    All groups  of the form $\ZZ^d \ltimes \ZZ^k$ for some $d,k \in \NN$ are Hilbert--Schmidt stable.
    \item \label{item:vPoly} All virtually polycyclic groups\footnote{The class of virtually polycyclic groups coincides with the class of  finitely generated amenable subgroups of $\GL{k}{\ZZ}$.} are Hilbert--Schmidt stable.
    \item \label{item:abelian-sofic} The topological dynamical system $(G,\TT^k)$ has dense periodic measures for any \emph{abelian} subgroup $G \le    \GL{k}{\ZZ}$ and all $k \ge 2$.
    \item \label{item:amenable-sofic} The topological dynamical  system $(G,\TT^k)$ has dense periodic measures for any \emph{amenable} subgroup $G \le \GL{k}{\ZZ}$ and all $k \ge 2$.
\end{enumerate}
\end{Thm}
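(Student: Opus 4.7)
The plan is to establish the four-way equivalence via a cycle of implications, leveraging Proposition \ref{obs:intro: necc for HS} (stability forces dense periodic measures on the dual of an abelian normal subgroup) together with the two dynamical criteria for stability developed earlier: Theorem \ref{thm:intro:condition for metabelian to be stable} in the metabelian case and Theorem \ref{thm:intro-HS-stabilit-polycyclic-groups} in the virtually polycyclic case. The implications (2)$\Rightarrow$(1) and (4)$\Rightarrow$(3) are immediate, since $\ZZ^d\ltimes\ZZ^k$ is virtually polycyclic and every abelian group is amenable.

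For (1)$\Rightarrow$(3) and (2)$\Rightarrow$(4), the plan is to take the given subgroup $G\leq\GL{k}{\ZZ}$ and form the semidirect product $G\ltimes\ZZ^k$ using the tautological action. In the abelian case I would write $G=\ZZ^d\times F$ with $F$ finite, so that the finite-index subgroup $\ZZ^d\ltimes\ZZ^k$ is Hilbert--Schmidt stable by (1); in the amenable case Mal'cev's theorem yields that $G$, and hence $G\ltimes\ZZ^k$, is virtually polycyclic, so Hilbert--Schmidt stable by (2). Applying Proposition \ref{obs:intro: necc for HS} to the abelian normal subgroup $\ZZ^k$ then produces dense periodic measures on $\widehat{\ZZ^k}=\TT^k$ for the inherited action. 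In the abelian case the approximating $\ZZ^d$-invariant measures of finite support are upgraded to $G$-invariant ones by averaging over the finite group $F$, which preserves both being of finite support and convergence to the $F$-invariant limit $\mu$.

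The converses (3)$\Rightarrow$(1) and (4)$\Rightarrow$(2) proceed by invoking Theorems \ref{thm:intro:condition for metabelian to be stable} and \ref{thm:intro-HS-stabilit-polycyclic-groups} respectively. For (3)$\Rightarrow$(1) one unravels the semidirect-product structure of $G=\ZZ^d\ltimes\ZZ^k$: for each normal subgroup $H\lhd G$, the image of $G$ in $\Aut{H^{\ab}}$ should be shown, after passing to a finite-index subgroup, to be an abelian subgroup of $\GL{m}{\ZZ}$ for suitable $m$, so that (3) supplies dense periodic measures on the torus part of $\widehat{H^{\ab}}$ (the torsion part contributes only finitely many orbits). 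For (4)$\Rightarrow$(2) the key observation is that $\vFit{L}$ centralizes its own center, so the $L$-action on $\mathrm{Z}(\vFit{L})$ factors through the amenable quotient $L/\vFit{L}$; writing $\mathrm{Z}(\vFit{L})\cong\ZZ^a\times F$ identifies this as an amenable subgroup of $\GL{a}{\ZZ}$ acting on $\TT^a$ up to finite data, which is precisely the setting of (4).

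The main obstacle is closing the cycle, for which it suffices to prove (1)$\Rightarrow$(2), or equivalently (3)$\Rightarrow$(4). Here the plan is to exploit the structure theory of amenable linear groups: by Mal'cev, any amenable $G\leq\GL{k}{\ZZ}$ has a triangularizable finite-index subgroup $G_0$, giving a short exact sequence $1\to U\to G_0\to A\to 1$ with $U$ unipotent (hence nilpotent) and $A$ abelian. The abelian factor is directly handled by (3), whereas the nilpotent part $(U,\TT^k)$ enjoys dense periodic measures because $U\ltimes\ZZ^k$ is nilpotent and thus Hilbert--Schmidt stable by Corollary \ref{Thm:nilpotent are HS}, so Proposition \ref{obs:intro: necc for HS} applies. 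Amalgamating these two density statements into dense periodic measures for the full extension $(G_0,\TT^k)$ along a composition series -- most plausibly via a relative specification argument on the space of $U$-invariant probability measures, on which $A$ acts with manageable dynamics -- is the delicate step of the proof.
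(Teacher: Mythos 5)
Your treatment of the easy implications, of (1)$\Rightarrow$(3) and (2)$\Rightarrow$(4) via Proposition \ref{obs:intro: necc for HS} applied to $G\ltimes\ZZ^k$, and your sketch of (4)$\Rightarrow$(2) via Theorem \ref{thm:intro-HS-stabilit-polycyclic-groups} all match the paper. But the proposal has a genuine gap: it does not close the cycle. With the implications you actually establish, (1)$\Leftrightarrow$(3) and (2)$\Leftrightarrow$(4), together with (2)$\Rightarrow$(1) and (4)$\Rightarrow$(3), you only get that the pair $\{2,4\}$ implies the pair $\{1,3\}$; the missing upward implication (1)$\Rightarrow$(2) (equivalently (3)$\Rightarrow$(4)) is exactly the step you defer to an unproved ``relative specification'' argument for amalgamating dense periodic measures along the extension $1\to U\to G_0\to A\to 1$. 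That is not a proof, and it is the hard direction: dense periodic measures are not known to pass well through extensions (even the finite-index question, Question \ref{quest:going up finite index}, is open in one direction), so building the theorem on such an amalgamation is precisely the kind of dynamical difficulty the paper is structured to avoid.

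The paper closes the cycle by proving (3)$\Rightarrow$(2) directly, and you already have the key observation in hand without exploiting it fully: in your (4)$\Rightarrow$(2) argument you note that $\vFit{L}$ acts trivially on $\mathrm{Z}(\vFit{L})$, so the $L$-action on $\widehat{\mathrm{Z}(\vFit{L})}$ factors through $L/\vFit{L}$. The point you miss is that this quotient is not merely amenable but \emph{virtually abelian} (it is crystallographic: $\Fit{L}\le\vFit{L}$ and $L/\Fit{L}$ is virtually abelian by Mal'cev). Hence the relevant systems in Theorem \ref{thm:intro-HS-stabilit-polycyclic-groups} are, up to passing to a finite-index subgroup and invoking Lemma \ref{lem:DPM-finite-index} together with the finiteness of the torsion part of $\mathrm{Z}(\vFit{L})$, exactly actions of \emph{abelian} subgroups of $\GL{a}{\ZZ}$ on tori. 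So hypothesis (3) alone already yields stability of all virtually polycyclic groups, which is the ``surprising'' reduction announced in the introduction; no extension or specification argument is needed, and your separate route (3)$\Rightarrow$(1) through the metabelian criterion becomes superfluous.
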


Surprisingly, to show that all virtually polycyclic groups are Hilbert--Schmidt stable it will be   enough to consider   the metabelian ones as in  statement (1). 

We do not know whether Statement (3)  of Theorem \ref{Thm:poly-grp-charactarization-HS-stability} is true. For a single toral automorphism this seems likely --- ergodic automorphisms are covered by the work of Marcus \cite{marcus1980note} and unipotent automorphisms were discussed  after Corollary \ref{Thm:nilpotent are HS}. The case of multiple commuting automorphisms seems to be much deeper. It is closely related to the higher rank measure rigidity conjecture   \cite{furstenberg1967disjointness}, \cite[Main Conjecture]{katok1996invariant}, \cite[Conjecture 3]{margulis2000problems},  \cite[Conjecture 4]{lindenstrauss2021recent}. 
Here is one explicit formulation of this conjecture.

\begin{conj*}[\cite{margulis2000problems}]
Let $\alpha$ be an  almost minimal  $\ZZ^d$-action on the   torus $\TT^k$, i.e. $\TT^k$ has no  proper closed infinite    $\ZZ^d$-invariant subsets.   Then  the only non-atomic  invariant probability measure on $\TT^k$ is the Haar measure.
\end{conj*}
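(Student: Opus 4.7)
The plan is to combine the structural classification of almost minimal abelian actions on the torus with the higher-rank measure rigidity machinery developed over the past two decades. First, by the ergodic decomposition, one may reduce to the case that $\mu$ is ergodic; if $\mu$ is in addition non-atomic, then $\mathrm{supp}(\mu)$ is a closed $\ZZ^d$-invariant infinite subset of $\TT^k$, and almost minimality forces $\mathrm{supp}(\mu) = \TT^k$. Thus the task reduces to showing that the only fully supported non-atomic ergodic $\ZZ^d$-invariant probability measure on $\TT^k$ is the Haar measure.

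I would next invoke Berend's analysis of \emph{ID} (infinite invariant closed) actions, which characterizes the almost minimal abelian actions by toral automorphisms in terms of containing a hyperbolic element, admitting no common rational eigenvector under any finite-index subaction, and being of genuine rank $\geq 2$. In particular the Lyapunov splittings of distinct hyperbolic elements of $\alpha$ are incommensurable, which is precisely the setup in which higher-rank measure rigidity operates.

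The heart of the argument would then be to apply the Einsiedler--Katok--Lindenstrauss and Katok--Spatzier--Kalinin program. One studies the coarse Lyapunov foliations for the various hyperbolic elements and the conditional measures of $\mu$ along their unstable leaves. Assuming that some element of the action has positive entropy with respect to $\mu$, an exponential drift / low-entropy-method argument shows that these conditional measures are translation-invariant along the associated Lyapunov subtori; combining over enough coarse foliations --- which, by Berend's conditions, jointly span $\TT^k$ --- forces $\mu$ itself to coincide with the Haar measure.

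The main obstacle, and the reason this remains only a conjecture, is the zero entropy case: if every element of the action has zero entropy with respect to $\mu$, the entropy machinery is silent and a genuinely different tool is required, perhaps involving joining classification, equidistribution of orbit closures, or an additive-combinatorial scarring argument exploiting the rank $\geq 2$ assumption. Ruling out exotic zero entropy invariant measures under a bona fide higher-rank hypothesis is, to the best of my knowledge, the crux of the higher rank measure rigidity conjecture and is open in the full generality stated above; this is precisely why the authors cannot settle statement (3) of Theorem~\ref{Thm:poly-grp-charactarization-HS-stability} unconditionally.
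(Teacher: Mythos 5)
There is a fundamental mismatch here: the statement you were asked about is not a result of the paper at all, but an open conjecture (the higher-rank measure rigidity conjecture, going back to Furstenberg and formulated by Katok--Spatzier and Margulis) which the paper only cites and uses \emph{conditionally} --- see Proposition \ref{prop:application of conjecture to stability}, which is explicitly stated as an application ``assuming the measure rigidity conjecture holds true,'' and the remark that the authors do not know whether statement (3) of Theorem \ref{Thm:poly-grp-charactarization-HS-stability} is true. Consequently the paper contains no proof of this statement, and there is nothing to compare your argument against.

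Your proposal, to its credit, is honest about this: the reduction to ergodic, fully supported non-atomic measures is fine (modulo the small remark that finite orbits consist of torsion points and are therefore countable, so a non-atomic invariant measure cannot charge the atomic ergodic components), and your description of the state of the art is accurate --- Berend's conditions characterize almost minimality, and the entropy methods of Katok--Spatzier, Einsiedler, Katok and Lindenstrauss do classify the invariant measures under a positive entropy hypothesis. But the final paragraph concedes that no argument is offered in the zero-entropy case, and that case is precisely the open content of the conjecture. So what you have written is a survey of why the statement is believed and where the difficulty lies, not a proof; the genuine gap is the missing treatment of zero-entropy invariant measures, and closing it is an open problem rather than an oversight on your part. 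If this text were to be used, it should be framed as a discussion of the conjecture's status, not as a proof.
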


Any $\ZZ^d$-action on the torus $\TT^k$ that satisfies the measure rigidity conjecture clearly has dense periodic measures. Here is an example of a  conditional application of the conjecture   to get stability.

 \begin{Prop}
\label{prop:application of conjecture to stability}
Assume that the measure rigidity conjecture holds true. Let $k$ be a totally real number field  with ring of algebraic integers $\mathcal{O}$. 
Then the metabelian polycyclic group $\mathcal{O}^* \ltimes \mathcal{O}$ is Hilbert--Schmidt stable.
\end{Prop}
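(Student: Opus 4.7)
The plan is to apply the stability criterion for finitely generated metabelian groups (Theorem~\ref{thm:intro:condition for metabelian to be stable}). Since $\mathcal{O} \cong \ZZ^n$ additively and $\mathcal{O}^* \cong \{\pm 1\} \times \ZZ^{n-1}$ by Dirichlet's theorem (with $n = [k:\QQ]$), the group $G = \mathcal{O}^* \ltimes \mathcal{O}$ is finitely generated metabelian, and it suffices to verify, assuming the measure rigidity conjecture, that $(G, \widehat{H^\ab})$ has dense periodic measures for every normal subgroup $H \lhd G$.

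I would first unpack $H^\ab$. Because $G$ is metabelian, $[H,H] \leq H \cap \mathcal{O}$, so the exact sequence $1 \to H \cap \mathcal{O} \to H \to A_H \to 1$ (with $A_H := H\mathcal{O}/\mathcal{O} \leq \mathcal{O}^*$) descends to a short exact sequence
\[
1 \to (H \cap \mathcal{O})/[H,H] \to H^\ab \to A_H \to 1.
\]
Its Pontryagin dual is an extension of $\widehat{A_H}$ by a torus quotient of $\widehat{\mathcal{O}} \cong \TT^n$; the $G$-action is trivial on the former (since $A_H$ sits in the abelian quotient $\mathcal{O}^*$) and factors through the $\mathcal{O}^*$-action on the latter. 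A standard extension argument then reduces dense periodic measures on $\widehat{H^\ab}$ to dense periodic measures for each of these two pieces, and the core task becomes showing that for every $\mathcal{O}^*$-invariant closed subgroup $L \leq \TT^n$ the restricted $\ZZ^{n-1}$-action on $L$ has dense periodic measures.

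The key step is to invoke the measure rigidity conjecture. Since $k$ is totally real, each unit acts on $\mathcal{O} \cong \ZZ^n$ by an integer matrix whose eigenvalues are its real Galois conjugates, so the dual $\ZZ^{n-1}$-action on $\TT^n$, and on every closed invariant subgroup $L$, is by commuting diagonalizable matrices with totally real spectrum. After ergodic decomposition, take an ergodic invariant measure $\mu$ on $L$, pass to the smallest closed invariant subgroup $T \leq L$ containing $\mathrm{supp}(\mu)$, and further quotient to obtain an almost minimal $\ZZ^{n-1}$-action on some subtorus $\TT^m$. The conjecture forces $\mu$ to be either atomic, in which case it is supported on a single periodic orbit and is already a periodic measure, or equal to the Haar measure of $T$.

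The main obstacle is the remaining step: displaying this Haar measure as a weak-$\ast$ limit of periodic ones. The conjecture classifies invariant measures but does not by itself furnish an approximating sequence, so a supplementary equidistribution input is needed. Density of periodic points on $\TT^n$ follows from the descending chain condition via \cite[Theorem~5.7]{schmidt2012dynamical}, and combined with standard counting and equidistribution estimates for periodic orbits of expansive higher-rank toral actions this should yield Haar as a weak-$\ast$ limit of periodic measures. In the rank-one subcase, such as $k$ real quadratic, the group is in any case already unconditionally Hilbert--Schmidt stable by Corollary~\ref{Cor:metabelian-grps-HS-stable}(4), so the conditional portion of the argument is only genuinely required when $[k:\QQ] \geq 3$.
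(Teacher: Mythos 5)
Your high-level skeleton (reduce stability to dense periodic measures for the dual systems via the metabelian criterion, then invoke measure rigidity) matches the paper, but there is a genuine gap at the point where you invoke the conjecture. The conjecture, as formulated, applies only to \emph{almost minimal} $\ZZ^d$-actions, and you never verify almost minimality for any of the systems you feed into it; you simply assert that after passing to the smallest closed invariant subgroup containing $\mathrm{supp}(\mu)$ and quotienting one obtains an almost minimal action on some subtorus. That step is not automatic (restrictions and quotients of a toral action need not be almost minimal, and when $\mathrm{rank}(\mathcal{O}^*)=1$ no such system can be), and establishing it is exactly the substantive content of the paper's Proposition \ref{prop:properties of semi direct product fields}: one checks Berend's criteria for the action of $\mathcal{O}^*$ on $\widehat{\mathcal{O}}$, namely the existence of a unit $u$ with $k=\QQ(u^m)$ for all $m$ (here Parry's theorem that $\mathrm{rank}(\mathcal{O}^*\cap l)<\mathrm{rank}(\mathcal{O}^*)$ for proper subfields $l$ of a non-CM field is used), hyperbolicity of some unit (using that $k$ is totally real), and $\mathrm{rank}(\mathcal{O}^*)\ge 2$. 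Without this number-theoretic input the conjecture cannot be applied, so your argument is missing its key lemma. The paper also organizes the reduction more economically: using Theorem \ref{thm:stability for metabelian} it suffices to treat subgroups $H$ with $\mathcal{O}\le H\le G$, and irreducibility of the $\mathcal{O}^*$-action forces $[H,H]$ to be either trivial or all of $\mathcal{O}$, so the only nontrivial system is $(\mathcal{O}^*,\widehat{\mathcal{O}})$ itself; this avoids the unproved ``standard extension argument'' you use for general normal $H$, which in the direction fiber-plus-base $\Rightarrow$ total space is not covered by Corollary \ref{cor:DPM-of-factors}.

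Separately, the step you single out as the ``main obstacle'' is not one, and your proposed fix is both unnecessary and unsubstantiated. Once the ergodic invariant measures are known to be either finitely supported or the Haar measure of the full torus, the Haar measure is a weak-$*$ limit of the uniform measures on the sets of $q$-torsion points, which are finite and invariant under the whole of $\GL{n}{\ZZ}$; no equidistribution or counting of individual periodic orbits is needed (this is why the paper can say that any action satisfying the conjecture ``clearly'' has dense periodic measures). Finally, a small inaccuracy: in the rank-one (real quadratic) case the group is $(\{\pm1\}\times\ZZ)\ltimes\mathcal{O}$, not of the form $\ZZ\ltimes_\alpha\ZZ^2$, so it is not literally covered by item (4) of Corollary \ref{Cor:metabelian-grps-HS-stable}; the paper handles it via Theorem \ref{thm:DPM of PVC}, or one can combine Marcus' theorem with Lemma \ref{lem:DPM-finite-index} for the finite-index cyclic subgroup of $\mathcal{O}^*$.
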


Irreducible genuinely  partially hyperbolic  higher rank  actions   \cite[\S2.2.7]{katok2011rigidity}  are not covered by  the measure rigidity conjecture, i.e. they admit plenty non-atomic invariant probability measures singular with respect to the Haar measure on the torus. See Question \ref{quest:Katok's example} below for more details.

 \subsection*{Remarks}
 
\begin{enumerate}
\item  Not long after this work was announced, Eckhardt and Shulman \cite{eckhardt2023amenable} announced their work  which has a certain overlap with ours. For instance Eckhardt and Shulman also establish  Hilbert--Schmidt stability of finitely generated nilpotent groups, albeit in a very different way.
    \item The product of two Hilbert--Schmidt stable groups stays Hilbert--Schmidt stable provided one of the groups is amenable \cite[Corollary D]{ioana2021ii1}. So any  new example of  a Hilbert--Schmidt stable amenable group provides    non-amenable  examples.
        Works on Hilbert--Schmidt stability of non-amenable groups include \cite{atkinson2018some,hadwin2018stability,ioana2020cohomological,gerasimova2021virtually,ioana2021almost}.  See also  \cite{de2019operator,akhtiamov2022uniform} for a uniform version of Hilbert--Schmidt stability.
    \item Parallel to Hilbert--Schmidt stability is   \emph{stability in permutations} \cite{arzhantseva2015almost}, where the target groups are taken to be finite symmetric rather than unitary ones. This property    is related to sofic groups  \cite{gromov1999endomorphisms,weiss2000sofic,pestov2008hyperlinear,glebsky2009almost}. 
 The Hadwin--Shulman criterion has a close parallel in that setting --- a   finitely generated amenable group $G$ is stable in permutations if and only if any $G$-invariant Borel probability measure on its space of closed subgroups  is a weak-$*$ limit of $G$-invariant measures supported on finite index subgroups \cite{becker2019stability}. This dynamical criterion was used    to show that lamplighter   groups are stable in permutations \cite{levit2019infinitely}. All virtually polycyclic groups are  stable in permutations \cite[Corollary 8.2]{becker2019stability}. 
 \item 
It is natural to investigate the   connection between   permutation and Hilbert--Schmidt stability. The  preprint \cite{burton2021hyperlinear} shows that a  weakly permutation stable group must be \enquote{weakly Hilbert--Schmidt stable}. Recall that any finitely generated amenable group is weakly permutation stable in the sense of  \cite{arzhantseva2015almost}. To the best of our understanding, our Hilbert--Schmidt stability results    do not follow from the  methods of \cite{burton2021hyperlinear}. The recent paper  \cite{eckhardt2023amenable} includes a first example of an amenable (indeed solvable) group which is Hilbert--Schmidt stable but is not permutation stable  \cite[Theorem 5.23]{eckhardt2023amenable}
\item An incomplete list of works dealing with the character theory of other groups includes \cite{dudko2014finite,bekka2007operator,peterson2016character,thomas2018invariant,dudko2019invariant, bekka2020characters,lavi2020characters,bader2021charmenability,thomas2022characters}.  In most known cases all characters are  almost monomial, i.e are induced from virtually abelian subgroups.
\end{enumerate}

\subsection*{Open questions}

Here are   some  open problems suggested by our work.

\begin{question}
\label{quest:going up finite index}
Let $H$ be a finite index subgroup of $G$. Assume that the system $(G,X)$ has dense periodic measures. Is the same   true for the system $(H,X)$?
\end{question}

The converse direction to Question \ref{quest:going up finite index} is true, see Lemma  \ref{lem:DPM-finite-index} below.

\begin{question}
\label{quest:any torus auto has dense periodic measures}
Do all  actions  on the torus generated by a single automorphism admit   dense periodic measures?
\end{question}

The answer to Question \ref{quest:any torus auto has dense periodic measures}  is positive if the action is totally irreducible. The remaining case is essentially that of reducible transformations of  \enquote{mixed type} (i.e. hyperbolic and unipotent). We find the following problem concerning genuinely partially hyperbolic higher rank $\ZZ^d$-action   to be particularly intriguing.

\begin{question}
\label{quest:Katok's example}
Let $A,B \in \mathrm{GL}_6(\ZZ)$ be the pair of partially hyperbolic commuting matrices from   \cite[Example 2.2.20]{katok2011rigidity}. The corresponding dynamical system $(\ZZ^2,\TT^6)$ is \emph{not} almost minimal.   Does it admit dense periodic measures?
\end{question}

\begin{question}
Consider the finitely generated  metabelian group  $\ZZ^2\ltimes \Z[1/6]$ closely related to Furstenberg's famous   $\times 2,\times 3$ problem. Is it Hilbert--Schmidt stable? Does the   topological dynamical system $ (\ZZ^2, \widehat{\ZZ\left[1/6\right]})$ admit dense periodic measures?
\end{question}

A vast generalization of Questions $2,3$ and $4$ is the following.
\begin{question}
\label{quest:non-sofic}
Are all finitely generated metabelian groups Hilbert--Schmidt stable? Equivalently, do all systems $(\ZZ^d,X)$ satisfying the descending chain condition have dense periodic measures?
\end{question}

 The authors intend to address the following problem in a forthcoming work.
\begin{question}
Is every solvable minimax   group  almost monomial?
\end{question}
\subsection*{Acknowledgements} The authors would like to thank Uri Bader, Alon Dogon, Alex Furman, Elon Lindenstrauss, Alex Lubotzky and  Barak Weiss as well as the anonymous referee for helpful comments  and suggestions.


\setcounter{tocdepth}{1}
\begin{samepage}
\tableofcontents
\end{samepage}

\section{Characters and trace representations} \label{sec:characters}

 We define traces and characters and discuss their relation to trace representations via von Neumann algebras. We  introduce the notions of relative and dominated traces.
Let $G$ be a countable  group. We use the notation $g^h = h^{-1} g h$ for any pair of elements $g,h \in G$. This makes  conjugation  a right action.

\begin{definition*}
A \emph{trace} on the group $G$ is a function $\varphi:G\to \CC$ so that  
\begin{enumerate}
    \item $\varphi$ is \emph{positive definite}, i.e. \begin{equation*}
    \sum_{i=1}^n \sum_{j=1}^n\overline{\alpha}_i\alpha_j\varphi(g_i^{-1}g_j)\geq 0    
    \end{equation*} 
    for all $n\in \NN$ and any choice of  elements $g_1,...,g_n\in G$ and $\alpha_1,...,\alpha_n\in \CC$,
    \item $\varphi$ is \emph{conjugation-invariant}, i.e. $\varphi(g^h)=\varphi(g)$ for all   elements $g,h\in G$, and
    \item $\varphi$ is \emph{normalized}, i.e.  $\varphi(e)=1$.
\end{enumerate}
\end{definition*}
The \emph{kernel} of a trace $\varphi$ on the group $G$ is the normal subgroup $\ker\varphi=\varphi^{-1}(1)$. We say that the trace $\varphi$ is \emph{faithful} if the subgroup $\ker\varphi$ is trivial.  Any trace $\varphi$ on the group $G$ descends to a faithful trace on the quotient group $G/\ker\varphi$ \cite[Lemma 12.A.1]{bekka2020unitary}.

Let $\Tr{G}$ denote the space of traces on the group $G$ endowed with    the topology of pointwise convergence. This makes $\Tr{G}$ a compact  convex subset of $\ell^\infty(G)$. 
Let $\chars{G}$ denote the set of extreme points of $\traces{G}$.   The members of $\chars{G}$ are called \emph{characters}\footnote{Some authors refer to traces as \enquote{characters} and to extreme traces as \enquote{indecomposable characters}.}. 

It is known that   $\Tr{G}$ is a metrizable Choquet simplex \cite{thoma1964unitare}. This means that  the barycenter map
\begin{equation}\label{eq:fourier}
\Prob{\Ch{G}}\to \Tr{G}, \quad \mu  \mapsto \varphi = \int_{\Ch{G}} \psi \, \mathrm{d} \mu(\psi)
\end{equation}
is a continuous affine bijection. We refer to this map as the $\emph{Fourier transform}$. See \cite{phelps2001lectures} for an overview of Choquet theory.  

If the discrete group $G$ is  abelian then any character of $G$ is multiplicative (see Lemma \ref{obs:restriction is multiplicative}). 
Therefore the character space  $\Ch{G}$ coincides with the compact Pontryagin dual group $\widehat{G}$. The fact that the barycenter map in Equation (\ref{eq:fourier}) is a bijection in this case is known as Bochner's theorem \cite{folland2016course}. As the space  $\Prob{\Ch{G}}$ is compact  the barycenter map is a homeomorphism.

\subsection*{Trace representations}

Von Neumann algebras provide a way to generalize the intimate relationship between  characters and representations    from finite to   infinite groups.

Let $M$ be a von Neumann algebra \cite{dixmier2011neumann}. 
A \emph{trace} on the von Neumann algebra $M$  is a positive linear functional $\tau:M\to \C$ satisfying $\tau(1_M)=1$ as well as  $\tau(xy)=\tau(yx)$ for all elements $x,y\in M$. The trace $\tau$ is called \emph{faithful} if the condition $\tau(x^*x)=0$ implies $x=0$ for all elements $x \in M$. The trace $\tau$ is called    \emph{normal} if $\tau$ is continuous in the ultraweak topology. 

A von Neumann algebra $M$ is called  a \emph{factor} if its   center $\mathrm{Z}(M)$  consists  of scalar multiples of $1_M$. A \emph{finite factor} is a factor von Neumann algebra admitting a trace. A classical result by Murray and von Neumann states that the trace on a finite factor is unique, normal and faithful  \cite{murray1937rings}. 
For example, the finite dimensional von Neumann algebra  $\mathcal{M}_d(\C)$ of all $d$-by-$d$ complex matrices is a finite factor. Its  unique trace is the standard normalized trace $\tr_d:A\mapsto \frac{1}{d}\sum_{i=1}^d A_{ii}$.

\begin{definition*}
A \emph{trace representation} of the group $G$ is a triple $(M,\pi,\tau)$
where
\begin{enumerate}
\item $M$ is a von Neumann algebra,
\item $\pi : G \to \mathcal{U}(M)$ is a representation of the group $G$ into the group $\mathcal{U}(M)$ of unitaries of the algebra $M$
such that the image $\pi(G)$ generates $M$ as a von Neumann algebra, and
\item $\tau$ is a normal   faithful trace on the von Neumann algebra $M$.
\end{enumerate}
\end{definition*}

Saying that the image $\pi(G) $ generates the von Neumann algebra $M$  is equivalent to requiring that $\pi(G)''=M$ by the von Neumann double commutant theorem \cite[Theorem 1.2.1]{arveson2012invitation}.

An isomorphism between  two trace  representations $(M_{1},\pi_{1},\tau_{1})$ and $  (M_{2},\pi_{2},\tau_{2})$ of the group $G$ is an isomorphism of von Neumann algebras $f:M_{1}\to M_{2}$ satisfying
 $f\circ\pi_{1}=\pi_{2}$ and $\tau_{1}=\tau_{2}\circ f$.

Any trace representation $(M,\pi,\tau)$ gives rise to the trace $\tau\circ\pi\in \Tr{G}$. The GNS construction can be used to obtain the following fundamental correspondence.  For the proof see e.g. \cite[Theorem 5.1.10]{peterson2013notes}.

\begin{thm}[Thoma's correspondence]	\label{thm:thoma correspondence }   
The set of traces  $\Tr{G}$ on the group $G$ stands in a bijective correspondence with the set of isomorphism classes of trace representations of $G$. Moreover
\begin{itemize}
    \item the trace representation $(M,\pi,\tau)$   corresponds to the trace $\varphi = \tau \circ \pi$, and
    \item $\varphi$ is a character if and only if the 
    von Neumann algebra $M$ is a factor.
\end{itemize}
\end{thm}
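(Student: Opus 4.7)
The plan is to use the Gelfand--Naimark--Segal (GNS) construction to build a trace representation from a given trace, and conversely to recover a trace from a representation by composition, and then to check that these two constructions are mutually inverse up to isomorphism.

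First, I would start with a trace $\varphi \in \Tr{G}$ and form the sesquilinear form $\langle x, y \rangle_\varphi = \varphi(y^*x)$ on the group algebra $\CC[G]$, where $g^* = g^{-1}$. Positive-definiteness of $\varphi$ makes this a pre-inner product; quotienting by the null ideal $N_\varphi = \{x \in \CC[G] : \varphi(x^*x) = 0\}$ and completing yields a Hilbert space $H_\varphi$ with a distinguished cyclic unit vector $\xi_\varphi$, the image of $1 \in \CC[G]$. Left multiplication on $\CC[G]$ descends to a unitary representation $\pi_\varphi : G \to \mathcal{U}(H_\varphi)$ satisfying $\varphi(g) = \langle \pi_\varphi(g) \xi_\varphi, \xi_\varphi \rangle$. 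Set $M_\varphi = \pi_\varphi(G)''$ and define $\tau_\varphi(x) = \langle x \xi_\varphi, \xi_\varphi\rangle$ for $x \in M_\varphi$.

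Next, I would verify that $(M_\varphi, \pi_\varphi, \tau_\varphi)$ is a trace representation. The conjugation-invariance of $\varphi$ translates into the identity $\langle \pi_\varphi(g)\pi_\varphi(h)\xi_\varphi, \xi_\varphi \rangle = \varphi(gh) = \varphi(hg) = \langle \pi_\varphi(h)\pi_\varphi(g)\xi_\varphi, \xi_\varphi\rangle$; by linearity and $\mathrm{sot}$-continuity this upgrades to $\tau_\varphi(xy) = \tau_\varphi(yx)$ for all $x, y \in M_\varphi$, so $\xi_\varphi$ is a tracial cyclic vector. Normality of $\tau_\varphi$ is automatic as it is a vector state; faithfulness follows from the standard fact that a cyclic tracial vector for a von Neumann algebra is also separating (since $x^*x \xi_\varphi = 0$ and the trace property give $\|y\xi_\varphi\|^2 = \tau_\varphi(y^* y) = \tau_\varphi(x^* y^* y x') = \dots = 0$ for $y$ in a suitable dense set). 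Conversely, given any trace representation $(M, \pi, \tau)$, the function $\tau \circ \pi$ is manifestly a trace on $G$, and the two constructions are mutually inverse: the GNS uniqueness theorem, applied to the state $\tau$ with cyclic vector $1_M$ realized as $\xi_{\tau\circ\pi}$, produces an isomorphism $M_{\tau\circ\pi} \to M$ intertwining the representations and the traces, which is precisely the required isomorphism of trace representations.

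Finally, for the second bullet I would use the standard fact that a finite von Neumann algebra admitting a faithful normal tracial state is a factor if and only if that state is extreme in the convex set of normal tracial states on $M$. Under the correspondence, any convex decomposition $\varphi = t \varphi_1 + (1-t) \varphi_2$ of the trace on $G$ pulls back to a decomposition of $\tau_\varphi$ (via restriction from the enveloping GNS data of $\varphi_1, \varphi_2$, together with absolute continuity supplied by $\varphi_1, \varphi_2 \le t^{-1}\varphi$ and $(1-t)^{-1}\varphi$ respectively), and conversely any nontrivial central projection $p \in \mathrm{Z}(M_\varphi)$ yields the decomposition $\tau_\varphi = \tau_\varphi(p) \tau_\varphi(\cdot p)/\tau_\varphi(p) + \tau_\varphi(1-p) \tau_\varphi(\cdot (1-p))/\tau_\varphi(1-p)$ which descends via $\pi_\varphi$ to a nontrivial convex decomposition of $\varphi$.

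The main technical obstacle I anticipate is the absolute-continuity step in the factor characterization: one has to argue that any splitting of $\varphi$ as a convex combination of traces on $G$ is implemented by bounded positive elements of the center of $M_\varphi$. This is handled by a Radon--Nikodym type argument for traces on finite von Neumann algebras, which in turn relies on the uniqueness of the trace on a finite factor; this is the one place where the full strength of Murray--von Neumann theory enters.
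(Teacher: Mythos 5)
Your sketch is correct and follows the same standard route the paper relies on: the paper does not prove this theorem itself but cites the GNS-based argument (Peterson's notes, Theorem 5.1.10), which is exactly what you reconstruct — GNS construction with a cyclic tracial (hence separating) vector, uniqueness of the GNS data for the converse, and the Radon--Nikodym correspondence between dominated traces and central elements $0\le T\le 1_M$ (the same fact the paper quotes as \cite[Lemma 11.C.3]{bekka2020unitary}) for the factor/character equivalence. No genuine gap; the only point worth spelling out in a full write-up is that the two traces $\tau(\pi(\cdot)p)/\tau(p)$ and $\tau(\pi(\cdot)(1-p))/\tau(1-p)$ coming from a nontrivial central projection are actually distinct on $G$, which follows by normality and the fact that $\pi(G)$ generates $M$.
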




\subsection*{Dominated traces}
There is a natural  partial ordering on the set of traces on the group $G$. To define this ordering it will be useful  to consider the compact convex set   $\tracesl{G}$ consisting of all positive definite conjugation-invariant functions $\varphi :G \to \CC $ satisfying   $\varphi(e)\leq 1$. 

Given a pair of traces\footnote{We abuse our standard terminology and refer to the elements of $\tracesl{G}$ as traces, even through these functions need not be normalized.}  $\varphi_1,\varphi_2\in \tracesl{G}$ we say that $\varphi_1$ is \emph{dominated} by $\varphi_2$ if the function $\varphi_2 - \varphi_1$ is positive definite. This condition is equivalent to saying that $\varphi_2 - \varphi_1 \in \tracesl{G}$. We denote this relation by $\varphi_1 \le \varphi_2$.
The two   traces
$\varphi_1,  \varphi_2 \in \traces{G}$ are  \emph{disjoint} if the only  trace $\psi \in \tracesl{G}$ satisfying  both $\psi\le \varphi_1$ and $\psi \le \varphi_2$ is the zero trace $\psi = 0$.

Let $(M,\pi,\tau)$ be a trace representation of the group $G$ with corresponding trace $\varphi = \tau\circ\pi$. For every  positive central element $T\in \mathrm{Z}(M)$ consider the function $\varphi_T$ on the group $G$ given by
\begin{equation}
\label{eq:phi_T definition}
\varphi_{T} =\tau\circ \Ad_{T^{\sfrac{1}{2}}} \circ \pi
\end{equation}
where
$\Ad:M\to \End{M}$ is the \emph{adjoint representation} defined by 
\begin{equation}\label{eq:adjoint}
\Ad_x(y)=x y x^* \quad \forall x,y\in M.
\end{equation}
    The mapping $T \mapsto \varphi_T$ sets up a bijective correspondence between the set of central elements $T\in Z(M)$ satisfying $0_M\leq T\leq 1_M$ and the set of traces in $ \tracesl{G}$ dominated by the given trace $\varphi$ \cite[Lemma 11.C.3]{bekka2020unitary}.

\subsection*{Relative traces}

Let $N$ be a normal subgroup of the group $G$.   The group $G$ admits a left action on the set of traces  $\tracesl{N}$ via  conjugation $\varphi\mapsto \varphi^g$ where 
\begin{equation}
 \varphi^g(n)=\varphi(n^g ) \quad    \forall g \in G, \, n \in N.
\end{equation}
The conjugation action is by affine homeomorphisms. It preserves the partial order relation of dominated traces.

Consider the compact   convex subset of $\Tr{N}$ consisting of traces on $N$ fixed by    the conjugation action of $G$, namely
\begin{equation}
\tracerel{G}{N}=\{\varphi\in\traces{N}\, : \, \varphi^g=\varphi \quad \forall g \in G \}.
\end{equation}
The members of $\tracerel{G}{N}$ are called \emph{relative traces}. For every trace $\varphi \in \traces{G}$ the    restriction $\varphi_{|N}$    is a relative trace in $\tracerel{G}{N}$. The Fourier transform of the restriction $\varphi_{|N} \in \Tr{N}$ determines  a $G$-invariant Borel probability measure $\mu_\varphi \in \mathrm{Prob} (\chars{N})$ such that $\varphi_{|N} = \int \psi \, \mathrm{d} \mu_\varphi(\psi)$. The probability measure $\mu_\varphi$ is ergodic if and only if the trace $\varphi$ is a character.



\begin{lemma}\label{lem:subtraces-projections}
Let $(M,\pi,\tau)$ be a trace representation of the group $G$   corresponding to the trace $\varphi=\tau\circ\pi$.
Let $N$ be a normal subgroup of $G$. Consider the relative trace $\psi=\varphi_{|N} \in \relTr{G}{N}$.  Then
\begin{enumerate}
    \item  The von Neumann subalgebra $Q =\pi(N)''\leq M$ is $G$-invariant.
    \item $(Q,\pi_{|N},\tau_{|Q})$ is the trace representation corresponding to the trace $\psi$.
    \item The mapping 
        \[
            T \mapsto \psi_T = \tau\circ \Ad_{T^{1/2}}\circ\pi_{|N}
        \] 
        is a $G$-equivariant order-preserving bijection from the set of    central elements $T \in \mathrm{Z}(Q)$ satisfying $0\leq T\leq 1_Q$  to the set of traces   in $ \tracesl N$ dominated by the relative trace $\psi$. 
    \item The central element  $T$ is a projection if and only if the two traces $\psi_T$ and $\psi_{1_Q-T}=\psi-\psi_T$ are disjoint.
\end{enumerate}
\end{lemma}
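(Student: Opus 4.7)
My plan is to reduce everything to the Bekka--Harpe correspondence recalled near Equation \eqref{eq:phi_T definition}, applied to the trace representation of $N$ built in part (2), and then to handle (4) by exploiting commutativity inside $\mathrm{Z}(Q)$.

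For (1) I would observe that normality of $N$ in $G$ yields $\pi(g)\pi(N)\pi(g)^{-1} = \pi(gNg^{-1}) = \pi(N)$, and that $\Ad_{\pi(g)}$ is a $\ast$-automorphism of $M$ which commutes with the double commutant operation, so $\Ad_{\pi(g)}(Q) = Q$. For (2) I would note that $\pi_{|N}$ is a unitary representation of $N$ generating $Q$ by construction, and that the restriction to the von Neumann subalgebra $Q$ of a normal faithful trace remains normal and faithful; the associated trace is $\tau_{|Q} \circ \pi_{|N} = \varphi_{|N} = \psi$.

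For (3) the existence of the bijection is a direct consequence of the Bekka--Harpe statement applied to $(Q,\pi_{|N},\tau_{|Q})$. The novelty is the $G$-equivariance and the order-preservation, both of which I expect to be clean verifications. Order-preservation in one direction is just $\psi_T - \psi_S = \psi_{T-S} \in \tracesl{N}$ whenever $S \le T$; the converse follows by applying the bijection to the dominated trace $\psi_T - \psi_S$ and invoking uniqueness. For $G$-equivariance, centrality of $T$ in $Q$ forces $T^{1/2}$ to be central in $Q$, so $\psi_T(n) = \tau(T\pi(n))$ for every $n \in N$, and a short computation using the trace identity gives
\begin{equation*}
(\psi_T)^g(n) \;=\; \tau\bigl(T\pi(g^{-1}ng)\bigr) \;=\; \tau\bigl(\pi(g)T\pi(g^{-1})\pi(n)\bigr) \;=\; \psi_{\Ad_{\pi(g)}(T)}(n),
\end{equation*}
which is exactly the compatibility with the $G$-action from (1).

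For (4) I would first record the identity $\psi_T + \psi_{1_Q - T} = \psi$ and then, using the order-preserving bijection from (3), translate the disjointness of $\psi_T$ and $\psi_{1_Q - T}$ into the purely algebraic condition that $S = 0$ is the only central element with $0 \le S \le T$ and $S \le 1_Q - T$. If $T$ is a projection then $T(1_Q - T) = 0$; multiplying $T - S \ge 0$ by the commuting positive element $1_Q - T$ forces $(1_Q - T)S = 0$, so $S = TS$, and a symmetric step with $1_Q - T - S \ge 0$ multiplied by $T$ yields $TS = 0$, hence $S = 0$. Conversely, if $T$ is not a projection I would produce the nonzero common lower bound $S := T(1_Q - T)$, verifying the inequalities via $T - S = T^2 \ge 0$ and $1_Q - T - S = (1_Q - T)^2 \ge 0$, and using faithfulness of $\tau$ to get $\psi_S(e) = \tau(S) > 0$. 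I do not anticipate a serious obstacle in any of this; the main point to watch is to keep all commutation arguments inside $Q$ (where $T$ is central) rather than $M$ (where in general it is not).
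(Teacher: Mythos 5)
Your proposal is correct and follows essentially the same route as the paper: parts (1)--(2) are immediate, (3) is the Bekka--de la Harpe correspondence applied to $(Q,\pi_{|N},\tau_{|Q})$ together with the same trace computation $\psi_{T}^{\,g}=\psi_{T^g}$, and (4) reduces to the algebraic fact that $T$ is a projection precisely when $T(1_Q-T)=0$. Your treatment of (4), with the explicit common lower bound $S=T(1_Q-T)$ and the commutative manipulations inside $\mathrm{Z}(Q)$, merely fills in details the paper leaves implicit behind its assertion that disjointness of $\psi_{T_1},\psi_{T_2}$ corresponds to $T_1T_2=0$.
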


Throughout   Lemma \ref{lem:subtraces-projections} we regard the von Neumann algebra $M$ with the $G$-action given by the adjoint representation defined in   Equation (\ref{eq:adjoint}). In other words   $x^g  = \Ad_{\pi(g)}x$ for all $x \in M$ and all elements $g \in G$.

\begin{proof}[Proof of Lemma \ref{lem:subtraces-projections}]
Statements (1) and (2) are immediate. The fact that the mapping  $T\mapsto \psi_T$ is  a   bijection between the two sets in question has been discussed after Equation (\ref{eq:adjoint}).
 This mapping is $G$-equivariant as
\begin{align}  \begin{split}
    \varphi_{T^g}(n)&=(\tau\circ\mathrm{Ad}_{(T^g)^{\sfrac{1}{2}}} \circ \pi)(n) =
    \tau(T^g \pi(n)) = 
    \tau\left(\pi(g)T\pi(g)^{-1}\pi(n)\right) 
    = \\
    &= \tau\left(T\pi(g^{-1}ng)\right) = (\tau \circ \mathrm{Ad}_{T^{\sfrac{1}{2}}} \circ \pi)(n^g) = 
     \psi_T(n^g)=\psi_T^g(n)
    \end{split}
\end{align}
 holds true for all elements $g\in G$ and $n\in N$.  This gives statement  (3). 

For statement (4) observe that the bijection $T \leftrightarrow \psi_T$ is additive in the sense that  $\psi_{T_1+T_2}=\psi_{T_1}+\psi_{T_2}$ holds true for any pair of central elements $T_1, T_2 \in \mathrm{Z}(Q)$. Moreover $\psi_{1_Q}=\psi$ and  $\psi_{0_Q}=0$. With this in mind  it is clear that the bijection $T \leftrightarrow \psi_T$   is order-preserving. Therefore  the two traces $\psi_{T_1}$ and $\psi_{T_2}$ are disjoint if and only if the two elements $T_1$ and $T_2$ are disjoint in the sense that $T_1  T_2=0$.  It follows that  the two traces $\psi_T$ and $\psi_{I-T}$ are disjoint   if and only if $T(I-T)=0$. This last condition is equivalent to $T$ being a projection. 
\end{proof}

\section{Induction of characters}
\label{sec:induced}


 We discuss induction of traces and characters.  Let $G$ be a discrete group. For the purpose of the following discussion  fix a subgroup $H$ of $G$.  

\begin{definition*}\label{def:induction}
A trace $\varphi $ on the subgroup $H$ is  \emph{almost $G$-invariant} if
the subgroup 
\begin{equation}
\label{eq:G phi}
    G_ \varphi =\left\{ g\in \normalizer{G}{H}:\varphi^{g}=\varphi\right\}
\end{equation}
has finite index in  $G$.
\end{definition*}

Almost $G$-invariant traces can be induced from the subgroup $H$ to the group $G$. Let us explain how this is done.
The \emph{trivial extension} of any function $\varphi : H \to \CC$ is the function $\widetilde{\varphi} : G \to \CC$ defined by
\begin{equation}
\label{eq:extended trace}
 \widetilde{\varphi}(g) = 
 \begin{cases} \varphi(g) & g \in H \\ 0 & g \notin H.
 \end{cases}
\end{equation}
If $\varphi$ is a positive definite function on the subgroup $H$ then $\widetilde{\varphi}$ is a positive definite function on the group $G$, see e.g. \cite[Proposition 1.F.10]{bekka2020unitary}. In particular if the subgroup $H$ is   normal  and $\varphi \in \tracerel{G}{H}$ then $\widetilde{\varphi} \in \Tr{G}$. The general induction procedure is  as follows. A very similar definition of induced characters is studied in \cite{kaniuth2006induced}.






 \begin{definition*}
 Let $\varphi \in \Tr{H}$ be an almost $G$-invariant trace on the subgroup $H$. The \emph{induced trace} $\Ind{G}{H}{\varphi} \in \traces{G}$ is given by
\begin{equation}
\label{eq:induced trace}
\Ind{G}{H}{\varphi}=\frac{1}{[G:G_\varphi ]}\sum_{g \in G/G_\varphi } \suptilde{\varphi}{g}.
\end{equation}
\end{definition*}

Note that $\varphi \mapsto \varphi^g$ defines a left action.
 The expression on the right-hand side of   Equation (\ref{eq:induced trace}) is independent of the particular choice of the coset representatives for $G/G_\varphi$. The formula in  Equation (\ref{eq:induced trace}) continues to hold true if the subgroup $G_\varphi$ is replaced by any other finite  index subgroup $L$ of $G$ contained in  $   G_\varphi$. 

The function $\Ind{G}{H}{\varphi}$ is normalized and positive definite since it is a convex combination of finitely many normalized positive definite
functions of the form $\widetilde{\varphi}^g$. This function is  conjugation-invariant as 
\begin{equation}
\left(\Ind{G}{H}{\varphi}\right)^{g'}=\frac{1}{[G:G_\varphi]}\sum_{g\in G/G_\varphi}(\suptilde{\varphi}{g}){\Large\mathstrut}^{g'}=\frac{1}{[G:G_\varphi]}\sum_{g\in G/G_\varphi} \suptilde{\varphi}{g'g} =\Ind GH\varphi
\end{equation}
for any element $g' \in G$. We conclude that $\Ind{G}{H}{\varphi}$ is indeed a trace on $G$.

This notion of induced traces generalizes the following situations:
\begin{itemize}
    \item  If $H$ is a normal subgroup of $G$ and $\varphi \in \relTr{G}{H}$ is a  relative trace,  then $\Ind{G}{H} \varphi = \widetilde{\varphi}$.
    \item  If $H$ is a finite index subgroup of $G$ then any trace  $\varphi \in \traces{H}$ is almost $G$-invariant  and
\begin{equation*}
 \Ind{G}{H} \varphi = \frac{1}{\left[G:H\right]} \sum_{g \in G/H} \suptilde{\varphi}{g}.
\end{equation*}
    \item Generalizing the two previous examples, if $\left[G:\mathrm{N}_G(H) \right] < \infty$ (i.e. $H$ is an almost normal  subgroup of $G$)  then any trace  $\varphi \in \relTr{\mathrm{N}_G(H)}{H}$ is almost $G$-invariant and 
\begin{equation*}
    \Ind{G}{H} \varphi = \frac{1}{\left[G:\mathrm{N}_G(H)\right]} \sum_{g \in G/\mathrm{N}_G(H)} \suptilde{\varphi}{g}.
\end{equation*}
\end{itemize}
 
The induced trace has several very useful properties.

\begin{lemma}[Induction in stages]
\label{lem:induction in stages}
Let $H$ and $L$ be a pair of subgroups of $G$ satisfying $H \le L \le G$ and $\left[G:L\right] < \infty$. Let $\varphi\in\Tr H$ be an almost $G$-invariant trace. Then 
\begin{equation}
\label{eq:induced in stages}
\Ind{G}{L}{\, \Ind {L}{H}\varphi}=\Ind{G}{H}{\varphi}.
\end{equation}
\end{lemma}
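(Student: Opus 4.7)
The plan is to carry out a direct computation, exploiting the freedom in the induction formula to replace $G_\varphi$ by any finite index subgroup contained in it. The natural common refinement is $K = G_\varphi \cap L$, which has finite index in $L$ (hence also in $G$) because $[G:G_\varphi]$ and $[G:L]$ are both finite. Since $K \le G_\varphi \le N_G(H)$ and $\varphi^k = \varphi$ for $k \in K$, the subgroup $K$ is a legitimate choice for computing the induction from $H$ to $L$. Moreover any trace on $L$ is automatically $L$-invariant under conjugation, so $L \le G_{\psi}$ where $\psi = \Ind{L}{H}\varphi$, meaning we may take $L$ itself as the finite index subgroup in the formula for $\Ind{G}{L}\psi$.

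First I would write
\begin{equation*}
\psi = \Ind{L}{H}\varphi = \frac{1}{[L:K]}\sum_{l \in L/K} \suptilde{\varphi}{l}_L
\quad\text{and}\quad
\Ind{G}{L}\psi = \frac{1}{[G:L]}\sum_{g \in G/L} \suptilde{\psi}{g}_G,
\end{equation*}
where $\suptilde{\varphi}{}_L$ and $\suptilde{\psi}{}_G$ denote the respective trivial extensions.  The crucial compatibility step is to observe that for any $l \in L$, the trivial extension to $G$ of the conjugate function $(\suptilde{\varphi}{}_L)^l$ on $L$ coincides with $(\suptilde{\varphi}{}_G)^l$. Indeed both sides vanish off $L$ (for $x \notin L$ and $l \in L$ one has $l^{-1}xl \notin L \supseteq H$, so $(\suptilde{\varphi}{}_G)^l(x) = 0$), and on $L$ both equal $\varphi(l^{-1}xl)$ when $l^{-1}xl \in H$ and zero otherwise. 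Consequently
\begin{equation*}
\suptilde{\psi}{}_G = \frac{1}{[L:K]}\sum_{l \in L/K}(\suptilde{\varphi}{}_G)^l.
\end{equation*}

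Next I would conjugate this identity by $g$ and use that $\varphi \mapsto \varphi^g$ is a left action, giving $((\suptilde{\varphi}{}_G)^l)^g = (\suptilde{\varphi}{}_G)^{gl}$. Substituting into the formula for $\Ind{G}{L}\psi$ yields the double sum
\begin{equation*}
\Ind{G}{L}\psi = \frac{1}{[G:L][L:K]}\sum_{g \in G/L}\sum_{l \in L/K}(\suptilde{\varphi}{}_G)^{gl}.
\end{equation*}
Finally I would invoke the standard fact that when $g$ ranges over coset representatives of $G/L$ and $l$ over coset representatives of $L/K$, the products $gl$ range over a full set of representatives of $G/K$. This collapses the double sum into $\sum_{x \in G/K}(\suptilde{\varphi}{}_G)^x$ with prefactor $1/[G:K]$, which is exactly $\Ind{G}{H}\varphi$ computed using the finite index subgroup $K \le G_\varphi$.

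I expect the main obstacle to be essentially bookkeeping: tracking three distinct trivial extensions (from $H$ to $L$, from $H$ to $G$, from $L$ to $G$) and verifying the compatibility $\widetilde{(\suptilde{\varphi}{}_L)^l}_G = (\suptilde{\varphi}{}_G)^l$ carefully, together with being consistent about whether $(\varphi^g)^h$ equals $\varphi^{gh}$ or $\varphi^{hg}$ given that conjugation acts on the right on $G$ but on the left on traces.
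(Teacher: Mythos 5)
Your proof is correct and is essentially the paper's argument: the paper likewise expands the double induction using $L_\varphi$ (which is exactly your $K = G_\varphi \cap L$), uses the compatibility of trivial extension with conjugation, and collapses the double sum over $G/L$ and $L/L_\varphi$ into a single sum over $G/L_\varphi$. Your write-up is just a more explicit version of the same computation, spelling out the trivial-extension compatibility and the left-action convention that the paper leaves implicit.
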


\begin{proof}
To begin with note that $\varphi$ is an almost $L$-invariant trace and $\Ind{L}{H} \varphi$ is an almost $G$-invariant trace so that the left-hand side of Equation (\ref{eq:induced in stages}) is well defined. Write
\begin{align}
    \begin{split}
\Ind{G}{L}{\left(\Ind{L}{H}{\varphi}\right)} &=
\frac{1}{[G:L]}\sum_{g\in G/L}\left(\widetilde{\Ind{L}{H}{\varphi}}\right)^{g}= \\
&= 
\frac{1}{\left[G:L\right]}\frac{1}{[L:L_\varphi]}\sum_{g\in G/L}\sum_{l\in L/L_\varphi}\suptilde{\varphi}{gl}=\\
&= \frac{1}{[G:L_\varphi]}\sum_{h\in G/L_\varphi}\suptilde{\varphi}{h}=\Ind{G}{H}{\varphi}.
\qedhere
\end{split}
\end{align}
\end{proof}

We remark that Lemma \ref{lem:induction in stages} remains true with a similar proof, if the assumption that the subgroup $L$ has finite index in $G$   is replaced by the more general assumption   that the trivial extension of $\varphi$ to $L$ is almost $G$-invariant. We will not have occasion to use   this more general statement. 

\begin{lemma}[Continuity of induction]\label{lem:continuity of induction}
Let $\varphi_n \in \traces{H}$ be a sequence of traces on the subgroup $H$ converging to the trace $\varphi \in \traces{H}$ in the pointwise topology. Assume that the subgroup
\begin{equation}
    L = \bigcap_{n\in \NN} G_{\varphi_n}
\end{equation} 
has finite index in $G$.  Then the sequence of traces $\Ind{G}{H}{\varphi_n}$ converges to $\Ind{G}{H}{\varphi}$ in the pointwise topology.
\end{lemma}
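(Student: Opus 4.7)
The core observation is that the hypothesis provides a single finite index subgroup $L \le G$ fixing \emph{every} $\varphi_n$ under conjugation. Using this common subgroup $L$ in the induction formula rather than the $\varphi_n$-dependent subgroups $G_{\varphi_n}$ turns the induced traces $\Ind{G}{H}{\varphi_n}$ into uniform finite averages, after which pointwise continuity is immediate.

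First I would verify that $L$ is also contained in $G_\varphi$, so that the limit trace $\varphi$ is itself almost $G$-invariant. Indeed, each inclusion $L \subseteq G_{\varphi_n} \subseteq \normalizer{G}{H}$ implies $L \subseteq \normalizer{G}{H}$, and for every $g \in L$ and $h \in H$ the identity $\varphi_n(h^g) = \varphi_n(h)$ passes to the pointwise limit to give $\varphi(h^g) = \varphi(h)$, hence $g \in G_\varphi$. Invoking the remark following Equation \eqref{eq:induced trace} (which allows $G_\varphi$ to be replaced by any finite index subgroup of $G$ contained in $G_\varphi$), I may therefore write
\begin{equation*}
\Ind{G}{H}{\varphi_n} = \frac{1}{[G:L]}\sum_{g \in G/L} \suptilde{\varphi_n}{g}
\qquad \text{and} \qquad
\Ind{G}{H}{\varphi} = \frac{1}{[G:L]}\sum_{g \in G/L} \suptilde{\varphi}{g},
\end{equation*}
using a single fixed set of coset representatives for $G/L$.

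Next I would note that pointwise convergence $\varphi_n \to \varphi$ on $H$ immediately yields pointwise convergence $\widetilde{\varphi_n} \to \widetilde{\varphi}$ on $G$, since both families vanish identically on the complement $G \setminus H$. Since $L \subseteq \normalizer{G}{H}$, the conjugated functions $\suptilde{\varphi_n}{g}$ and $\suptilde{\varphi}{g}$ are supported on $H$ for each coset representative $g$, and the identity $\suptilde{\varphi_n}{g}(x) = \widetilde{\varphi_n}(x^g)$ shows that $\suptilde{\varphi_n}{g} \to \suptilde{\varphi}{g}$ pointwise on $G$ as well.

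The conclusion then follows by taking the pointwise limit of the finite average: for every fixed $x \in G$, summing the convergent scalar sequences $\suptilde{\varphi_n}{g}(x) \to \suptilde{\varphi}{g}(x)$ over the finite index set $G/L$ yields $\Ind{G}{H}{\varphi_n}(x) \to \Ind{G}{H}{\varphi}(x)$. There is no real obstacle once the common subgroup $L$ is available; the only subtle point, which is precisely what the hypothesis on $L = \bigcap_n G_{\varphi_n}$ secures, is that the finiteness of the averaging index is uniform in $n$, so that no issues of interchanging limits with infinite sums arise.
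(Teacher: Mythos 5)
Your proof is correct and follows essentially the same route as the paper's: rewrite both $\Ind{G}{H}{\varphi_n}$ and $\Ind{G}{H}{\varphi}$ as averages over the fixed finite-index subgroup $L$ (justified by the remark after the definition of induced traces) and pass to the pointwise limit of a finite sum. Your explicit verification that $L \subseteq G_\varphi$, so that $\varphi$ is itself almost $G$-invariant, is a detail the paper leaves implicit, but the argument is the same.
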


\begin{proof}
The assumption implies that the traces $\varphi_n$ as well as the trace $\varphi$ are all almost $G$-invariant. It is clear that the sequence of positive definite functions $\widetilde{\varphi}_n^{g}$ converges to the positive definite function $\suptilde{\varphi}{g}$ for any fixed element $g\in G$. Therefore
\begin{equation}
    \Ind{G}{H}{\varphi_n}= \frac{1}{[G:L]}\sum_{g\in G/L} \widetilde{\varphi}_n^{g} \xrightarrow[n\to \infty]{} \frac{1}{[G:L]}\sum_{g\in G/L} \widetilde{\varphi}^{g}=\Ind{G}{H}{\varphi}
\end{equation}
in the pointwise topology.
\end{proof}

\subsection*{Mackey theory}
We introduce the following Mackey-type criterion which helps to identify when a given character is induced from a finite index subgroup.  
\begin{theorem}
\label{thm:Mackey intro}
 Let $N$ be a normal subgroup of the discrete group $G$. Let $\varphi$ be a character of $G$. Assume that
 \begin{equation}
\varphi_{|N} = \frac{1}{n} (\psi_1 + \cdots + \psi_n)
 \end{equation}
 for some $n \in \NN$ and some family of pairwise disjoint traces $\psi_i$ of the group $N$ permuted transitively by the $G$-action. Then each trace $\psi_i$ can be extended to a character $\varphi_i$ of the isotropy group $G_i = \mathrm{stab}_G (\psi_i)$ so that
 \begin{equation}\varphi = \Ind{G}{G_i}{\varphi_i}.
 \end{equation}
 \end{theorem}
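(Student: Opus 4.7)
The plan is to use Thoma's correspondence to translate the statement into the language of factors. Let $(M,\pi,\tau)$ be the trace representation of $\varphi$ from Theorem \ref{thm:thoma correspondence }, so that $M$ is a factor. Setting $\psi = \varphi_{|N}$ and $Q = \pi(N)''$, the triple $(Q, \pi_{|N}, \tau_{|Q})$ is the trace representation of $\psi$ by Lemma \ref{lem:subtraces-projections}. The assumed decomposition $\psi = \frac{1}{n}\sum_i \psi_i$ into pairwise disjoint summands, via the $G$-equivariant order-preserving bijection in parts (3)-(4) of that lemma, corresponds to pairwise orthogonal central projections $T_1,\ldots,T_n \in \mathrm{Z}(Q)$ with $\sum T_i = 1_Q$ and $\tau(T_i \pi(n')) = \frac{1}{n}\psi_i(n')$ for every $n' \in N$. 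Transitivity of the $G$-action on the $\psi_i$ translates under the bijection to a transitive conjugation action on the $T_i$, so that $\mathrm{stab}_G(T_i) = G_i$, $[G:G_i] = n$, and $\tau(T_i) = 1/n$.

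Without loss of generality I will extend $\psi_1$ to $G_1$; the extensions for the remaining $\psi_i$ are obtained by conjugating through a set of coset representatives $\{g'_i\}$ for $G/G_1$. For $g \in G_1$ the unitary $\pi(g)$ commutes with $T_1$, so the formula $\pi_1(g) := T_1 \pi(g)$ defines a unitary representation into the corner algebra $M_1 := T_1 M T_1$ (with unit $T_1$). Equipping $M_1$ with the rescaled trace $\tau_1 := n \cdot \tau_{|M_1}$ and setting $\varphi_1 := \tau_1 \circ \pi_1$, a short calculation using $T_1 \in \mathrm{Z}(Q)$ yields $\varphi_1(n') = n \tau(T_1 \pi(n')) = \psi_1(n')$ for every $n' \in N \subseteq G_1$, so $\varphi_1$ extends $\psi_1$.

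To establish the induction formula $\varphi = \Ind{G}{G_1}{\varphi_1}$, expand $\varphi(g) = \tau(\pi(g)) = \sum_i \tau(T_i \pi(g))$ and use trace cyclicity together with $T_i = \pi(g'_i) T_1 \pi(g'_i)^{-1}$ to rewrite each summand as $\tau(T_1 \pi(g^{g'_i}))$. The crux is the vanishing $\tau(T_1 \pi(h)) = 0$ whenever $h \notin G_1$, which I will derive from
\begin{equation*}
\tau(T_1 \pi(h)) \;=\; \tau(T_1 \pi(h) T_1) \;=\; \tau(T_1 T_1^{h^{-1}} \pi(h)),
\end{equation*}
together with the observation that $h^{-1}$ sends $T_1$ to some $T_j$ with $j \neq 1$, so that $T_1 T_1^{h^{-1}} = T_1 T_j = 0$. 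Combined with $\tau(T_1 \pi(h)) = \frac{1}{n}\varphi_1(h)$ for $h \in G_1$, this gives $\tau(T_1 \pi(h)) = \frac{1}{n}\widetilde{\varphi_1}(h)$ for every $h \in G$, and summing over $i$ produces the Mackey formula.

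The main obstacle is verifying that $\varphi_1$ is genuinely a character, i.e. that $(M_1, \pi_1, \tau_1)$ is a trace representation and that $M_1$ is a factor. Factorness of $T_1 M T_1$ is the standard fact that a compression of a factor by a nonzero projection remains a factor, and faithfulness of $\tau_1$ is immediate from faithfulness of $\tau$. The remaining generation condition $\pi_1(G_1)'' = M_1$ is the most delicate point; my plan is to approximate an arbitrary $m \in M = \pi(G)''$ in the strong-operator topology by finite linear combinations $\sum c_g \pi(g)$, and then observe that compressing by $T_1$ annihilates every term with $g \notin G_1$ by exactly the vanishing above, leaving $T_1 m T_1$ inside the strong-operator closure of $\pi_1(G_1)$. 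It is here that the transitivity hypothesis on the $\psi_i$ is truly used, since it supplies the unit decomposition $\sum T_i = 1$ whose equivalent pieces, joined by the unitaries $\pi(g'_i)$, reconstruct all of $M$ from $M_1$.
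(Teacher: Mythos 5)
Your proposal is correct, and your construction of the extension coincides with the paper's: the corner $M_1 = T_1 M T_1$, the compressed representation $\pi_1 = \Ad_{T_1}\circ\pi$, the rescaled trace $\tau_1 = n\,\tau$ (using $\tau(T_1)=\tfrac1n$), and the verification that $\varphi_1$ restricts to $\psi_1$ are exactly the steps in the paper. Where you genuinely diverge is in the two remaining points, and in both your route is more elementary. For the identity $\varphi = \Ind{G}{G_1}{\varphi_1}$ the paper embeds $M$ into the matrix algebra $\mathcal{M}_n(M_1)$ via $f(x)_{ij} = p_1\pi(t_i^{-1})x\pi(t_j)p_1$ and invokes uniqueness of the normal faithful trace on a finite factor to conclude $\tau = \widehat{\tau}\circ f$; you instead expand $\tau(\pi(g)) = \sum_i\tau(T_i\pi(g))$, move each $T_i$ to $T_1$ by traciality, and kill the terms with $t_i^{-1}gt_i\notin G_1$ using $T_1\pi(h)T_1=0$, arriving directly at the same final formula as in Equation (\ref{eq:complicated}) without the matrix embedding or the uniqueness-of-trace argument. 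For factoriality and generation of $M_1$ the paper cites Dixmier, while you prove $\pi_1(G_1)''=M_1$ by SOT-approximating elements of $M=\pi(G)''$ by elements of $\mathrm{span}\,\pi(G)$ and compressing, which works because $x\mapsto T_1xT_1$ is SOT-continuous and the off-$G_1$ terms again vanish. Two minor remarks: your superscript in $T_1T_1^{h^{-1}}$ uses the opposite convention to the paper's $x^g=\Ad_{\pi(g)}x$, but the substantive point, namely that $\pi(h)T_1\pi(h)^{-1}$ equals some $T_j$ with $j\neq 1$ when $h\notin G_1$, is correct; and transitivity is not what supplies the decomposition $\sum_i T_i = 1_Q$ (that comes from the hypothesis on $\varphi_{|N}$ via Lemma \ref{lem:subtraces-projections}) --- rather it guarantees that $G$ maps $T_1$ into the orthogonal family $\{T_j\}$ and that $[G:G_1]=n$, which is what matches the $\tfrac1n$ normalization in the definition of the induced trace.
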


\begin{proof}
 Let $\varphi \in \chars{G}$ be a character with a corresponding trace representation $(M,\pi,\tau)$. Here $M$ is a finite factor von Neumann algebra, $\pi$ is a representation of the group $G$ into the group $\mathcal{U}(M)$ of unitaries of  $M$ and $\tau$ is a normal faithful trace on $M$ satisfying   $\varphi = \tau \circ \pi$.
 
Let $N$ be a normal subgroup of the group $G$. Assume that  the restriction $\varphi_{|N}$ of the character $\varphi$ can be written as
 \begin{equation}
\varphi_{|N} = \frac{1}{n} (\psi_1 + \cdots + \psi_n)
 \end{equation}
 for some family of \emph{pairwise disjoint} traces $\psi_i$ of the group $N$ that are permuted transitively by the natural $G$-action. Denote $Q = \pi(N)''$ so that $Q$ is a von Neumann subalgebra of $M$ with $1_M = 1_Q \in Q$. Moreover denote $G_1 = \mathrm{stab}_G(\psi_1)$. Let $t_i \in G$ be a choice of coset representatives for $G_1$ in $G$ so that $\psi_1^{t_i} = \psi_i$ for all $i$. We may assume without loss of generality that $t_1 = e_G$.
  
There exist   central  orthogonal projections $p_{1},...,p_{n}\in \mathrm{Z}(Q)$
satisfying 
\begin{equation}
\sum_{i=1}^n p_{i}=1_{Q}=1_{M} \quad \text{and} \quad
p_i p_j = 0 \quad \forall i \neq j
\end{equation}
as well as
\begin{equation}
\frac{1}{n} \psi_i=(\varphi_{| N})_{p_i} = \tau\circ \Ad_{p_i} \circ \pi
\end{equation}
as traces on the subgroup $N$ for all $i \in \{1,\ldots,n\}$, see  Lemma \ref{lem:subtraces-projections}. 
Since the   correspondence $p_i \leftrightarrow \psi_i$ is $G$-equivariant we have that $G_1 = \mathrm{stab}_{G}( p_1)$ and that  the group $G$  permutes the central projections $p_1,\ldots,p_n$ transitively via the adjoint action.  In particular    $\tau(p_i)=\tau(p_1^{t_i})=\tau(p_1)$ so that $\tau(p_i)=\frac{1}{n}$ for all $i \in \{1,\ldots,n\}$.

We   construct a trace representation $(M_1, \pi_1, \tau_1)$ of the finite index subgroup $G_1$ as follows. Take
\begin{equation}
M_{1}=p_{1} M p_{1}, \quad  \pi_{1}=\Ad{}_{p_{1}}\circ\pi \quad \text{and} \quad 
\tau_{1}=\frac{1}{\tau(p_1)} \tau\circ\Ad_{p_{1}}.
\end{equation}
Note that
\begin{equation}
\Ad{}_{p_1}(\pi(G)) = p_1\pi(G)p_1=p_1\pi(G_1)p_1= \pi_1(G_1).
\end{equation}
The von Neumann subalgebra $M_1$ is a factor and $M_1$ is generated by $\pi_1(G_1)$    \cite[Chapter 1 \S2]{dixmier2011neumann}. We conclude that $(M_1, \pi_1, \tau_1)$ is   a genuine trace representation of the subgroup $G_1$.
Consider the character $\varphi_1 = \tau_1 \circ \pi_1$ of the subgroup $G_1$. It extends the trace $\psi_1$ in the sense that
\begin{equation}    \varphi_1(n)=\frac{\tau(p_1\pi(n)p_1)}{\tau(p_1)} =\frac{(\varphi_{|N})_{p_1}(n)}{\tau(p_1)}  =\psi_1(n)
\end{equation}
holds true for all elements $n \in N$.

To conclude the proof   it remains to show that $\varphi = \Ind{G}{G_1}{\varphi_1}$. With this goal in mind consider the von Neumann algebra $\mathcal{M}_{n}(M_{1})$ of all $n$-by-$n$ matrices with entries in $M_1$. This von Neumann algebra admits the  normal faithful trace
\begin{equation}
\widehat{\tau}:x\mapsto\frac{1}{n}\sum_{i=1}^{n}\tau_{1}(x_{ii})\quad  \forall x = (x_{ij}) \in \mathcal{M}_{n}(M_{1}).
\end{equation}
On the other hand, the  map $f:M\to\mathcal{M}_{n}(M_{1})$
given by
\begin{equation}
f(x)_{ij}=p_{1}\pi(t_{i}^{-1})x\pi(t_{j})p_{1} \quad \forall x \in M, \; \forall i,j \in \{1,\ldots,n\}
\end{equation}
 is a normal unital $*$-homomorphism. Since the von Neumann algebra $M$ is a factor it follows that $f$ must be an embedding. Therefore  $\widehat{\tau} \circ f$ is a faithful normal trace on the von Neumann algebra $M$. The uniqueness of traces on finite factors implies that $\tau =\widehat{\tau} \circ f$. 
 Write
\begin{align}
\begin{split}
\label{eq:complicated}
    \varphi(g)&=\tau\circ\pi(g)=\hat{\tau}\circ f\circ\pi(g)=\frac{1}{n}\sum_{i=1}^{n}\tau_{1}\left(p_{1}\pi(t_{i}^{-1}gt_{i})p_{1}\right)=\\
    &=\frac{1}{n}\sum_{i=1}^{n}\begin{cases}
    \tau_{1}\circ\pi_{1}(t_{i}^{-1}gt_{i}) & g\in G_{1}^{t_{i}}\\
    0 & g\notin G_{1}^{t_{i}}
    \end{cases}=\Ind {G}{G_{1}}{\varphi_{1}}(g)
\end{split}
\end{align}
for every element $g \in G$.
The first equality on the second line of Equation (\ref{eq:complicated})    is due to the observation that
the condition $g_{0}\notin G_{1}$ implies that   $p_{1}^{g_{0}}=p_{i}$ for some central projection $p_i$ satisfying $p_1 p_i = 0$. In that case  $p_{1}\pi(g_{0})p_{1}=0$. 
\end{proof}

\subsection*{Vanishing of characters} 

The following two lemmas are instrumental towards our study of the character theory of solvable groups.

\begin{lemma}[Bekka]
\label{lemma:Bekka's other lemma}
Let $N$ and $H$ be a pair of normal subgroups of the group $G$ satisfying $N \le H$.
Let $\varphi \in \traces{G}$ be a trace vanishing on $H \setminus N$. Let $g \in G$ be any element.
		If   there is a sequence of elements  $x_n \in G$  such that the commutators  $\left[g,x_n\right] $ belong to pairwise distinct cosets of $N$  and  such that
	$\left[g,x_n\right] \in H$ for all $n \in \NN$
	then $\varphi(g) = 0 $.
\end{lemma}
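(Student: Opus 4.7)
The plan is to use the GNS/trace representation and reduce the claim to an elegant application of Bessel's inequality in the GNS Hilbert space. Let $(M,\pi,\tau)$ be the trace representation of $G$ corresponding to $\varphi$, and let $\xi$ be the canonical cyclic vector in the associated Hilbert space $\mathcal{H}$, so that $\langle \pi(h)\xi, \xi\rangle = \varphi(h)$ for every $h\in G$. Write $c_n = [g,x_n] = g^{-1}g^{x_n}\in H$, so that $g^{x_n} = g\,c_n$.

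First, I would define the unit vectors $v_n = \pi(g^{x_n})\xi\in \mathcal{H}$ and compute their pairwise inner products. By the trace property (a consequence of conjugation invariance) together with the identity $g^{x_n}g^{-x_m} = g\,c_nc_m^{-1}\,g^{-1}$, I would obtain
\begin{equation}
    \langle v_n, v_m\rangle = \varphi\bigl((g^{x_m})^{-1} g^{x_n}\bigr) = \varphi\bigl(g^{x_n}g^{-x_m}\bigr) = \varphi\bigl(c_n c_m^{-1}\bigr).
\end{equation}
For $n=m$ this equals $\varphi(e)=1$. For $n\neq m$ the element $c_n c_m^{-1}$ lies in $H$, and since $c_n,c_m$ are in distinct cosets of $N$, it lies in $H\setminus N$; by hypothesis $\varphi$ vanishes there, so the inner product is zero. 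Hence $\{v_n\}_{n\in\NN}$ is an orthonormal sequence in $\mathcal{H}$.

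Second, by conjugation invariance of $\varphi$, each vector $v_n$ has inner product with $\xi$ equal to
\begin{equation}
    \langle v_n, \xi\rangle = \varphi(g^{x_n}) = \varphi(g).
\end{equation}
Applying Bessel's inequality for the orthonormal family $\{v_n\}$ against the unit vector $\xi$ then gives
\begin{equation}
    \sum_{n=1}^{\infty} \abs{\varphi(g)}^2 = \sum_{n=1}^{\infty}\abs{\langle v_n,\xi\rangle}^2 \le \norm{\xi}^2 = 1,
\end{equation}
which forces $\varphi(g)=0$, as desired.

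I do not expect any serious obstacle here — the argument is essentially bookkeeping once one spots the right orthonormal family. The only subtlety is verifying that $g^{x_n}g^{-x_m}$ is conjugate (inside $G$) to $c_nc_m^{-1}$, which is where the commutator identity $g^{x_n}=gc_n$ is used; this, combined with the trace identity $\varphi(ab)=\varphi(ba)$, is what allows the hypothesis about $H\setminus N$ to be applied.
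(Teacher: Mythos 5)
Your argument is correct, and it is essentially the standard proof behind the citation the paper gives (Bekka's Lemma 16 / Lavi's Lemma 4.13, which the paper does not reproduce): pass to the GNS representation of $\varphi$, check that the vectors $\pi(g^{x_n})\xi$ form an orthonormal family because $\varphi$ kills $H\setminus N$, and apply Bessel's inequality against $\xi$ to force $\varphi(g)=0$. One cosmetic remark: the detour through the trace property is unnecessary, since $(g^{x_m})^{-1}g^{x_n}=(gc_m)^{-1}(gc_n)=c_m^{-1}c_n$ lies in $H\setminus N$ directly for $n\neq m$.
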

\begin{proof}
The case where the subgroup $N$ is trivial is  \cite[Lemma 16]{bekka2007operator}. See   \cite[Lemma 4.13]{lavi2020characters} for the general case.
\end{proof}

\begin{lemma}[Bekka--de la Harpe]
\label{lemma:free action on SNAG}
Let $N$ be an abelian normal subgroup of the group  $G$  with Pontryagin dual $\widehat{N}$.   
Let $\varphi \in \traces{G}$ be a trace whose restriction to $N$ corresponds to the Borel probability measure   $\mu_\varphi \in \mathrm{Prob}(\widehat{N})$ via the Fourier transform.  Then $\varphi(g) = 0$ for  every element $g \in G$ that acts $\mu_\varphi$-essentially freely on $\widehat{N}$.
\end{lemma}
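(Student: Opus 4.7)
The plan is to pass to the trace representation of $\varphi$, identify the abelian von Neumann algebra generated by $\pi(N)$ with $L^\infty(\widehat{N}, \mu_\varphi)$, and prove that the $\tau$-preserving conditional expectation of $\pi(g)$ onto this subalgebra vanishes. The desired equality $\varphi(g) = 0$ follows immediately.

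Let $(M, \pi, \tau)$ be the trace representation of $\varphi$ supplied by Theorem~\ref{thm:thoma correspondence } and set $Q = \pi(N)'' \subseteq M$. Because $N$ is abelian, $Q$ is a commutative von Neumann subalgebra of $M$, and because $N$ is normal in $G$, conjugation by $\pi(g)$ leaves $Q$ invariant. Combining the spectral theorem with Bochner's theorem yields a canonical isomorphism $(Q, \tau|_Q) \cong (L^\infty(\widehat{N}, \mu_\varphi),\, \int \cdot\, d\mu_\varphi)$ under which each $\pi(n)$ corresponds to the evaluation function $\hat{n} : \chi \mapsto \chi(n)$. Under this isomorphism the adjoint action of $\pi(g)$ on $Q$ is conjugate to the $G$-action on $\widehat{N}$ dual to conjugation in $N$ — the same action under which $\mu_\varphi$ is $G$-invariant and with respect to which $g$ is assumed to act essentially freely.

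Let $E_Q : M \to Q$ denote the unique $\tau$-preserving conditional expectation; it is a $Q$-bimodule map. Set $F := E_Q(\pi(g)) \in Q$, so that $\varphi(g) = \tau(\pi(g)) = \tau(F)$. For every $n \in N$ the identity $\pi(g)\pi(n) = \pi(gng^{-1})\pi(g)$ combined with the bimodule property of $E_Q$ gives $F \cdot \hat{n} = \widehat{gng^{-1}} \cdot F$ in $Q$. Translating into $L^\infty(\widehat{N}, \mu_\varphi)$, this is precisely
\[
F(\chi)\bigl[\chi(n) - (g^{-1}\cdot\chi)(n)\bigr] = 0 \quad \text{for } \mu_\varphi\text{-a.e. } \chi,
\]
for each fixed $n \in N$.

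Since the group $N$ is countable, these countably many null sets can be combined into a single $\mu_\varphi$-null set $B$ outside of which the above equality holds simultaneously for all $n \in N$. For $\chi \notin B$ one then has either $F(\chi) = 0$ or else $\chi$ and $g^{-1}\cdot\chi$ agree on all of $N$, i.e.\ $\chi \in \Fix{g}$. The essentially free hypothesis gives $\mu_\varphi(\Fix{g}) = 0$, so $F = 0$ $\mu_\varphi$-almost everywhere and hence $\varphi(g) = \tau(F) = 0$. The only mildly non-routine step is the $G$-equivariant identification $(Q, \tau|_Q) \cong L^\infty(\widehat{N}, \mu_\varphi)$; once that is in place the rest is a standard Radon–Nikodym / countability maneuver.
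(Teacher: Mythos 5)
Your proof is correct. Note that the paper gives no in-text argument for this lemma: it simply cites the \enquote{second case} of the proof of Theorem 12.D.1 in Bekka--de la Harpe, whose argument for the Baumslag--Solitar group is in the same spirit as yours --- pass to the GNS/trace representation, identify $\pi(N)''$ with $L^\infty(\widehat{N},\mu_\varphi)$ via Bochner and the spectral theorem, and use the covariance relation $\pi(g)\pi(n)\pi(g)^{-1}=\pi(gng^{-1})$ together with essential freeness to kill the diagonal coefficient. Your packaging of this through the unique $\tau$-preserving conditional expectation $E_Q:M\to Q$ is a clean, self-contained way to run it, and it buys a proof that visibly works for an arbitrary abelian normal subgroup rather than \enquote{verbatim by inspection} of the special case treated in the citation. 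Two small points are worth making explicit: the expectation $E_Q$ exists and is $\tau$-preserving precisely because $\tau$ is a faithful normal \emph{trace} on $M$ (this is where conjugation-invariance of $\varphi$ is used), and the upgrade from \enquote{for each fixed $n$} to \enquote{for all $n$ simultaneously outside one null set} uses countability of $N$, which is legitimate since the paper's standing assumption is that $G$ is countable. Finally, your identity yields $\chi\in\mathrm{Fix}(g^{-1})$ with the paper's convention for the dual action, but since $\mathrm{Fix}(g^{-1})=\mathrm{Fix}(g)$ in $\widehat{N}$ the essential-freeness hypothesis applies regardless of the sign convention, so the conclusion $F=0$ $\mu_\varphi$-a.e.\ and hence $\varphi(g)=\tau(F)=0$ stands.
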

\begin{proof}
This statement is   part of the  \enquote{second case} of the proof of \cite[Theorem 12.D.1]{bekka2020unitary}. It  deals specifically with the Baumslag--Solitar group $G = \ZZ \ltimes \ZZ\left[1/n\right]$ and its normal subgroup $N = \ZZ\left[1/n\right]$. However the same argument applies verbatim in the general setting.
\end{proof}

\section{Characters of metabelian groups}

We study the character theory of metabelian groups and show that all of their characters   are induced from   abelian subquotients. 

\begin{theorem}
\label{thm:characters of metabelian groups}
Let $G$ be a metabelian group admitting  an   abelian normal subgroup $N$ such that $G / N$ is abelian. Then any character of the group $G$ is induced from a trace on the abelianization $H^{\mathrm{ab}}$ of some subgroup $H$ satisfying  $N \le H\le G$.
\end{theorem}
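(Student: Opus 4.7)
The plan is to restrict $\varphi$ to $N$ and analyse the associated Borel probability measure $\mu=\mu_\varphi$ on $\widehat N$, which is $G$-ergodic since $\varphi$ is a character. I would split into two cases according to whether $\mu$ is atomic or not, and construct the subgroup $H$ differently in each.

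If $\mu$ is atomic then ergodicity places it on a single finite $G$-orbit $\{\chi_1,\dots,\chi_n\}$ of distinct (hence pairwise disjoint, since $N$ is abelian) characters of $N$, so Theorem~\ref{thm:Mackey intro} gives $\varphi=\Ind{G}{G_1}{\varphi_1}$ where $G_1=\mathrm{stab}_G(\chi_1)$ has finite index in $G$ and $\varphi_1$ is a character of $G_1$ extending the $G_1$-invariant character $\chi:=\chi_1$. By Lemma~\ref{lem:induction in stages} it suffices to treat $\varphi_1$ on the metabelian group $G_1$, so we are reduced to the situation $\mu=\delta_\chi$ with $\chi$ $G$-invariant. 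Here I introduce the \emph{radical}
\[
R=\{g\in G : \chi([g,x])=1 \text{ for every } x\in G\}.
\]
The identity $[g_1g_2,h]=[g_1,h]^{g_2}[g_2,h]$ combined with the $G$-invariance of $\chi$ shows that $R$ is a normal subgroup, and $[N,G]\le\ker\chi$ (another consequence of $G$-invariance) gives $N\le R$. For $g\notin R$ pick $h$ with $\chi([g,h])\neq 1$; since $[g,h]\in N$ and $\pi|_N=\chi\cdot I_M$ in the trace representation $(M,\pi,\tau)$, the identity $\pi(g)=\chi([g,h])\,\pi(h)\pi(g)\pi(h)^{-1}$ together with the conjugation invariance of $\tau$ forces $\varphi(g)=0$. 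By the very definition of $R$ one has $\chi([R,R])=1$, and since $[R,R]\le N$ this yields $\varphi|_{[R,R]}\equiv1$, so $\varphi|_R$ factors through $R^{\ab}$, is $G$-invariant, and $\varphi=\widetilde{\varphi|_R}=\Ind{G}{R}{\varphi|_R}$.

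If $\mu$ is non-atomic, I use a different $H$. Because $G/N$ is abelian, the pointwise stabilizer $\mathrm{stab}_{G/N}(\chi)$ is constant along every $G/N$-orbit, hence by $G$-ergodicity is $\mu$-essentially equal to a single subgroup $H_0\le G/N$. Let $H$ be its preimage in $G$, so $N\trianglelefteq H\trianglelefteq G$. For $g\notin H$ the fixed set $\{\chi : g\cdot\chi=\chi\}$ is a $G$-invariant measurable subset of $\widehat N$, hence $\mu$-null by ergodicity, so $g$ acts $\mu$-essentially freely and Lemma~\ref{lemma:free action on SNAG} yields $\varphi(g)=0$. To check that $\varphi|_H$ descends to $H^{\ab}$, I would use the commutator identities $[ab,c]=[a,c]^b[b,c]$ and $[a,bc]=[a,c][a,b]^c$ inside $G$, together with the observations that (i) $N$ is abelian and therefore acts trivially by conjugation on $[G,G]\le N$, and (ii) $[h,N]\subseteq K:=\{k\in N : \chi(k)=1 \text{ for $\mu$-a.e.\ }\chi\}$ for every $h\in H$, by the definition of $H$. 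These reduce the commutator $[h_1,h_2]$ modulo $K$ to a cocycle-type bracket of lifts of $h_1N,h_2N\in H_0$, which also lies in $K$ because the action of $H_0$ on $N$ is trivial modulo $K$. Hence $\varphi|_{[H,H]}\equiv1$ and $\varphi=\widetilde{\varphi|_H}=\Ind{G}{H}{\varphi|_H}$.

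The main obstacle I foresee is the final commutator bookkeeping in the non-atomic case when the extension $1\to N\to G\to G/N\to 1$ is non-split, since the 2-cocycle contribution to $[h_1,h_2]$ has to be controlled separately from the action contribution. The Heisenberg group with a faithful central character is a good test case illustrating why the two cases must genuinely be kept separate: there $R$ from the first case is precisely the centre $N$ and $\varphi$ vanishes off $N$, while the \enquote{common stabilizer} construction of the second case would wrongly give $H=G$ with $\chi([G,G])\neq1$, so the split into the two constructions is not an artefact of the proof but reflects a real dichotomy.
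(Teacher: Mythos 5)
Your atomic (point-mass) case is essentially correct, and the radical $R$ you introduce is a good direct substitute for Howe's lemma in that setting. The genuine gap is in the non-atomic case: the key claim that $[H,H]\subseteq K=N\cap\ker\varphi$ (so that $\varphi_{|H}$ factors through $H^{\ab}$) is false, and non-atomicity of $\mu$ does not rescue it. Concretely, let $G_1=H_3(\ZZ)=\langle x,y,z\mid [x,y]=z \text{ central}\rangle$ with the classical character $\varphi_1$ vanishing off $\langle z\rangle$ and $\varphi_1(z^c)=e^{2\pi i\theta c}$ for irrational $\theta$, and let $G_2=\ZZ\ltimes_A\ZZ^2$ with $A$ hyperbolic and $\varphi_2$ the character which is the trivial extension of the Fourier transform of the Haar measure on $\TT^2$. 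Put $G=G_1\times G_2$, $N=\langle z\rangle\times\ZZ^2$ (abelian, normal, with $G/N\cong\ZZ^2\times\ZZ$ abelian) and $\varphi=\varphi_1\otimes\varphi_2$, which is a character of the metabelian group $G$. Then $\mu_\varphi=\delta_{e^{2\pi i\theta}}\times\mathrm{Haar}_{\TT^2}$ is non-atomic, the essential stabilizer is $H_0=\left(G_1/\langle z\rangle\right)\times\{0\}$, so your $H$ equals $G_1\times\ZZ^2$; but $[H,H]=\langle z\rangle$ and $\varphi(z)=e^{2\pi i\theta}\neq 1$, while $K$ is trivial. So $\varphi_{|H}$ does not factor through $H^{\ab}$, and your ``cocycle-type bracket lies in $K$ because $H_0$ acts trivially on $N$ modulo $K$'' step breaks exactly as you feared in the non-split situation --- the commutator pairing $H/N\times H/N\to N/K$ can be nontrivial even though the conjugation action of $H/N$ on $N/K$ is trivial. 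In particular the atomic/non-atomic dichotomy is not the relevant one: the Heisenberg pathology persists after tensoring with an independent system of non-atomic spectral type.

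What is missing is a second reduction inside $H$. The paper's route: first pass to $G/\ker\varphi$ so that $\varphi$ is faithful; then the computation $\varphi([m,n])=\int\chi([m,n])\,\mathrm{d}\mu_\varphi=1$ for $m\in M$, $n\in N$ (where $M$ is your $H$) combined with faithfulness gives $[M,N]=\{e\}$, so $M$ is two-step nilpotent; finally a \emph{second} ergodicity argument, applied to the map $\theta\mapsto \mathrm{Z}(M/\ker\theta)$ on $\Ch{M}$ together with Howe's lemma, produces a smaller subgroup $N\le H\le M$ off which $\varphi$ vanishes and on which $\mu_\varphi$-almost every $\theta$ restricts multiplicatively, whence $\varphi_{|H}$ factors through $H^{\ab}$. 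Equivalently, in your language, the radical construction from your atomic case has to be run a second time (fiberwise over the ergodic decomposition of $\varphi_{|M}$, or after the faithfulness reduction), rather than stopping at the common stabilizer $H$. With that extra step your argument would align with the paper's proof; without it, the non-atomic half does not establish the theorem.
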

\begin{proof}
Let $\varphi $ be any character of the metabelian group $G$. 
We may assume without loss of generality that the character $\varphi$ is faithful, for otherwise we may regard $\varphi$  as a faithful character of the metabelian quotient group $G / \ker \varphi$.

Consider the ergodic $G$-invariant Borel probability measure $\mu_\varphi$ on the Pontryagin dual $\widehat{N}$ associated to the 
 Fourier transform   of  the restriction $\varphi_{|N
}$.
Denote $Q = G/N$ and let $\Sub{Q}$ be the Chabauty space of all subgroups of $Q$.  Consider the Borel stabilizer map
\begin{equation} \mathrm{stab}: \widehat{N} \to \Sub{Q}, \quad \chi \mapsto \mathrm{stab}_Q(\chi) \quad \forall \chi \in \widehat{N}.
\end{equation}
This form of the stabilizer map with range $\Sub{Q}$   is well defined as the normal subgroup $N$ is abelian. Moreover the map $\mathrm{stab}$ is $G$-invariant since the conjugation action of the group $G$ on its quotient $Q$ is trivial. The $G$-ergodicity of the   measure $\mu_\varphi$ implies that there is some normal subgroup $M$ with $N \le M \le G$   such that  
$M = \mathrm{stab}_G(\chi)$ holds true for $\mu_\varphi$-almost every  character $\chi\in \hat{N}$. We obtain  that  $\varphi=\widetilde{\varphi}_{|M} = \Ind{G}{M}{(\varphi_{|M})}$ according to 
  Lemma \ref{lemma:free action on SNAG}. 
  
  Observe that any    pair of elements $m \in M$ and $n \in N$ satisfy $\left[m,n\right] \in N$ and
\begin{equation}
    \varphi([m,n])= \int_{\widehat{N}}\chi([m,n])\, \mathrm{d}\mu_\varphi(\chi)=\int_{\widehat{N}}\chi^m(n)\chi^{-1}(n)\, \mathrm{d}\mu_\varphi(\chi)=1.
\end{equation}
Since the character $\varphi$ is faithful by our assumption it follows that $\left[M,N\right] = \{e\}$.  In other words $M \le \mathrm{C}_G(N)$ and the subgroup $M$ is two-step nilpotent.

Let $\Sub{M}$ denote the Chabauty space of all  subgroups of the group $M$. 
Consider the   Borel map
\begin{equation}
\mathfrak{Z} : \chars{M} \to \Sub{M}, \quad\theta \mapsto \mathrm{Z}(M/\ker \theta).
\end{equation}
Note that $N \le \mathfrak{Z}(\theta)$  holds true for all characters $\theta \in \chars{M}$. In particular  the map $\mathfrak{Z}$ is in fact $G$-invariant. It follows that the map $\mathfrak{Z}$ is  essentially constant with respect to the ergodic $G$-invariant Borel probability measure on the space $\chars{M}$ associated via the Fourier transform to the restriction $\varphi_{|M}$. Therefore there is some subgroup $N \le H \le M$ satisfying  $H=\mathfrak{Z}(\theta) = \mathrm{Z}(M/\ker \theta)$ almost surely.

  Howe's lemma implies that any character $\theta$ of the two-step nilpotent group $M$ is induced from  its central subgroup $H$, see e.g. \cite[Proposition 2.6]{carey1984characters}. We conclude that $\varphi =\widetilde{\varphi}_{|H}= \Ind{G}{H}{(\varphi_{|H})}$ where $\varphi_{|H}$ is  a  $G$-invariant trace  on the subgroup $H$. The restriction   $\varphi_{|H}$ factorizes through the abelian subquotient $H^{\mathrm{ab}}$ as required.
\end{proof}

The above is a restatement of Theorem \ref{thm:intro:characters of metabelian groups} of the introduction. 

\section{Characters of Noetherian groups}

Recall that a group is called \emph{Noetherian} if it satisfies the ascending chain condition on subgroups.  This condition is equivalent to saying that every subgroup is finitely generated. A solvable group is Noetherian  if and only if it is polycyclic. 

In the current section we study the character theory of Noetherian groups in general and of virtually nilpotent groups in particular.

\subsection*{Totally faithful characters}

We introduce a property of characters which is a strengthening of being faithful.  It will play a central role in our analysis. 


\begin{definition*}
A character $\varphi$ of a group $G$ is  \emph{totally faithful} if $\varphi$ is not induced from a non-faithful character of any finite index subgroup.
\end{definition*}

The usefulness of  totally faithful characters  has to do with the fact that any   character of a
Noetherian group is induced from a totally faithful character of some subquotient.

  \begin{prop}[Noetherian induction principle]
\label{prop:induced from totally faithful}
Let $G$ be a Noetherian group and  $\varphi \in \chars{G}$  a character. Then there is a finite index subgroup $H $ of $  G$ and a character $\psi \in \chars{H}$ satisfying $\varphi = \Ind{G}{H}{\psi}$ such that $\psi$ is totally faithful when regarded as a character of the subquotient $H / \ker \psi$.
\end{prop}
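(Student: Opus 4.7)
The plan is an iterative descent whose termination is guaranteed by the Noetherian hypothesis. Starting from $(H_0, \psi_0) := (G, \varphi)$, I will produce inductively a sequence of pairs $(H_i, \psi_i)$ with $H_i \le G$ of finite index, $\psi_i \in \chars{H_i}$, $\varphi = \Ind{G}{H_i}{\psi_i}$, and a strictly ascending chain of kernels $\ker\psi_0 \subsetneq \ker\psi_1 \subsetneq \cdots$ inside $G$. Since $G$ is Noetherian, any ascending chain of subgroups must stabilize, so the construction terminates after finitely many steps and the terminal pair will furnish the desired $(H,\psi)$.

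For the inductive step, suppose $(H_i, \psi_i)$ has been constructed and regard $\psi_i$ as a faithful character of the quotient $\overline{H}_i := H_i/\ker\psi_i$. If $\psi_i$ is totally faithful on $\overline{H}_i$ we stop. Otherwise, by the definition of \enquote{totally faithful}, there exists a finite index subgroup $K \le \overline{H}_i$ and a \emph{non-faithful} character $\theta \in \chars{K}$ with $\psi_i = \Ind{\overline{H}_i}{K}{\theta}$. Let $\pi_i : H_i \to \overline{H}_i$ be the canonical projection, set $H_{i+1} := \pi_i^{-1}(K)$, and define $\psi_{i+1} := \theta \circ \pi_i|_{H_{i+1}}$. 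Then $H_{i+1}$ has finite index in $H_i$; the pullback $\psi_{i+1}$ is a character of $H_{i+1}$ (extremality is inherited because any trace of $H_{i+1}$ taking the value $1$ on $\ker\psi_i$ factors through the quotient $K$, reducing extremality of $\psi_{i+1}$ to that of $\theta$); and $\ker\psi_{i+1} = \pi_i^{-1}(\ker\theta)$ strictly contains $\ker\psi_i$ since $\ker\theta$ is nontrivial.

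It remains to verify that $\varphi = \Ind{G}{H_{i+1}}{\psi_{i+1}}$. By induction in stages (Lemma~\ref{lem:induction in stages}) this reduces to checking $\psi_i = \Ind{H_i}{H_{i+1}}{\psi_{i+1}}$, which is a direct unwinding of the averaging formula in Equation~(\ref{eq:induced trace}): the stabilizer of $\psi_{i+1}$ in $H_i$ is precisely $\pi_i^{-1}(K_\theta)$, so the index and coset data match those of $\Ind{\overline{H}_i}{K}{\theta}$, and the trivial extension commutes with pullback along $\pi_i$.

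I do not expect a serious obstacle here; the main point is careful bookkeeping to ensure that the kernels $\ker\psi_i$, although normal in different subgroups, are consistently compared as subgroups of the ambient Noetherian group $G$ (which they are, by the nested inclusions $\ker\psi_i \subseteq H_{i+1} \subseteq H_i \subseteq G$). Once this is in place, the Noetherian ascending chain condition terminates the process after finitely many steps, and the final character $\psi_n \in \chars{H_n}$ is totally faithful on $H_n/\ker\psi_n$ by construction.
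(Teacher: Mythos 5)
Your proof is correct and takes essentially the same route as the paper's: an inductive descent in which failure of total faithfulness produces a finite-index subgroup with a character of strictly larger kernel, combined with induction in stages, and terminated by the ascending chain condition on the kernels inside the Noetherian group $G$. The only difference is that you spell out the pullback bookkeeping (stabilizers, coset matching, and extremality of $\theta\circ\pi_i$) that the paper leaves implicit.
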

\begin{proof}
We   construct  inductively  a descending sequence of finite index subgroups $G_i \le G$  equipped  with characters $\varphi_i \in \chars{G_i}$ satisfying $\varphi = \Ind{G}{G_i}\varphi_i$ for all $i \in \NN$.
The fact that the group $G$ is Noetherian will be used  to ensure that this process terminates after finitely many steps and that we arrive at some totally faithful character. 
The base of the induction is given by taking the group $G_0 = G$ and the character $\varphi_0 = \varphi \in \chars{G}$. 

Let us consider the induction step. Assume that $G_i \le G$ is some finite index subgroup and  $\varphi_i \in \chars{G_i}$ is a character satisfying $\varphi = \Ind{G}{G_i}{\varphi_i}$ for some $i\in\NN$. There are two possible cases:
\begin{itemize}
    \item If $\varphi_i$ is totally faithful regarded as a character of the subquotient $G_i / \ker \varphi_i$ then we  may conclude the proof  taking $H = G_i$ and $\psi = \varphi_i$.
    \item Otherwise there exist  a further finite index subgroup $G_{i+1} \le G_i$ containing $\ker \varphi_i$ and a character $\varphi_{i+1} \in \chars{G_{i+1}}$ satisfying $\ker \varphi_i \lneq \ker \varphi_{i+1}$ and $\varphi_i = \Ind{G_i}{G_{i+1}}{\varphi_{i+1}}$. Therefore $\varphi = \Ind{G}{G_{i+1}}{\varphi_{i+1}}$ by induction in stages (Lemma \ref{lem:induction in stages}).
\end{itemize}

Proceed inductively to define the finite index subgroups $G_i$ and the characters $\varphi_i \in \chars{G_i}$ for all $i \in \NN$ as in the above paragraph.   As the group $G$ is Noetherian, the resulting ascending sequence of kernel subgroups $\ker \varphi_i $   must eventually stabilize.  This completes the proof.
\end{proof}

\begin{lemma}
\label{prop:restriction of totally faithful is almost surely faithful}
Let $G$ be a Noetherian group
and $\varphi \in \chars{G}$ be a    totally faithful character. Let    $N$ be a normal subgroup of $G$ such  that $\mu_\varphi \in  \mathrm{Prob}(\Ch{N})$  is  the Fourier transform of the restriction $\varphi_{|N}$. Then $\mu_\varphi$-almost every character of $N$ is faithful.
\end{lemma}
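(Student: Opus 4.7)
The plan is to argue by contradiction: assuming a positive $\mu_\varphi$-mass of characters of $N$ have a non-trivial kernel, I will produce, via Mackey's theorem (Theorem \ref{thm:Mackey intro}), a finite index subgroup of $G$ and a non-faithful character of it inducing $\varphi$, contradicting total faithfulness.

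First I would study the $G$-equivariant Borel map $\kappa : \chars{N} \to \mathcal{N}(N)$ sending a character $\theta$ to its kernel $\ker \theta$, where $\mathcal{N}(N)$ denotes the countable set of normal subgroups of $N$ (countable because $N$ is a subgroup of the Noetherian group $G$, hence Noetherian itself). The pushforward $\kappa_\ast \mu_\varphi$ is a $G$-invariant probability measure on the countable discrete $G$-set $\mathcal{N}(N)$, and it is $G$-ergodic because the original character $\varphi$ is a character (so $\mu_\varphi$ is $G$-ergodic, as discussed after the definition of the Fourier transform of a relative trace). A $G$-ergodic probability measure on a countable discrete $G$-set is necessarily supported on a single finite $G$-orbit. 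Thus there exist normal subgroups $K_1,\ldots,K_n$ of $N$ which form a single $G$-orbit under conjugation and satisfy $\mu_\varphi(\{\theta : \ker\theta = K_i\}) = 1/n$ for each $i$.

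Next I would decompose the restriction as
\begin{equation}
\varphi_{|N} \;=\; \frac{1}{n}\bigl(\psi_1 + \cdots + \psi_n\bigr), \qquad \psi_i \;=\; n\int_{\{\theta : \ker\theta = K_i\}} \theta\, \mathrm{d}\mu_\varphi(\theta).
\end{equation}
Each $\psi_i$ is a trace on $N$ because its defining measure has total mass one, and the spectral measures of the $\psi_i$ are mutually singular (they are supported on the disjoint Borel sets $\{\theta : \ker\theta = K_i\}$), so the $\psi_i$ are pairwise disjoint. The $G$-action permutes the sets $\{\theta : \ker\theta = K_i\}$ transitively (it permutes the $K_i$ transitively) and therefore permutes the $\psi_i$ transitively as well. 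The stabilizer of $\psi_1$ is precisely the normalizer $L = \mathrm{N}_G(K_1)$, which has index $n$ in $G$.

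The main step is then to apply Theorem \ref{thm:Mackey intro}: $\psi_1$ extends to a character $\widetilde{\varphi}$ of the finite index subgroup $L$ such that $\varphi = \Ind{G}{L}{\widetilde{\varphi}}$. Since the restriction of $\widetilde{\varphi}$ to $N$ is $\psi_1$, and $\psi_1$ is a convex combination (in fact an integral) of characters of $N$ all having kernel containing $K_1$, we obtain $K_1 \le \ker \widetilde{\varphi}$. If $K_1$ were non-trivial, then $\widetilde{\varphi}$ would be a non-faithful character of the finite index subgroup $L$ inducing $\varphi$, contradicting the total faithfulness of $\varphi$. Hence $K_1 = \{e\}$, which means that $\mu_\varphi$-almost every character of $N$ is faithful. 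The main subtlety to verify carefully is the pairwise disjointness of the $\psi_i$, which I would justify by translating disjointness of traces into mutual singularity of the associated measures on $\chars{N}$ via Thoma's correspondence and the Choquet decomposition.
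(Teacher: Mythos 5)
Your argument is correct and follows essentially the same route as the paper: push $\mu_\varphi$ forward under the kernel map to the countable space of subgroups of the Noetherian group $N$, use ergodicity to get a single finite $G$-orbit $K_1,\ldots,K_n$ with uniform mass, decompose $\varphi_{|N}$ into the corresponding pairwise disjoint, transitively permuted traces $\psi_i$, and apply Theorem \ref{thm:Mackey intro} to extend $\psi_1$ to a character of $\mathrm{N}_G(K_1)$ inducing $\varphi$, so total faithfulness forces $K_1=\{e\}$. The only differences (contradiction framing, identifying the stabilizer explicitly as the normalizer, justifying disjointness via mutual singularity of the Choquet measures) are cosmetic elaborations of the paper's proof.
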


\begin{proof}
Let $\Sub{N}$ denote the Chabauty space of all subgroups of $N$.  As the group $N$ is Noetherian the space $\Sub{N}$ is countable.  The group $G$ acts on the space $\Sub{N}$ by conjugation. There is a natural $G$-equivariant kernel map
\begin{equation}
 \kappa  : \chars{N}   \to \Sub{N}, \quad  \chi \mapsto \ker \chi.
\end{equation}
 The pushforward measure $ \kappa _* \mu_\varphi$ is a $G$-invariant ergodic Borel probability measure on the countable space $\Sub{N}$. As such there are finitely many normal subgroups $K_1,\ldots,K_m$ of $N$ such that $G$-acts transitively on this family and $\mu_\varphi$ is the uniform probability measure on this orbit. In particular $\kappa_* \mu_\varphi(\{K_i\}) = \frac{1}{m}$ for all $i$.
 
 Write   $\varphi_{|N} = \frac{1}{m}\left( \psi_1 + \cdots + \psi_m\right)$ where each trace   $\psi_i \in \traces{N}$ is given by
 \begin{equation} 
 \psi_i = m \cdot \int_{\kappa^{-1}(K_i)} \psi \, \mathrm{d}\mu_\varphi(\psi)
 \end{equation}
 for   $i\in\{1,...,m\}$.  The traces  $\psi_1,...,\psi_m$ are pairwise disjoint   and the group  $G$ permutes them transitively. 
 It follows from Theorem \ref{thm:Mackey intro}   that $\varphi = \Ind{G}{\mathrm{N}_G(K_1)}{\varphi_1}$ for some character $\varphi_1 \in \chars{\mathrm{N}_G(K_1)}$ extending the trace $\psi_1$. 
 
The assumption that the   the character $\varphi$ is  totally faithful implies that   the character   $\varphi_1$ must be faithful. This means that the subgroup $K_1$ is trivial. In other words   $m=1$ so that    $\mu_\varphi$-almost every  character $\psi \in \chars{N}$ is faithful.
\end{proof}

Rather than assuming that the group $G$ itself is Noetherian in Lemma \ref{prop:restriction of totally faithful is almost surely faithful}  it would have sufficed to assume that its normal subgroup $N$ is Noetherian.  

Total faithfulness is useful to obtain   a vanishing criterion for characters of Noetherian groups.

\begin{lemma}\label{lem:vanishing-totally-faithful-centralizer}
Let $G$ be a Noetherian  group and $\varphi \in \chars{G}$ be a   totally faithful character. Let $N$ be an abelian   normal   subgroup of $G$. Then $\varphi(g) = 0 $ for any element $g \in G \setminus \mathrm{C}_G(N)$.
\end{lemma}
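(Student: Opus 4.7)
The plan is to reduce the desired vanishing to an application of the Bekka--de la Harpe lemma (Lemma \ref{lemma:free action on SNAG}). Let $\mu_\varphi \in \mathrm{Prob}(\widehat{N})$ be the Borel probability measure on the Pontryagin dual $\widehat{N}$ obtained via the Fourier transform from the restriction $\varphi_{|N}$. By Lemma \ref{lemma:free action on SNAG}, it is enough to show that every element $g \in G \setminus \mathrm{C}_G(N)$ acts $\mu_\varphi$-essentially freely on $\widehat{N}$, i.e. that the set of characters of $N$ fixed by $g$ has $\mu_\varphi$-measure zero.

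Since $N$ is abelian, every character $\chi \in \chars{N}$ is a group homomorphism, so from $\chi^g(n) = \chi(n^g)$ one obtains
\begin{equation*}
\mathrm{Fix}_g := \{ \chi \in \widehat{N} : \chi^g = \chi \} = \{ \chi \in \widehat{N} : \chi_{|K_g} \equiv 1 \},
\end{equation*}
where $K_g \le N$ is the subgroup generated by the elements $\{ n^g n^{-1} : n \in N \}$. The subgroup $K_g$ is trivial if and only if $n^g = n$ for every $n \in N$, i.e. if and only if $g \in \mathrm{C}_G(N)$. Under our hypothesis $g \notin \mathrm{C}_G(N)$, so $K_g$ is a nontrivial subgroup of $N$.

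To deduce that $\mu_\varphi(\mathrm{Fix}_g) = 0$ I appeal to Lemma \ref{prop:restriction of totally faithful is almost surely faithful}. Since $N$ is a subgroup of the Noetherian group $G$ it is itself Noetherian, and $\varphi$ is totally faithful by assumption; the lemma then asserts that $\mu_\varphi$-almost every $\chi \in \chars{N}$ is faithful. But any $\chi \in \mathrm{Fix}_g$ vanishes on the nontrivial subgroup $K_g \le N$ and is therefore non-faithful. Hence $\mathrm{Fix}_g$ is contained in a $\mu_\varphi$-null set, and Lemma \ref{lemma:free action on SNAG} yields $\varphi(g) = 0$.

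The argument is short and I foresee no serious obstacle; the only point meriting a little care is the identification of the $g$-fixed characters with the annihilator of the commutator-type subgroup $K_g$, together with the verification that $K_g$ is nontrivial precisely when $g$ fails to centralize $N$. Everything else is a direct combination of Lemmas \ref{lemma:free action on SNAG} and \ref{prop:restriction of totally faithful is almost surely faithful}.
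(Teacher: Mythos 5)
Your proof is correct and follows essentially the same route as the paper: both reduce the statement to showing that $g$ acts $\mu_\varphi$-essentially freely on $\widehat{N}$ via Lemma \ref{prop:restriction of totally faithful is almost surely faithful} and then conclude with Lemma \ref{lemma:free action on SNAG}. Your identification of the $g$-fixed characters with the annihilator of the subgroup generated by $\{n^g n^{-1} : n \in N\}$ is just a direct rephrasing of the paper's contradiction argument (a faithful character fixed by $g$ would force $g \in \mathrm{C}_G(N)$), so there is no substantive difference.
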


\begin{proof}

Let $g\in G$ be any element satisfying $g \notin  \mathrm{C}_G(N)$. We claim that $g$ acts $\mu_\varphi$-essentially freely on the Pontryagin dual $\widehat{N}$ where $\mu_\varphi \in \mathrm{Prob}(\widehat{N})$ is the measure corresponding to the    restriction $\varphi_{|N}$ via the Fourier transform. To establish this claim assume towards contradiction that
 \begin{equation}
     \mu_\varphi(\{\chi \in \widehat{H} \: : \: \chi^g = \chi\}) > 0.
 \end{equation} 
 Since the character $\varphi$ is totally faithful we know that $\mu_\varphi$-almost every multiplicative character $\chi \in \widehat{N}$ is faithful by Lemma \ref{prop:restriction of totally faithful is almost surely faithful}. In particular there exists some faithful multiplicative character $\chi \in \widehat{N}$ with $\chi^g = \chi$. So
 \begin{equation}
    \chi^g = \chi \quad \Rightarrow \quad \chi(n^g) = \chi (n) \quad \forall n \in N \quad \Rightarrow \quad n^g = n \quad \forall n \in N.
 \end{equation}
This means that  $g \in   \mathrm{C}_G(N)$ contrary to the assumption. The claim follows. We conclude that $\varphi(g)=0$ by Lemma \ref{lemma:free action on SNAG} on vanishing of characters.  
\end{proof}

\subsection*{Virtually nilpotent groups}
Let us  review   the character theory of nilpotent groups alongside some new observations. Nilpotent groups are  a basic   special case of polycyclic ones. More importantly, the Fitting subgroup of a general polycyclic group is nilpotent and it plays a crucial role in our approach.

\begin{theorem}[Howe \cite{howe1977representations}]
\label{thm:nilpotent are FC -induced}
Let $G$ be a finitely generated nilpotent group. Then any faithful character $\varphi \in \chars{G}$ is induced from $\FC{G}$. 
\end{theorem}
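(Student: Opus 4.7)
The statement is equivalent to showing that $\varphi(g) = 0$ for every $g \in G \setminus \FC{G}$: since $\FC{G}$ is normal in $G$ and $\varphi|_{\FC{G}}$ is automatically $G$-invariant, the vanishing outside $\FC{G}$ yields $\varphi = \widetilde{\varphi|_{\FC{G}}} = \Ind{G}{\FC{G}}{\varphi|_{\FC{G}}}$.

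The plan is induction on the nilpotency class $c$ of $G$, the abelian case $c = 1$ being vacuous as $\FC{G} = G$. For the inductive step, I would first record that $\chi := \varphi|_{Z(G)}$ is a multiplicative character: centrality of $Z(G)$ places $\pi(Z(G))$ inside $\mathrm{Z}(M) = \CC \cdot 1_M$ of the associated finite factor $M$, and the faithfulness of $\varphi$ makes $\chi$ faithful on $Z(G)$. The first vanishing step concerns the second center: for $g \in Z_2(G)$ and any $x \in G$, the relation $g^x = g \cdot [g,x]$ with $[g,x] \in Z(G)$ gives $\pi(x)^{-1}\pi(g)\pi(x) = \chi([g,x])\,\pi(g)$; taking traces yields $\varphi(g)\bigl(1 - \chi([g,x])\bigr) = 0$ for every $x$, and faithfulness of $\chi$ forces $\varphi$ to vanish on $Z_2(G) \setminus Z(G)$.

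To extend the vanishing to all of $G \setminus \FC{G}$, I would apply the Bekka--de la Harpe Lemma \ref{lemma:free action on SNAG} to a maximal abelian normal subgroup $N \le G$, which in a nilpotent group is self-centralizing, $\mathrm{C}_G(N) = N$; this is a classical fact based on the property that every nontrivial normal subgroup of a nilpotent group intersects the center nontrivially. For $g \notin N = \mathrm{C}_G(N)$ the $g$-action on $\widehat N$ is nontrivial, and one must argue it is essentially free with respect to $\mu_\varphi$, concluding $\varphi(g) = 0$. For $g \in N \setminus Z(G)$ not in $\FC{G}$, one iterates Bekka's vanishing Lemma \ref{lemma:Bekka's other lemma} up the upper central series, starting from the $Z_2(G)$-vanishing of the previous paragraph and using the commutator map $x \mapsto [g,x]$, which has infinite image by the infinite conjugacy class hypothesis.

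The main obstacle is the essentially-free verification in the Bekka--de la Harpe step: one needs $\mu_\varphi$-a.e. $\chi' \in \widehat N$ to have stabilizer equal to $N$ itself. The cleanest route is through the Noetherian induction principle (Proposition \ref{prop:induced from totally faithful}), which reduces matters to the case of a totally faithful character; Lemma \ref{prop:restriction of totally faithful is almost surely faithful} then ensures that $\mu_\varphi$ concentrates on faithful characters of $N$, whose stabilizers are forced (by faithfulness) to coincide with $\mathrm{C}_G(N) = N$, completing the argument.
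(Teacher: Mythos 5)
First, a point of reference: the paper does not prove this statement at all --- it is quoted from Howe \cite{howe1977representations} and used as a black box (via Lemma \ref{lemma:Kaniuth on induced}); the closest result the paper actually proves is Theorem \ref{thm:characters of virtually nilpotent}, whose conclusion is deliberately weaker (induction from $\FC{H/\ker\varphi}$ for some finite index subgroup $H$). This matters, because your proposed fix for the key difficulty collapses exactly onto that weaker statement. Your opening reduction (vanishing off $\FC{G}$), the Schur-type computation giving $\varphi = 0$ on $\mathrm{Z}_2(G)\setminus \mathrm{Z}(G)$, and the identification of the real obstacle (one needs $\mu_\varphi$-a.e.\ character of a maximal abelian normal subgroup $N$ to be faithful, so that stabilizers equal $\mathrm{C}_G(N)=N$ and Lemma \ref{lemma:free action on SNAG} applies) are all correct. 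But the resolution you offer --- ``reduce to the totally faithful case via Proposition \ref{prop:induced from totally faithful}'' --- does not preserve the statement you are proving. The Noetherian principle writes $\varphi = \Ind{G}{H}{\psi}$ with $\psi$ totally faithful only as a character of the subquotient $H/\ker\psi$, and faithfulness of $\varphi$ does not force $\ker\psi$ to be trivial (faithful $\neq$ totally faithful). Proving the vanishing statement for $\psi$ on $H/\ker\psi$ and inducing back shows only that $\varphi$ vanishes outside the conjugates of the preimage of $\FC{H/\ker\psi}$ in $H$, which is not contained in $\FC{G}$; that is precisely Theorem \ref{thm:characters of virtually nilpotent}, not Howe's theorem. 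To make your route work you would have to prove, in addition, that a faithful character of a finitely generated nilpotent group is automatically totally faithful (or otherwise derive a.e.-faithfulness of $\mu_\varphi$ from mere faithfulness), and this is neither done nor obvious --- it carries essentially the same content as the theorem itself.

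The second gap is the closing step ``iterate Bekka's vanishing Lemma \ref{lemma:Bekka's other lemma} up the upper central series.'' That lemma needs, for the given $g$, a pair of normal subgroups $N'\le H'$ with $\varphi$ already known to vanish on $H'\setminus N'$ and a sequence $x_n$ with $\left[g,x_n\right]\in H'$ pairwise distinct \emph{modulo} $N'$. The hypothesis $g\notin\FC{G}$ only gives infinitely many distinct commutators somewhere in $G$: with the natural choice $H'=G$, $N'=N$ (using your Case A vanishing off $N$) all the commutators $\left[g,x_n\right]$ lie in the single coset $N$, so the lemma gives nothing; with $N'=\mathrm{Z}(G)$, $H'=\mathrm{Z}_2(G)$ you must produce infinitely many commutators of $g$ lying in $\mathrm{Z}_2(G)$ and distinct modulo $\mathrm{Z}(G)$, which is a genuine structural claim, not a formality. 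Supplying such a commutator configuration is the hard combinatorial heart of arguments of this type --- in the paper's polycyclic analogue this is exactly the role of Lemma \ref{lem:infinite order conjugates inside Fitting} (distinctness modulo $\FC{\Fit{G}}$), proved there by a careful analysis of the upper central series --- and your sketch does not provide its nilpotent counterpart. (A small additional remark: the induction on nilpotency class announced at the start is never actually used.)
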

 
The statement of Theorem \ref{thm:nilpotent are FC -induced} holds true more generally for any   nilpotent group $G$ such that the quotient  $G/\FC{G}$  has \emph{finite rank}, in the sense that any finitely generated subgroup of $G$ is contained in some $r$-generated subgroup for some fixed rank $r\in\NN$  
\cite[Theorem 4.5]{carey1984characters}.

The rest of this section is dedicated to the proof of Theorem \ref{thm:characters of virtually nilpotent} which  generalizes  Theorem \ref{thm:nilpotent are FC -induced} to the virtually nilpotent case. We begin with a few useful lemmas.

\begin{lemma}[Kaniuth]
\label{lemma:Kaniuth on induced}
Let $G$ be a finitely generated group admitting a nilpotent normal subgroup $H$ of finite index. Let $\varphi$ be a faithful character of $G$ such that the restriction $\varphi_{|H}$ is   a character of $H$. Then the character $\varphi$ is induced from $\mathrm{FC}(G)$.
\end{lemma}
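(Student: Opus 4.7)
The plan is to combine Howe's theorem (Theorem \ref{thm:nilpotent are FC -induced}) with the Mackey-type Theorem \ref{thm:Mackey intro} and iterate. First, since $\varphi$ is faithful on $G$, its restriction to $H$ is faithful, because $\ker(\varphi|_H) = H \cap \ker\varphi = \{e\}$. Since $H$ is finitely generated (as a finite-index subgroup of a finitely generated group) and nilpotent, Howe's theorem yields $\varphi|_H = \Ind{H}{\FC{H}}{\chi}$ for some character $\chi \in \Ch{\FC{H}}$. Restricting further gives
\[
\varphi|_{\FC{H}} = \frac{1}{n}\sum_{h \in H/H_\chi} \chi^h,
\]
a sum of $n = [H:H_\chi]$ pairwise disjoint characters of $\FC{H}$.

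Because $\varphi$ is $G$-invariant and the map $\Ind{H}{\FC{H}}{\cdot}$ is injective on $H$-orbits of characters of $\FC{H}$, the $G$-orbit of $\chi$ in $\Ch{\FC{H}}$ coincides with its $H$-orbit. Hence the stabilizer $G_\chi := \mathrm{stab}_G(\chi)$ has finite index in $G$ with $G_\chi \cap H = H_\chi$, and the characters $\chi^h$ are permuted transitively by $G$. Applying Theorem \ref{thm:Mackey intro} with the normal subgroup $\FC{H} \lhd G$ now produces a character $\chi' \in \Ch{G_\chi}$ extending $\chi$ with $\varphi = \Ind{G}{G_\chi}{\chi'}$.

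The next step is to apply the same procedure recursively to the triple $(G_\chi, H_\chi, \chi')$: here $H_\chi \lhd G_\chi$ is a nilpotent normal subgroup of finite index $[G:H]$. One verifies that $\chi'|_{H_\chi}$ is a character of $H_\chi$, which follows from the construction underlying Theorem \ref{thm:Mackey intro}, where $\chi'$ is the trace associated to the factor von Neumann subalgebra $p_1 \pi(H)'' p_1$. To preserve the faithfulness hypothesis one passes to the quotient $G_\chi/\ker\chi'$; the relation $\bigcap_g (\ker\chi')^g = \ker\varphi = \{e\}$ makes this descent compatible with the induction. Each iteration enlarges the stabilizer of the current character on its FC-center, and since $H$ is a Noetherian nilpotent group, the resulting ascending chain $\FC{H} \subseteq \FC{H_\chi^{(1)}} \subseteq \cdots$ stabilizes after finitely many steps. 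Upon termination the character is fully $G$-invariant on the relevant subgroup; assembling the intermediate inductions via Lemma \ref{lem:induction in stages}, and using that $\FC{L} = \FC{G} \cap L$ for every finite-index subgroup $L \leq G$, one concludes $\varphi = \Ind{G}{\FC{G}}{\varphi|_{\FC{G}}}$.

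The main obstacle is the careful bookkeeping of the recursion: verifying at each step that $\chi'|_{H_\chi}$ is indeed a character (via the factor subalgebra $p_1 \pi(H)'' p_1$), that the descent to the faithful quotient preserves the hypotheses of the lemma, and that the composed induction ultimately lands inside $\FC{G}$ itself rather than some larger finite-index subgroup. An alternative, non-iterative route would be to show directly that $\varphi$ vanishes on $G \setminus \FC{G}$ using Lemma \ref{lemma:Bekka's other lemma}, by producing sequences of commutators $[g,x_n] \in H$ in pairwise distinct $\FC{H}$-cosets for $g \notin \FC{G}$; this reduction is delicate, since the naive analysis via the torsion-free nilpotent quotient $H/\FC{H}$ only forces $\bar g \in \FC{G/\FC{H}}$, which does not a priori imply $g \in \FC{G}$.
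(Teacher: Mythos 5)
Your first half is essentially sound: since $\varphi$ is faithful, $\varphi_{|H}$ is a faithful character of the finitely generated nilpotent group $H$, Howe gives that it lives on $\FC{H}$ and (because $H$ acts on $\FC{H}$ through a finite quotient) decomposes over a finite orbit of pairwise disjoint characters; the $G$-invariance of $\varphi_{|\FC{H}}$ and uniqueness of the Choquet decomposition force the $G$-orbit of $\chi$ to equal the $H$-orbit; and Theorem \ref{thm:Mackey intro} then yields $\varphi=\Ind{G}{G_\chi}{\chi'}$ with $\chi'_{|H_\chi}$ a character, since $\pi_1(H_\chi)''=p_1\pi(H)''p_1$ is a corner of a factor. (Note, for comparison, that the paper does not prove this lemma at all: it simply cites Kaniuth, so any complete argument here would be new content.)

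The genuine gap is in the recursion and its assembly. First, termination: since $\FC{H}\le G_\chi$, one has $\FC{H_\chi}=\FC{H}\cap H_\chi=\FC{H}$, so your chain of FC-centers is constant, not ascending, and nothing shrinks unless $\ker\chi'$ is nontrivial. In the key case where $\varphi_{|\FC{H}}$ is already a $G$-invariant character (equivalently $n=1$, which is exactly the situation one lands in after the first step whenever $\ker\chi'=\{e\}$), your procedure returns the same triple $(G,H,\varphi)$ and makes no progress; this case is precisely the heart of the lemma, namely showing $\varphi(g)=0$ for $g\in G\setminus\FC{G}$ (in particular for $g\notin H$ with infinite conjugacy class), and no argument for it is given -- your closing paragraph concedes that the direct vanishing route via Lemma \ref{lemma:Bekka's other lemma} is left open. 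Second, assembly: when $\ker\chi'$ is nontrivial you pass to $G_\chi/\ker\chi'$, but $\ker\chi'$ need not be finite (only its normal core in $G$ is trivial), so the preimage in $G_\chi$ of $\FC{G_\chi/\ker\chi'}$ can be strictly larger than $\FC{G_\chi}$; composing the inductions then only shows that $\varphi$ is induced from such a preimage inside some finite-index subgroup, which need not be contained in $\FC{G}$. The identity $\FC{L}=\FC{G}\cap L$ for finite-index $L\le G$ handles subgroups but not quotients, so the final step ``the composed induction lands inside $\FC{G}$'' is a non sequitur as written. Until the $n=1$ vanishing statement and the control of the kernels $\ker\chi'$ are supplied, the proof is incomplete.
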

We emphasize that saying that the restriction $\varphi_{|H}$ is a character means that is not a proper convex combination of traces of $H$.
\begin{proof}[Proof of Lemma \ref{lemma:Kaniuth on induced}]
This is \cite[Lemma 2]{kaniuth1980ideals}.
\end{proof}

\begin{lemma}
\label{lem:new world order lemma}
Let $G$ be a group admitting a finitely generated virtually nilpotent normal subgroup $N$. Let $\varphi \in \chars{G}$ be a totally faithful character. Then the restriction $\varphi_{|N}$ is induced from $\FC{N}$.
\end{lemma}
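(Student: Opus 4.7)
The plan is to Fourier-decompose $\varphi_{|N}$ over the character space $\chars{N}$, obtaining an ergodic $G$-invariant Borel probability measure $\mu$ on $\chars{N}$ with $\varphi_{|N} = \int \psi \, \mathrm{d}\mu(\psi)$. Since $\FC{N}$ is normal in $N$ and $\varphi_{|\FC{N}}$ is $N$-invariant, the desired identity $\varphi_{|N} = \Ind{N}{\FC{N}}{\varphi_{|\FC{N}}}$ amounts to the claim that $\varphi_{|N}$ vanishes on $N \setminus \FC{N}$, and by integration this reduces to showing that $\mu$-almost every $\psi \in \chars{N}$ vanishes outside $\FC{N}$.

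Fix a characteristic finite-index nilpotent subgroup $H$ of $N$; since $N$ is normal in $G$, so is $H$. Applying Lemma \ref{prop:restriction of totally faithful is almost surely faithful} with $N$ and with $H$ in turn playing the role of the normal subgroup (both are Noetherian, so the version of the lemma noted after its proof applies), I deduce that $\mu$-a.e.\ $\psi \in \chars{N}$ is faithful and that the Fourier measure of $\varphi_{|H}$ on $\chars{H}$ is supported on faithful characters of $H$. Disintegrating the latter measure through the map $\psi \mapsto \psi_{|H}$, it follows that for $\mu$-a.e.\ $\psi$ every component $\chi_i$ in the finite decomposition $\psi_{|H} = \tfrac{1}{k}(\chi_1 + \cdots + \chi_k)$ into $N$-conjugate characters of $H$ is faithful.

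For such a generic $\psi$, Theorem \ref{thm:Mackey intro} applied to the pair $H \trianglelefteq N$ produces $\psi = \Ind{N}{N_1}{\tilde\chi_1}$ with $N_1 = \mathrm{stab}_N(\chi_1) \supseteq H$ and $\tilde\chi_1 \in \chars{N_1}$ extending $\chi_1$. Since $\chi_1$ is faithful, the kernel $K$ of $\tilde\chi_1$ satisfies $K \cap H = \{e\}$ and is therefore a finite normal subgroup of $N_1$. Passing to the quotient $\bar N_1 = N_1/K$, the induced character $\bar\chi_1$ is faithful, and its restriction to the nilpotent finite-index normal subgroup $\bar H \cong H$ is the character $\chi_1$. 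Kaniuth's Lemma \ref{lemma:Kaniuth on induced} then applies to the triple $(\bar N_1, \bar H, \bar\chi_1)$ and exhibits $\bar\chi_1$ as induced from $\FC{\bar N_1}$; pulling back along $N_1 \to \bar N_1$ and combining with induction in stages (Lemma \ref{lem:induction in stages}), one obtains $\psi = \Ind{N}{L}{\sigma}$ for a suitable trace $\sigma$ on a subgroup $L \le \FC{N_1}$.

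To conclude, the identity $\FC{N_1^g} = \FC{N} \cap N_1^g$ (proved exactly like $\FC{H} = \FC{N} \cap H$, using that $N_1^g$ has finite index in $N$) forces $L^g \le (\FC{N_1})^g = \FC{N_1^g} \le \FC{N}$ for every $g \in N$, so the support of $\Ind{N}{L}{\sigma}$ lies inside $\FC{N}$. Integration against $\mu$ then yields the desired vanishing of $\varphi_{|N}$ on $N \setminus \FC{N}$. The main obstacle is that $\tilde\chi_1$ need not be faithful on $N_1$, because elements of $N_1 \setminus H$ may lie in its kernel, so Kaniuth's Lemma does not apply directly; the quotient passage to $N_1/K$ is what restores faithfulness, and the inclusion $L \le \FC{N_1}$ (together with $\FC{N_1^g} \le \FC{N}$) is what allows one to upgrade ``induced from $\FC{N_1}$'' to ``vanishes outside $\FC{N}$''.
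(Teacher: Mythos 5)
Your proposal is correct and takes essentially the same route as the paper's proof: decompose $\varphi_{|N}$ via the Fourier transform, use Lemma \ref{prop:restriction of totally faithful is almost surely faithful} (with the Noetherian hypothesis on the normal subgroup) to make the components over the nilpotent finite-index subgroup $H$ almost surely faithful, apply Theorem \ref{thm:Mackey intro}, pass to the quotient by the finite kernel so that Kaniuth's Lemma \ref{lemma:Kaniuth on induced} applies, and conclude with $\FC{N_1}\le \FC{N}$, induction in stages and integration. The only difference is organizational: you argue pointwise for $\mu$-almost every $\psi$, whereas the paper fixes the stabilizer subgroup and the extending character through measurable selections, which does not change the substance of the argument.
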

\begin{proof}
Let $\mu_\varphi \in \mathrm{Prob}(\chars{N})$ denote the measure corresponding to the restriction $\varphi_{|N}$ via the Fourier transform. The measure  $\mu_\varphi$ is $G$-invariant and ergodic.

Fix a finite index nilpotent subgroup $H$ of $N$ which is normal in $G$.  For every character   $\zeta \in \chars{N}$  the restriction $\zeta_{|H}$ is a convex combination of finitely many characters of $H$.  Choose a  $\mu_\varphi$-measurable mapping taking a character $\zeta \in \chars{N}$ to an arbitrary character $\psi_\zeta \in \chars{H}$ appearing in this  restriction. 

Consider the subgroups
\begin{equation}
S(\zeta) = \mathrm{stab}_N(\psi_\zeta)
\end{equation}
defined for $\mu_\varphi$-almost every character $\zeta$ of $N$.  The fact that the group $N$ has countably many subgroups  combined with the ergodicity of the measure $\mu_\varphi$ implies that the subgroups $S(\zeta)$ all belong to a single conjugacy class.
 This allows us to assume that   $S(\zeta) = S$ for some fixed subgroup $S$ satisfying $H \le S\le N$ up to modifying the $\mu_\varphi$-measurable mapping $\psi_\zeta$ where necessary.

Consider   the collection of characters $M(\zeta)$ given for $\mu_\varphi$-almost every character $\zeta \in \chars{N}$  by
\begin{equation}
\label{eq:M(zeta)}
M(\zeta) = \{\chi \in \chars{S} \: : \: \chi_{|H} = \psi_\zeta, \; \Ind{N}{S}{\chi} = \zeta \}.
\end{equation}
The set $M(\zeta)$ is $\mu_\varphi$-almost surely non-empty by Theorem \ref{thm:Mackey intro} and   finite by \cite{itibader}.
Let  $\chi_\zeta \in M(\zeta)$ be an arbitrary character chosen in a    $\mu_\varphi$-measurable manner.

As   the character  $\varphi$ is totally faithful it follows   that the  character $\psi_\zeta \in \chars{H}$ is  faithful $\mu_\varphi$-almost surely, see Lemma \ref{prop:restriction of totally faithful is almost surely faithful}.  In particular the kernel of the character $\chi_\zeta$  is   finite $\mu_\varphi$-almost  surely so that
\begin{equation}
\FC{S /\ker\chi_\zeta} = \FC{S  }.
\end{equation}
Every   character $\chi_\zeta \in \chars{S}$  restricts $\mu_\varphi$-almost surely to a character of $H$ and is therefore   induced from the subquotient $\FC{S /\ker\chi_\zeta}$  by 
  Lemma \ref{lemma:Kaniuth on induced}. In other words the   character $\chi_\zeta$   is   $\mu_\varphi$-almost surely induced from the subgroup $\FC{S }$.

 Observe that $\FC{S } \le \FC{N}$ as $\left[N:S\right] < \infty$. Therefore $\mu_\varphi$-almost every character $\zeta$ is induced from the subgroup $\FC{N}$ according to the definition of the family $M(\zeta)$ given in Equation (\ref{eq:M(zeta)}) and using induction in stages (Lemma \ref{lem:induction in stages}). It follows that the restriction $\varphi_{|N} = \int \zeta \, \mathrm{d}\mu_\varphi(\zeta)$ vanishes outside the normal subgroup $\FC{N}$.
\end{proof}

The special case of Lemma \ref{lem:new world order lemma} where the group $G$ is  itself virtually nilpotent gives the following.

\begin{cor}
\label{cor:totally faithful is induced from FC}
Let $G$ be a finitely generated virtually nilpotent group. Any totally faithful character $\varphi \in \chars{G}$ is induced from  $\mathrm{FC}(G)$.
\end{cor}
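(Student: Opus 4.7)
The plan is to derive this corollary as an immediate specialization of Lemma \ref{lem:new world order lemma}. Since $G$ is finitely generated virtually nilpotent by hypothesis, $G$ itself qualifies as a finitely generated virtually nilpotent normal subgroup of $G$. Applying the lemma with the choice $N = G$, any totally faithful character $\varphi \in \chars{G}$ has the property that $\varphi = \varphi_{|G}$ vanishes on $G \setminus \FC{G}$.

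To translate this vanishing statement into the induction language of the corollary, I will invoke the observation recorded right after Definition \ref{def:induction}: if $H$ is a normal subgroup of $G$ and $\psi \in \relTr{G}{H}$ is a relative trace, then $\Ind{G}{H}{\psi} = \widetilde{\psi}$ is simply the trivial extension. The subgroup $\FC{G}$ is characteristic in $G$, hence normal, and the restriction $\varphi_{|\FC{G}}$ is automatically $G$-invariant under conjugation since $\varphi$ is a class function on the whole group $G$. Therefore $\widetilde{\varphi_{|\FC{G}}}$ coincides with $\varphi$ precisely because $\varphi$ vanishes on $G \setminus \FC{G}$, yielding $\varphi = \Ind{G}{\FC{G}}{\varphi_{|\FC{G}}}$.

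Since all the technical content is already packaged inside Lemma \ref{lem:new world order lemma}, there is no real obstacle in the corollary itself. The only small point that merits a line in the written version is the equivalence between \emph{vanishing outside $\FC{G}$} and \emph{being induced from $\FC{G}$}, which relies on normality of $\FC{G}$ together with $G$-invariance of $\varphi_{|\FC{G}}$.
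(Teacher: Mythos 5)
Your proof is correct and takes essentially the same route as the paper, which obtains this corollary precisely as the special case $N = G$ of Lemma \ref{lem:new world order lemma}. Your additional unpacking of \enquote{induced from $\FC{G}$} as vanishing outside the normal subgroup $\FC{G}$, via the trivial-extension description of induction from a normal subgroup applied to the $G$-invariant restriction $\varphi_{|\FC{G}}$, is exactly what the lemma's proof establishes, so nothing is missing.
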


We are ready to obtain the following result.

\begin{theorem}
\label{thm:characters of virtually nilpotent}
Let $G$ be a finitely generated virtually nilpotent group. Then any character $\varphi$ of the group $G$ is induced   from $\FC{H/\ker \varphi}$ for some finite index subgroup $H$ of $G$.
\end{theorem}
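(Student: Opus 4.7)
The plan is to combine the Noetherian induction principle (Proposition \ref{prop:induced from totally faithful}) with the characterization of totally faithful characters (Corollary \ref{cor:totally faithful is induced from FC}), glued together via induction in stages (Lemma \ref{lem:induction in stages}). I would first reduce to the case where $\varphi$ is faithful by passing to the quotient $G/\ker\varphi$, which remains finitely generated virtually nilpotent. In this reduced setting the target subquotient $\FC{H/\ker\varphi}$ simplifies to $\FC{H}$ for a finite index subgroup $H$, so the task becomes producing such an $H$.

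Applying the Noetherian induction principle yields a finite index subgroup $H_0 \leq G$, which I may take to be normal in $G$ by passing to the normal core, together with a character $\psi \in \chars{H_0}$ such that $\varphi = \Ind{G}{H_0}{\psi}$ and $\psi$ descends to a totally faithful character $\bar\psi$ on the quotient $\bar H_0 = H_0/\ker\psi$. Since $\bar H_0$ is itself finitely generated virtually nilpotent, Corollary \ref{cor:totally faithful is induced from FC} applies and gives $\bar\psi = \Ind{\bar H_0}{\FC{\bar H_0}}{\bar\rho}$ for some character $\bar\rho$. Pulling $\bar\rho$ back through the surjection $H_0 \twoheadrightarrow \bar H_0$, and using that induction commutes with such pullback, the character $\psi$ itself arises as $\psi = \Ind{H_0}{F}{\rho}$, where $F \leq H_0$ is the preimage of $\FC{\bar H_0}$ and $\rho$ is the pullback of $\bar\rho$. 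Since $[G:H_0] < \infty$, induction in stages then gives $\varphi = \Ind{G}{F}{\rho}$.

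The final step is to identify the subgroup $F$ with the preimage of $\FC{H/\ker\varphi}$ for a suitable finite index subgroup $H \leq G$, which I expect to be the main technical obstacle. The key observation is that with $\varphi$ faithful and $H_0$ normal in $G$, the intersection $H_0 \cap \ker\varphi$ coincides with the $G$-core $\bigcap_{g \in G}(\ker\psi)^g$ of $\ker\psi$ in $H_0$, and must therefore be trivial; equivalently, the conjugates of $\ker\psi$ intersect trivially. Exploiting this triviality together with a possible Mackey-type refinement in the spirit of Theorem \ref{thm:Mackey intro} that accounts for the $G$-orbit of $\psi$ allows one to select a finite index $H \leq G$ for which $F$ is recognized as the preimage of $\FC{H/\ker\varphi}$, completing the proof.
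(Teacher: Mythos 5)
Your first two paragraphs retrace the paper's proof: the Noetherian induction principle (Proposition \ref{prop:induced from totally faithful}) produces a finite index subgroup $H_0$ and a character $\psi$ with $\varphi = \Ind{G}{H_0}{\psi}$ and $\psi$ totally faithful on $H_0/\ker\psi$; Corollary \ref{cor:totally faithful is induced from FC} shows $\psi$ is induced from $\FC{H_0/\ker\psi}$; induction in stages (Lemma \ref{lem:induction in stages}) finishes. Your bookkeeping that induction commutes with inflation along $H_0 \twoheadrightarrow H_0/\ker\psi$ is fine, and your observation that $\ker\varphi$ equals the $G$-core of $\ker\psi$ (hence the conjugates of $\ker\psi$ intersect trivially when $\varphi$ is faithful) is correct. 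The paper's proof simply stops at this point: the conclusion is read in the subquotient sense, i.e.\ $\varphi$ is induced from the subquotient $\FC{H_0/\ker\psi}$, exactly as in the more general Theorem \ref{Thm:characters of virtually polycyclic groups}, where the target is $\FC{\vFit{\overline{H}}}$ for a \emph{quotient} $\overline{H}$ of a finite index subgroup $H$.

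The genuine gap is your final step, which you yourself flag as the main obstacle and resolve only by appeal to ``a possible Mackey-type refinement.'' By first reducing to $\varphi$ faithful and then reading the target as $\FC{H}$ for an honest finite index subgroup $H \le G$, you have converted the statement into a strictly stronger claim than the one the paper's argument delivers, and nothing you cite closes the distance. Total faithfulness of $\psi$ is a property of the quotient $H_0/\ker\psi$ and does not force $\ker\psi$ to be trivial; faithfulness of $\varphi$ only kills the $G$-core of $\ker\psi$, not $\ker\psi$ itself; and the preimage $F \le H_0$ of $\FC{H_0/\ker\psi}$ is in general not of the form $\FC{H}$ for any finite index $H$ (it contains $\ker\psi$, whose elements need not have finite conjugacy classes in any finite index subgroup). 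Theorem \ref{thm:Mackey intro} is about splitting a restriction into disjoint traces and extending them to stabilizers; it offers no mechanism for recognizing $F$ as an FC-center, so the identification you need is unproven (and is not needed under the paper's reading of the conclusion). A smaller but real issue: you cannot simply ``pass to the normal core'' of $H_0$ while retaining both $\varphi = \Ind{G}{H_0}{\psi}$ and the total faithfulness of $\psi$ on $H_0/\ker\psi$; the output of Proposition \ref{prop:induced from totally faithful} is tied to the specific subgroup it produces, and replacing it requires rerunning the argument, not a one-line adjustment.
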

\begin{proof} 
Let $G$ be a finitely generated virtually nilpotent group. Consider any character $\varphi$   of $G$. The Noetherian induction principle allows us to find a finite index subgroup $H$ of $G$ and a character $\psi $ of $H$ satisfying $\varphi = \Ind{G}{H}{\psi}$ so that $\psi$ is totally faithful regarded as a character of the subquotient $H/\ker\psi$. The totally faithful character $\psi$ is induced from $\FC{H / \ker \psi}$ according to Corollary \ref{cor:totally faithful is induced from FC}. The statement follows by induction in stages (Lemma \ref{lem:induction in stages}).
\end{proof}

\section{Characters of virtually polycyclic groups}
\label{sec:virtually polycycic}

The goal of the current section is to prove Theorem \ref{Thm:characters of virtually polycyclic groups} from the introduction dealing with characters of virtually polycyclic groups.  

To set the stage for the study of the character theory of  polycyclic groups we first make a brief algebraic digression.

\subsection*{The Fitting subgroup}
\label{sec:Fitting}

Recall that the \emph{Fitting subgroup} $\Fit{G}$ of the group $G$ is the characteristic subgroup generated by all normal nilpotent subgroups of $G$. If the group $G$ is Noetherian  then $\Fit{G}$ is nilpotent \cite[1.2.9]{lennox2004theory}. In particular $\Fit{G}$ is nilpotent provided that  $G$ is virtually polycyclic.

\begin{lemma}
\label{lem:infinite order conjugates prep}
Let $G$ be a virtually polycyclic group. Let $g \in  G$ be an element whose projection to the quotient $G/\Fit{G}$  has infinite order.  Then the subgroup $L = \left<g\right> \cdot \Fit{G}$ is not nilpotent.
\end{lemma}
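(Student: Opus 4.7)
The plan is to suppose for contradiction that $L = \langle g \rangle \cdot \Fit{G}$ is nilpotent and to deduce $g \in \Fit{G}$, which contradicts the hypothesis that $g$ has infinite order in $G/\Fit{G}$. Writing $F = \Fit{G}$, the key reduction is to prove that the normal closure $N := \langle g^G \rangle \cdot F$ in $G$ is itself nilpotent. Once this is established, $N$ is a nilpotent normal subgroup of $G$, so by maximality of the Fitting subgroup $N \leq F$, and in particular $g \in F$. After a preliminary reduction (passing to a torsion-free polycyclic finite-index normal subgroup of $G$ containing $F$, and replacing $g$ by a suitable power lying in it), I would focus on establishing nilpotency of $N$.

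The nilpotency of $L$ implies, via an iterated-commutator calculation inside $L$, that the conjugation-by-$g$ automorphism of $F$ acts on every abelian quotient $V_i := \gamma_i(F)/\gamma_{i+1}(F)$ of the lower central series of $F$ as a unipotent endomorphism, with $(\alpha(g) - 1)^c = 0$ on each $V_i$, where $c$ is the nilpotency class of $L$ and $\alpha : G \to \Aut{F}$ is the conjugation homomorphism. Equivalently, $\alpha(g)$ extends to a unipotent automorphism of the Mal'cev completion $F_{\mathbb{Q}}$. I would then invoke Mal'cev's theorem: the Zariski closure in the $\mathbb{Q}$-algebraic group $\Aut{F_{\mathbb{Q}}}$ of the polycyclic image $\alpha(G)$ is a solvable linear algebraic group. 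In such a group the unipotent elements of the identity component lie in the unipotent radical, which is a closed normal nilpotent subgroup. After passing to a further finite-index subgroup of $G$ if necessary, this places $\alpha(g)$ — and, by normality, every conjugate $\alpha(g^h)$ — inside that radical, making $\alpha(\langle g^G \rangle)$ a nilpotent subgroup of $\Aut{F}$ consisting of automorphisms unipotent on each $V_i$.

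Finally I would apply Kolchin's theorem to refine $\{\gamma_i(F)\}$ into a longer filtration of $F$ by $G$-invariant normal subgroups on each of whose quotients $\alpha(\langle g^G \rangle)$ acts trivially. Using the standard polycyclic-groups fact $C_G(F) \subseteq F$ to identify $N/F$ as a nilpotent quotient of $\alpha(\langle g^G \rangle)$, this refinement assembles into a central series for $N$, proving its nilpotency.

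The main obstacle will be the algebraic-group machinery needed to promote the single-element unipotence of $\alpha(g)$ into nilpotency of the whole normal closure $\alpha(\langle g^G \rangle)$: the raw set of unipotent elements in a polycyclic linear group need not form a subgroup, so Mal'cev's solvability theorem together with the structure of solvable algebraic groups is essential. A purely group-theoretic alternative would be to invoke Gruenberg's theorem characterizing the Fitting subgroup of a polycyclic group as its set of left Engel elements and show directly that $g$ is left Engel in $G$, but extending the Engel condition from $L$ to all of $G$ appears to require comparable structural input.
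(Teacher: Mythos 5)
Your route is correct in substance, but it is genuinely different from, and far heavier than, the paper's argument. The paper disposes of the lemma in a few lines of elementary polycyclic group theory: by Mal'cev's theorem $G/\Fit{G}$ is virtually abelian, so there is a finite index normal subgroup $H$ with $\Fit{G}\le H$ and $H/\Fit{G}$ abelian; then $\Fit{H}$ is a nilpotent subgroup normal in $G$, hence $\Fit{H}\le\Fit{G}$, while $L\cap H$ is normal in $H$ (it contains $\Fit{G}$ and $H/\Fit{G}$ is abelian) and strictly contains $\Fit{H}$, because some power of $g$ lies in $H\setminus\Fit{G}$ -- this is exactly where the infinite-order hypothesis enters. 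Maximality of the Fitting subgroup of $H$ then forces $L\cap H$, and hence $L$, to be non-nilpotent; no Mal'cev completions, Zariski closures, Kolchin, or Hall-type reassembly are needed. What your approach buys, once repaired, is a stronger structural fact (for torsion-free polycyclic groups, nilpotency of $\langle g\rangle\Fit{G}$ forces $g\in\Fit{G}$, via nilpotency of the normal closure); what the paper's approach buys is brevity and the fact that it works verbatim in the virtually polycyclic, torsion-allowing setting.

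Three points in your sketch need repair, though none is fatal. First, a torsion-free finite index subgroup cannot contain $\Fit{G}$ when the latter has torsion; you must work with $\Fit{H}=\Fit{G}\cap H$ inside a torsion-free normal finite index $H$ and replace $g$ by a power lying in $H$. Note that this reduction is where the infinite-order hypothesis is indispensable: your chain ``$L$ nilpotent $\Rightarrow\langle g^G\rangle\Fit{G}$ nilpotent $\Rightarrow g\in\Fit{G}$'' is false for torsion elements (in $G=S_4$ one has $\Fit{G}=V_4$, the subgroup generated by a transposition together with $V_4$ is dihedral of order $8$, yet the normal closure of the transposition is all of $S_4$); after the reduction you only conclude that a \emph{power} of $g$ lies in the Fitting subgroup, which is what contradicts infinite order. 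Second, ``passing to a further finite index subgroup of $G$'' does not by itself place $\alpha(g)$ in the identity component, since $g$ need not lie in that subgroup; either replace $g$ by a power again, or observe that in characteristic zero every unipotent element already lies in the identity component, because the Zariski closure of the cyclic group it generates is a connected one-parameter unipotent subgroup, so all unipotent elements of the (virtually solvable) closure of $\alpha(G)$ lie in the unipotent radical of its identity component, which is normal in the whole closure. Third, the fact $\mathrm{C}_G(\Fit{G})\subseteq\Fit{G}$ fails for general virtually polycyclic groups (consider $A_5\times\ZZ$, where $\Fit{G}=\ZZ$ is central); it holds for solvable groups, so invoke it only after the reduction to the (torsion-free, hence solvable) polycyclic subgroup $H$.
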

\begin{proof}
The quotient group $G / \Fit{G}$ is virtually abelian by a theorem of Mal'cev \cite{malcev1951}. Let $H $ be a normal finite index subgroup of $G$ containing $\Fit{G}$ such that the quotient group $H/\Fit{G}$ is abelian. The nilpotent subgroup $\Fit{H}$ is  normal in $G$ so that $\Fit{H} \le \Fit{G}$. On the other hand $L \cap H \lhd H$ and $\Fit{H} \lneq L \cap H$ by the assumption on the element $g $. We conclude that the subgroup $L \cap H$ is not nilpotent. Therefore the group $L$ itself is not  nilpotent.
\end{proof}

We are interested in  studying the center  of the Fitting subgroup $\Fit{G}$. Note that if $G$ is solvable then  $\mathrm{Z}(\Fit{G}) = \mathrm{C}_G(\Fit{G}) $ \cite[1.2.10]{lennox2004theory}. The following result gives more refined information.

\begin{lemma}
\label{lem:infinite order conjugates inside Fitting}
Let $G$ be a virtually polycyclic group. Let $g \in  \mathrm{C}_G(\mathrm{Z}(\Fit{G}))$ be an element whose projection to the quotient $G/\Fit{G}$  has infinite order. 
Then there is an element $x \in \Fit{G}$ such that the commutators $\left[g,x^n\right]$  are pairwise distinct modulo the  subgroup $\mathrm{FC}(\Fit{G})$ for all $n \in \NN$.
\end{lemma}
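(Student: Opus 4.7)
Set $F = \Fit{G}$, a finitely generated nilpotent group, and write $\alpha \in \Aut{F}$ for the conjugation automorphism $\alpha(y) = y^{g}$. The hypothesis provides $\alpha|_{\mathrm{Z}(F)} = \mathrm{id}$, while Lemma~\ref{lem:infinite order conjugates prep} ensures $L := \langle g \rangle \cdot F$ is not nilpotent. Using $\FC{F}/T(F) = \mathrm{Z}(F/T(F))$, where $T(F)$ is the (finite) torsion subgroup, together with the fact that the quotient of a torsion-free finitely generated nilpotent group by its center stays torsion-free finitely generated nilpotent, the quotient $F/\FC{F}$ is torsion-free finitely generated nilpotent; consequently its abelianization
\[
A := F/\left(\FC{F} \cdot [F,F]\right)
\]
is torsion-free. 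A direct computation gives $[g, x^{n}] = \alpha(x)^{-n} x^{n}$, whose image in $A$ equals, in additive notation, $n \cdot (\bar{x} - \overline{\alpha(x)})$. Since $A$ is torsion-free, these images are pairwise distinct for distinct $n \in \NN$ as soon as $\bar{x} - \overline{\alpha(x)} \ne 0$ in $A$, and distinctness in the quotient $A$ of $F/\FC{F}$ automatically lifts to distinctness in $F/\FC{F}$. The plan therefore reduces to producing $x \in F$ with $\alpha(x) \not\equiv x \pmod{\FC{F} \cdot [F,F]}$.

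I will produce such $x$ by contradiction, using the non-nilpotency of $L$. Suppose $\alpha$ acts as the identity on $F/(\FC{F} \cdot [F,F])$. Let $\mathfrak{f}$ be the Mal'cev rational Lie algebra of $F/T(F)$, with induced Lie algebra automorphism $d\alpha$ and center $\mathfrak{z}$. Under the Mal'cev correspondence the standing assumption reads $d\alpha \equiv \mathrm{id}$ on $\mathfrak{f}/(\mathfrak{z} + [\mathfrak{f},\mathfrak{f}])$. The hard part will be to also establish
\[
d\alpha|_{\mathfrak{z}} = \mathrm{id},
\]
which is not immediate from $\alpha|_{\mathrm{Z}(F)} = \mathrm{id}$, because $\mathfrak{z}$ corresponds to the potentially larger subgroup $\mathrm{Z}(F/T(F)) = \FC{F}/T(F)$. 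To bridge the gap I will use the standard finiteness of $\ker(\Aut{F} \to \Aut{F/T(F)})$: since $[\FC{F}, F] \subseteq T(F)$, the conjugation map $\FC{F} \to \Aut{F}$ lands inside this finite kernel, giving $[\FC{F} : \mathrm{Z}(F)] < \infty$. Hence the image of $\mathrm{Z}(F)$ has finite index in $\mathrm{Z}(F/T(F))$; as the latter is finitely generated torsion-free abelian, triviality of the induced action on a finite-index subgroup promotes to triviality on the whole group, yielding $d\alpha|_{\mathfrak{z}} = \mathrm{id}$.

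Combining both observations, $d\alpha$ acts as the identity on the subobject $(\mathfrak{z} + [\mathfrak{f},\mathfrak{f}])/[\mathfrak{f},\mathfrak{f}]$ (since it fixes $\mathfrak{z}$) and on the quotient $\mathfrak{f}/(\mathfrak{z} + [\mathfrak{f},\mathfrak{f}])$ (by our standing assumption) of $\mathfrak{f}/[\mathfrak{f},\mathfrak{f}]$. The short exact sequence
\[
0 \to (\mathfrak{z} + [\mathfrak{f},\mathfrak{f}])/[\mathfrak{f},\mathfrak{f}] \to \mathfrak{f}/[\mathfrak{f},\mathfrak{f}] \to \mathfrak{f}/(\mathfrak{z} + [\mathfrak{f},\mathfrak{f}]) \to 0
\]
then forces $(d\alpha - \mathrm{id})^{2} = 0$ on $\mathfrak{f}/[\mathfrak{f},\mathfrak{f}]$, so $d\alpha$ is unipotent on the abelianization. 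Because $\mathfrak{f}$ is generated as a Lie algebra by its abelianization, $d\alpha$ acts on each lower central quotient $\mathfrak{f}_{i}/\mathfrak{f}_{i+1}$ as a subquotient of a tensor power of its action on $\mathfrak{f}/[\mathfrak{f},\mathfrak{f}]$, hence unipotently; so $d\alpha$ is unipotent on all of $\mathfrak{f}$. By the Mal'cev correspondence this forces $L$ to be nilpotent, contradicting Lemma~\ref{lem:infinite order conjugates prep}.
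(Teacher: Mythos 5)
Your strategy is genuinely different from the paper's: you try to detect the non-commutation already in the abelianization $A = F/(\FC{F}\,[F,F])$ (with $F=\Fit{G}$) and then rule out the bad case via unipotence on the Mal'cev Lie algebra, whereas the paper locates the separating element $x$ at some level $Z_i/Z_{i-1}$ of the upper central series with $i\ge 2$, works only modulo the torsion $T_i$ of that layer, and passes to a power $g^m$ centralizing the torsion. Unfortunately there are two genuine gaps. First, the assertion that $A$ is torsion-free is justified by the principle \enquote{torsion-free finitely generated nilpotent $\Rightarrow$ torsion-free abelianization}, which is false: the torsion-free class-two group $\left< a,b,c \mid [a,b]=c^2,\ c \text{ central}\right>$ has abelianization $\ZZ^2\oplus \ZZ/2\ZZ$. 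What you actually need is that $\mathrm{Z}(\overline{F})\,[\overline{F},\overline{F}]$ is isolated in $\overline{F}=F/T(F)$ (equivalently, that the image of the center in the abelianization is a pure subgroup), and this is neither standard nor proved in your write-up; without it, $\bar{x}-\overline{\alpha(x)}$ could be a nonzero torsion element of $A$ and the multiples $n(\bar{x}-\overline{\alpha(x)})$ need not be pairwise distinct.

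Second, and more seriously, the contradiction at the end is not reached. Granting everything up to \enquote{$d\alpha$ is unipotent on $\mathfrak{f}$}, the Mal'cev correspondence only concerns $F/T(F)$: it yields that $\ZZ\ltimes_{\bar\alpha}(F/T(F))\cong L/T(F)$ is nilpotent, not that $L=\left<g\right>\Fit{G}$ is. Since $T(F)$ is a finite group on which $g$ and $F$ may act nontrivially, \enquote{$L/T(F)$ nilpotent} is perfectly compatible with \enquote{$L$ not nilpotent} (finite-by-nilpotent groups need not be nilpotent), so Lemma \ref{lem:infinite order conjugates prep} is not contradicted. Equivalently, if you try to run Hall's nilpotency criterion on $L$ you need unipotence of the $g$-action on all of $F^{\mathrm{ab}}$ including its torsion (and triviality-type control on $T(F)$), which the reductio hypothesis $\alpha\equiv\mathrm{id}$ on $A$ does not give. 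This torsion is exactly the difficulty the paper's proof is built around (the passage to $g^m$, the decompositions $Z_i/Z_{i-1}=T_i\oplus A_i$, and the use of $[\FC{\Fit{G}}:\mathrm{Z}(\Fit{G})]<\infty$), and the fact that in the paper the separating element may only appear at depth $i\ge 2$ of the upper central series indicates that your stronger claim --- that $\alpha$ must already act nontrivially on the abelianization modulo $\FC{F}$ --- may simply fail, so the gap is not merely cosmetic.
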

\begin{proof}
Consider the upper central series
\begin{equation}
\label{eq:central series}
\{e\}  = Z_0 \le Z_1 = \mathrm{Z}(\Fit{G}) \le Z_2 \le \cdots \le Z_k = \Fit{G}
\end{equation}
for the nilpotent subgroup $\Fit{G}$ of nilpotence degree $k\in \NN$. An automorphism $\alpha$ of the group $\Fit{G}$ is said to \emph{stabilize} this central series if $\alpha(xZ_i) = xZ_i$ for all $i \in \{0,\ldots,k-1\}$ and all elements $x \in Z_{i+1}$. Any subgroup of $\mathrm{Aut}(\Fit{G})$ which stabilizes a central series for the group $\Fit{G}$ must be nilpotent \cite[1.2.7]{lennox2004theory}.

We know that the subgroup $L = \left<g\right>  \Fit{G}$ is not nilpotent by Lemma \ref{lem:infinite order conjugates prep}. In particular the central quotient  $\overline{L} = L /\mathrm{Z}(L)$ is not nilpotent as well.
Moreover
\begin{equation}
\mathrm{C}_L(\Fit{G}) =     \mathrm{C}_G(\Fit{G}) \cap L = \mathrm{Z}(\Fit{G}) \cap L = \mathrm{Z}(\Fit{G}).
\end{equation}
This means that the central quotient  $\overline{L}$ embeds into $\mathrm{Aut}(\Fit{G})$ via its action by inner  automorphisms. Therefore the element $g$ does not centralize some   factor of the upper central series in Equation (\ref{eq:central series}) above.
We wish to make this observation a bit more precise, as follows.

Write $Z_i / Z_{i-1} = T_i \oplus A_i  $ where $T_i$ is the torsion subgroup of the abelian factor $Z_i / Z_{i-1}$ and $A_i$ is some direct complement for each index $i \in \{1,\ldots,k\}$. 
There is some $m \in \NN$ such that the power $g^m$ centralizes the torsion subgroup $T_i$ and that the condition $\left[g,A_i\right] \subset T_i$ implies $\left[g^{m},A_i\right] \subset Z_{i-1}$ for all $i$. Arguing as in the previous paragraph with respect to the power $g^{m}$, we deduce that the element $g$ satisfies $\left[g,A_i\right] \not\subset T_i$   for some fixed index   $i \in \{2,\ldots,k\}$. 
The possibility of $i=1$ here is excluded by the   assumptions.

Let $z \mapsto \overline{z}$ denote the quotient map from the subgroup $Z_i$ to the abelian factor $Z_i/Z_{i-1}$. Take an element   $x \in Z_i$ such that $\overline{x} \in A_i$ and that the commutator $y = \left[g,x\right]$ has $\overline{y} \notin T_i$.

To conclude the proof it remains to show that the commutators  $y_n = \left[g,x^n\right]$ are pairwise distinct modulo the subgroup $\FC{\Fit{G}}$ for all $n \in \NN$. Note that the center $\mathrm{Z}(\Fit{G})$ has finite index in the subgroup  $\FC{\Fit{G}}$  \cite[p. 98]{kaniuth1980ideals}. In particular the projection of $\FC{\Fit{G}} \cap Z_{i}$ to the factor group $Z_i/Z_{i-1}$  lies inside the torsion subgroup $T_i$. Therefore it will suffice to verify that the $\overline{y}_n$'s are pairwise distinct modulo the torsion subgroup $T_i$ regarded as elements of the subquotient $Z_i/Z_{i-1}$.  Relying on standard commutator identities we   write
\begin{equation} \label{eq:lemma-FFit-Comm}
 y_n = \left[g,x^{n}\right] = \left[g,x^{n-1}\right] \left[g,x\right]^{x^{n-1}} = y_{n-1} y \left[y,x^{n-1}\right]
\end{equation} for all $n \in \NN$. As $[y,x^{n-1}]\in Z_{i-1}$ we have  $\overline{y}_n=\overline{y}_{n-1}\cdot \overline{y}$   by Equation (\ref{eq:lemma-FFit-Comm}). This gives     $\overline{y}_n = \overline{y}^n$  by induction. The proof is complete.
\end{proof}

\subsection*{Crystallographic groups}
\label{page:FFit}
For an arbitrary group $G$ let $\mathrm{F}(G)$ denote the characteristic subgroup\footnote{This subgroup is called the \emph{polyfinite radical} of $G$ and denoted $W(G)$   in \cite{cornulier2015commability}.}   generated by all  finite normal subgroups. If the group $G$ is Noetherian then its subgroup $\mathrm{F}(G)$ is finite. 
Let  $\vFit{G}$ denote the characteristic subgroup of $G$ corresponding to $\mathrm{F}(G/\Fit{G}))$. See \cite{dekimpe1994structure}.

From now on   assume that the group $G$ is virtually polycyclic. This means that $\left[\vFit{G}:\Fit{G}\right] < \infty$ so that $\vFit{G}$ is virtually nilpotent. In fact $\vFit{G}$ is the maximal  virtually nilpotent normal subgroup of $G$.


Recall that the quotient $G/\Fit{G}$ is virtually abelian \cite{malcev1951}. Therefore the quotient  $\Gamma(G) = G/\vFit{G}$ is a virtually abelian group without   non-trivial finite normal subgroups. It follows that $\Gamma(G)$ is  a \emph{crystallographic group} \cite[Theorem 1.1]{dekimpe1994structure}. In other words,   the quotient  $\Gamma(G)$  is a uniform lattice in the group of isometries of the   Euclidean space $\mathbb{E}^d$ in some dimension $d \in \NN$. 
We shall use the following elementary fact regarding crystallographic groups.

\begin{lemma}
\label{lem:infinite order commutators in crystal}
Let $\Gamma$ be a crystallographic group and  $g \in \Gamma$ be a non-trivial torsion element. Then there is a translation    $h \in \Gamma$ so that the commutators $\left[g,h^n\right]$    are pairwise distinct translations in $\Gamma$ for all $n \in \NN$.
\end{lemma}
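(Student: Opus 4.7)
The plan is to use the Bieberbach structure theorem, which describes a crystallographic group $\Gamma$ as a finite extension of its translation subgroup $T = \Gamma \cap \RR^d$. Specifically, $T$ is normal, has finite index in $\Gamma$, is isomorphic to $\ZZ^d$, and \emph{spans} $\RR^d$ as a lattice; the quotient $\Gamma/T$ (the point group) embeds into $O(d)$. Identifying $\Gamma$ with a subgroup of $\mathrm{Isom}(\mathbb{E}^d) = O(d) \ltimes \RR^d$, I would write each element as a pair $(A,v)$ acting on $\RR^d$ by $x \mapsto Ax + v$, with composition $(A,v)(B,w) = (AB, Aw + v)$ and inverse $(A,v)^{-1} = (A^{-1}, -A^{-1}v)$.

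Write $g = (A,v)$. First I would observe that the rotational part $A$ must be non-trivial: otherwise $g = (I,v)$ would be a pure translation, and a non-identity translation has infinite order, contradicting the hypothesis that $g$ is a non-trivial torsion element. Since $A \neq I$, its fixed subspace $\ker(I - A) \subsetneq \RR^d$ is a proper linear subspace. Because the lattice $T$ spans $\RR^d$, it cannot be contained in this proper subspace, so there exists a translation $h = (I,w) \in T$ whose translation vector $w$ is \emph{not} fixed by $A$. This $w$ will serve as the desired choice.

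The remainder is a one-line commutator computation in $\mathrm{Isom}(\mathbb{E}^d)$. Since $h^n = (I, nw)$, one verifies directly from the formulas above that
\[
[g, h^n] \;=\; g^{-1} h^{-n} g h^n \;=\; \bigl(I,\; n(I - A^{-1})w\bigr)
\]
for every $n \in \NN$. This is manifestly a translation, and since $w$ was chosen so that $A^{-1}w \neq w$, the vector $(I - A^{-1})w$ is non-zero. Consequently the translations $[g, h^n]$ are pairwise distinct as $n$ ranges over $\NN$.

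The only ingredient carrying any genuine content is the appeal to Bieberbach's theorem, which simultaneously gives the explicit $O(d) \ltimes \RR^d$ parametrization and, crucially, the full-rank property of the translation lattice forcing a non-fixed lattice vector to exist. Once this structure is in hand, the proof is purely computational and I do not anticipate any substantive obstacle.
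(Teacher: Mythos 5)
Your proof is correct and follows essentially the same route as the paper: both rest on Bieberbach's theorem that the translations in $\Gamma$ form a finite-index, full-rank lattice, both choose a translation $h$ that $g$ fails to commute with, and both conclude that $[g,h^n]$ is the $n$-fold iterate of a single non-trivial translation, hence the commutators are pairwise distinct. The only differences are cosmetic: the paper selects $h$ as a translation not preserving the proper affine subspace $\mathrm{Fix}(g)$ and obtains $[g,h^n]=t^n$ inductively from commutator identities, whereas you select $h=(I,w)$ with $Aw\neq w$ (an equivalent condition, since $\mathrm{Fix}(g)=p+\ker(A-I)$ for torsion $g$) and compute $[g,h^n]=\bigl(I,n(I-A^{-1})w\bigr)$ directly in $O(d)\ltimes\RR^d$ coordinates, which as a small bonus avoids invoking that torsion isometries have fixed points.
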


\begin{proof}
The fixed point set $\mathrm{Fix}(g) = \{ x \in \mathbb{E}^d \, : \, gx = x\}$ of the torsion element $g$ is a proper affine subspace of the Euclidean space $\mathbb{E}^d$. The subgroup of translations in $\Gamma$ acts co-compactly on $\mathbb{E}^d$. Therefore there is some non-trivial translation $h \in \Gamma$ which does not preserve the set $\mathrm{Fix}(g)$. In particular the elements $g$ and $h$ do not commute. The element $t = \left[g,h\right]$ is a non-trivial translation (since  translations form a normal subgroup of $\Gamma$). Using standard commutator identities and relying on the fact that the subgroup of translations is abelian we get
\begin{equation}
    \left[g,h^n\right] = \left[g,h^{n-1}\right] \left[g,h\right]^{h^{n-1}} = \left[g,h^{n-1}\right] t ^{h^{n-1}} = \left[g,h^{n-1}\right] t
\end{equation} 
for all $n \in \NN$.   Arguing by induction gives $\left[g,h^n\right] = t^n  $ for all $n \in \NN$. Therefore the elements $\left[g,h^n\right]$ are   pairwise distinct.
\end{proof}

\subsection*{Characters of virtually polycyclic groups}


We begin by establishing  Theorem \ref{Thm:characters of virtually polycyclic groups} under the additional assumption that the character in question is totally faithful.

\begin{prop}
\label{prop: a totally faithful character is induced from FC}
Let $G$ be a virtually polycyclic group. Then any totally faithful character of $G$ is induced from $\FC{\vFit{G}}$.
\end{prop}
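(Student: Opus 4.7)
The plan is to reduce the proposition to showing that $\varphi(g)=0$ for every $g \in G \setminus \FC{\vFit{G}}$. Since $\FC{\vFit{G}}$ is characteristic in $\vFit{G}$ and hence normal in $G$, such vanishing immediately gives $\varphi = \widetilde{\varphi_{|\FC{\vFit{G}}}} = \Ind{G}{\FC{\vFit{G}}}{\varphi_{|\FC{\vFit{G}}}}$. I would split the elements $g \in G$ into four cases, organized according to the chain $\FC{\vFit{G}} \le \vFit{G} \le G$ and the structure of the crystallographic quotient $\Gamma(G) = G/\vFit{G}$.

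First, if $g \in \vFit{G}$, then since $\vFit{G}$ is a finitely generated virtually nilpotent normal subgroup of $G$, Lemma \ref{lem:new world order lemma} gives that $\varphi_{|\vFit{G}}$ is induced from $\FC{\vFit{G}}$, hence vanishes on $\vFit{G} \setminus \FC{\vFit{G}}$. Second, if $g \notin \mathrm{C}_G(\mathrm{Z}(\Fit{G}))$, then the abelian normal subgroup $\mathrm{Z}(\Fit{G}) \lhd G$ together with Lemma \ref{lem:vanishing-totally-faithful-centralizer} directly yields $\varphi(g)=0$.

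Assume next that $g \in \mathrm{C}_G(\mathrm{Z}(\Fit{G})) \setminus \vFit{G}$ and that the image of $g$ in $G/\Fit{G}$ has infinite order. Lemma \ref{lem:infinite order conjugates inside Fitting} then produces $x \in \Fit{G}$ such that the commutators $[g,x^n]$ lie in $\Fit{G}$ and are pairwise distinct modulo $\FC{\Fit{G}}$. Since $[\vFit{G}:\Fit{G}] < \infty$, a short centralizer-index computation gives $\FC{\Fit{G}} = \FC{\vFit{G}} \cap \Fit{G}$, so the first case implies that $\varphi$ vanishes on $\Fit{G} \setminus \FC{\Fit{G}}$. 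Both subgroups being characteristic in $G$, Bekka's Lemma \ref{lemma:Bekka's other lemma} applied with $H = \Fit{G}$ and $N = \FC{\Fit{G}}$ delivers $\varphi(g)=0$.

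The main obstacle is the remaining case, where $g \in \mathrm{C}_G(\mathrm{Z}(\Fit{G})) \setminus \vFit{G}$ but the image $\bar g$ in the crystallographic group $\Gamma(G)$ is a non-trivial torsion element. Here I would invoke Lemma \ref{lem:infinite order commutators in crystal} to produce a translation $\bar h \in \Gamma(G)$ for which the commutators $[\bar g, \bar h^n]$ are pairwise distinct translations in $\Gamma(G)$. Lifting to some $h \in G$, the commutators $[g,h^n]$ all lie in the preimage $T \le G$ of the translation subgroup of $\Gamma(G)$ and sit in pairwise distinct cosets of $\vFit{G}$. Since the translation subgroup is characteristic in $\Gamma(G)$, the subgroup $T$ is normal in $G$. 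The key verification is that $\varphi$ vanishes on $T \setminus \vFit{G}$: any element there projects to a non-trivial translation, hence has infinite order in $\Gamma(G)$ and therefore in $G/\Fit{G}$ (as $\vFit{G}/\Fit{G}$ is finite), and so is handled by the second or third case above. A final application of Bekka's lemma with $H = T$ and $N = \vFit{G}$ concludes that $\varphi(g)=0$, finishing the proof.
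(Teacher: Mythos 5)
Your proposal is correct and takes essentially the same route as the paper: reduce to showing $\varphi$ vanishes off $\FC{\vFit{G}}$, treat elements of $\vFit{G}$ via Lemma \ref{lem:new world order lemma}, elements outside $\mathrm{C}_G(\mathrm{Z}(\Fit{G}))$ via Lemma \ref{lem:vanishing-totally-faithful-centralizer}, elements with infinite-order image via Lemma \ref{lem:infinite order conjugates inside Fitting} together with Lemma \ref{lemma:Bekka's other lemma}, and torsion images via Lemma \ref{lem:infinite order commutators in crystal} and a second application of Lemma \ref{lemma:Bekka's other lemma} with the preimage of the translation subgroup. The only (harmless) deviation is that in the infinite-order case you obtain the vanishing on $\Fit{G}\setminus\FC{\Fit{G}}$ from the identity $\FC{\Fit{G}}=\FC{\vFit{G}}\cap\Fit{G}$ and your first case, whereas the paper gets it by applying Lemma \ref{lem:new world order lemma} a second time to $\Fit{G}$.
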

\begin{proof}
Let $\varphi \in \chars{G}$ be a totally faithful character. Fix an arbitrary element   $g\in G\setminus \FC{\vFit{G}}$. We will show that $\varphi(g)=0$. 

To begin with, we know by Lemma   \ref{lem:new world order lemma} that the restriction $\varphi_{|\vFit{G}} $ is induced from the subgroup $\FC{\vFit{G}}$. Therefore we may assume   that  $g\notin \vFit{G}$. Likewise Lemma \ref{lem:vanishing-totally-faithful-centralizer} allows us to assume   that  $g\in  \mathrm{C}_G(Z(\Fit{G}))$.


Assume   that the image of the element $g$ in the crystallographic group quotient $\Gamma = G/\vFit{G}$ has infinite order.
 According to Lemma \ref{lem:infinite order conjugates inside Fitting} there is an element $x \in \Fit{G}$ so that the commutators $\left[g,x^n\right] \in \Fit{G}$ are pairwise distinct modulo the subgroup $\FC{\Fit{G}}$.  However another application of 
 Lemma   \ref{lem:new world order lemma}   with respect to the restriction $\varphi_{|\Fit{G}}$ shows that this restriction  is induced from $\FC{\Fit{G}}$.  We  conclude that $\varphi(g)= 0 $  from the vanishing result for characters stated in Lemma \ref{lemma:Bekka's other lemma}.  
 
 
Finally assume that the image of the element $g$ in the crystallographic group quotient $\Gamma =  G / \vFit{G}$ has finite order.  There exists   an element   $h \in G \setminus \vFit{G}$  projecting to a translation in the crystallographic group quotient $\Gamma$ so that the commutators $\left[g,h^n\right]$ project to pairwise distinct translations in $\Gamma$,  see  Lemma \ref{lem:infinite order commutators in crystal}.  We have seen in the previous paragraph  that the character $\varphi$ vanishes   on all elements of $G$ that project onto a translation element in the crystallographic quotient $\Gamma$. We conclude that  $\varphi(g) =0  $   by relying again  on Lemma \ref{lemma:Bekka's other lemma} (with the normal subgroup of all translations in $\Gamma$ playing the role of the subgroup $H$).
 \end{proof}

We  conclude  the proof of our main result in the character theory of polycyclic groups.

\begin{proof}[Proof of Theorem \ref{Thm:characters of virtually polycyclic groups}]
Let $\varphi \in \chars{G}$ be any character of the  virtually polycyclic group $G$.
According to the Noetherian induction principle (Proposition \ref{prop:induced from totally faithful}) there is some finite index subgroup $H \le G$ and some character $\psi \in \chars{H}$ satisfying   $\varphi = \Ind{G}{H}{\psi}$ such that $\psi$ is totally faithful when regarded as a character of the subquotient $H/\ker \psi$. The character $\psi$ is induced from $\FC{\vFit{H/\ker \psi}}$ according to Proposition \ref{prop: a totally faithful character is induced from FC}. The  result follows by induction in stages (Lemma \ref{lem:induction in stages}).
\end{proof}




\section{Characters of virtually central groups}
\label{sec:fc induced}

The point of departure for the current section is the following well-known fact.

\begin{lemma}[Schur's lemma for characters]
\label{obs:restriction is multiplicative}
The restriction of any character of a countable group $G$ to its center $Z(G)$   is a multiplicative character.
\end{lemma}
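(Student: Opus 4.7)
The plan is to deduce this from Thoma's correspondence (Theorem~\ref{thm:thoma correspondence }), which is exactly what makes the finite factor structure available. Given a character $\varphi \in \Ch{G}$, let $(M,\pi,\tau)$ be the associated trace representation; since $\varphi$ is extremal, the von Neumann algebra $M$ is a factor, so its center consists only of scalar multiples of $1_M$.

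Next I would exploit centrality at the group level. For any element $z \in \mathrm{Z}(G)$, the unitary $\pi(z)$ commutes with $\pi(g)$ for every $g \in G$. Since $\pi(G)$ generates $M$ as a von Neumann algebra, by the double commutant theorem $\pi(z)$ lies in $\mathrm{Z}(M) = \CC \cdot 1_M$. Hence there is a scalar $\chi(z) \in \CC$ with $\pi(z) = \chi(z) \cdot 1_M$, and because $\pi(z)$ is unitary we in fact have $\chi(z) \in \TT$. Applying $\tau$ gives
\begin{equation}
    \varphi(z) = \tau(\pi(z)) = \chi(z)\tau(1_M) = \chi(z).
\end{equation}

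Finally, the map $z \mapsto \chi(z)$ inherits multiplicativity from $\pi$: for $z_1,z_2 \in \mathrm{Z}(G)$,
\begin{equation}
    \chi(z_1 z_2)\cdot 1_M = \pi(z_1 z_2) = \pi(z_1)\pi(z_2) = \chi(z_1)\chi(z_2) \cdot 1_M,
\end{equation}
so $\chi(z_1 z_2) = \chi(z_1)\chi(z_2)$. This shows $\varphi_{|\mathrm{Z}(G)} = \chi$ is a multiplicative character, as required. No significant obstacle is anticipated; the only conceptual input is Thoma's correspondence together with the fact that a factor has trivial center.
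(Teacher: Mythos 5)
Your proof is correct and follows essentially the same route as the paper: pass to the trace representation, use that a character corresponds to a factor, note that $\pi(\mathrm{Z}(G))$ lands in $\mathrm{Z}(M)=\CC\cdot 1_M$ (the paper states this directly; you spell out the double commutant step), and read off the unimodular scalar via $\tau$. The extra verification of multiplicativity is a harmless elaboration of the paper's concluding sentence.
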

\begin{proof}
Let $G$ be a countable group. Consider any character $\varphi \in \chars{G}$ with a corresponding trace representation   $(M,\pi,\tau)$.   The unitary operators $\pi(\mathrm{Z}(G))$ are contained in the center $\mathrm{Z}(M)$ of the von Neumann algebra $M$. Since $\varphi$ is a character the von Neumann algebra $M$ is a factor. Therefore  $\pi(z)$ is a unimodular complex scalar denoted $\chi(z)$ for each element $z\in \mathrm{Z}(G)$. It follows that   $\varphi_{|Z} = \chi$ is   a multiplicative character of the center $\mathrm{Z}(G)$.
\end{proof}

\subsection*{Restriction is a covering map}

Let $G$ be a virtually central group, i.e. a group satisfying $\left[G:\mathrm{Z}(G)\right] < \infty$. 
Fix a central subgroup 
 $Z \le \mathrm{Z}(G)$  with $\left[G:Z\right] < \infty$. 
 We obtain the  restriction map 
\begin{equation}
r: \chars{G} \to \chars{Z}, 
\quad
r : \varphi \mapsto \varphi_{|Z}.
\end{equation}
The restriction map $r$ is continuous, surjective \cite[Lemma 16]{thoma1964unitare} and has finite fibers \cite[Proposition 4.8.(1)]{lavi2020characters}. A deeper property of the map $r $ is as follows.

\begin{prop}
\label{prop:restriction is open}
The restriction map $r : \chars{G} \to \chars{Z}$ is open.
\end{prop}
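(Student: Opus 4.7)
The plan is to establish the sequential form of openness: given $\varphi \in \chars{G}$ with $r(\varphi) = \chi$ and any sequence $\chi_n \to \chi$ in $\widehat{Z}$, I will construct $\varphi_n \in r^{-1}(\chi_n)$ converging pointwise to $\varphi$. Since both $\chars{G}$ and $\widehat{Z}$ are metrizable (as subsets of $\ell^\infty(G)$), this suffices for openness. The finiteness of $[G:Z]$ first reduces the problem to a finite-dimensional picture: for any trace representation $(M,\pi,\tau)$ of $\varphi$, the center $\mathrm{Z}(M) = \CC \cdot 1_M$ contains $\pi(Z)$, so $\pi|_Z = \chi \cdot 1_M$; picking coset representatives $g_1,\dots,g_k$ for $Z$ in $G$ shows that $\pi(g)$ is a scalar multiple of some $\pi(g_i)$, so $M$ is linearly spanned by $\pi(g_1),\dots,\pi(g_k)$. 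Hence $M = \mathcal{M}_d(\CC)$ for some $d$ and $\varphi = \tfrac{1}{d}\tr\circ\pi$ for an irreducible representation $\pi : G \to U(d)$.

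The construction then proceeds via a cohomological twist. Writing $G$ as a central extension $1 \to Z \to G \to F \to 1$ with $F = G/Z$ finite and fixing a normalized set-theoretic section $s: F \to G$ with associated $2$-cocycle $c(f_1,f_2) = s(f_1)s(f_2)s(f_1f_2)^{-1} \in Z$, the representation $\pi$ becomes a $(\chi \circ c)$-projective representation $\bar\pi(f) = \pi(s(f))$ of the finite group $F$. The continuous homomorphism $\widehat{Z} \to H^2(F,\TT)$, $\chi' \mapsto [\chi' \circ c]$, lands in the finite Schur multiplier of $F$, hence has open kernel; so for $n$ large $(\chi_n\chi^{-1})\circ c$ is a coboundary in $Z^2(F,\TT)$. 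Since the coboundary map $\delta : C^1(F,\TT) \to Z^2(F,\TT)$ is a continuous homomorphism of compact abelian Lie groups whose induced map $C^1/\widehat{F} \to \delta(C^1)$ is a homeomorphism onto the closed image, I can select $1$-cochains $\eta_n : F \to \TT$ with $\delta\eta_n = (\chi_n\chi^{-1})\circ c$ and $\eta_n \to 1$ pointwise. Defining
\[
\pi_n(zs(f)) \; := \; \chi_n(z)\,\eta_n(f)\,\pi(s(f)),
\]
the identity $\delta\eta_n = (\chi_n\chi^{-1})\circ c$ is exactly the compatibility needed to make $\pi_n$ a well-defined (ordinary) representation; it is irreducible because $\pi_n(g)$ is a scalar multiple of $\pi(g)$ and the two families therefore have the same linear span; and $\pi_n|_Z = \chi_n \cdot I$ by construction. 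Then $\varphi_n := \tfrac{1}{d}\tr\circ\pi_n$ is a character with $r(\varphi_n) = \chi_n$, and the pointwise convergences $\chi_n \to \chi$ and $\eta_n \to 1$ yield $\varphi_n \to \varphi$.

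The main subtlety lies in the cohomological step: arranging that the bounding $1$-cochains $\eta_n$ can simultaneously satisfy the correct coboundary equation and tend to the trivial cochain. This combines the finiteness of the Schur multiplier $H^2(F,\TT)$ (so that the cohomology class stabilizes once $\chi_n$ is close to $\chi$) with the elementary topological fact that a continuous surjective homomorphism between compact abelian Lie groups induces a homeomorphism on its quotient by the kernel, which lets one transport the convergence $(\chi_n\chi^{-1})\circ c \to 1$ back to a convergent sequence of cochains.
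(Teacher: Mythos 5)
Your proof is correct and follows essentially the same route as the paper: twist the $(\chi\circ c)$-projective representation by a $1$-cochain, use the finiteness of $\mathrm{H}^2(G/Z,S^1)$ to make the class of $(\chi_n\chi^{-1})\circ c$ eventually trivial, and choose the bounding cochains $\eta_n\to 1$ by exploiting that the coboundary map realizes $\mathrm{B}^2$ as a quotient of $(S^1)^{G/Z}$ by the finite group $\mathrm{Hom}(G/Z,S^1)$, so convergence can be lifted. The only cosmetic differences are your preliminary reduction to finite-dimensional irreducible representations and the sequential formulation of openness, whereas the paper runs the identical twisting argument directly inside the factor trace representation $(M,\pi,\tau)$.
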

\begin{proof}
Denote   $Q \cong G/Z$ so that the quotient group $Q$ is finite. Consider the short exact sequence
\begin{equation}
\label{eq:central extension}
1  \to Z \to G \xrightarrow{p} Q \to 1.
\end{equation}
Let $s : Q \to G$ be an arbitrary section to the projection map $p$ satisfying $s(e_Q) = e_G$. Consider the 
cohomology class $\left[c\right] \in \mathrm{H}^2(Q,Z)$ naturally associated   to the central extension given in Equation (\ref{eq:central extension}). The class $\left[c\right]$ is represented by the $2$-cocycle $c : Q \times Q\to Z$ determined by 
\begin{equation} s(q_1)s(q_2) = c(q_1,q_2) s(q_1q_2) \quad \forall q_1,q_2 \in Q.
\end{equation}

Every multiplicative character $\chi \in \chars{Z}$ determines the cohomology class $\left[c_\chi\right] \in \mathrm{H}^2(Q,S^1)$ represented by the $2$-cocycle $c_\chi = \chi \circ c$. We observe that the  second cohomology group 
$ \mathrm{H}^2(Q,S^1)  $ is  finite. Indeed $ \mathrm{H}^2(Q,S^1)  $ can be regarded as a subgroup the   cohomology group $ \mathrm{H}^2(Q,\CC^*)$ and this latter group is finite by
\cite[Chapter 2, Theorem 3.22.(ii)]{karpilovsky1985projective}.
It follows that   the group of coboundaries $\mathrm{B}^2(Q,S^1)$ is an open subgroup of the group of cocycles $\mathrm{Z}^2(Q,S^1)$ when both are regarded as abelian topological groups. 

Note that the correspondence $\eta \mapsto \beta_\eta$ taking a point $\eta \in (S^1)^Q$ to the co-boundary $\beta_\eta \in \mathrm{B}^2(Q,S^1)$ determined by
\begin{equation}
\beta_\eta(q_1,q_2) = \eta(q_1) \eta(q_2) \eta(q_1 q_2)^{-1}
\end{equation}
is an homomorphism of compact abelian groups. This establishes the following   topological group isomorphism 
\begin{equation}
\label{eq:quotient top}
\mathrm{B}^2(Q,S^1) \cong (S^1)^Q/\mathrm{Hom}(Q,S^1).
\end{equation}

Fix a multiplicative character $\chi \in \chars{Z}$  of the central subgroup $Z$ and let $\varphi \in r^{-1}(\chi)$ be any character of the group $G$ that extends $\chi$. Consider  a character $\chi_1 \in \chars{Z}$ which is close to $\chi$, i.e. so that $\chi^{-1} \chi_1$ lies in some small neighborhood of the identity in $\chars{Z}$. Our goal is to find a character $\varphi_1 \in r^{-1}(\chi_1)$ which is close to $\varphi$.

Let $(M,\pi,\tau)$ be the trace representation corresponding to the character $\varphi$. We will    define a new representation $\pi_1$ of the group $G$ into the group of unitaries of the von Neumann algebra $M$ so that the desired character  $\varphi_1$ will be given by $\tau\circ \pi_1$.  
Consider the set-theoretical map $\overline{\pi} : Q \to M$ given by   $\overline{\pi} = \pi \circ s$. The map $\overline{\pi}$ satisfies
\begin{equation}
\label{eq:chi}
 \overline{\pi}(q_1) \overline{\pi}(q_2) = c_\chi(q_1, q_2) \overline{\pi}(q_1 q_2) \quad \forall q_1,q_2\in Q.
\end{equation}
Next, consider the map $\overline{\pi}_1 : Q \to M$ given by 
\begin{equation}
\label{eq:mu}
\overline{\pi}_1(q) = \eta(q) \overline{\pi}(q) 
\end{equation}
for some choice of a map $\eta \in (S^1)^Q$. Define
\begin{equation}
\label{eq:the map pi}
\pi_1 : G \to M, \quad \pi_1(s(q)z) = \overline{\pi}(q) \chi_1(z) \quad \forall q \in Q, z \in Z.
\end{equation}

The map $\pi_1$ defined in Equation (\ref{eq:the map pi}) is a group representation if and only if the condition
\begin{equation}
\label{eq:needed for rep}
\overline{\pi}_1(q_1) \overline{\pi}_1(q_2) = c_{\chi_1}(q_1, q_2) \overline{\pi}_1(q_1 q_2) \quad \forall q_1,q_2 \in Q
\end{equation} 
holds where $c_{\chi_1} = \chi_1 \circ c$.
 Substituting Equation (\ref{eq:mu})  into Equation (\ref{eq:needed for rep})  gives
\begin{equation}
 \eta(q_1) \eta(q_2) \overline{\pi}(q_1) \overline{\pi}(q_2) = \eta(q_1 q_2) c_{\chi_1}(q_1, q_2) \overline{\pi}(q_1 q_2).
\end{equation}
Rearranging  and using the definition of the cocycle $c_\chi$ as in Equation (\ref{eq:chi}) we obtain
that the map $\pi_1$ is a group representation if and only if 
\begin{equation}
\left[c_{\chi_1 \chi^{-1}}\right]  = \left[\beta_\eta\right] = 0 \in \mathrm{H}^2(Q,S^1).
\end{equation}
As the $2$-cocycle $\left[c_\chi\right]$ depends continuously on the choice of the character $\chi \in \chars{Z}$ and as the   co-boundaries $\mathrm{B}^2(Q,S^1)$ form an open subgroup of the group of co-chains $\mathrm{Z}^2(Q,S^1)$ we deduce that $\pi_1$ can be made into   a group representation for all characters $\chi'$ sufficiently close to $\chi$ and for \emph{some} suitable choice of $\eta \in (S^1)^Q$.

In order to find   a character $\varphi_1 \in r^{-1}(\chi_1)$ which is \emph{close} to the given character $\varphi$, the point $\eta$ has to be  close to the identity in the compact abelian group $(S^1)^Q$. This can always be achieved taking into account Equation (\ref{eq:quotient top}) and 
as   a  quotient map of topological groups is open. We conclude that the restriction map $r$ is open.
\end{proof}

\subsection*{Characters of FC-groups}
Recall that a finitely generated group is virtually central if and only if it is an FC-group\footnote{A group $G$ is called an \emph{FC-group} if    every   conjugacy class of $G$ is finite. See \cite[\S15.1]{scott2012group} for more information on FC-groups.}.

\begin{theorem}
\label{thm:restriction is covering}
Let $G$ be a finitely generated FC-group and $Z\le G$ be a central subgroup   satisfying $\left[G:Z\right] < \infty$. Then the restriction map $r : \chars{G} \to \chars{Z}$ is a finite-sheeted cover. 
 \end{theorem}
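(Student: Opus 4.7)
The plan is to combine the facts already established about $r$---continuous, surjective with finite fibers, and open by Proposition \ref{prop:restriction is open}---with a local-trivialization argument driven by the cocycle calculation appearing in that proposition. Since $\mathrm{Ch}(G)$ is Hausdorff, it suffices to show that for every $\chi_0 \in \mathrm{Ch}(Z)$ with fiber $r^{-1}(\chi_0) = \{\varphi_1,\ldots,\varphi_k\}$ there is an open neighborhood $U$ of $\chi_0$ together with continuous local sections $\sigma_i : U \to \mathrm{Ch}(G)$ satisfying $\sigma_i(\chi_0) = \varphi_i$ and $r^{-1}(U) = \bigsqcup_{i=1}^k \sigma_i(U)$.

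First I would observe that because $[G:Z]<\infty$ every character of $G$ corresponds to a finite-dimensional irreducible unitary representation: in any trace representation $(M,\pi,\tau)$ with $\varphi = \tau\circ\pi$ a character, $\pi(Z)$ lies in the center of the factor $M$ and is therefore scalar, so $M$ is generated by the finitely many unitaries $\pi(s(Q))$ together with $\mathbb{C}\cdot 1_M$ and must be a finite-dimensional matrix algebra. Consequently the fiber $r^{-1}(\chi)$ is canonically in bijection with the finite set of isomorphism classes of irreducible projective representations of the finite quotient $Q = G/Z$ carrying cocycle $c_\chi = \chi\circ c$.

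Next I would use that the classifying map $\chi \mapsto [c_\chi] \in \mathrm{H}^2(Q,S^1)$ is continuous into a finite group (finiteness of $\mathrm{H}^2(Q,S^1)$ being established inside the proof of Proposition \ref{prop:restriction is open}) and hence is locally constant. On some neighborhood $U$ of $\chi_0$ one can therefore select a continuous family $\eta : U \to (S^1)^Q$ with $c_\chi c_{\chi_0}^{-1} = \beta_{\eta(\chi)}$, exactly as in the argument of Proposition \ref{prop:restriction is open}. Twisting each of the $k$ irreducible projective representations of $Q$ classifying $r^{-1}(\chi_0)$ by $\eta(\chi)$ produces a continuous family of $k$ irreducible projective representations of $Q$ with cocycle $c_\chi$, and the corresponding twist of characters coincides with the explicit formula for $\pi_1$ in Equation~(\ref{eq:the map pi}). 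This gives the $k$ continuous local sections $\sigma_i : U \to \mathrm{Ch}(G)$ through each $\varphi_i$, and the bijection of projective-rep classes shows that $|r^{-1}(\chi)| = k$ for all $\chi \in U$.

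Using Hausdorffness to shrink $U$ so that the images $\sigma_i(U)$ are pairwise disjoint, the matching fiber count forces $r^{-1}(U) = \bigsqcup_{i=1}^k \sigma_i(U)$, and each $r|_{\sigma_i(U)}$ is a continuous open bijection onto $U$, hence a homeomorphism. The main obstacle is verifying that the local sections actually exhaust the fibers: openness of $r$ gives lower semicontinuity of fiber cardinality for free, but the matching upper bound genuinely relies on the representation-theoretic identification of a fiber with the set of isomorphism classes of irreducible projective $Q$-representations carrying a given cocycle class, whose cardinality is invariant under cohomologous perturbations.
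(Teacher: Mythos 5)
Your proof is correct in substance, but it follows a genuinely different route from the paper. The paper's proof is a short soft-topology argument: the restriction map $r$ is continuous, surjective and has finite fibers (by the results quoted before the theorem), it is open by Proposition \ref{prop:restriction is open}, hence a local homeomorphism, and then one invokes the general fact that a proper local homeomorphism between connected, locally path-connected metric spaces is a covering map, applied to the components of $\chars{G}$ and $\chars{Z}$ (which are tori). You instead build the even coverings by hand: you identify the fiber $r^{-1}(\chi)$ with the isomorphism classes of irreducible projective representations of the finite quotient $Q=G/Z$ with cocycle $c_\chi=\chi\circ c$ (using that a factor representation of a virtually central group is finite dimensional because the span of $\pi(s(Q))$ is already a weakly closed $*$-algebra), you get local constancy of the sheet number from local constancy of $\left[c_\chi\right]$ in the finite group $\mathrm{H}^2(Q,S^1)$, and you produce continuous local sections by scalar twisting, reusing the cocycle computation of Proposition \ref{prop:restriction is open}. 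What your approach buys is an explicit trivialization and a direct proof that the number of sheets is locally constant, without appealing to properness or the external covering-space criterion; in fact it re-proves openness of $r$ along the way. The price is a few details you should make explicit: (i) a \emph{continuous} choice of $\eta:U\to(S^1)^Q$ is slightly more than Proposition \ref{prop:restriction is open} literally states, but it follows because the quotient homomorphism $(S^1)^Q\to \mathrm{B}^2(Q,S^1)$ has finite kernel $\mathrm{Hom}(Q,S^1)$ and therefore admits local continuous sections; (ii) the sheets $\sigma_i(U)$ must be \emph{open} in $\chars{G}$, not merely homeomorphic to $U$ via $r$; this is immediate once you shrink $U$ so that $\sigma_i(U)\subset V_i$ for pairwise disjoint open sets $V_i$ around the $\varphi_i$, since then $\sigma_i(U)=V_i\cap r^{-1}(U)$ by the fiber count. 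With these two points spelled out your argument is complete.
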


\begin{proof}[Proof of Theorem \ref{thm:restriction is covering}]
The restriction map $r$ is   continuous,   surjective and has finite fibers. The restriction map $r$ is open by Proposition \ref{prop:restriction is open}. Hence $r$ is a local homeomorphism. 
Generally speaking, any  proper local homeomorphism of connected, locally path-connected metric spaces is  a covering map \cite[p. 303]{lee2010introduction}.
\end{proof}

We turn to studying the  dynamics of the dual action on the space of characters of FC-groups.   It turns out that the action of a certain finite index subgroup of automorphisms   on  each connected component of this space is conjugate to toral automorphisms.

In order to state a more precise conclusion  we need some terminology.
An action $\alpha_1$ on  the torus $\TT^k \cong \RR^k/\ZZ^k$ by toral automorphisms is called a  \emph{finite algebraic  factor} of some other action $\alpha_2$ if there is a  homomorphism $f : \TT^k \to \TT^k$ with finite fibers such that $\alpha_1 \circ f = f \circ \alpha_2$. Two such actions   $\alpha_1$ and $\alpha_2$ are called \emph{weakly algebraically isomorphic}  if each one is a finite algebraic  factor of the other. This is an equivalence relation. It is not hard to see  that if $\alpha_1$ is an finite algebraic  factor of $\alpha_2$ then the two actions $\alpha_1$ and $\alpha_2$ are in fact weakly algebraically isomorphic. For all this  see  \cite[\S2.2.1]{katok2011rigidity}.

\begin{prop}
\label{prop:action of dual of normal FC subgroup}
Let $G$ be a finitely generated FC-group.  Then there exists a finite index subgroup of $\Aut{G}$ whose dual action preserves each connected component of the space $\chars{G}$ and its  action on  each connected component is topologically conjugate to a linear action on a torus. Moreover these linear  actions are weakly algebraically isomorphic to the dual action on the torus $\chars{\mathrm{Z}(G)}$.
\end{prop}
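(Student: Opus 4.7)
The plan is to reduce to the dual action on $\chars{Z}$ for $Z = \mathrm{Z}(G)$ via the finite-sheeted covering $r : \chars{G} \to \chars{Z}$ supplied by Theorem \ref{thm:restriction is covering}. Since $Z$ is a finitely generated abelian group, I will write $Z \cong \ZZ^k \oplus T$ for some $k \ge 0$ and a finite abelian group $T$, giving $\chars{Z} = \widehat{Z} \cong \TT^k \times \widehat{T}$, a finite disjoint union of $k$-tori indexed by $\widehat{T}$. The subgroup $Z$ is characteristic in $G$, so $\Aut{G}$ acts on $Z$ and dually on $\widehat{Z}$.

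First I will find a finite index subgroup $A_0 \le \Aut{G}$ acting linearly on each connected component of $\chars{Z}$. Since the components are finite in number, a finite index subgroup preserves each of them and thereby acts trivially on the finite set $\widehat{T}$, hence trivially on $T$ itself. The set of splittings $Z \cong \ZZ^k \oplus T$ is a torsor over the finite group $\mathrm{Hom}(\ZZ^k, T) \cong T^k$, so a further finite index subgroup $A_0$ stabilizes a chosen splitting. Every $\alpha \in A_0$ then has matrix form $\begin{pmatrix} a_\alpha & 0 \\ 0 & I \end{pmatrix}$ with respect to this splitting, and its dual action on every component $\TT^k \times \{\chi\}$ of $\chars{Z}$ becomes the linear toral automorphism $(a_\alpha^{-1})^*$ fixing the origin, independent of $\chi$.

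Next I will lift to $\chars{G}$. Since $r$ is a finite-sheeted cover, the space $\chars{G}$ has finitely many connected components, each a connected finite cover of a torus and hence itself homeomorphic to $\TT^k$. A further finite index subgroup of $A_0$ preserves every component of $\chars{G}$. On such a component $X$ covering some component $Y \cong \TT^k$ of $\chars{Z}$, every $\alpha$ acts as a continuous lift of a linear toral automorphism fixing $0 \in Y$, and any such lift is an affine toral map on $X$. The preimages of $0 \in Y$ in $X$ form a finite set of size equal to the degree of the cover. Passing to the finite index subgroup $A$ that pointwise fixes every such preimage in every component of $\chars{G}$, each affine lift must fix some preimage of $0$; choosing this fixed preimage as the origin of $X \cong \TT^k$ turns the affine lift into a linear one, since an affine lift of a linear toral automorphism which fixes the origin is itself linear.

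Finally, the covering $X \to Y$ is a homomorphism of tori with finite kernel that intertwines the linear $A$-actions on $X$ and $Y$, exhibiting the action on $Y$ as a finite algebraic factor of the action on $X$; hence they are weakly algebraically isomorphic by the equivalence recalled in the paper's preamble on this notion. Since the action of $A$ on every component of $\chars{Z}$ coincides with its action on the identity component $\TT^k$, the linear action on every component of $\chars{G}$ is weakly algebraically isomorphic to the dual action on the torus $\chars{\mathrm{Z}(G)}$. I anticipate the main obstacle to be the third step, namely promoting affine lifts to linear ones, which requires the additional finite index reduction to single out a common fixed preimage of the origin in each component.
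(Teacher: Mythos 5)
Your argument is correct and follows essentially the same route as the paper: reduce via the finite-sheeted equivariant cover $r:\chars{G}\to\chars{\mathrm{Z}(G)}$ of Theorem \ref{thm:restriction is covering}, pass to a finite index subgroup preserving connected components, and conjugate the lifted action to a linear one by stabilizing a point of the fiber over the origin --- your third step re-proves, using affine lifts, exactly the content of the paper's Lemma \ref{lem:dynamics on finite covers}. The only differences are matters of detail: you spell out the preliminary reduction that linearizes the dual action on each component of $\chars{\mathrm{Z}(G)}$ (trivializing the action on the torsion part and fixing a splitting), and you verify the weak algebraic isomorphism directly via the covering isogeny rather than citing the discussion in \cite{katok2011rigidity}.
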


We precede the proof of Proposition \ref{prop:action of dual of normal FC subgroup}   with a general result on equivariant covers of tori.

\begin{lemma}
\label{lem:dynamics on finite covers}
Let $T_1$ and $T_2$ be a pair of    tori. Let $\Gamma$ be a group acting on $T_1$ as well as on $T_2$ by homeomorphisms such that there is a  finite-sheeted $\Gamma$-equivariant covering map $r : T_1 \to T_2$. Assume that the $\Gamma$-action   on the torus $T_2$  is by toral automorphisms. Then $\Gamma$ admits a finite-index subgroup $\Gamma_0$  such that the $\Gamma_0$-action on the torus $T_1$ is topologically conjugate to toral automorphisms. 
\end{lemma}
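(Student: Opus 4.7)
The plan is to reduce to the case where a finite-index subgroup $\Gamma_0 \leq \Gamma$ fixes a point in $T_1$, and then linearize the $\Gamma_0$-action by passing to universal covers.

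Since the $\Gamma$-action on $T_2$ is by toral automorphisms, it fixes the identity $e_2 \in T_2$. The $\Gamma$-equivariance of $r$ then makes the finite fiber $F = r^{-1}(e_2)$ a $\Gamma$-invariant subset of $T_1$, so I would take $\Gamma_0$ to be the stabilizer in $\Gamma$ of any chosen base point $x_0 \in F$. After replacing $\Gamma$ by $\Gamma_0$, the action on $T_1$ fixes $x_0$.

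Using the classification of finite coverings of tori, I would identify $T_2$ with $\RR^n/\Lambda_2$ (taking $e_2 = 0$) and produce a homeomorphism $h : T_1 \to \RR^n/\Lambda_1$ with $h(x_0) = 0$, for some finite-index sublattice $\Lambda_1 \leq \Lambda_2$, so that $r$ corresponds via $h$ to the natural projection $q : \RR^n/\Lambda_1 \to \RR^n/\Lambda_2$. Conjugating the $\Gamma_0$-action across $h$, it suffices to check that the resulting homeomorphism $\varphi_\gamma$ of $\RR^n/\Lambda_1$ is a toral automorphism for every $\gamma \in \Gamma_0$.

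For the key rigidity step, I would lift each $\varphi_\gamma$ to a homeomorphism $\tilde\varphi_\gamma : \RR^n \to \RR^n$ fixing $0$, and lift the corresponding toral automorphism $\psi_\gamma$ on $T_2$ to the linear map $A_\gamma \in \GL{n}{\RR}$ preserving $\Lambda_2$. The $\Gamma_0$-equivariance of $q$ becomes the condition
\begin{equation*}
\tilde\varphi_\gamma(v) - A_\gamma v \in \Lambda_2 \quad \forall v \in \RR^n.
\end{equation*}
Since the left-hand side is continuous in $v$ and $\Lambda_2$ is discrete, it must be constant on the connected space $\RR^n$; evaluating at $v = 0$ forces this constant to vanish. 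Hence $\tilde\varphi_\gamma = A_\gamma$, and the well-definedness of $\varphi_\gamma$ on $\RR^n/\Lambda_1$ automatically forces $A_\gamma \Lambda_1 = \Lambda_1$, exhibiting $\varphi_\gamma$ as a toral automorphism.

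The main obstacle is the construction of the homeomorphism $h$ in the second paragraph: it must simultaneously realize $r$ as the standard quotient $q$ and send the specific base point $x_0$ to the origin. Both demands can be arranged using that the deck group of $q$ acts transitively on fibers, but setting this identification up carefully is the technical crux. Once $h$ is in place, the rigidity of pointed continuous maps between tori reduces everything to the short continuity--discreteness argument above.
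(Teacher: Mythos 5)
Your proposal is correct and follows essentially the same route as the paper: identify $T_1$ with $\RR^n/\Lambda_1$ for the finite-index sublattice $\Lambda_1 = r_*\pi_1(T_1) \le \Lambda_2$, take $\Gamma_0$ to be the stabilizer of a point in the fiber over the origin, and show the lifted action on the universal cover is the linear lift of the $T_2$-action. Your continuity--discreteness argument simply makes explicit the equivariance/linearity step that the paper's proof asserts more tersely, and the base-point adjustment you worry about is handled exactly as you suggest, by composing with a deck translation.
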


\begin{proof}
Assume that the two tori $T_1$ and $T_2$ are $k$-dimensional for some $k \in \NN$. Let $p_1 : \RR^k \to T_1$  be the universal covering map of the torus $T_1$. The composition $p_2 = r \circ p_1$ is the universal covering map of the torus $T_2$. 
Consider the finite-index subgroup  $\Lambda = r_* \pi_1(T_1)$ of the fundamental group $\pi_1(T_2) \cong \ZZ^k$. Up to topological conjugacy, we may identify the pair of tori $T_1$ and $T_2$ with the two quotient spaces $\RR^k/\Lambda$ and $\RR^k / \ZZ^k$ respectively. In particular the point $0 \in \RR^k$ satisfies $p_1(0) = \left[0\right] \in \RR^k / \Lambda$ and $p_2(0) = \left[0\right] \in \RR^k / \ZZ^k$.

The action of the group $\Gamma$  on the torus $T_2$ by toral automorphisms  lifts to a linear action  on the universal cover $\RR^k$ that fixes the point $0 \in \RR^k$. Such an action is given by some homomorphism $f : \Gamma \to \mathrm{GL}_k(\ZZ)$. The two covering maps $p_1$ and $p_2$ are  $\Gamma$-equivariant with respect to this action. 


Let $\Gamma_0$  be the stabilizer  of the point $\left[0\right] \in \RR^k / \Lambda \cong T_1$ for the action of the group $\Gamma$ on the torus $T_1$.
Since the $\Gamma$-action on torus $T_2$ fixes the point  $\left[0\right]\in \R^k/\Z^k \cong T_2$ and as the covering $r$ is finite-sheeted we have that $\left[\Gamma:\Gamma_0\right] < \infty$.  The restricted $\Gamma_0$-action on the universal cover $\RR^k$ preserves the $\Lambda$-orbit of the point $0\in\RR^k$. T    his orbit can be naturally identified with $\Lambda$ itself regarded as a subgroup of the universal cover $\RR^d$.
Finally, the restriction $f_{|\Gamma_0} : \Gamma_0 \to \mathrm{GL}_k(\ZZ)$ descends to an action of the subgroup $\Gamma_0$ on the torus $T_1$ by toral automorphisms.
\end{proof}

\begin{proof}[Proof of Proposition \ref{prop:action of dual of normal FC subgroup}]
 Let $d \in \NN$ denote the rank of center $\mathrm{Z}(G)$ of the group $G$. The dual space $\chars{\mathrm{Z}(G)}$ can be identified with a disjoint union of finitely many $d$-dimensional tori. The group $G$ satisfies   $\left[G:\mathrm{Z}(G)\right] < \infty$. Therefore the space $\chars{G}$ is a finite-sheeted cover of $\chars{\mathrm{Z}(G)}$ by Theorem \ref{thm:restriction is covering}. As such $\chars{G}$ is also a disjoint union of   $d$-dimensional tori. The group  $\Aut{G}$ acts on the space $\chars{G}$  by homeomorphisms. Let $\Gamma\leq \Aut{G}$ be a finite index subgroup  that preserves each  connected component of the space $\chars{G}$. We   conclude the proof  by relying on Lemma \ref{lem:dynamics on finite covers} individually with respect to each connected component of this cover. The fact that the  action on each connected component  is weakly algebraically isomorphic to the dual  action on the torus $\chars{\mathrm{Z}(G)}$ follows from the discussion on \cite[p. 54]{katok2011rigidity}.
\end{proof}

\subsection*{Characters with finite index kernel}

 We consider the behaviour of characters  of FC-groups whose kernel has finite index.

 \begin{lemma}
 \label{lem:abelian-finite-index-kernel-G-invariant}
 Let $A$ be discrete abelian group  and $\Gamma \le \mathrm{Aut}(A)$ be any subgroup such that $\Gamma \ltimes A$ is finitely generated. Let $\varphi \in \chars{A}$ be a $\Gamma$-invariant character. Then there is a sequence of $\Gamma$-invariant characters $\varphi_n \in \chars{A}$  with $\left[A:\ker \varphi_n\right] < \infty$ satisfying 
 $\varphi = \lim_n \varphi_n$ in the pointwise topology.
 \end{lemma}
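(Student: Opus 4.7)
The plan is to reformulate the statement in terms of Pontryagin duality and then invoke the structure theorem for finitely generated abelian groups. Since the group $A$ is abelian, its character space $\chars{A}$ coincides with the Pontryagin dual $\widehat{A}$ by Lemma \ref{obs:restriction is multiplicative}. A multiplicative character $\chi \in \widehat{A}$ has $[A : \ker \chi] < \infty$ if and only if the image $\chi(A)$ is a finite subgroup of $S^1$, which in turn holds if and only if $\chi$ has finite order in the compact abelian group $\widehat{A}$. The $\Gamma$-invariant characters form a closed subgroup $\widehat{A}^\Gamma \le \widehat{A}$, and the task reduces to showing that the torsion elements of $\widehat{A}^\Gamma$ are dense there.

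The key algebraic input is that $A$ is a finitely generated $\ZZ[\Gamma]$-module. Indeed, if $g_1, \dots, g_m$ generate the group $\Gamma \ltimes A$, one may write each $g_i = \gamma_i a_i$ with $\gamma_i \in \Gamma$ and $a_i \in A$; the $\Gamma$-submodule $A'$ of $A$ generated by $\{a_1, \dots, a_m\}$ then satisfies $\Gamma \ltimes A' \supseteq \{g_1, \ldots, g_m\}$, which forces $A' = A$. Consequently the group of $\Gamma$-coinvariants
\begin{equation*}
A_\Gamma := A \,\bigl/\, \langle a - \gamma \cdot a \: : \: a \in A,\, \gamma \in \Gamma\rangle
\end{equation*}
is a quotient of $A$ generated as an abelian group by the images of $a_1, \dots, a_m$, hence is a finitely generated abelian group.

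By Pontryagin duality the quotient map $A \to A_\Gamma$ induces a topological group isomorphism $\widehat{A_\Gamma} \xrightarrow{\sim} \widehat{A}^\Gamma$, and under this isomorphism a $\Gamma$-invariant character $\chi \in \widehat{A}$ has finite-index kernel if and only if the corresponding character $\tilde\chi \in \widehat{A_\Gamma}$ does, because $A/\ker\chi \cong A_\Gamma/\ker\tilde\chi$ whenever $\chi$ is $\Gamma$-invariant. Writing $A_\Gamma \cong \ZZ^k \oplus T$ with $T$ a finite abelian group via the structure theorem, the dual becomes $\widehat{A_\Gamma} \cong \TT^k \times \widehat{T}$, in which the torsion subgroup $(\QQ/\ZZ)^k \times \widehat{T}$ is visibly dense. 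Pulling this back through the Pontryagin isomorphism produces a sequence of $\Gamma$-invariant finite-order characters of $A$ approximating the given $\varphi$ pointwise, as required. There is no substantial obstacle; the only mildly subtle point is the verification that $A_\Gamma$ is finitely generated, which rests on the elementary observation above that $A$ is finitely generated as a $\ZZ[\Gamma]$-module.
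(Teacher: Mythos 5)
Your proof is correct and follows essentially the same route as the paper: pass to the group of $\Gamma$-coinvariants (finitely generated because $\Gamma \ltimes A$ is), identify its Pontryagin dual with the closed subgroup of $\Gamma$-invariant characters, approximate there by torsion (equivalently, finite-index-kernel) characters, and lift back to $A$. The only difference is cosmetic: you spell out the finite generation of the coinvariants and the density of torsion characters via the structure theorem, which the paper asserts without detail.
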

 \begin{proof}
Let $A_\Gamma$ denote the $\Gamma$-invariant subgroup of $A$   generated by all  elements of the form  $\gamma_1 a - \gamma_2 a$ where $a \in A$ and $\gamma_1,\gamma_2 \in \Gamma$.  The group of \emph{co-invariants}  $A/A_\Gamma$ is finitely generated as the group $\Gamma\ltimes A$ is assumed to be finitely generated. Note that  $\varphi_{|A_\Gamma} = 1$.
The Pontryagin dual of the quotient $ A/A_\Gamma$ can be identified with the closed subgroup \begin{equation}
    \widehat{A}^\Gamma = \{\chi \in \widehat{A} \: : \: \gamma \chi = \chi \, \forall \gamma \in \Gamma\}
\end{equation} of the Pontryagin dual $\widehat{A}$   consisting of the  $\Gamma$-invariant characters.  The action  of the group $\Gamma$ on the quotient $A/A_\Gamma$ and on its dual $\widehat{A}^\Gamma$ is trivial.

The character $\varphi$ can   be regarded as a character of the quotient group $A/A_\Gamma$. As the quotient $A/A_\Gamma$ is finitely generated   the   character $\varphi$ is a pointwise limit of characters  of  $A/A_\Gamma$ with finite index kernels. We may lift the limiting characters from $A/A_\Gamma$ back to $A$ to obtain a sequence  $\varphi_n$ of $\Gamma$-invariant characters with finite index kernels converging to $\varphi$.
 \end{proof}

\begin{lemma}
\label{lem:finite index kernel for FC}
Let $G$  be a finitely generated FC-group. Let $\Gamma \le \mathrm{Aut}(G)$ be any subgroup of automorphisms. Then there is a finite index subgroup $\Gamma_0 \le \Gamma$ such that any finitely supported $\Gamma_0$-invariant probability measure  on the space $\chars{G}$  is a weak-$*$ limit   of $\Gamma_0$-invariant probability measures supported on  finitely many characters  whose kernels   all have    finite index in $G$.
\end{lemma}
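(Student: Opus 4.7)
The plan is to use Proposition \ref{prop:action of dual of normal FC subgroup} to identify each component $T$ of $\chars{G}$ with a torus $\TT^d$ in such a way that (i) $\Gamma_0$ acts linearly and (ii) the toral torsion points coincide with characters of $G$ in $T$ having finite-index kernel. Torsion elements of $\widehat{\mathrm{Z}(G)}$, which are precisely the characters of $\mathrm{Z}(G)$ with finite-index kernel, are dense in every connected component of $\widehat{\mathrm{Z}(G)}$; each such torsion element has finite-index stabilizer in $\Aut{G}$, since its orbit is contained in the finite set of characters of $\mathrm{Z}(G)$ of any fixed order. Choose a torsion basepoint on each component of $\widehat{\mathrm{Z}(G)}$. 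Lifting through the finite cover $r \colon \chars{G} \to \widehat{\mathrm{Z}(G)}$ from Theorem \ref{thm:restriction is covering} and applying Lemma \ref{lem:dynamics on finite covers} componentwise, we obtain on each component $T$ of $\chars{G}$ a basepoint $e_T$ fixed by some finite-index subgroup of $\Aut{G}$ whose action on $T$ is toral with respect to the identification $(T, e_T) \cong (\TT^d, 0)$. Since $r(e_T)$ has finite-index kernel in $\mathrm{Z}(G)$ and $[G \colon \mathrm{Z}(G)] < \infty$, the basepoint $e_T$ is itself a character of $G$ with finite-index kernel in $G$. Let $\Gamma_0 \le \Gamma$ be the intersection of $\Gamma$ with these finite-index subgroups of $\Aut{G}$ taken over the finitely many components.

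Given any finitely supported $\Gamma_0$-invariant probability measure $\mu$ on $\chars{G}$, I would decompose $\mu$ into a convex combination of uniform measures on individual $\Gamma_0$-orbits and treat each orbit separately. Fix an orbit $O = \Gamma_0 \cdot \varphi$ with finite-index stabilizer $\Gamma_\varphi \le \Gamma_0$. The character $\varphi$ lies in some component $T$, and under the identification $(T, e_T) \cong (\TT^d, 0)$ the subgroup $\Gamma_\varphi$ acts by toral automorphisms. Hence the fixed set $T^{\Gamma_\varphi}$ is an intersection of closed subgroups of $\TT^d$, itself a closed subgroup. Since torsion points are dense in every closed subgroup of a torus, I may choose torsion points $\varphi_n \in T^{\Gamma_\varphi}$ with $\varphi_n \to \varphi$.

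The central identification is that the torsion points of $T$, with respect to the toral structure having $e_T$ as identity, are exactly the characters of $G$ in $T$ whose kernel has finite index. With compatible basepoints $e_T$ on $T$ and $r(e_T)$ on $r(T)$, the restriction $r \colon T \to r(T)$ is a finite-sheeted homomorphism of tori, so torsion in $T$ corresponds under $r$ to torsion in $r(T)$; as $r(e_T)$ is already torsion in $\widehat{\mathrm{Z}(G)}$, this torsion set coincides with the characters of $\mathrm{Z}(G)$ in $r(T)$ of finite-index kernel, which pulls back under $r$ to the characters of $G$ in $T$ of finite-index kernel (using that $\ker(\varphi|_{\mathrm{Z}(G)}) = \ker(\varphi) \cap \mathrm{Z}(G)$ has finite index in $\mathrm{Z}(G)$ if and only if $\ker(\varphi)$ has finite index in $G$). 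Each $\varphi_n$ is therefore a $\Gamma_\varphi$-fixed character of $G$ with finite-index kernel, and the averaged measure
\begin{equation*}
    \nu_n = \frac{1}{|O|} \sum_{g \in \Gamma_0 / \Gamma_\varphi} \delta_{g \cdot \varphi_n}
\end{equation*}
is $\Gamma_0$-invariant, finitely supported on characters of finite-index kernel (a condition preserved by $\Aut{G}$), and converges weakly to the uniform measure on $O$. The main obstacle will be the careful bookkeeping in the first paragraph, to arrange the basepoints $e_T$ so that the toral group structure on each component matches the finite-index-kernel condition on characters of $G$; once this is in place, the density of torsion in closed subgroups of tori delivers the approximation routinely.
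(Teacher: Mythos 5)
Your proposal is correct and follows essentially the same route as the paper: pass to the finite index subgroup of $\Aut{G}$ supplied by Proposition \ref{prop:action of dual of normal FC subgroup} (built from Theorem \ref{thm:restriction is covering} and Lemma \ref{lem:dynamics on finite covers}) so that each connected component of $\chars{G}$ carries a toral structure on which the action is linear, and identify the characters of $G$ with finite index kernel with the torsion points of each component through the restriction map to $\widehat{\mathrm{Z}(G)}$ (using that $\left[G:\ker\varphi\right]<\infty$ iff $\left[\mathrm{Z}(G):\ker\varphi\cap\mathrm{Z}(G)\right]<\infty$ iff $r(\varphi)$ is torsion). The ``bookkeeping'' you flag as the main obstacle is genuinely the content of the paper's identification, and it is handled exactly as you suggest: in the proof of Lemma \ref{lem:dynamics on finite covers} the map $r$ becomes the quotient homomorphism $\RR^d/\Lambda\to\RR^d/\ZZ^d$ once compatible basepoints are chosen, and your choice of torsion basepoints with finite-index stabilizers on the non-identity components is a legitimate (and more explicit) way to arrange this. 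The only divergence is the concluding approximation step: the paper invokes Lemma \ref{lem:abelian-finite-index-kernel-G-invariant} (invariant characters of a discrete abelian group are limits of invariant characters with finite index kernel, via co-invariants), whereas you argue directly that the fixed set $T^{\Gamma_\varphi}$ is a closed subgroup of a torus in which torsion points are dense, and then average over the $\Gamma_0$-orbit. These are two formulations of the same fact --- Lemma \ref{lem:abelian-finite-index-kernel-G-invariant} applied to the dual $\ZZ^d$ of a component is precisely your torsion-density statement --- but your version has the merit of spelling out the orbit decomposition, the stabilizer argument and the averaging that the paper's one-line conclusion leaves implicit, at the cost of redoing by hand a lemma the paper already has.
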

\begin{proof}
Consider the restriction map $r : \chars{G} \to \chars{\mathrm{Z}(G)}$. A given character $\varphi \in \chars{G}$ satisfies $\left[G:\ker \varphi\right] < \infty$ if and only if $\left[\mathrm{Z}(G): \ker r(\varphi)\right] < \infty$. The latter condition holds true if and only if the restriction $r(\varphi)$ is a torsion element of the dual group $\widehat{\mathrm{Z}(G)}$.

Let $\Gamma_0$ denote the intersection of the subgroup   $\Gamma$ with the finite index subgroup of $\Aut{G}$ provided by Proposition \ref{prop:action of dual of normal FC subgroup}. The group $\Gamma_0$ preserves every connected component of the space $\chars{G}$ and acts on it linearly (up to  topological conjugacy). Additionally it follows from Proposition  \ref{prop:action of dual of normal FC subgroup} that the  preimage under the restriction map $r$ of the torsion subgroup of $\chars{\mathrm{Z}(G)}$ can be identified with the torsion subgroup of every connected component of $\chars{G}$ regarded as a compact abelian group. The desired conclusion follows from Lemma \ref{lem:abelian-finite-index-kernel-G-invariant}. 
\end{proof}

 \section{Dense periodic measures}
 \label{sec:sofic systems}
 
Let $G$ be a discrete group acting on a compact  metrizable space  $X$  by homeomorphisms. The pair $(G,X)$ is called a \emph{topological dynamical system}. We restrict our attention to the situation where the acting group $G$ is amenable. 

 \begin{definition*}
 \label{def:sofic action}
 The system $(G,X)$ has   \emph{dense periodic measures} if every $G$-invariant Borel probability measure on the space $X$ is a weak-$*$ limit of $G$-invariant probability measures with finite supports.
 \end{definition*}
 
For the purpose of showing that the system $(G,X)$ has dense periodic measures it suffices to consider ergodic $G$-invariant probability measures on $X$.
 In that case, it is always possible to   take the limiting   $G$-invariant probability measures to be the uniform probability measures supported on  certain finite $G$-orbits  \cite[\S2]{levit2019infinitely} \footnote{The paper \cite{levit2019infinitely} deals with the special case of the topological dynamical system $(G,\Sub{G})$, however the proof of the above mentioned fact is true  more generally.}.
 
\begin{remark}
Throughout this work it will most often be the case that the space $X$ is a compact abelian group and that the group $G$ is acting on $X$ by group automorphisms.
\end{remark}
\subsection*{Hereditary properties} 

We discuss the behaviour of the density of periodic measures   for topological dynamical systems with respect to some basic operations.

\begin{lemma}
\label{lem:DPM-finite-index}
Let $H \leq G$ a finite index normal subgroup. If $(H,X)$ has dense periodic measures then so does $(G,X)$. 
\end{lemma}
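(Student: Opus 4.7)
The plan is to lift approximations from $H$-invariance to $G$-invariance by averaging over the finite quotient $G/H$. Let $\mu$ be a $G$-invariant Borel probability measure on $X$. Since $\mu$ is in particular $H$-invariant, the hypothesis on $(H,X)$ provides a sequence $\mu_n$ of $H$-invariant, finitely supported probability measures on $X$ with $\mu_n \to \mu$ in the weak-$*$ topology. The issue is that the $\mu_n$ need not themselves be $G$-invariant.

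Fix a set of coset representatives $g_1,\dots,g_k$ for $G/H$ and define
\begin{equation*}
\nu_n = \frac{1}{k}\sum_{i=1}^{k} (g_i)_*\mu_n.
\end{equation*}
Each $\nu_n$ has finite support, since it is a finite convex combination of finitely supported measures. To verify $G$-invariance, take an arbitrary element $g\in G$ and write $g g_i = g_{\sigma(i)} h_i$ for a suitable permutation $\sigma$ of $\{1,\dots,k\}$ and elements $h_i\in H$, which is possible precisely because $H$ is normal of finite index. Using the $H$-invariance of $\mu_n$, we obtain
\begin{equation*}
 g_*\nu_n = \frac{1}{k}\sum_{i=1}^k (g_{\sigma(i)})_*(h_i)_*\mu_n = \frac{1}{k}\sum_{i=1}^k (g_{\sigma(i)})_*\mu_n = \nu_n,
\end{equation*}
so $\nu_n$ is $G$-invariant.

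Finally, since each homeomorphism $g_i : X \to X$ pushes weak-$*$ convergent sequences to weak-$*$ convergent sequences, $(g_i)_*\mu_n \to (g_i)_*\mu = \mu$, where the last equality uses the $G$-invariance of $\mu$. Averaging yields $\nu_n \to \tfrac{1}{k}\sum_i \mu = \mu$. Thus the finitely supported $G$-invariant measures $\nu_n$ approximate $\mu$, proving that $(G,X)$ has dense periodic measures. No serious obstacle arises; the only subtlety is that normality of $H$ is needed to ensure that the averaged measure is actually $G$-invariant (and not merely invariant under a conjugate of $H$).
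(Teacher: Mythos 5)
Your proof is correct, and it takes a genuinely different and in fact shorter route than the paper. The paper first reduces to an ergodic $G$-invariant measure $\mu$, takes the ergodic decomposition of $(X,\mu)$ with respect to the $H$-action, identifies the space of $H$-ergodic components with a finite coset space $G/K$, and thereby writes $\mu = \frac{1}{[G:H]}\sum_{g\in G/H} g_*\nu$ for a single $H$-ergodic measure $\nu$; it then approximates $\nu$ by finitely supported $H$-invariant measures and averages their translates. You skip the ergodic decomposition entirely: you approximate $\mu$ itself (which is $H$-invariant) by finitely supported $H$-invariant measures $\mu_n$ and symmetrize over coset representatives, using weak-$*$ continuity of pushforwards and the $G$-invariance of $\mu$ to see that the averages still converge to $\mu$. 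This is a cleaner argument for this particular lemma; the paper's decomposition buys a more structured approximant (translates of approximants of one $H$-ergodic piece), in the spirit of its remark that limits may be taken along uniform measures on finite orbits, but that extra structure is not needed here. One small correction: your closing remark attributes the $G$-invariance of the averaged measure to normality of $H$, but your own computation only uses that left multiplication by $g$ permutes the left cosets $g_iH$ together with the $H$-invariance of $\mu_n$ (the identity $gg_i = g_{\sigma(i)}h_i$ holds for any finite index subgroup). So your argument actually proves the statement for an arbitrary finite index subgroup, whereas the paper's argument uses normality to get the $G/H$-action on the space of $H$-ergodic components.
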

\begin{proof}
Let $\mu$ be an ergodic $G$-invariant probability measure on $X$. Consider the ergodic decomposition of the probability space $(X,\mu)$ with the respect to the $H$-action, see e.g. \cite[\S4.2]{einsiedler2013ergodic}. 
The space $Z$ of the $H$-ergodic components admits an ergodic $G/H$-action. Therefore the space  $Z$ is isomorphic to the coset space $G/K$ with the uniform probability measure for some finite index subgroup $K$ satisfying $H \le K \le G$. It follows that  there is an $H$-ergodic probability measure $\nu$ on $X$ so that 
\begin{equation}\label{eq:DPM-finite-index-mu}
    \mu = \frac{1}{\left[G:K\right]} \sum_{g\in G/K} g_* \nu = \frac{1}{\left[G:H\right]} \sum_{g\in G/H} g_* \nu.
\end{equation}

The system   $(H,X)$ has dense periodic measures by assumption. Therefore there is a sequence $\nu_n$ of finitely supported $H$-invariant probability measures on $X$ converging to $\nu$ in the weak-$*$ topology. The sequence of finitely supported $G$-invariant probability measures
\begin{align}\label{eq:DPM-finite-index-mun}
    \mu_n = \frac{1}{\left[G:K\right]} \sum_{g\in G/K} g_* \nu_n
\end{align}
converges to the measure $\mu$ in the weak-$*$ topology.
\end{proof}

The converse direction of Lemma \ref{lem:DPM-finite-index} seems more challenging, see Question \ref{quest:going up finite index} in the introduction.

\begin{prop} \label{prop:DPM-of-factors}
Let $(G,X)$ and $(G,Y)$ be a pair of topological dynamical systems and  $p : X\to Y$  a continuous,  surjective and $G$-equivariant map. Assume that there is a $G$-equivariant Borel map $Y\to\Prob{X}$ taking a point $y \in Y$ to a probability measure $\nu_y \in \Prob{X}$ with $\mathrm{supp}(\nu_y) \subset p^{-1}(y)$. If   $(G,X)$ has dense periodic measures then so does  $(G,Y)$.
\end{prop}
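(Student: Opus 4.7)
The plan is to lift any $G$-invariant probability measure on $Y$ to one on $X$ by averaging the given fiber measures $\nu_y$, approximate the lifted measure by finitely supported $G$-invariant measures on $X$ using the hypothesis, and then push forward to $Y$. Let me spell this out.

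First I would start with an arbitrary $G$-invariant Borel probability measure $\mu \in \mathrm{Prob}(Y)$ and define the \emph{lifted measure}
\[
\tilde{\mu} = \int_Y \nu_y \, \mathrm{d}\mu(y) \in \mathrm{Prob}(X),
\]
which makes sense because $y \mapsto \nu_y$ is Borel. The $G$-equivariance of this map combined with the $G$-invariance of $\mu$ immediately gives that $\tilde{\mu}$ is a $G$-invariant probability measure on $X$: for every Borel set $A \subset X$ and every $g \in G$,
\[
g_*\tilde{\mu}(A) = \int_Y \nu_y(g^{-1}A)\, \mathrm{d}\mu(y) = \int_Y \nu_{g^{-1}y}(g^{-1}A)\, \mathrm{d}\mu(y) = \int_Y \nu_y(A)\, \mathrm{d}\mu(y) = \tilde{\mu}(A),
\]
where the middle equality uses $G$-equivariance of the section and the last uses $G$-invariance of $\mu$.

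Next, since $(G,X)$ has dense periodic measures by hypothesis, there exists a sequence of finitely supported $G$-invariant probability measures $\tilde{\mu}_n \in \mathrm{Prob}(X)$ converging to $\tilde{\mu}$ in the weak-$*$ topology. I would then push these forward under $p$ to obtain the candidate approximating sequence $\mu_n = p_*\tilde{\mu}_n \in \mathrm{Prob}(Y)$. Each $\mu_n$ is $G$-invariant because $p$ is $G$-equivariant, and finitely supported because $p$ is continuous and $\tilde{\mu}_n$ is finitely supported.

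The final step is to check that $\mu_n \to \mu$ in the weak-$*$ topology on $\mathrm{Prob}(Y)$. Since $p$ is continuous, the pushforward map $p_*: \mathrm{Prob}(X) \to \mathrm{Prob}(Y)$ is weak-$*$ continuous, so $\mu_n = p_*\tilde{\mu}_n \to p_*\tilde{\mu}$. It remains to identify $p_*\tilde{\mu}$ with $\mu$. This is where the support condition $\mathrm{supp}(\nu_y) \subset p^{-1}(y)$ is decisive: it forces $p_*\nu_y = \delta_y$ for every $y \in Y$, and hence
\[
p_*\tilde{\mu} = \int_Y p_*\nu_y \, \mathrm{d}\mu(y) = \int_Y \delta_y \, \mathrm{d}\mu(y) = \mu.
\]
No step here looks technically hard; the whole argument is essentially a disintegration-free lifting trick. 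The only point requiring minor care is the measurability of $y \mapsto \nu_y(g^{-1}A)$ and of $y \mapsto p_*\nu_y$, both of which follow from the fact that $y \mapsto \nu_y$ is a Borel map into $\mathrm{Prob}(X)$ endowed with its standard Borel structure.
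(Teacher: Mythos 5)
Your proposal is correct and follows essentially the same route as the paper's proof: lift $\mu$ to $\tilde{\mu}=\int_Y \nu_y\,\mathrm{d}\mu$, approximate $\tilde{\mu}$ by finitely supported $G$-invariant measures using the hypothesis on $(G,X)$, and push forward via $p$, using the support condition to see $p_*\tilde{\mu}=\mu$. Your write-up simply spells out the invariance and continuity checks that the paper leaves implicit.
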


\begin{proof}
Let $\mu$ be any $G$-invariant probability measure on the space $Y$. We wish to show that $\mu$ is a limit of finitely supported $G$-invariant probability measures. To do so, consider the   $G$-invariant probability measure $\nu$ on the space $X$ given by
\begin{equation}
\nu = \int_Y \nu_y \, \mathrm{d}\mu.
\end{equation} 
Note that   $p_*\nu=\mu$.
As the system $(G,X)$ has dense periodic measures there is a sequence  of $G$-invariant finitely supported probability measures $\nu_n$ on $X$ converging to $\nu$ in the weak-$*$ topology. The  $G$-invariant pushforward  measures $p_* \nu_n$   have finite supports and converge to the measure $\mu$ on the space $Y$ in the weak-$*$ topology.
\end{proof}

\begin{cor} \label{cor:DPM-of-factors}
Let $(G,X)$ and $(G,Y)$ be a pair of topological dynamical systems and  $p : X\to Y$  a continuous,  surjective and $G$-equivariant map. 
Assume    either that
\begin{enumerate}
    \item there is a $G$-equivariant Borel section $s:Y\to X$ of the map $p$, or \label{cor:DPM-of-factors item:section}
    \item the system $(G,X)$ is a compact group extension of the system $(G,Y)$, or \label{cor:DPM-of-factors item:compact}
    \item the fibers of the map $p$ are all finite.\label{cor:DPM-of-factors item:finite-fibers}
\end{enumerate}
 If the system $(G,X)$ has dense periodic measures  then so does the system  $(G,Y)$.
\end{cor}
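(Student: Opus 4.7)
The plan is to reduce each case to Proposition \ref{prop:DPM-of-factors} by exhibiting a $G$-equivariant Borel map $Y \to \Prob{X}$, $y \mapsto \nu_y$, with $\mathrm{supp}(\nu_y)\subset p^{-1}(y)$. In each case the assignment will be canonical enough that $G$-equivariance is automatic, so the real content is Borel measurability.

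For case (1) I would simply take $\nu_y = \delta_{s(y)}$. The map $y \mapsto \delta_{s(y)}$ is Borel and $G$-equivariant as the composition of $s$ with the continuous embedding $X \hookrightarrow \Prob{X}$; its support is $\{s(y)\} \subset p^{-1}(y)$.

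For case (2) a compact group extension of $(G,Y)$ by a compact group $K$ means that $K$ acts continuously and freely on $X$, the $K$-action commutes with the $G$-action, and the fibers of $p$ are precisely the $K$-orbits. I would take $\nu_y$ to be the pushforward of the Haar measure $m_K$ under any orbit map $k \mapsto k \cdot x$ with $x \in p^{-1}(y)$; translation invariance of $m_K$ makes $\nu_y$ independent of the chosen basepoint $x$, and the commutation of $G$ with $K$ gives $g_*\nu_y = \nu_{gy}$. Borel measurability of $y \mapsto \nu_y$ follows once we pick any Borel section of $p$, whose existence is granted by the Kuratowski--Ryll-Nardzewski selection theorem.

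For case (3) I would take $\nu_y$ to be the uniform probability measure on the finite fiber $p^{-1}(y)$. This assignment is canonical, hence $G$-equivariant. To establish Borel measurability, I would stratify $Y$ by the cardinality function $N(y) = |p^{-1}(y)|$, which is Borel, and apply a measurable selection argument on each Borel stratum $\{N = n\}$ to find $n$ Borel sections $s_1, \ldots, s_n$ enumerating each fiber; then $\nu_y = \tfrac{1}{n}\sum_{i=1}^n \delta_{s_i(y)}$ on $\{N = n\}$. The main obstacle across the three cases is precisely this Borel measurability issue: in case (2) the fibers are all homeomorphic to $K$ so a global continuous section need not exist though a Borel one does, while in case (3) the potentially varying fiber cardinality forces the stratification argument.
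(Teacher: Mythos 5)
Your proposal is correct and follows essentially the same route as the paper: in all three cases it reduces to Proposition \ref{prop:DPM-of-factors} by constructing the Borel map $y \mapsto \nu_y$ exactly as the paper does (Dirac mass at the equivariant section, pushforward of Haar measure along the fiber, uniform measure on the finite fiber). The only cosmetic differences are that in case (2) you argue equivariance via basepoint-independence of the Haar pushforward rather than through the chosen section, and you invoke Kuratowski--Ryll-Nardzewski where the paper uses Jankov--von Neumann; both are standard and equally valid here.
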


\begin{proof}
Assume that the system   $(G,X)$ has dense periodic measures. We will deduce the density of periodic measures for  the system $(G,Y)$   from  Proposition \ref{prop:DPM-of-factors} by constructing a suitable map $Y \to \mathrm{Prob}(X)$.

In case (\ref{cor:DPM-of-factors item:section}) let   $s:Y\to X$ be a $G$-equivariant Borel section of the map $p$. It is clear that the map $Y\to \Prob{X}$ defined by $ y \mapsto \nu_y = \delta_{s(y)}$ is  Borel and  $G$-equivariant. It satisfies $\mathrm{supp}(\nu_y)=\{s(y)\}\subseteq p^{-1}(y)$ as required.

In case (\ref{cor:DPM-of-factors item:compact})  there is a compact group $K$ admitting a $G$-equivariant continuous action on the space $X$ such that $Y = X/K$ \cite[p. 15]{glasner2003ergodic}.  There is a Borel measurable section $s : Y \to X$ by the Jankov--von Neumann theorem, see e.g. \cite[Theorem 2.10]{glasner2003ergodic}.  Let $\mu$ denote the Haar probability measure on the compact group $K$. Define a Borel map $Y \to \mathrm{Prob}(X)$ taking each point $y \in Y$ to the pushforward $\nu_y$ of the Haar measure $\mu$ via the action map $K \mapsto Ks(y)$. It remains to verify  that the map $y \mapsto \nu_y$ is $G$-equivariant. Let $y_1,y_2 \in Y $ be a pair of points with $gy_1 = y_2$ for some element $g \in G$. It follows that $s(y_2) = k_0 gs(y_1) = gk_0 s(y_1)$ for some element $k_0 \in K$. To conclude observe that
\begin{equation}
    \nu_{y_2} = \mu *  \delta_{s(y_2)}   = \mu *  \delta_{gk_0 s(y_1)} = g \left(\mu *   k_0 \delta_{ s(y_1)}\right) =  g   \nu_{y_1}.
\end{equation}

In case (\ref{cor:DPM-of-factors item:finite-fibers}) assume that the surjective map $p : X \to Y$ has finite fibers. We need to show that the $G$-equivariant map $Y \to \Prob{X}$ assigning to each point $y \in Y$ the uniform measure $\nu_y$ on the finite set $p^{-1}(y)$ is Borel. 
We may write the compact space $Y$ as a disjoint union $Y = \amalg_{n\in\NN} Y_n$ of $G$-invariant Borel sets so that $|p^{-1}(y)| = n$ for all points $y \in Y_n$. Apply   the Kuratowski--Ryll-Nardzewski selection theorem \cite[Theorem 12.13]{kechris2012classical} to find Borel maps $f_{n,i} : Y_n \to X$ for all $n \in \NN$ and all $i \in \{1,\ldots,n\}$ satisfying
\begin{equation}
p^{-1}(y) = \{f_{n,1}(y),f_{n,2}(y),\ldots,f_{n,n}(y)\} \quad \forall n\in\NN,\,\forall  y \in Y_n.
\end{equation}
For any continuous function $F \in C(X)$ and any point $y \in Y$ the integral $\nu_y(F)$ can be evaluated as 
\begin{equation}
\nu_y(F) = \frac{1}{n} \sum_{i=1}^n F(f_{n,i}(y)) \quad \forall n\in \NN, \, \forall y \in Y_n.
\end{equation}
It follows that the map $y \mapsto \nu_y$ is indeed Borel measurable with respect to the weak-$*$ topology on the space $\Prob{X}$. This concludes the proof.
\end{proof}

\subsection*{The specification property} 

Any $\ZZ^d$-action which enjoys the so called \emph{periodic specification property} has dense periodic measures. For an in-depth discussion   of various flavors of this property we refer the reader to \cite{kwietniak2016panorama}.

Bowen introduced the specification property and established it for  Axiom A diffeomorphisms, a class of dynamical systems including hyperbolic torus automorphisms \cite{bowen1971periodic}.   See also Sigmund's treatment in \cite{sigmund1970generic,sigmund1974dynamical}. Lind and Schmidt obtained  necessary and sufficient conditions for a very general family of commuting automorphisms of compact abelian groups to have specification \cite{lind1999homoclinic}.


\subsection*{Ergodic torus automorphisms}

Non-hyperbolic ergodic torus automorphisms do not  enjoy  specification in the strict sense \cite{lind1979ergodic}. Nevertheless there is a useful periodic analogue of \emph{weak specification} for such systems. It was used by Marcus \cite{marcus1980note} to obtain the following.

\begin{theorem}[Marcus]\label{thm:marcus}
Let $A$ be an  automorphism of the   torus $\TT^k$ ergodic\footnote{A torus automorphism is ergodic with respect to the Haar measure  if and only if all of its eigenvalues are not roots of unity.} with respect to the Haar measure.  Then the topological dynamical system $(\ZZ,\TT^k)$ has dense periodic measures.
\end{theorem}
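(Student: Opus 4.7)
The plan is to approximate every ergodic $A$-invariant probability measure on $\TT^k$ by uniform measures on periodic orbits of $A$, using a weak specification / shadowing argument that works even when $A$ is not hyperbolic.

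As a first step I would reduce to ergodic measures. By the Choquet decomposition for the compact convex set of $A$-invariant probability measures on $\TT^k$, and by the remark following the definition of dense periodic measures, it suffices to show that every \emph{ergodic} $A$-invariant probability measure $\mu$ is a weak-$*$ limit of uniform probability measures supported on single finite $A$-orbits. Fix such a $\mu$ and fix a continuous test function $F \in C(\TT^k)$ together with an accuracy parameter $\varepsilon > 0$. By the Birkhoff ergodic theorem there is a $\mu$-generic point $x \in \TT^k$, so that the empirical measures $\frac{1}{n}\sum_{i=0}^{n-1}\delta_{A^i x}$ converge to $\mu$ in the weak-$*$ topology as $n \to \infty$. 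Thus it suffices to produce, for every large $n$, a periodic point $p$ of some period $N$ comparable to $n$ whose orbit $\varepsilon$-shadows the orbit segment $x, Ax, \ldots, A^{n-1}x$ in the sup metric on $\TT^k$ for all but a vanishing fraction of indices $i$.

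The main ingredient is a weak periodic specification property tailored to ergodic toral automorphisms. Split $\TT^k$ (up to finite-index isogeny) as a product $\TT^k \sim \TT^h \times \TT^c$ corresponding to the eigenvalues of $A$ of modulus $\neq 1$ and of modulus $=1$ respectively; by ergodicity, no eigenvalue is a root of unity, so on the second factor $A$ acts as a transformation with infinite-order irrational rotational components (possibly combined with unipotent parts). On $\TT^h$ the automorphism is hyperbolic and Bowen's theorem provides periodic specification outright: periodic points of period $N$ are $e^{-cN}$-dense and shadow arbitrary orbit segments of length $\lesssim N$ with exponentially small error. On $\TT^c$ one exploits the fact that the subgroup of $N$-torsion points has size growing polynomially in $N$ and is $A$-invariant, and that the absence of roots of unity among the eigenvalues of $A\res{\TT^c}$ lets one quantify how well these torsion points can be chosen to track the orbit of $x$ in the isometric directions (this is where diophantine control of the eigenvalues enters, replacing the exponential shadowing used in the hyperbolic case). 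Combining the two shadowings by choosing $q = (q_h, q_c)$ with $q_h$ a hyperbolic shadowing periodic point and $q_c$ a carefully chosen torsion point of large enough denominator yields a genuine periodic point of $A$ whose orbit $\varepsilon$-shadows that of $x$ on a $(1-o(1))$-fraction of the indices $0,\ldots,n-1$.

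Given such a $q$, the uniform probability measure $\nu_q$ supported on its finite $A$-orbit is automatically $A$-invariant and finitely supported, and the shadowing property gives $|\nu_q(F) - \frac{1}{n}\sum_{i=0}^{n-1} F(A^i x)| \le \varepsilon \, \mathrm{Lip}(F) + o_n(1)$, so letting $n \to \infty$ and $\varepsilon \to 0$ yields a sequence of periodic measures converging weak-$*$ to $\mu$. The main obstacle I anticipate is the combined shadowing in the mixed case: in the hyperbolic directions one needs to contract the forward orbit to meet a periodic lattice point, while in the isometric directions one needs the same $N$ to supply good diophantine denominators — reconciling these two requirements forces $N$ to grow in a precise way with $n$ and $\varepsilon$, and it is precisely this quantitative matching that distinguishes the ergodic case from the hyperbolic case and constitutes the technical core of the argument.
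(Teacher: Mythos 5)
The paper does not prove this statement at all --- it is quoted verbatim from Marcus \cite{marcus1980note}, whose argument combines a Sigmund-style generic-point reduction (exactly your first paragraph, which is fine) with Lind's weak specification property for \emph{quasi-hyperbolic} toral automorphisms \cite{lind1979ergodic,lind1982dynamical}. Your overall strategy therefore matches the known proof at the level of the first and last paragraphs, but the middle step, which is where all the difficulty sits, contains a genuine gap.

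The proposed splitting $\TT^k \sim \TT^h \times \TT^c$ into invariant subtori carrying the eigenvalues of modulus $\neq 1$ and $=1$ respectively does not exist in general. The unit-modulus eigendirections of an ergodic integer matrix are not rational subspaces: if $A$ has characteristic polynomial irreducible over $\QQ$ with some roots on and some off the unit circle (the basic quasi-hyperbolic case), there are \emph{no} proper rational $A$-invariant subspaces, hence no invariant subtori at all, even up to isogeny. More pointedly, a torus automorphism all of whose eigenvalues lie on the unit circle would have, by Kronecker's theorem, only roots of unity as eigenvalues (the eigenvalue set of an integer matrix is Galois-closed), so it would be quasi-unipotent and non-ergodic; thus the factor $\TT^c$ you describe, an automorphism with unit-modulus non-root-of-unity ``rotational'' eigenvalues, cannot exist. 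Consequently the plan of running Bowen's specification on $\TT^h$ and a diophantine torsion-point argument on $\TT^c$ and then taking $q=(q_h,q_c)$ has nothing to attach to: within an irreducible quasi-hyperbolic block the neutral and hyperbolic directions are inseparably intertwined, and this is precisely why Lind's work is needed. The correct route (and the one Marcus follows) is to establish a weak/periodic specification statement for the quasi-hyperbolic automorphism as a whole, via homoclinic points and quantitative estimates that control the neutral directions inside each irreducible block, rather than via a product decomposition of the torus. Your quantitative ``matching of $N$ with $n$ and $\varepsilon$'' concern is real, but it has to be resolved inside this non-split framework, so as written the proposal does not yield a proof.
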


\subsection*{Bernoulli shifts}

Topological dynamical systems arising from Bernoulli shifts have dense periodic measures.  Most of these  facts are well-known and appear in the literature.

\begin{prop}
\label{prop:Bernoulli shift has dense periodic measures}
Let $G$ be any residually finite amenable group. Then for any compact  set $K$ the Bernoulli system $(G, K^G)$ has dense periodic measures with respect to the shift action.
\end{prop}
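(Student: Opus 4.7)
The plan is to approximate any $G$-invariant Borel probability measure $\mu \in \Prob{K^G}$ first by $G$-invariant measures supported on configurations that are periodic under a finite-index normal subgroup of $G$, and then, on each such periodic stratum, by finitely supported measures via a discretization of the compact alphabet $K$. Using residual finiteness of $G$, I would fix a decreasing sequence $N_i \lhd G$ of finite-index normal subgroups with $\bigcap_i N_i = \{e\}$. The set $X_i \subset K^G$ of $N_i$-periodic configurations is closed, $G$-invariant, and naturally homeomorphic to $K^{G/N_i}$, on which the shift action of $G$ factors through the finite quotient $G/N_i$.

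For the first approximation, I would use amenability of $G$ to choose fundamental domains $F_i$ for $G/N_i$ (so that $G = \bigsqcup_{f \in F_i} fN_i$) forming a left-F\o lner sequence in $G$. Define a Borel periodization map $\Pi_i : K^G \to X_i$ by $\Pi_i(x)(fn) = x(f)$ for $f \in F_i$ and $n \in N_i$, and symmetrize:
\[
\mu_i := \frac{1}{[G:N_i]} \sum_{g \in G/N_i} g_* \bigl( (\Pi_i)_* \mu \bigr),
\]
which is a $G$-invariant probability measure supported on $X_i$. To verify $\mu_i \to \mu$ weakly it suffices to check convergence on cylinder sets $E = \{x : x(s_j) = k_j, \; j = 1, \ldots, r\}$ with $S = \{s_j\} \subset G$ finite. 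Using $G$-invariance of $\mu$ and the observation that $\Pi_i(x)$ agrees with $x$ at every coordinate in $g^{-1}S$ whenever $g^{-1}S \subset F_i$, one obtains $((\Pi_i)_* \mu)(g^{-1}E) = \mu(E)$ for every such \enquote{good} $g \in F_i$. The F\o lner property, combined with residual finiteness (which guarantees that $S$ injects into $G/N_i$ for all large $i$), forces the proportion of good $g \in F_i$ to tend to $1$, and hence $\mu_i(E) \to \mu(E)$.

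For the second approximation, each $\mu_i$ is a $G/N_i$-invariant measure on the compact metric space $X_i \cong K^{G/N_i}$. For each $n \in \NN$, choose a finite Borel partition $K = \bigsqcup_j A_{n,j}$ with $\mathrm{diam}(A_{n,j}) < 1/n$ and representatives $k_{n,j} \in A_{n,j}$; the coordinate-wise collapse is a $G$-equivariant Borel map $X_i \to \{k_{n,j}\}^{G/N_i}$ with finite image, so the pushforward of $\mu_i$ is a finitely supported $G$-invariant probability measure converging weakly to $\mu_i$ as $n \to \infty$. A standard diagonal argument then yields the required sequence of finitely supported $G$-invariant measures converging to $\mu$. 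The main obstacle is the existence of F\o lner fundamental domains for the $N_i$, which requires combining amenability with residual finiteness in a compatible way. For finitely generated abelian $G$ (such as $\ZZ^d$) this is straightforward via symmetric boxes, but for general residually finite amenable $G$ one must invoke Ornstein--Weiss quasi-tiling techniques, or else bypass the issue by applying Lindenstrauss's pointwise ergodic theorem to an ergodic decomposition of $\mu$ and constructing periodic approximations by shadowing generic orbits.
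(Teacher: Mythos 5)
Your overall strategy is sound and, in its second half, genuinely different from (and arguably more elementary than) the paper's argument: instead of invoking an ergodic decomposition and the pointwise ergodic theorem of \cite{lindenstrauss2001pointwise} applied to empirical measures of periodized orbit points, you periodize the measure itself via $\Pi_i$, average over $G/N_i$, and verify weak-$*$ convergence directly; the extra discretization of the alphabet $K$ is needed in your setup (the paper avoids it because its approximants are uniform measures on single finite orbits) and is handled correctly by the coordinatewise, $G$-equivariant collapse. Two small repairs: for infinite compact $K$ you should test convergence against continuous functions depending on finitely many coordinates rather than against cylinder sets $\{x(s_j)=k_j\}$ (which are typically null), and you must fix the shift convention so that the ``good'' condition on $g$ really is controlled by the left F\o lner property of $F_i$ (with the wrong convention an inverse appears, $g\in sF_i^{-1}$, which F\o lner does not control directly); residual finiteness of the index set plays no role in that count.

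The one genuine gap is the ingredient you yourself flag: the existence of coset transversals $F_i$ for the chain $N_i$ that form a F\o lner sequence. This is exactly the content of Weiss's monotileability theorem for residually finite amenable groups \cite{weiss2001monotileable}, which is what the paper cites, and it is the only place where amenability and residual finiteness genuinely interact. Your two proposed substitutes do not close it: Ornstein--Weiss quasi-tilings produce approximate tilings by several F\o lner tiles with small overlaps and boundary errors, not an exact fundamental domain for a prescribed finite-index normal subgroup, so some additional argument (essentially Weiss's) is still required; and the alternative of ``bypassing the issue'' via Lindenstrauss's pointwise ergodic theorem does not bypass it at all --- the paper's own proof uses both that ergodic theorem \emph{and} the F\o lner transversals, since the periodized points $x_i$ must be built from transversals that are themselves F\o lner for the empirical and periodic measures to be close. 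With the reference to \cite{weiss2001monotileable} supplied, your argument goes through.
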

The case where the acting group is $\ZZ$ was established in \cite[Theorem 3.3]{parthasarathy1961category}. 
 The proof we present below relies on  the pointwise ergodic theorem for amenable groups \cite{lindenstrauss2001pointwise} in the manner  of  \cite[Lemma 1]{sigmund1970generic} and  \cite[Main Theorem]{marcus1980note}. Moreover it uses ideas from  \cite[Theorem 3.10]{levit2019infinitely}.




\begin{proof}[Proof of Proposition \ref{prop:Bernoulli shift has dense periodic measures}]
Fix  a descending   chain  $N_i$ of finite index normal subgroups  of the group $G$ with trivial intersection. The main result of \cite{weiss2001monotileable} allows us to find  a F\o{}lner   sequence $F_i$ of coset transverals to the subgroups $N_i$. For each index $i \in \NN$ and every  element $g \in G$ let   $f_i(g) \in F_i$ denote the unique group element satisfying $f_i(g) N_i = g N_i$.
Given  any point $x \in K^G$ we  define a sequence of new points $x_i \in K^G$ via  
\begin{equation}
    x_i(g) = x( f_i(g)) \quad \forall g \in G.
\end{equation}  
 Observe that the point $x_i$ is $N_i$-periodic for every $i \in \NN$, namely $gx_i = x_i$ holds true for all elements $g \in N_i$.   
 
 Let $\mu$ be an arbitrary $G$-invariant probability measure on the space $K^G$. We may assume without loss of generality that the measure $\mu$ is ergodic. The sequence of finitely supported probability measures $\mu_{x,i} = \frac{1}{|F_i|} \sum_{g \in F_i} \delta_{gx}$ weak-$*$ converges to the probability measure $\mu$ for $\mu$-almost every point $x $ by the pointwise ergodic theorem, see \cite[Theorem 3.9]{levit2019uncountably}for details.
On the other hand,
  as the sequence $F_i$ is  \folner,   the sequence of $G$-invariant finitely supported  probability measures $\nu_{x,i}  = \frac{1}{|F_i|} \sum_{g \in F_i} \delta_{gx_i}$ satisfies  $d(\mu_{x,i},\nu_{x,i}) \to 0$ with respect to any compatible metric $d$ on the space $\mathrm{Prob}(K^G)$ and any point $x \in K^G$.  The desired conclusion follows.
  \end{proof}

\begin{prop}
\label{prop:p mult has dense periodic measures}
Let $\mathrm{S}_p = \varprojlim \RR/p^n \ZZ$ be $p$-adic solenoid. Consider the dynamical system $(\ZZ,\mathrm{S}_p)$ where the group $\ZZ$  is acting via multiplication by $p$. The  system $(\ZZ,\mathrm{S}_p)$ has dense periodic measures.
\end{prop}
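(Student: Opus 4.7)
The plan is to exhibit $(\ZZ,\mathrm{S}_p)$ as a continuous equivariant factor, with fibers of cardinality at most two, of the Bernoulli shift $(\ZZ,\{0,1,\ldots,p-1\}^\ZZ)$. Once this is done, the conclusion will follow immediately by combining Proposition \ref{prop:Bernoulli shift has dense periodic measures} with Corollary \ref{cor:DPM-of-factors}\eqref{cor:DPM-of-factors item:finite-fibers}.

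I will use the standard presentation of the solenoid as
\begin{equation*}
\mathrm{S}_p = \{(y_n)_{n\geq 0}\in (\RR/\ZZ)^{\NN} \, : \, p\,y_{n+1}=y_n \text{ in } \RR/\ZZ\},
\end{equation*}
on which multiplication by $p$ acts as the homeomorphism $T:(y_0,y_1,y_2,\ldots)\mapsto (p y_0 \bmod 1,\, y_0,\, y_1,\ldots)$. Given a bi-infinite digit sequence $c=(c_n)_{n\in\ZZ}\in \{0,1,\ldots,p-1\}^{\ZZ}$, define $\phi(c)=(y_n)_{n\geq 0}$ by
\begin{equation*}
y_0=\sum_{k\geq 1} c_k\, p^{-k} \pmod 1, \qquad y_{n+1}=\frac{y_n+c_{-n}}{p}\quad (n\geq 0),
\end{equation*}
where each $c_{-n}$ is regarded as the corresponding integer in $\{0,\ldots,p-1\}$. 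Since $y_n\in [0,1)$ and $c_{-n}\in\{0,\ldots,p-1\}$ one has $y_{n+1}\in[0,1)$ and $p y_{n+1}=y_n+c_{-n}\equiv y_n\pmod 1$, so $\phi(c)\in\mathrm{S}_p$. Continuity and surjectivity are straightforward, and a direct computation based on the identity $p y_0 \equiv \sum_{k\geq 1} c_{k+1}\, p^{-k}\pmod 1$ verifies the semiconjugacy $\phi\circ\sigma = T\circ \phi$, where $\sigma$ is the left shift.

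The crucial technical point is that the fibers of $\phi$ have cardinality at most two. Indeed, if $\phi(c)=(y_n)_{n\geq 0}$ then the recursion forces $c_{-n}=p y_{n+1}-y_n$ for every $n\geq 0$, a uniquely determined element of $\{0,1,\ldots,p-1\}$. Simultaneously the sequence $(c_k)_{k\geq 1}$ is a $p$-ary expansion of $y_0\in [0,1)$, which is unique unless $y_0$ is a rational of the form $\ell/p^m$, in which case exactly two expansions exist (one terminating, one eventually equal to $p-1$). Hence every preimage $\phi^{-1}(y)$ has either one or two elements.

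Assembling these ingredients, the Bernoulli shift $(\ZZ,\{0,1,\ldots,p-1\}^\ZZ)$ has dense periodic measures by Proposition \ref{prop:Bernoulli shift has dense periodic measures}, and this property transfers to the factor $(\ZZ,\mathrm{S}_p)$ along the finite-fibered semiconjugacy $\phi$ by Corollary \ref{cor:DPM-of-factors}\eqref{cor:DPM-of-factors item:finite-fibers}. The step that will demand the most care is the construction of $\phi$ and the verification that no additional ambiguity enters its fibers beyond the standard terminating-versus-repeating ambiguity of $p$-ary expansions of points in $[0,1)$.
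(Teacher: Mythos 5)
Your proof takes essentially the same route as the paper: the paper's argument is precisely to exhibit $\mathrm{S}_p$ as a continuous $\ZZ$-equivariant finite-to-one factor of the Bernoulli shift on $p$ symbols and then combine Proposition \ref{prop:Bernoulli shift has dense periodic measures} with Corollary \ref{cor:DPM-of-factors}; you simply make the factor map explicit. One small correction to the construction: if $y_0$ is taken to be the sum reduced mod $1$ with representative in $[0,1)$, then at sequences whose forward tail is constantly $p-1$ the sum equals $1$ and the representative jumps to $0$, and there $\phi$ as written is neither continuous nor equivariant --- for instance with $c_k=p-1$ for all $k\ge 2$ and $c_1=a<p-1$ one gets $y'_1=a/p$ for $\phi(\sigma c)$ while the second coordinate of $T\phi(c)$ is $y_0=(a+1)/p$. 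The fix is to run the recursion with the unreduced real value $\tilde y_0=\sum_{k\ge 1}c_kp^{-k}\in[0,1]$, setting $\tilde y_{n+1}=(\tilde y_n+c_{-n})/p\in[0,1]$, and reduce modulo $1$ only at the end; then $\phi$ is continuous, satisfies $\phi\circ\sigma=T\circ\phi$ exactly, and your fiber analysis (at most two preimages, coming from the terminating-versus-repeating ambiguity of $p$-ary expansions) goes through unchanged.
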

\begin{proof}
Let $X_p = \{0,\ldots,p-1\}^\ZZ$ be a Bernoulli system with the shift action.
There is a $\ZZ$-equivariant continuous surjection $X_p \to S_p$  with finite fibers. We conclude that the  system $(\ZZ,\mathrm{S}_p)$ has dense periodic measures  by combining Proposition \ref{prop:Bernoulli shift has dense periodic measures} with Corollary \ref{cor:DPM-of-factors}.
\end{proof}


\section{Characters and Hilbert--Schmidt stability}
\label{sec:HS stability}

We formulate the precise definition of Hilbert--Schmidt stability for discrete groups.  Let $\mathrm{U}(n)$ denote the unitary group of degree $n$ for each $n \in \NN$. The \emph{normalized Hilbert--Schmidt norm}\footnote{By abuse of notation, we omit the   the index $n$ from the notations of the Hilbert--Schmidt norm and metric.}   on the group $\mathrm{U}(n)$ is 
\begin{equation}
\|A\|_\textrm{HS} = \sqrt{\frac{1}{n}\mathrm{tr}(A^* A)} \quad \forall A \in \mathrm{U}(n).
\end{equation}
The corresponding \emph{normalized Hilbert--Schmidt metric}
on the group $\mathrm{U}(n)$ is given by
\begin{equation}
d_ \textrm{HS}(A,B) = \|A-B\|_\textrm{HS} \quad \forall A,B \in \mathrm{U}(n).
\end{equation} 
This metric is bi-invariant in the sense that
\begin{equation}
    d_ \textrm{HS}(A,B) = d_ \textrm{HS}(CAD,CBD)    \quad \forall A,B,C,D \in \mathrm{U}(n).
\end{equation}

\begin{definition*}
Let $G$ be a discrete group. An \emph{asymptotic homomorphism} of the  group $G$ is a sequence of set-theoretic maps $f_n : G \to \mathrm{U}(n)$ for all $n\in \NN$ satisfying
\begin{equation}
d_\textrm{HS}(f_n(g) f_n(h), f_n(gh)) \xrightarrow{n\to\infty} 0 \quad \forall g,h \in G.    
\end{equation}
The group $G$ is  \emph{Hilbert--Schmidt stable} if for any asymptotic homomorphism $f_n : G \to \mathrm{U}(n)$ there is a sequence of   group homomorphisms $\varphi_n : G \to \mathrm{U}(n)$ such that
\begin{equation}
d_\textrm{HS}(f_n(g), \varphi_n(g)) \xrightarrow{n\to\infty} 0 \quad \forall g \in G.
\end{equation}
\end{definition*}

The Hadwin--Schulman criterion \cite{hadwin2018stability} relates Hilbert--Schmidt stability to characters. We proceed with a detailed analysis of this criterion.

\subsection*{Finite dimensional traces}

Let $G$ be a discrete group. A trace representation $(M,\pi,\tau)$ of the group $G$ is called \emph{finite dimensional} if the von Neumann algebra $M$ is finite dimensional. A trace on the group $G$ is called \emph{finite dimensional} if the trace representation corresponding to it via   Theorem \ref{thm:thoma correspondence } is finite dimensional. 
We denote by $\Trfd{G}$ the set of all finite dimensional traces on the group $G$.

\begin{lemma}\label{lem:fd-traces-face}
$\Trfd{G}$ is a face of the convex set $\Tr{G}$.
\end{lemma}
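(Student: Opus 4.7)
The plan is to verify both defining properties of a face: convexity of $\Trfd{G}$, and the absorption property that a nontrivial convex combination of traces lying in $\Trfd{G}$ forces each summand into $\Trfd{G}$.

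For convexity, given finite dimensional trace representations $(M_i,\pi_i,\tau_i)$ of $\varphi_i$ for $i=1,2$ and $t\in(0,1)$, I would realize the trace representation of $t\varphi_1+(1-t)\varphi_2$ as the von Neumann subalgebra $M\subseteq M_1\oplus M_2$ generated by $(\pi_1\oplus\pi_2)(G)$, equipped with the restriction of the trace $t\tau_1+(1-t)\tau_2$. Since $M_1\oplus M_2$ is finite dimensional and any von Neumann subalgebra of a finite dimensional algebra remains finite dimensional, the resulting trace representation is finite dimensional.

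For the face property, suppose $\varphi=t\varphi_1+(1-t)\varphi_2\in\Trfd{G}$ with $t\in(0,1)$ and $\varphi_1,\varphi_2\in\Tr{G}$. Let $(M,\pi,\tau)$ be the corresponding trace representation, so that $M$ is finite dimensional. Since $t\varphi_1\le\varphi$ in $\tracesl{G}$, the correspondence recalled after Equation \eqref{eq:phi_T definition} produces a central element $T\in\mathrm{Z}(M)$ with $0_M\le T\le 1_M$ and $t\varphi_1=\varphi_T=\tau\circ\Ad_{T^{1/2}}\circ\pi$. Centrality of $T$ simplifies this to $t\varphi_1(g)=\tau(T\pi(g))$, and the normalization $\varphi_1(e)=1$ forces $\tau(T)=t$. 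Letting $p\in\mathrm{Z}(M)$ be the support projection of $T$, I would compress to the finite dimensional direct summand $M'=pM$, define $\pi'\colon G\to\mathcal{U}(M')$ by $\pi'(g)=p\pi(g)$, and introduce the functional
\begin{equation}
\tau'(x)=\tfrac{1}{t}\tau(Tx)\quad\forall x\in M'.
\end{equation}
Routine checks (using centrality of $T$ and the fact that $p$ is its support) confirm that $\tau'$ is a normal faithful trace on $M'$ satisfying $\tau'\circ\pi'=\varphi_1$. The von Neumann subalgebra $M''=\pi'(G)''\subseteq M'$ is finite dimensional, and $(M'',\pi',\tau'|_{M''})$ is a finite dimensional trace representation of $\varphi_1$. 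Hence $\varphi_1\in\Trfd{G}$, and by symmetry $\varphi_2\in\Trfd{G}$ as well.

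There is no real obstacle here: the only slightly delicate point is to ensure that compressing by the central support $p$ of $T$ and renormalizing yields a bona fide finite dimensional trace representation of $\varphi_1$, which is why it is important that $T$ be central (so $\pi'(g)=p\pi(g)$ is unitary in $M'$) and that $M$ be finite dimensional (so taking supports and passing to subalgebras stays within the finite dimensional world). Both features are already provided by Lemma \ref{lem:subtraces-projections} combined with the finite dimensionality of the trace representation of $\varphi$.
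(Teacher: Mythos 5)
Your proof is correct, but the mechanism you use for the face property differs from the paper's. The paper handles both directions at once by a disjointness decomposition: it writes $\varphi_1 = s_1\varphi_1' + (1-s_1)\psi$ and $\varphi_2 = s_2\varphi_2' + (1-s_2)\psi$ with $\varphi_1',\varphi_2',\psi$ pairwise disjoint, forms the direct sum of the three corresponding trace representations, uses disjointness to identify the von Neumann algebra of $\varphi = t\varphi_1+(1-t)\varphi_2$ as the full direct sum $M_1\oplus M_2\oplus M_3$, and reads off the algebras of $\varphi_1$ and $\varphi_2$ as the summands $M_1\oplus M_3$ and $M_2\oplus M_3$; finite dimensionality of $\varphi$ is then visibly equivalent to that of both $\varphi_1$ and $\varphi_2$. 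You instead split the statement: for convexity you take the subalgebra of $M_1\oplus M_2$ generated by $(\pi_1\oplus\pi_2)(G)$ (which is cleaner, since you never need the algebra to be the whole direct sum), and for the face direction you invoke the correspondence between traces in $\tracesl{G}$ dominated by $\varphi$ and central elements $0\le T\le 1_M$, then compress by the central support projection of $T$ and renormalize to manufacture a finite dimensional trace representation of $\varphi_1$; by the uniqueness in Thoma's correspondence this forces $\varphi_1\in\Trfd{G}$. Your route buys something concrete: it avoids the paper's somewhat terse claim that $\varphi_1,\varphi_2$ admit a mutual decomposition into pairwise disjoint pieces with a common summand, and it reuses machinery the paper has already set up (the $T\mapsto\varphi_T$ bijection quoted after Equation (\ref{eq:phi_T definition}), essentially Lemma 11.C.3 of Bekka--de la Harpe). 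The paper's route, in exchange, yields more structural information, exhibiting the exact direct-sum shape of all three algebras in a single computation. The only steps you leave as ``routine'' — that $\tau'(x)=\tfrac1t\tau(Tx)$ is tracial and faithful on $pM$, using centrality of $T$, faithfulness of $\tau$, and that $p$ is the support of $T$ — are indeed routine and hold as stated, so there is no gap.
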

A convex subset $F$ of a convex set $C$ is called  a \emph{face} if $y,z \in F$ whenever $x = ty + (1-t) z$ for some points $x \in F$,  $y,z \in C$ and some $ 0 < t < 1$, see \cite[Definition 16.5]{phelps2001lectures}.
\begin{proof}[Proof of Lemma \ref{lem:fd-traces-face}]
Consider a pair of traces  $\varphi_1,\varphi_2 \in \traces{G}$.  Let $\varphi$ be a non-trivial convex combination of the traces $\varphi_1$ and $\varphi_2$ so that  
$\varphi=t\varphi_1+(1-t)\varphi_2$  for some $t\in(0,1)$. 

We may write $\varphi_1 = s_1  \varphi'_1 + (1-s_1) \psi  $ and $\varphi_2 = s_2 \varphi'_2  + (1-s_2) \psi$ for some real numbers $s_1,s_2 \in \left[0,1\right]$ such that the three traces $\varphi'_1, \varphi'_2$ and $\psi$ are pairwise disjoint. Let   $(M_1,\pi_1,\tau_1),(M_2,\pi_2,\tau_2)$ and  $(M_3, \pi_3, \tau_3)$ be the trace representations corresponding to $\varphi'_1, \varphi'_2$ and $\psi$ respectively. A direct computation shows that the trace representation corresponding to the trace $\varphi$ is given by $(M,\pi,\tau)$  where 
\begin{equation}
    \pi=\pi_1\oplus \pi_2 \oplus \pi_3 \quad \text{and} \quad \tau=ts_1\tau_1\oplus(1-t)s_2\tau_2 \oplus (t(1-s_1)+(1-t)(1-s_2)) \tau_3.
\end{equation}
 The von Neumann algebra $M$ is given by  $M=\pi(G)''\subseteq M_1\oplus M_2 \oplus M_3$. The fact that the  traces $\varphi'_1, \varphi'_2$ and $\psi$ are pairwise disjoint implies that $M = M_1 \oplus M_2 \oplus M_3$. A similar argument shows that the von Neumann algebras corresponding to the two traces $\varphi_1$ and $\varphi_2$ are $M_1 \oplus M_3$ and $M_2 \oplus M_3$ respectively.

 We conclude that the trace $\varphi$ is finite dimensional if and only if both traces $\varphi_1$ and $ \varphi_2$ are finite dimensional. This means that $\Trfd{G}$ is convex and is a  face of the simplex $\traces{G}$, as required.
\end{proof}

The Hadwin--Shulman criterion is given in terms of the functions   $\frac{1}{n}\tr\circ \pi$ where $\pi : G \to U(n)$ is some finite dimensional unitary representation. Any such function is a finite dimensional trace on $G$. Indeed its   corresponding trace representation is given by $(\pi(G)'',\pi,\frac{1}{n}\tr)$. However, not every finite dimensional trace is a normalized trace of some finite dimensional unitary representation. An example   for   the group $G = \ZZ / 2\ZZ$ is provided by the trace $\varphi = t \chi_1 + (1-t) \chi_{-1}$ for any irrational  $t\in [0,1]$. This nuance is clarified by the following lemma.

\begin{lemma}\label{lem:finite-dimensional-traces-rational-coefficients}
Let $\varphi : G \to \CC$ be a function. The following  two conditions are equivalent.
\begin{enumerate}
    \item $\varphi = \frac{1}{\dim\pi}\tr\circ \pi$ for some finite dimensional unitary representation $\pi$.
    \item $\varphi $ is a finite rational convex combination   of finite dimensional characters. 
\end{enumerate}  
\end{lemma}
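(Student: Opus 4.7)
The plan is to use Thoma's correspondence together with the Murray--von Neumann classification of finite factors in order to identify the finite dimensional characters of $G$ explicitly as the normalized traces of irreducible finite dimensional unitary representations. Once this identification is in hand, both implications become direct computations with decompositions into irreducibles.

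First I would prove that a trace $\varphi$ on $G$ is a finite dimensional character if and only if $\varphi = \frac{1}{\dim \pi} \tr \circ \pi$ for some irreducible finite dimensional unitary representation $\pi$. Indeed, the trace representation of a character is a finite factor, and the only finite dimensional finite factors are the matrix algebras $\mathcal{M}_d(\CC)$ with their unique normalized trace $\tr_d$. The image $\pi(G) \subseteq \mathcal{M}_d(\CC)$ must generate $\mathcal{M}_d(\CC)$ as a von Neumann algebra, which (by the double commutant theorem) is equivalent to $\pi$ being irreducible.

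For the implication \textup{(1)} $\Rightarrow$ \textup{(2)}, I would decompose any finite dimensional unitary representation $\pi$ into distinct irreducibles $\pi = \bigoplus_{i=1}^k m_i \pi_i$ with $\dim \pi_i = d_i$, so that $\dim \pi = \sum_i m_i d_i$. Denoting $\chi_i = \frac{1}{d_i} \tr \circ \pi_i$, each $\chi_i$ is a finite dimensional character by the preceding paragraph, and a direct computation yields
\begin{equation}
    \frac{1}{\dim \pi} \tr \circ \pi = \sum_{i=1}^k \frac{m_i d_i}{\sum_j m_j d_j}\, \chi_i,
\end{equation}
which is a rational convex combination of finite dimensional characters.

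For the converse \textup{(2)} $\Rightarrow$ \textup{(1)}, suppose $\varphi = \sum_i \frac{a_i}{N} \chi_i$ where $a_i \in \NN$, $\sum_i a_i = N$, and each $\chi_i = \frac{1}{d_i} \tr \circ \pi_i$ comes from an irreducible representation $\pi_i$ of dimension $d_i$. Choose a common multiple $D$ of $d_1,\ldots,d_k$ and set $m_i = a_i D / d_i \in \NN$. Then the representation $\pi = \bigoplus_i m_i \pi_i$ has dimension $\sum_i m_i d_i = D N$, and
\begin{equation}
    \frac{1}{\dim \pi} \tr \circ \pi = \sum_i \frac{m_i d_i}{DN}\, \chi_i = \sum_i \frac{a_i}{N}\, \chi_i = \varphi,
\end{equation}
which proves (1). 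The only subtle point, and the closest thing to an obstacle, is the preliminary lemma identifying finite dimensional characters as normalized traces of irreducibles; but once the factor condition is interpreted via Thoma's correspondence as in Theorem~\ref{thm:thoma correspondence } this is immediate.
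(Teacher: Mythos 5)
Your proof is correct and follows essentially the same route as the paper: identify finite dimensional characters with normalized traces of irreducible representations via the factor condition, decompose into irreducibles (the paper uses isotypic components plus Schur's lemma) for one direction, and pass to a common multiple of the dimensions (the paper uses the lcm with repetitions) to assemble the representation in the other. The minor bookkeeping differences do not change the argument.
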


\begin{proof}
Assume that the function $\varphi$ is given by $ \frac{1}{\dim\pi}\tr\circ \pi$ for some finite dimensional unitary representation $\pi$ of the group $G$. Let $\pi=\bigoplus_{i=1}^d \pi_i$ be the decomposition of the representation $\pi$ into isotypic components, that is, each subrepresentation $\pi_i$ is the sum of all the irreducible subrepresentations of $\pi$ sharing  the same isomorphism type.
By Schur's Lemma each von Neumann algebra $\pi_i(G)''$ is a factor. Therefore each function $\varphi_i=\frac{1}{\dim\pi_i}\tr\circ \pi_i$ is a finite dimensional character. We have
\begin{equation}    \varphi=\frac{1}{\dim\pi}\tr\circ \pi= \frac{1}{\dim\pi} \sum_{i=1}^d \tr\circ \pi_i = \sum_{i=1}^d \frac{\dim \pi_i}{\dim\pi} \varphi_i 
\end{equation}
so that the function $\varphi$ is   a finite rational convex combination of finite dimensional characters.

Consider the   converse direction. Assume that the function $\varphi$ is a finite rational convex combination of finite dimensional characters. 
Therefore $\varphi=\frac{1}{d}\sum_{i=1}^d\varphi_i$ for some   $d\in \NN$ and some   finite dimensional characters $\varphi_1,...,\varphi_d \in \Ch{G}$, possibly with repetitions. Let $(M_i,\pi_i,\tau_i)$ be a trace representation corresponding to each character $\varphi_i$. Since $\varphi_i$ is a finite dimensional character the von Neumann algebra $M_i$ is a finite dimensional factor. In other words $M_i \cong \mathrm{M}_{\dim \pi_i}(\CC)$.  Therefore we must have   $\tau_i = \frac{1}{\dim\pi_i}\tr$ and $\varphi_i=\tau_i \circ \pi_i$.  Let $m\in \NN$ be the least  common multiple of the dimensions $\dim\pi_1,...,\dim\pi_d$. Consider the finite dimensional unitary representation $\pi=\bigoplus_{i=1}^d  \frac{m}{\dim\pi_i} \pi_i$. Its dimension is $dm$. We conclude that 
\begin{equation}
    \varphi=\frac{1}{d}\sum_{i=1}^d \varphi_i=\frac{1}{\dim \pi}\sum_{i=1}^d \frac{m}{\dim\pi_i} \ \tr\circ \pi_i=   \frac{1}{\dim \pi}\tr\circ \pi
\end{equation}
as required.
\end{proof}

The above analysis can be used to reformulate the Hadwin--Shulman criterion in the following manner.

\begin{cor}[Hadwin--Shulman \cite{hadwin2018stability}]
\label{cor:Hadwin Shulman}
Let $G$ be an amenable group. The following conditions are equivalent.
\begin{enumerate}
    \item The group $G$ is Hilbert--Schmidt stable.
    \item Any character of the group $G$ is a pointwise limit of normalized traces of finite dimensional unitary representations.
    \item Any trace on the group $G$ is a pointwise limit of normalized traces of finite dimensional unitary representations.
    \item The face $\Trfd{G}$ is dense in  the compact convex set   $\Tr{G}$. 
\end{enumerate} 
\end{cor}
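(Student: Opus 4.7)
The plan is to prove the chain $(1) \Leftrightarrow (3) \Rightarrow (2) \Rightarrow (4) \Rightarrow (3)$. The equivalence $(1) \Leftrightarrow (3)$ is exactly the Hadwin--Shulman theorem quoted in the introduction (the theorem labelled ``Hadwin--Shulman''), and $(3) \Rightarrow (2)$ is immediate from the observation that every character is a trace. So only the equivalence $(2) \Leftrightarrow (3) \Leftrightarrow (4)$ requires genuine work, and the two lemmas just proved (Lemmas \ref{lem:fd-traces-face} and \ref{lem:finite-dimensional-traces-rational-coefficients}) are tailor-made for this purpose.

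For $(4) \Rightarrow (3)$, I would argue directly: given $\varphi \in \Tr{G}$, condition $(4)$ yields a sequence $\varphi_n \in \Trfd{G}$ converging pointwise to $\varphi$. Each $\varphi_n$ corresponds to a finite dimensional trace representation $(M_n, \pi_n, \tau_n)$, and the factor decomposition $M_n \cong \bigoplus_i \mathcal{M}_{k_{n,i}}(\CC)$ forces $\tau_n = \sum_i t_{n,i} \tr_{k_{n,i}}$ with non-negative weights summing to $1$, so $\varphi_n$ is a (possibly non-rational) convex combination of finite dimensional characters. Approximating each real weight $t_{n,i}$ by rationals turns $\varphi_n$ itself into a pointwise limit of \emph{rational} finite convex combinations of finite dimensional characters, and Lemma \ref{lem:finite-dimensional-traces-rational-coefficients} identifies these with functions of the form $\frac{1}{\dim \pi}\tr\circ\pi$. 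A diagonal argument then concludes.

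For $(2) \Rightarrow (4)$, the idea is to combine the Choquet structure of $\Tr{G}$ with the convexity of $\Trfd{G}$ (Lemma \ref{lem:fd-traces-face}). Given $\varphi \in \Tr{G}$, the Fourier transform in Equation \eqref{eq:fourier} writes $\varphi$ as a barycentric integral $\int \psi \, \mathrm{d} \mu(\psi)$ over $\Ch{G}$. Since finitely supported probability measures are weak-$*$ dense in $\Prob{\Ch{G}}$, the trace $\varphi$ is a pointwise limit of finite convex combinations $\sum t_i \chi_i$ of characters. By $(2)$, each character $\chi_i$ is itself a pointwise limit of normalized traces of finite dimensional unitary representations, which are elements of $\Trfd{G}$. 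Convexity of $\Trfd{G}$ ensures that any finite convex combination of such approximations stays in $\Trfd{G}$, so another diagonal argument places $\varphi$ in $\overline{\Trfd{G}}$. Finally, $(4) \Rightarrow (3)$ was already established, and $(3) \Rightarrow (2)$ is trivial, closing the loop.

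No step looks like a serious obstacle: the Hadwin--Shulman theorem is cited, the Choquet decomposition is available, and the arithmetic of approximating real convex weights by rational ones is routine. The only care needed is to organize the two diagonal approximations (one in $(2) \Rightarrow (4)$, one implicit in $(4) \Rightarrow (3)$) so that convergence is pointwise on the whole group and to remain inside the convex face $\Trfd{G}$ when assembling convex combinations of finite dimensional traces.
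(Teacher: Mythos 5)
Your proposal is correct, and it reaches the same destination as the paper by a slightly different arrangement. The paper imports the full equivalence $(1)\Leftrightarrow(2)\Leftrightarrow(3)$ from Hadwin--Shulman (Theorem 4 together with Lemma 1 of their paper) and only proves $(3)\Leftrightarrow(4)$: the implication $(3)\Rightarrow(4)$ is immediate, and $(4)\Rightarrow(3)$ is obtained by observing that the extreme points of the face $\Trfd{G}$ are the finite dimensional characters, invoking Krein--Milman to get density of rational convex combinations of such characters in $\Trfd{G}$, and identifying these with normalized traces of finite dimensional representations via Lemma \ref{lem:finite-dimensional-traces-rational-coefficients}. You instead cite only $(1)\Leftrightarrow(3)$, exactly as stated in the introduction, and close the loop $(3)\Rightarrow(2)\Rightarrow(4)\Rightarrow(3)$ yourself. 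Your $(4)\Rightarrow(3)$ is in substance the paper's argument made explicit: instead of appealing to Krein--Milman for $\Trfd{G}$ (which is not compact, so that appeal is best read as precisely what you do), you decompose each finite dimensional trace as a finite convex combination of finite dimensional characters using the structure of finite dimensional von Neumann algebras --- the same computation as in Lemma \ref{lem:f.d-trace-finitely-supported-measure} --- then rationalize the coefficients and apply Lemma \ref{lem:finite-dimensional-traces-rational-coefficients}. Your extra step $(2)\Rightarrow(4)$, via the Choquet/Krein--Milman decomposition of an arbitrary trace into characters combined with the convexity of $\Trfd{G}$ from Lemma \ref{lem:fd-traces-face}, is sound: the evaluation maps $\psi\mapsto\psi(g)$ are bounded and continuous on $\Ch{G}$, so weak-$*$ approximation by finitely supported measures does give pointwise approximation of barycenters (or one can simply quote Krein--Milman for the compact set $\Tr{G}$). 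This buys something the paper only remarks upon without proof, namely that $(2)$, $(3)$ and $(4)$ are equivalent with no amenability assumption, since amenability enters only through the cited $(1)\Leftrightarrow(3)$. The approximation bookkeeping you flag is harmless: under the paper's standing countability assumption the pointwise topology on $\Tr{G}$ is metrizable, and in any case it suffices to approximate on finite subsets of $G$.
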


Indeed the equivalence of  (2), (3) and (4) holds true in general and does not depend on amenability.

\begin{proof}[Proof of Corollary \ref{cor:Hadwin Shulman}]
The equivalence of conditions $(1)$, $(2)$ and $(3)$  is established in  \cite[Theorem 4]{hadwin2018stability} combined with  \cite[Lemma 1]{hadwin2018stability}. The implication $(3)\Rightarrow (4)$ is clear   as normalized traces of finite dimensional unitary representations are  a special form of finite dimensional traces.  
It remains to establish the implication  $(4)\Rightarrow (3)$. Recall that the subset $\Trfd{G}$ of the finite dimensional traces is   a face of the convex set $\traces{G}$ of all traces, see Lemma \ref{lem:fd-traces-face}. In particular the extreme points of $\Trfd{G}$  are precisely the finite dimensional \emph{characters}. The Krein--Milman theorem implies that the subset of the face $\Trfd{G}$ consisting of all finite rational   convex combinations of finite dimensional characters is dense in $\Trfd{G}$. But this subset coincides with the subset of all normalized traces of finite dimensional unitary representations, see Lemma \ref{lem:finite-dimensional-traces-rational-coefficients}. This completes the proof. 
\end{proof}

\subsection*{Dense periodic measures   on character spaces}

\begin{lemma}\label{lem:f.d-trace-finitely-supported-measure}
Let $\varphi$ be a trace on the discrete group $G$  with Fourier transform  $\mu_\varphi \in \mathrm{Prob}(\chars{G})$.  If the trace $\varphi$ is finite dimensional then   $\mathrm{supp}(\mu_\varphi)$ is finite. The converse direction holds true assuming that the group $G$ is virtually abelian. 
\end{lemma}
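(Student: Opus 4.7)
Let $(M,\pi,\tau)$ be the trace representation of $\varphi$ provided by Thoma's correspondence (Theorem \ref{thm:thoma correspondence }). My plan is to decompose the finite dimensional algebra $M$ into its minimal factors. Writing $1_M = p_1 + \cdots + p_n$ as a sum of minimal central projections, each corner $(p_i M p_i,\,\mathrm{Ad}_{p_i}\circ\pi,\,\frac{1}{\tau(p_i)}\tau\circ\mathrm{Ad}_{p_i})$ is a finite dimensional trace representation of $G$ whose underlying algebra is a factor, and hence its trace $\psi_i$ is a finite dimensional character. Summing these decompositions yields
\begin{equation*}
\varphi = \sum_{i=1}^n \tau(p_i)\,\psi_i,
\end{equation*}
so by the injectivity of the Fourier transform the measure $\mu_\varphi = \sum_i \tau(p_i)\delta_{\psi_i}$ is supported on the finite set $\{\psi_1,\ldots,\psi_n\} \subset \chars{G}$.

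\textbf{Converse direction (virtually abelian case).} Assume now that $G$ is virtually abelian with abelian normal subgroup $A \lhd G$ of finite index, and that $\mathrm{supp}(\mu_\varphi)$ is finite. Then $\varphi$ is a finite convex combination of characters, and since $\Trfd{G}$ is convex (Lemma \ref{lem:fd-traces-face}), it suffices to show that every character $\psi \in \chars{G}$ is finite dimensional. The restriction $\psi|_A$ corresponds via the Fourier transform to a $G$-invariant ergodic probability measure on $\widehat{A}$. As the $G$-action on $\widehat{A}$ factors through the finite quotient $G/A$, ergodicity forces this measure to be the uniform measure on a single finite $G$-orbit $\{\chi_1,\ldots,\chi_m\} \subset \widehat{A}$. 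Distinct multiplicative characters of $A$ are pairwise disjoint as traces, so Theorem \ref{thm:Mackey intro} applies to produce a character $\psi_1 \in \chars{G_1}$ of the finite index subgroup $G_1 = \mathrm{stab}_G(\chi_1)$ extending $\chi_1$, with $\psi = \Ind{G}{G_1}{\psi_1}$.

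Let $(M_1,\pi_1,\tau_1)$ be the trace representation of $\psi_1$. The key observation is that $M_1$ is a factor while $\pi_1(a) = \chi_1(a)\cdot 1_{M_1} \in \mathrm{Z}(M_1) = \CC\cdot 1_{M_1}$ for every $a \in A$, by Schur's lemma for characters (Lemma \ref{obs:restriction is multiplicative}). Consequently $\pi_1$ factors through the finite group $G_1/A$, its image is finite, and $M_1 = \pi_1(G_1)''$ is finite dimensional. Being a finite dimensional factor, $M_1 \cong \mathrm{M}_d(\CC)$ for some $d$, and $\psi_1 = \frac{1}{d}\mathrm{tr}\circ\rho_1$ for a unitary representation $\rho_1 : G_1 \to \mathrm{U}(d)$. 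The classical induced unitary representation $\rho = \mathrm{Ind}_{G_1}^G \rho_1 : G \to \mathrm{U}(d\cdot[G:G_1])$ is finite dimensional, and a direct comparison of the Frobenius character formula with Equation (\ref{eq:induced trace}) gives $\frac{1}{\dim\rho}\mathrm{tr}\circ\rho = \Ind{G}{G_1}{\psi_1} = \psi$, exhibiting $\psi$ as a finite dimensional trace.

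\textbf{Main obstacle.} The delicate point of the proof is the passage from the factor property of $M_1$ to its finite dimensionality. This step crucially combines three ingredients: the multiplicativity of $\psi_1|_A$, which puts $\pi_1(A)$ in the scalars; the finiteness of $G_1/A$, which then forces the image of $\pi_1$ itself to be finite; and the use of Mackey theory (Theorem \ref{thm:Mackey intro}) to reduce from an arbitrary character of $G$ to one extending a genuine multiplicative character of $A$. The rest of the argument consists of routine decomposition in finite dimensional von Neumann algebras and the standard matching between induced characters and induced representations.
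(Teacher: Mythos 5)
Your forward direction is essentially the paper's own argument (decompose the finite dimensional algebra along its minimal central projections). Your converse, however, takes a genuinely different route: the paper invokes the type $\mathrm{I}$ theory of virtually abelian groups (the factor generated by any character is type $\mathrm{I}$, and all irreducible representations of such groups are finite dimensional), whereas you stay inside the paper's own toolkit --- ergodicity of the Fourier transform of $\psi|_A$ under an action that factors through the finite group $G/A$ forces a uniform measure on a single finite orbit of multiplicative characters, pairwise disjointness lets you apply Theorem \ref{thm:Mackey intro}, and you then induce a finite dimensional character from the finite index stabilizer $G_1$. This is more self-contained and amounts to an elementary proof that all characters of virtually abelian groups are finite dimensional.

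Two justifications in your ``key observation'' need repair, though the conclusion survives. First, Lemma \ref{obs:restriction is multiplicative} is about the center of $G_1$, and $A$ need not be central in $G_1$; the correct reason that $\pi_1(a)=\chi_1(a)1_{M_1}$ is that $\tau_1$ is faithful and $|\psi_1(a)|=|\chi_1(a)|=1$, so $\tau_1\bigl((\pi_1(a)-\chi_1(a))^*(\pi_1(a)-\chi_1(a))\bigr)=2-2|\chi_1(a)|^2=0$. Second, and more seriously, $\pi_1$ does \emph{not} factor through $G_1/A$ and its image need not be finite: $\pi_1(a)$ is the scalar $\chi_1(a)$, not $1$, and $\chi_1(A)$ is typically an infinite subgroup of the circle. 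What is true, and what rescues the step, is that if $t_1,\dots,t_r$ are coset representatives of $A$ in $G_1$, then every $\pi_1(g)$ is a unimodular scalar multiple of some $\pi_1(t_j)$, so the $*$-algebra generated by $\pi_1(G_1)$ is the linear span of $\pi_1(t_1),\dots,\pi_1(t_r)$; being finite dimensional it is already weakly closed, whence $\dim M_1\le[G_1:A]<\infty$. With that substitution your argument is correct; for the last step you could also simply cite Lemma \ref{lem:induction-of-fd-is-fd} rather than re-deriving the Frobenius formula.
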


\begin{proof}
Let  $\varphi \in \traces{G}$  be a   finite dimensional trace with a corresponding trace representation  $(M,\pi,\tau)$. So $M$ is a finite dimensional von Neumann algebra. Then
\begin{equation}
M = \bigoplus_{i=1}^m \mathrm{M}_{d_i}(\CC), \quad \pi=\bigoplus_{i=1}^m \pi_i \quad \text{and} \quad \tau=\bigoplus_{i=1}^{m} \alpha_i\tau_i
\end{equation}
for some number $m \in \NN$, some    dimensions  $d_i \in \NN$, some finite dimensional unitary representations $\pi_i$ and some    real numbers  $\alpha_1,...,\alpha_m\in \left[0,1\right]$ satisfying $\sum_{i=1}^m\alpha_i=1$. Here  $\tau_i$ is the unique normalized trace $\frac{1}{d_i} \mathrm{tr}$ on the matrix algebra $\mathrm{M}_{d_i}(\CC)$.  In particular each function $\varphi_i=\tau_i\circ \pi_i$ is a character and  $\varphi=\sum_{i=1}^{m}\alpha_i \varphi_i$. We conclude  that the probability measure $\mu_\varphi$ is supported on the finite set $\{\varphi_1,...,\varphi_m \} \subset \chars{G}$.

For the converse direction,   assume that the group $G$ is virtually abelian and that $\mathrm{supp}(\mu_\varphi) =\{\varphi_1,...,\varphi_m\}\subseteq \Ch{G}$ for some $m\in\NN$. This means that  $\varphi=\sum_{i=1}^m \alpha_i \varphi_i$ for some real numbers $\alpha_i \in \left[0,1\right]$ satisfying $\sum_{i=1}^m\alpha_i =1$.

As the group $G$ is virtually abelian it has type $\mathrm{I}$ and all of its irreducible representations are finite dimensional. Let  $(M_i,\pi_i,\tau_i)$ be a trace representation corresponding to each character  $\varphi_i$. 
 Each  algebra  $M_i$ is  a type $\mathrm{I}$ factor, i.e. $M_i$ is isomorphic to the  algebra of all bounded operators on some Hilbert space \cite[Chapter 6]{bekka2020unitary}. With respect to this isomorphism  each representation $\pi_i$ is irreducible. Therefore each   $\pi_i$ is  finite dimensional so that each algebra $M_i$ is isomorphic to the matrix algebra $\mathrm{M}_{d_i}(\CC)$ for some dimension $d_i \in \NN$  and $\tau_i = \frac{1}{d_i} \mathrm{tr}$ is the unique normalized trace on $M_i$. The trace representation corresponding to the character $\varphi$ is given by $(M,\pi,\tau)$ where $\pi = \bigoplus_{i=1}^m \pi_i$, $M = \pi''(G) \le \bigoplus_{i=1}^m M_i$ and $\tau = \bigoplus_{i=1}^m \alpha_i \tau_i$. In particular the trace $\varphi$ is finite dimensional.
\end{proof}

We are  now able to deduce the necessary condition for Hilbert--Schmidt  stability from the introduction.

\begin{proof}[Proof of Proposition \ref{obs:intro: necc for HS}]
Let $N$ be an   abelian normal subgroup of the group $G$. Consider the dual action of $G$ on $\hat{N}$ by continuous automorphisms.  We will assume that the group $G$ is Hilbert--Schmidt stable and infer that the  topological dynamical system $(G,\hat{N})$ has dense periodic measures.

Let $\mu$ be any $G$-invariant Borel probability measure on the space $\chars{N} \cong \widehat{N}$. It gives rise to the relative trace   $\psi \in \tracerel{G}{N}$     by taking the Fourier transform of the measure $\mu$. Consider the trivial extension  $\varphi = \widetilde{\psi} \in \traces{G}$ as defined in  Equation (\ref{eq:extended trace}). 
Since  the group $G$ is Hilbert--Schmidt stable the trace $\varphi$ is a pointwise limit of a sequence $\varphi_n$ of finite dimensional traces,  see Corollary \ref{cor:Hadwin Shulman}.   Let $\mu_n$ denote the $G$-invariant Borel probability measure on the space $\Ch{N}$  corresponding to the restrictions $(\varphi_{n})_{|N}$. The measures $\mu_n$ have finite supports by  Lemma \ref{lem:f.d-trace-finitely-supported-measure}.

The barycenter map $\Prob{\Ch{N}}\to \Tr{N}$ is a homeomorphism in this case since the space of characters $\Ch{N} \cong \widehat{N}$ is compact. In particular   the probability measures $\mu_n$ converge to the measure $\mu$ in the weak-$*$ topology. We conclude that the dynamical  system $(G,\widehat{N})$ has dense periodic measures.
\end{proof}

\subsection*{Approximation of induced  finite dimensional traces}

The following approximation technique will play an important role in our applications towards Hilbert--Schmidt stability. Recall that a subgroup is   \emph{profinitely closed} if it is an intersection of finite index subgroups.

 \begin{prop}
 \label{lem:approximating finitely induced}
 Let $H$ be a subgroup of $G$ and $\psi \in \traces{H}$ be an almost $G$-invariant trace. Assume that  $\left[H:\ker \psi \right] < \infty$ and that $ \ker \psi$ is profinitely closed in $G$. Then the trace $ \Ind{G}{H}{\psi}$ is a limit of   finite dimensional traces on the group   $G$.
 \end{prop}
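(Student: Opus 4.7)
The plan is to reduce to the case of a normal subgroup carrying a genuinely $G$-invariant trace, and then to exploit the profinite closedness of $\ker\psi$ to build an approximating chain of finite-index normal subgroups. First I observe that $H \subseteq G_\psi$ (since $\psi$ is conjugation-invariant on $H$) and $H \lhd G_\psi$ (since $G_\psi \subseteq \mathrm{N}_G(H)$). Applying the induction formula (\ref{eq:induced trace}) directly, the stabilizer of $\psi$ inside $G_\psi$ is all of $G_\psi$, so $\Ind{G_\psi}{H}{\psi} = \widetilde{\psi}$, the trivial extension of $\psi$ from $H$ to $G_\psi$. Induction in stages (Lemma \ref{lem:induction in stages}) then gives $\Ind{G}{H}{\psi} = \Ind{G}{G_\psi}{\widetilde{\psi}}$. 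Since $G_\psi$ has finite index in $G$, induction from $G_\psi$ to $G$ preserves finite-dimensionality (the induced unitary representation of a finite-dimensional one is again finite-dimensional) and is continuous by Lemma \ref{lem:continuity of induction}. Thus it suffices to approximate $\widetilde{\psi}$ by finite-dimensional traces on $G_\psi$, and I may henceforth assume $H \lhd G$, $\psi$ is $G$-invariant, and $\ker\psi \lhd G$ is profinitely closed in $G$.

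Next I construct a decreasing sequence $L_n$ of finite-index normal subgroups of $G$ with $\ker\psi \subseteq L_n$, $L_n \cap H = \ker\psi$, and $\bigcap_n L_n = \ker\psi$. For each $g \in G \setminus \ker\psi$, profinite closedness supplies a finite-index subgroup $M_g \le G$ with $\ker\psi \subseteq M_g$ and $g \notin M_g$. Normality of $\ker\psi$ in $G$ ensures that the normal core $M_g^\circ$ still contains $\ker\psi$ and still misses $g$. Enumerating $G \setminus \ker\psi = \{g_1, g_2, \ldots\}$ and placing a finite set of coset representatives of $\ker\psi$ in $H$ (finite since $[H:\ker\psi] < \infty$) at the start of the enumeration, the partial intersections $L_n := \bigcap_{i \le n} M_{g_i}^\circ$ satisfy all three required properties.

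For each $n$, the subgroup $HL_n$ is normal of finite index in $G$, and the identity $L_n \cap H = \ker\psi$ yields an isomorphism $HL_n / L_n \cong H / \ker\psi$. Pulling back the faithful trace on the finite quotient $H/\ker\psi$ induced by $\psi$ defines a finite-dimensional, $G$-invariant trace $\psi_n$ on $HL_n$ by $\psi_n(hl) = \psi(h)$ for $h \in H$, $l \in L_n$. Since $HL_n$ is normal and $\psi_n$ is $G$-invariant, the trivial extension $\widetilde{\psi_n}$ to $G$ coincides with $\Ind{G}{HL_n}{\psi_n}$ and is the normalized trace of the finite-dimensional induced unitary representation, hence a finite-dimensional trace on $G$. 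The key verification is pointwise convergence $\widetilde{\psi_n} \to \widetilde{\psi}$: the two functions agree on $H$ and vanish outside $HL_n$ and $H$ respectively, so it suffices to show that every $g \in G \setminus H$ satisfies $g \notin HL_n$ for large $n$. Otherwise we would have $g = h_n l_n$ with $h_n \in H$ and $l_n \in L_n$ for infinitely many $n$, and passing to a subsequence with $h_n \in h \ker\psi$ for a fixed $h \in H$ (using $[H:\ker\psi]<\infty$) we get $h^{-1}g \in \ker\psi \cdot L_n = L_n$, hence $h^{-1}g \in \bigcap_n L_n = \ker\psi \subseteq H$, contradicting $g \notin H$. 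Inducing from $G_\psi$ back to the original $G$ then produces the desired sequence of finite-dimensional traces converging to $\Ind{G}{H}{\psi}$. The main obstacle is engineering the chain $L_n$ with $\bigcap_n L_n \subseteq \ker\psi$, which is precisely what profinite closedness delivers.
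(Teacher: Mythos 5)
Your argument is correct and is essentially the paper's proof: the paper first passes to the quotient $G/\ker\psi$ (making $H$ finite and the quotient residually finite via the profinite closedness) and uses a descending chain $K_n$ of finite-index normal subgroups with trivial intersection, which is precisely your chain $L_n$ viewed upstairs, with your subgroups $HL_n$ and traces $\psi_n(hl)=\psi(h)$ matching its $G_n=K_nH$ and $\psi_n$, and with the same convergence argument for the trivial extensions. The handling of the general case — reducing to the finite-index subgroup $G_\psi$ by induction in stages, then using continuity of induction and the fact that finite-index induction preserves finite dimensionality — also coincides with the paper's, so the only differences (working above $\ker\psi$ rather than in the quotient, and doing the $G_\psi$-reduction first) are cosmetic.
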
 

\begin{proof}
To begin with  assume that the subgroup $H$ is normal in $G$ and that the trace $\psi$ is precisely $G$-invariant, i.e.  $\psi\in \relTr{G}{H}$. These additional assumptions imply that $\ker \psi $ is a normal subgroup of $G$. Up to replacing the group $G$ by its quotient $G / \ker \psi$, we will assume without further loss of generality  that  the trace $\psi$ is faithful, the subgroup $H$ is finite and  the group $G$ is residually finite.

Fix a descending   sequence $K_n$ of finite index normal subgroups of the group $G$ with trivial intersection. Up to discarding finitely many subgroups from this sequence one has  $K_n \cap H  = \{e\}$ for all $n \in \NN$. 

Consider the subgroups $G_n=K_n H$. The subgroup $G_n$ is a direct product of the two  subgroups $K_n$ and $H$ for all $n \in \NN$. Moreover $\left[G:G_n\right] < \infty$  for all $n \in \NN$. 
Consider the family of functions $\psi_n : G_n \to \CC$ given by
\begin{equation}
\psi_n(kh) = \psi(h) \quad \forall k \in K_n, \forall h \in H.
\end{equation}
Each function $\psi_n$ is the composition of the natural projection from $G_n$ to $H$ with the trace $\psi$ on the subgroup $H$. In particular each $\psi_n$ is   a finite dimensional trace   factoring though the finite group $H$.  Denote $\varphi_n =  \Ind{G}{G_n}{\psi_n}$. Each $\varphi_n$ is a finite dimensional trace on  the group $G$ by Lemma \ref{lem:induction-of-fd-is-fd} below. Observe that
\begin{equation}
\label{eq:limit 1}
 \varphi_n(g)  = \Ind{G}{G_n}\psi_n(g)=\widetilde{\psi}_n(g)=\psi_n(g)=\psi(g) \quad \forall n \in \NN
\end{equation}
for all elements $g \in H$ and that
\begin{equation}
 \label{eq:limit 2}
    \lim_{n\to \infty} \varphi_n(g) = \lim_{n\to\infty} \Ind{G}{G_n}{\psi_n}(g)=\lim_{n\to\infty} \widetilde{\psi}_n(g) = 0
\end{equation}
for all elements $g \in G \setminus H$.  Equations (\ref{eq:limit 1}) and (\ref{eq:limit 2}) put together imply the desired conclusion (given the additional assumptions).

In the general case consider the finite index subgroup $G_\psi  $ defined in Equation (\ref{eq:G phi}). The subgroup $G_\psi$ normalizes the subgroup $H$ and the trace $\psi$ is $G_\psi$-invariant. Relying on the previous paragraphs we find a sequence $\varphi_n$ of finite dimensional traces on the group $G_\psi$ converging to $\Ind{G_\psi}{H}{\psi} \in \traces{G_\psi}$. Induction in stages (Lemma \ref{lem:induction in stages}) combined with the continuity of induction (Lemma \ref{lem:continuity of induction}) gives
\begin{equation}
\label{eq:conclusion of induced}
    \Ind{G}{H}{\psi}=
    \Ind{G}{G_\psi}{\Ind{G_\psi}{H}{\psi}}=
    \Ind{G}{G_\psi}{\left(\lim_{n\to \infty}\varphi_n\right)}=
    \lim_{n\to \infty}\Ind{G}{G_\psi}\varphi_n.
\end{equation}
The functions on the right-hand side of Equation (\ref{eq:conclusion of induced}) are finite dimensional traces on the group $G$  by Lemma \ref{lem:induction-of-fd-is-fd} below. 
\end{proof}

Proposition \ref{lem:approximating finitely induced} is to be compared with \cite[Theorem 7]{hadwin2018stability} as well as with \cite[Proposition 8.1]{becker2019stability}.

\begin{lemma}\label{lem:induction-of-fd-is-fd}
Let $H$ be a finite index subgroup of the group $G$. If $\varphi \in \traces{H}$ is a finite dimensional trace on  the group $H$ then   $\Ind{G}{H}{\varphi} \in \traces{G}$ is a finite dimensional trace on the group $G$.
\end{lemma}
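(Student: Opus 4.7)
The plan is to reduce the problem to the case where $\varphi$ is a finite dimensional character and then use the classical construction of induced representations.

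First I would reduce to characters. The defining formula
\[
 \Ind{G}{H}{\varphi}=\frac{1}{[G:H]}\sum_{g\in G/H}\widetilde{\varphi}{\,}^{g}
\]
shows that induction is linear in $\varphi$. A finite dimensional trace on $H$ decomposes as a finite convex combination $\varphi=\sum_{i}\alpha_{i}\varphi_{i}$ of finite dimensional characters $\varphi_{i}\in\Ch{H}$ (this is exactly the decomposition of $M=\bigoplus M_{d_{i}}(\CC)$ and $\tau=\bigoplus\alpha_{i}\tau_{i}$ used in the proof of Lemma \ref{lem:f.d-trace-finitely-supported-measure}). Since $\Trfd{G}$ is a convex set by Lemma \ref{lem:fd-traces-face}, it suffices to show that $\Ind{G}{H}{\varphi_{i}}$ is finite dimensional for each~$i$.

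Next I would pass to the representation side. If $\varphi\in\Ch{H}$ is a finite dimensional character, its trace representation is isomorphic to $(M_{d}(\CC),\pi,\tfrac{1}{d}\tr)$ where $\pi\colon H\to\mathcal{U}(d)$ is an irreducible unitary representation and $\varphi=\tfrac{1}{d}\tr\circ\pi$. Since $[G:H]=n<\infty$, the induced representation $\sigma=\Ind{G}{H}{\pi}$ of $G$ is finite dimensional of dimension $nd$, acting on $W=\bigoplus_{t\in G/H}t\otimes\CC^{d}$. Consequently the associated trace $\tfrac{1}{nd}\tr\circ\sigma$ is the finite dimensional trace corresponding to the trace representation $(\sigma(G)'',\sigma,\tfrac{1}{nd}\tr\!\restriction_{\sigma(G)''})$.

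The key step is then a direct computation, using a set of coset representatives $\{t_{1},\dots,t_{n}\}$ for $G/H$: in the block decomposition of $\sigma(g)$ with respect to the basis of $W$, the $(t_{i},t_{j})$-block is nonzero only if $t_{j}^{-1}gt_{i}\in H$, and the diagonal $(t_{i},t_{i})$-block equals $\pi(t_{i}^{-1}gt_{i})$. Taking the normalized trace yields
\[
\frac{1}{nd}\tr(\sigma(g))=\frac{1}{n}\sum_{i=1}^{n}\widetilde{\tfrac{1}{d}\tr\circ\pi}(t_{i}^{-1}gt_{i})=\Ind{G}{H}{\varphi}(g),
\]
exhibiting $\Ind{G}{H}{\varphi}$ as a finite dimensional trace on $G$. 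The only mild obstacle is keeping the conventions straight (the paper's right conjugation action $g^{h}=h^{-1}gh$ versus the standard formula for induced characters), but once the bookkeeping is set up correctly the argument is routine.
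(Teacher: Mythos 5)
Your proof is correct and rests on the same key point as the paper's: the induced trace $\Ind{G}{H}{\varphi}$ coincides with the normalized trace of the induced finite-dimensional representation, which exhibits it as a finite dimensional trace on $G$. The only difference is your preliminary reduction to the character case via the decomposition $\varphi=\sum_i\alpha_i\varphi_i$ and convexity of $\Trfd{G}$; the paper skips this by inducing the whole (possibly non-factor) trace representation $(M,\pi,\tau)$ at once into $\mathcal{M}_{[G:H]}(M)$ and writing $\Ind{G}{H}{\varphi}=\frac{1}{[G:H]}(\tr\otimes\tau)\circ\Ind{G}{H}{\pi}$, so your extra step is harmless but not needed.
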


\begin{proof}
Let $\varphi$ be a finite dimensional trace on the group $H$. This means that $\varphi=\tau\circ\pi$ for some trace representation $(M,\pi,\tau)$ where the von Neumann algebra $M$ is finite dimensional. The induced representation $\Ind{G}{H}{\pi}$ of the group $G$ can be viewed as a representation  into the group of unitaries of the von Neumann algebra $\mathcal{M}_d(M)$ of $d$-by-$d$ matrices with entries in $M$   where $d=[G:H]$. By comparing the   definitions of induced traces  and induced representations  we see that 
\begin{equation}
   \Ind{G}{H}{\varphi} = \frac{1}{d}(\tr\otimes \tau) \circ\Ind{G}{H}{\pi}.
\end{equation}
It follows that the trace representation corresponding to $\Ind{G}{H}{\varphi}$ is finite dimensional. Therefore the trace $\Ind{G}{H}{\varphi}$ is finite dimensional.
\end{proof}

\section{Metabelian groups and Hilbert--Schmidt stability}
\label{sec:metabelian and stability}

Let $G$ be a finitely generated metabelian group. Assume that $N$ is an abelian normal subgroup of $G$ so that the quotient $G/N$ is abelian. We provide sufficient conditions for the group $G$ to be Hilbert--Schmidt stable.

\begin{theorem}
\label{thm:stability for metabelian}
If the topological dynamical system $(G,\widehat{H}^\mathrm{ab})$ has dense periodic measures  for every subgroup   $H$ with $N \le H \le G$ then the metabelian group $G$  is Hilbert--Schmidt stable.
\end{theorem}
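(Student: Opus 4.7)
The plan is to verify the Hadwin--Shulman criterion (Corollary \ref{cor:Hadwin Shulman}): since $\Trfd{G}$ is a face of the Choquet simplex $\Tr{G}$ (Lemma \ref{lem:fd-traces-face}), it suffices to pointwise approximate each character $\varphi \in \chars{G}$ by finite dimensional traces. Applying Theorem \ref{thm:characters of metabelian groups}, write $\varphi = \Ind{G}{H}{\psi}$ for some normal subgroup $H$ with $N \le H \lhd G$ and a $G$-invariant trace $\psi = \bar\psi \circ q_H$ on $H$ factoring through the abelianization $q_H : H \to H^\mathrm{ab}$. Via Bochner's theorem, $\bar\psi$ corresponds to a $G$-invariant Borel probability measure $\mu$ on $\widehat{H^\mathrm{ab}}$, and the induction simplifies to the trivial extension $\varphi = \widetilde\psi$.

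The heart of the argument is to approximate $\mu$ by $G$-invariant probability measures supported on the \emph{torsion} subgroup of $\widehat{H^\mathrm{ab}}$. This proceeds in two stages. First, the hypothesis that $(G,\widehat{H^\mathrm{ab}})$ has dense periodic measures produces a sequence of $G$-invariant finitely supported probability measures $\mu_n \to \mu$ weak-$*$, each a convex combination of uniform measures on finite $G$-orbits. Second, for each orbit representative $v \in \widehat{H^\mathrm{ab}}$ with finite-index stabilizer $G_v \le G$, we invoke Lemma \ref{lem:abelian-finite-index-kernel-G-invariant} to pointwise approximate $v$ by torsion $G_v$-invariant characters $v^{(k)}$. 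This requires $G_v \ltimes H^\mathrm{ab}$ to be finitely generated, equivalently, that $H^\mathrm{ab}$ be finitely generated as a $\ZZ[G_v/H]$-module (noting that $H$ acts trivially on $H^\mathrm{ab}$ and $G_v/H$ has finite index in $G/H$). This is a standard consequence of the metabelian hypothesis: $G/H$ is a finitely presented abelian quotient of the finitely generated group $G$, and lifting a presentation exhibits $H$ as the normal closure in $G$ of finitely many elements, which descend to module generators of $H^\mathrm{ab}$. Replacing each orbit representative of $\mu_n$ by its $k$-th torsion approximant yields $G$-invariant finitely supported measures $\tilde\mu_n^{(k)} \to \mu_n$ weak-$*$ as $k\to\infty$; a diagonal selection in the metrizable space $\Prob{\widehat{H^\mathrm{ab}}}$ then produces $G$-invariant finitely supported measures $\nu_m \to \mu$ weak-$*$ with torsion support.

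Each $\nu_m$ is the Fourier transform of a $G$-invariant trace $\bar\rho_m$ on $H^\mathrm{ab}$ of finite-index kernel; its lift $\rho_m = \bar\rho_m \circ q_H$ is a $G$-invariant trace on $H$ satisfying $[H:\ker \rho_m] < \infty$ and $\rho_m \to \psi$ pointwise. Since $\rho_m$ is $G$-invariant on the normal subgroup $H$, its kernel is normal in $G$, and $G/\ker\rho_m$ is an extension of the finitely generated abelian group $G/H$ by the finite group $H/\ker\rho_m$, hence polycyclic and residually finite; thus $\ker \rho_m$ is profinitely closed in $G$. Proposition \ref{lem:approximating finitely induced} therefore gives that each $\Ind{G}{H}{\rho_m}$ is a pointwise limit of finite dimensional traces, and continuity of induction (Lemma \ref{lem:continuity of induction}, applicable trivially since every $\rho_m$ is $G$-invariant) yields $\Ind{G}{H}{\rho_m} \to \varphi$ pointwise. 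A final diagonal extraction realizes $\varphi$ as a pointwise limit of finite dimensional traces, completing the verification of the criterion.

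The main obstacle is the torsion-approximation step: the application of Lemma \ref{lem:abelian-finite-index-kernel-G-invariant} hinges on the algebraic input that $H^\mathrm{ab}$ is finitely generated as a $\ZZ[G/H]$-module, which uses the metabelian hypothesis essentially. The remaining ingredients are the structural theorem for metabelian characters, Bochner's theorem, and the formal induction calculus of Section \ref{sec:induced}.
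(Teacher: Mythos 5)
Your proof is correct and follows essentially the same route as the paper: the Hadwin--Shulman criterion, the classification of metabelian characters, the dense-periodic-measures hypothesis, reduction to traces with finite-index kernel, profinite closedness of those kernels, Proposition \ref{lem:approximating finitely induced}, and continuity of induction. The only differences are expository: you carry out the finite-index-kernel reduction orbit-by-orbit via Lemma \ref{lem:abelian-finite-index-kernel-G-invariant} and argue profinite closedness directly through the polycyclic (hence residually finite) quotient $G/\ker\rho_m$, where the paper invokes the lemma wholesale and cites Hall's theorem that normal subgroups of finitely generated metabelian groups are profinitely closed.
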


The notation $H^\textrm{ab}$ stands for   abelianization, i.e. $H^\textrm{ab} = H / \left[H,H\right]$.

\begin{proof}[Proof of Theorem \ref{thm:stability for metabelian}]
Assume that the dynamical systems $(G,\widehat{H}^\textrm{ab})$ as   in the statement of the theorem   all have dense periodic measures. We will rely on the Hadwin--Shulman criterion by showing that any character of $G$   is a pointwise limit of finite dimensional traces on  the group $G$, see   Corollary \ref{cor:Hadwin Shulman}.

Let $\varphi$ be a character of the group $G$. The classification of characters of metabelian groups (see Theorem \ref{thm:characters of metabelian groups}) says that there exists a subgroup $H$ with   $N\leq H\leq G$ and a $G$-invariant  trace $\psi$ on the abelianization  $H^{\mathrm{ab}}$ such that  $\varphi=\Ind{G}{H}{\psi}$. Let $\mu$ be the $G$-invariant Borel probability measure on the Pontryagin dual $\widehat{H}^\mathrm{ab}$  corresponding to the trace $\psi$ via the Fourier transform. The dynamical system $(G,\widehat{H}^\textrm{ab})$ has dense periodic measures by assumption. Therefore there is a sequence $\mu_n$ of finitely supported $G$-invariant probability measures on $\widehat{H}^\textrm{ab}$ converging to the measure $\mu$ in the weak-$*$ topology. Let $\psi_n \in \traces{H^\textrm{ab}}$ be the Fourier transform of the measure $\mu_n$ for each $n$. The traces $\psi_n$ are $G$-invariant and satisfy $\lim_n \psi_n = \psi$ by the continuity of the Fourier transform over abelian groups. Furthermore each trace $\psi_n$ is finite dimensional   by   Lemma \ref{lem:f.d-trace-finitely-supported-measure}. 
Continuity of induction (Lemma \ref{lem:continuity of induction}) gives  
\begin{equation}
\lim_n \Ind{G}{H}{\psi_n} = \Ind{G}{H}{ \lim_n \psi_n} = \Ind{G}{H}{ \psi} = \varphi.
\end{equation}

Denote $\Gamma = G/H$ and regard $\Gamma$  as a subgroup of $\mathrm{Aut}(H^\ab)$. The semidirect product $\Gamma \ltimes H^\ab$ is finitely generated. In this situation Lemma \ref{lem:abelian-finite-index-kernel-G-invariant} allows us to assume without loss of generality that  $\left[H^\ab:\ker \psi_n\right] < \infty$ for all $n$.
Each    $\ker \psi_n$ is a normal subgroup of the finitely generated metabelian group $G$. As such each $\ker \psi_n$ is profinitely closed in the group $G$ by a classical theorem of Hall \cite{hall1959finiteness}.  We conclude that each induced trace $\Ind{G}{H}{\psi_n} $ is a limit of finite dimensional traces   by making use of   Proposition \ref{lem:approximating finitely induced}.
\end{proof}

Theorem \ref{Cor:metabelian-grps-HS-stable} of the introduction is a special case of the above Theorem \ref{thm:stability for metabelian}.

\subsection*{Hilbert--Schmidt stable metabelian groups}

We are ready to present the proofs of the theorems from the introduction \S\ref{sec:intro} dealing with metabelian groups.

\begin{prop}
\label{prop:Z-system}
Let $G = \ZZ \ltimes N$ be a finitely generated metabelian group. The topological dynamical system $(\ZZ, \widehat{N})$ has dense periodic measures if and only if the group $G$ is Hilbert--Schmidt stable.
\end{prop}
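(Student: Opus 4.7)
The plan is to treat the two directions separately. For the backward implication, I would assume $G$ is Hilbert--Schmidt stable and apply Proposition \ref{obs:intro: necc for HS} with the abelian normal subgroup $N$ to obtain dense periodic measures for $(G, \widehat{N})$. Since $N$ is abelian, it acts trivially on $\widehat{N}$, so the $G$-action factors through $G/N = \ZZ$, and $(\ZZ, \widehat{N})$ inherits dense periodic measures.

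For the forward implication, the strategy is to verify the hypothesis of Theorem \ref{thm:stability for metabelian}: that $(G, \widehat{H^\ab})$ has dense periodic measures for every subgroup $H$ with $N \leq H \leq G$. Such subgroups correspond bijectively to subgroups of $\ZZ = G/N$, so $H = d\ZZ \ltimes N$ for some integer $d \geq 0$ (with $d = 0$ meaning $H = N$), and each such $H$ is normal in $G$ since $G/N$ is abelian. A direct computation shows $[H,H] = (t^d - 1)N$ where $t$ generates $\ZZ$, whence
\[
H^\ab \cong d\ZZ \oplus N/(t^d-1)N \quad \text{and} \quad \widehat{H^\ab} \cong \TT \times \mathrm{Fix}(t^d),
\]
where $\mathrm{Fix}(t^d) = \widehat{N/(t^d-1)N}$ is the closed subgroup of $\widehat{N}$ fixed pointwise by $t^d$. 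The case $d = 0$ reduces directly to the hypothesis on $(\ZZ, \widehat{N})$ (using again that $N$ acts trivially on $\widehat{N}$).

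For $d \geq 1$, the $G$-action on $\widehat{H^\ab}$ factors through $\ZZ = G/N$, acts trivially on the $\TT$ factor (since $\ZZ$ centralizes $d\ZZ$), and is the restriction of the dual action on $\mathrm{Fix}(t^d)$. Crucially, $t^d$ acts trivially on both factors, so the $\ZZ$-action reduces to an action of the finite group $\ZZ/d\ZZ$. Any action of a finite group on a compact metrizable space automatically has dense periodic measures: one can approximate any invariant measure by an arbitrary finitely supported measure and then symmetrize by averaging over the finite group to obtain a finitely supported invariant approximant. This verifies the hypothesis of Theorem \ref{thm:stability for metabelian} and yields Hilbert--Schmidt stability of $G$.

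The main obstacle I anticipate is the structural identification of $\widehat{H^\ab}$ and confirming the claimed factorization of the $G$-action. Once that is in place, the analysis splits cleanly: for $d \geq 1$ the action trivializes through a finite quotient and dense periodic measures come for free, so the hypothesis on $(\ZZ, \widehat{N})$ is really only needed in the single case $H = N$.
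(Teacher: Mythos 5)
Your proof is correct and follows essentially the same route as the paper: the backward direction via Proposition \ref{obs:intro: necc for HS}, and the forward direction by verifying the hypothesis of Theorem \ref{thm:stability for metabelian}, with the only nontrivial case being $H=N$. The explicit computation of $H^\ab \cong d\ZZ \oplus N/(t^d-1)N$ is more than needed: for $N \lneq H$ one has $[G:H]<\infty$ and conjugation by $H$ is trivial on $H^\ab$, so the $G$-action on $\widehat{H}^\ab$ factors through the finite group $G/H$ and dense periodic measures follow at once, exactly as in your symmetrization remark.
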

\begin{proof}
Assume that $(\ZZ,\widehat{N})$ has dense periodic measures. Let  $H$ be any fixed normal subgroup of $G$ satisfying $N \le H \le G$. It will suffice to show that the dynamical system $(G,\widehat{H}^\textrm{ab})$ has dense periodic measures according to   Theorem \ref{thm:stability for metabelian}. There are two separate cases to consider. If $H = N$ then $\widehat{H}^\textrm{ab} = \widehat{N}$ and the system in question has dense periodic measures by assumption. Otherwise $N \lneq H$, or equivalently $\left[G: H\right] < \infty$.  In that case the $G$-action on the space $\widehat{H}^\textrm{ab}$ factors through the finite quotient group $G/H$ and as such it certainly has dense periodic measures. The converse direction is an immediate consequence of Proposition \ref{obs:intro: necc for HS}.
\end{proof}

\begin{proof}[Proof of Corollary \ref{Cor:metabelian-grps-HS-stable}]
We show that the metabelian groups listed in the statement are Hilbert--Schmidt stable.

(1) Let  $M_k$ be the  free metabelian group of rank $k$. In other words   $M_k \cong F_k / F''_k$ where $F_k$ is the free group of rank $k$. Let $H$ be any     subgroup of the free metabelian group satisfying $M'_k \le H \le M_k$.
    To prove that the group $M_k$ is Hilbert--Schmidt stable it suffices by Theorem \ref{thm:stability for metabelian} to show that the dynamical system $(M_k,\widehat{H}^\ab)$ has dense periodic measures.
    Consider the subgroup $R$ of the free group $F_k$ satisfying  $F'_k \le R \le F_k$ and corresponding to the subgroup $H$ via the correspondence theorem. Since $F'_k \le R$ we have $F''_k \le R'$. In particular the subgroup $R' \le F_k$ corresponds to the subgroup $H' \le M_k$ via the correspondence theorem.  The third isomorphism theorem says that $    H/ H' \cong  R/ R'$.
    In other words $H^\ab$ and $R^\ab$ are isomorphic as  $M_k$-modules. Using the method of free differential calculus one can show that the  $M_k$-module $R^{\ab}$ is  isomorphic to a submodule of the $M_k$-module $\bigoplus_{i=1}^k  \ZZ \left[A\right] $ where $A \cong M_k / H \cong F_k / R$, see \cite[Chapter 11, Theorem 1]{johnson1997presentations} for details. 
The Bernoulli dynamical system $(M_k, \prod_A \TT^k)$ is a compact extension of the    dynamical system $(M_k,\widehat{H}^\ab)$.   Any Bernoulli dynamical system has dense periodic measures by   Proposition  \ref{prop:Bernoulli shift has dense periodic measures}. The same is   true for the dual dynamical system $(M_k,\widehat{H}^\ab)$ by  Item (\ref{cor:DPM-of-factors item:compact}) of Proposition \ref{prop:DPM-of-factors}.   
    
    (2) Consider the wreath product $G = A \wr \ZZ^d$ where $A$ is any finitely generated abelian group. Denote $N = \bigoplus_{\ZZ^d} A$ so that the normal subgroup $N$ as well as the quotient group $G/N$ are abelian. Let $H$ be any subgroup with $N \le H \le G$.    To prove that the group $G$ is Hilbert--Schmidt stable it suffices by Theorem \ref{thm:stability for metabelian} to show that the dynamical system $(G,\widehat{H}^\ab)$ has dense periodic measures.
     We claim that $\left[H,H\right] = \left[H,N\right]$. Indeed given any pair of elements $qn,rm \in G$ with $q,r \in \ZZ^d$ and $n,m \in N$ standard commutator identities give
     $$ \left[qn,rm\right] = \left[q,m\right] - \left[r,n \right].$$
     The claim follows. As the quotient $H/N$ is a free abelian group we obtain the direct sum  $H^\ab \cong   (H/N) \oplus N/\left[H,N\right]$. The dual dynamical system $(G,\widehat{H}^\ab)$ is isomorphic to the product system $(G,\widehat{H/N} \times X)$ where $\widehat{H/N}$ is a finite dimensional torus and $$X = \widehat{N}^H = \{x \in \widehat{N} \: : \: hx = x \;\; \forall h \in H\}.$$ The  $G$-action is trivial in the first factor and corresponds to the dual action in the second factor. Note that the topological dynamical system $(G/H,X)$ is  a Bernoulli system over a certain compact abelian group.  At this point we may conclude exactly as in case (1).
    
(3)    The Baumslag--Solitar  group  $\mathrm{BS}(1,n)$ is isomorphic to $ \ZZ \ltimes    \ZZ\left[1/n\right]$. The Pontryagin dual of the abelian group $\ZZ\left[1/n\right]$ is the $n$-adic solenoid $\mathrm{S}_n$ and the dual action corresponds to multiplication by $n$. 
 The dynamical system $(\ZZ,\mathrm{S}_n)$ has dense periodic measures by Proposition \ref{prop:p mult has dense periodic measures}. We conclude relying on Proposition \ref{prop:Z-system}.
 
 (4) Consider the group $G = \ZZ \ltimes_\alpha \ZZ^d$ where $\alpha \in \mathrm{GL}_d(\ZZ)$ is an ergodic torus automorphism. The topological dynamical system $(\ZZ,\TT^d)$     has dense periodic measures by  Theorem \ref{thm:marcus}. We conclude relying on Proposition \ref{prop:Z-system}.
\end{proof}

More generally, consider any  metabelian group of the form 
$G = \ZZ \ltimes _\alpha \ZZ\left[x,x^{-1}\right]/\left(p\right)$ where $ p \in \ZZ\left[x,x^{-1}\right]$ is not of the form $p(x,x^{-1}) = x^n c(x^m)$ for  some cyclotomic polynomial $c$ and some $n,m\in \ZZ$. Similar methods can be used to show that the group $G$ is Hilbert--Schmidt stable.
Indeed the corresponding topological dynamical system is expansive and has completely positive entropy \cite{schmidt2012dynamical}. Therefore  it has periodic specification \cite[Theorem 5.2]{lind1999homoclinic}. As such it  has  dense periodic measures.

\begin{proof}[Proof of Corollary \ref{Cor:compact-ring}]
Let $k$ be a non-Archimedean local field with ring of integers $\mathcal{O}$. Let $A$ be any infinite   subgroup of  $\mathcal{O}^*$. We will show that the group   $G = A \ltimes \widehat{\mathcal{O}}$ is not Hilbert--Schmidt stable by showing that the topological dynamical system $(A,\widehat{\mathcal{O}})$ does not have dense periodic measures, see Proposition \ref{obs:intro: necc for HS}. Indeed the singleton $\{0\}$ is the only finite $A$-orbit in the dual action on the Pontryagin dual group $\widehat{\mathcal{O}}$. In particular the Haar measure on $\widehat{\mathcal{O}}$ is certainly not a weak-$*$ limit of $G$-invariant probability measures of finite support.
\end{proof}

Let $G$ be a discrete group and  $X$   a compact abelian group admitting a $G$-action by continuous automorphisms.\label{definition of dcc}
This action is said to satisfy the \emph{descending chain condition} (d.c.c) if every descending chain of closed $G$-invariant subgroups of $X$ stabilizes. The topological dynamical system $(G,X)$ satisfies the descending chain condition   if and only if  the Pontryagin dual abelian group $\widehat{X}$ is a Noetherian module over the group ring $\ZZ\left[G\right]$. See   \cite{schmidt2012dynamical} for more on this property.

\begin{proof}[Proof of Theorem \ref{Thm:metabelian-grp-charactarization-HS-stability}]
$(\ref{item:all metabelian are stable}) \Rightarrow (\ref{item:all dcc are sofic})$. Assume that all finitely generated metabelian groups are Hilbert--Schmidt stable. 
Let $(G,X)$ be a topological dynamical system where $G$ is a finitely generated abelian group and $X$ a compact abelian group admitting a $G$-action by automorphisms    satisfying the d.c.c. Choose any finite index torsion-free subgroup  $G_0 \le G$. The metabelian group $G_0 \ltimes \widehat{X}$ is finitely generated and hence Hilbert--Schmidt stable by the assumption. Therefore the topological dynamical system $(G_0,X)$ has dense periodic measures by Proposition \ref{obs:intro: necc for HS}. The density of periodic measures for the system $(G,X)$ follows from Lemma \ref{lem:DPM-finite-index}.

$(\ref{item:all dcc are sofic}) \Rightarrow (\ref{item:all metabelian are stable})$. Assume that any $\ZZ^d$-action by automorphisms on any compact abelian group satisfying the d.c.c has dense periodic measures. Let $G$ be any finitely generated metabelian group. 
The assumption implies that the topological dynamical system  system $(G ,\widehat{H}^\ab)$ has dense periodic measures for any normal subgroup $H \lhd G$. It follows from    Theorem \ref{thm:stability for metabelian}  that the group $G$ is Hilbert--Schmidt stable. 
\end{proof}

\section{Polycyclic groups and Hilbert--Schmidt stability}
\label{sec:polycyclic and stability}

This last section deals with various questions concerning the Hilbert--Schmidt stability of virtually polycyclic groups. 

We start by developing   a sufficient condition for such groups to be stable in terms of  the density of periodic measures for certain associated dynamical systems, see Theorem \ref{thm:HS-stabilit-polycyclic-groups}. We proceed by constructing a family  a non-metabelian polycyclic   stable groups via a certain algebraic idea of rank-one modules, see Theorem \ref{thm:DPM of PVC}. Finally we explore the connection between the higher-rank measure rigidity conjecture and stability relying on some algebraic number theory. 

For the reader's convenience we  restate Theorem \ref{thm:intro-HS-stabilit-polycyclic-groups} of the introduction.

\begin{thm}\label{thm:HS-stabilit-polycyclic-groups}
Let $G$ be a virtually polycyclic group. Assume that for any finite index subgroup $H \le G$ and any quotient $H 	\twoheadrightarrow L$  the dual dynamical system 
\begin{equation}
\label{eq:systems that need to be sofic - repeat}
    (L,
    \chars{\mathrm{Z}( \vFit{L})})
\end{equation}
has dense periodic measures. Then the group $G$ is Hilbert--Schmidt stable. 
\end{thm}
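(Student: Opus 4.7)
The plan is to verify the Hadwin--Shulman criterion (Corollary \ref{cor:Hadwin Shulman}) by showing every character $\varphi \in \Ch{G}$ is a pointwise limit of finite-dimensional traces. First, I would apply the Noetherian induction principle (Proposition \ref{prop:induced from totally faithful}) to write $\varphi = \Ind{G}{H}{\psi}$ where $H$ is a finite-index subgroup of $G$ and $\psi$ is totally faithful on the subquotient $L = H/\ker\psi$. Since induction from a finite-index subgroup preserves finite-dimensional traces (Lemma \ref{lem:induction-of-fd-is-fd}) and by induction in stages (Lemma \ref{lem:induction in stages}), it suffices to approximate $\psi$ on $L$. By Proposition \ref{prop: a totally faithful character is induced from FC}, we have $\psi = \widetilde{\psi_{|F}}$ with $F = \FC{\vFit{L}}$ a finitely generated FC-group, and $\psi_{|F}$ an $L$-invariant trace whose barycenter decomposition is an $L$-ergodic Borel probability measure $\mu$ on $\Ch{F}$.

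The key step is to approximate $\mu$ by $L$-invariant finitely supported measures on $\Ch{F}$ supported on characters with finite-index kernels. By Proposition \ref{prop:action of dual of normal FC subgroup}, the action of a finite-index subgroup $L_0 \le L$ on each connected component of $\Ch{F}$ is topologically conjugate to a toral action, weakly algebraically isomorphic to the dual action on $\Ch{\mathrm{Z}(F)}$. Combining item (\ref{cor:DPM-of-factors item:finite-fibers}) of Corollary \ref{cor:DPM-of-factors} with Lemma \ref{lem:DPM-finite-index}, density of periodic measures for $(L, \Ch{\mathrm{Z}(F)})$ would yield density for $(L, \Ch{F})$. Assuming such density on $\Ch{F}$ is secured, one first approximates $\mu$ by finitely supported $L$-invariant measures, then promotes these to be supported on characters of $F$ with finite-index kernels via Lemma \ref{lem:finite index kernel for FC}, averaging over $L/L_0$ to restore full $L$-invariance.

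The approximating measures $\mu_n$ correspond to $L$-invariant traces $\rho_n$ on $F$ with $[F : \ker \rho_n] < \infty$. Since $L$ is virtually polycyclic and hence subgroup separable by a classical theorem of Mal'cev, the finitely generated subgroup $\ker\rho_n$ is profinitely closed in $L$. Proposition \ref{lem:approximating finitely induced} then shows that each $\Ind{L}{F}{\rho_n}$ is a pointwise limit of finite-dimensional traces on $L$, while continuity of induction (Lemma \ref{lem:continuity of induction}) ensures $\Ind{L}{F}{\rho_n} \to \widetilde{\psi_{|F}} = \psi$. A diagonal argument concludes the approximation of $\psi$ on $L$, and the corresponding approximation of $\varphi$ on $G$ follows by inducing through $H$ via induction in stages together with Lemma \ref{lem:induction-of-fd-is-fd}.

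The main obstacle is the bridging step between $\Ch{\mathrm{Z}(\vFit{L})}$ and $\Ch{\mathrm{Z}(F)}$. The inclusion $\mathrm{Z}(\vFit{L}) \le \mathrm{Z}(F)$ yields only a surjective $L$-equivariant restriction $\Ch{\mathrm{Z}(F)} \twoheadrightarrow \Ch{\mathrm{Z}(\vFit{L})}$, so density of periodic measures on the smaller (quotient) space does not automatically lift to the covering torus. Overcoming this gap should leverage the fact that the hypothesis applies uniformly to all subquotients of all finite-index subgroups of $G$: passing to an appropriately chosen quotient $L' = L/K$ ought to produce a situation in which $\mathrm{Z}(\vFit{L'})$ captures exactly the data needed for $\Ch{\mathrm{Z}(F)}$, so that the hypothesis applied to $L'$ directly provides the required density.
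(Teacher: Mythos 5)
Your skeleton is essentially the paper's: verify the Hadwin--Shulman criterion (Corollary \ref{cor:Hadwin Shulman}), reduce via the polycyclic character theory (you re-derive it from Proposition \ref{prop:induced from totally faithful} and Proposition \ref{prop: a totally faithful character is induced from FC}, which is exactly what the paper's refined statement packages) to a trace $\psi$ on $F=\FC{\vFit{L}}$ with $\varphi$ induced from $F$, obtain dense periodic measures on $\chars{F}$, upgrade to finite-index kernels by Lemma \ref{lem:finite index kernel for FC}, and conclude with profinite closedness of subgroups of virtually polycyclic groups, Proposition \ref{lem:approximating finitely induced}, continuity of induction and induction in stages. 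However, the step you yourself flag as ``the main obstacle'' is precisely where the theorem has content, and your proposal leaves it open; moreover the repair you gesture at (choosing a quotient $L'=L/K$ so that $\mathrm{Z}(\vFit{L'})$ ``captures'' $\mathrm{Z}(F)$) is not what is needed and is not obviously realizable, so as written the argument has a genuine gap.

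The paper closes that gap not by passing to a new quotient but by exploiting that the hypothesis quantifies over \emph{all} finite-index subgroups of $G$. Let $L_0\le L$ be the finite-index normal subgroup furnished by Proposition \ref{prop:action of dual of normal FC subgroup}, so that the $L_0$-action on each connected component of $\chars{F}$ is topologically conjugate to toral automorphisms, weakly algebraically isomorphic to the dual action on the relevant torus of characters of the center. Since $L_0$ is again a quotient of a finite-index subgroup of $G$, the hypothesis applies to $L_0$ itself and gives dense periodic measures for $(L_0,\chars{\mathrm{Z}(\vFit{L_0})})$. The subgroups $\mathrm{Z}(\vFit{L_0})$ and $\mathrm{Z}(\vFit{L})$ are commensurable, so the corresponding dual $L_0$-actions are weakly algebraically isomorphic; combining this with the weak algebraic isomorphism from Proposition \ref{prop:action of dual of normal FC subgroup}, density of periodic measures transfers to each component of $\chars{F}$ by Corollary \ref{cor:DPM-of-factors} (weak algebraic isomorphisms are implemented by finite-to-one, respectively compact, equivariant factor maps, so no ``lifting against a quotient map'' of the kind you worry about is ever required), and then passes from $L_0$ up to $L$ by Lemma \ref{lem:DPM-finite-index}. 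This is the bridge between $\chars{\mathrm{Z}(\vFit{L})}$ and $\chars{F}$ that your write-up is missing; with it in place, the rest of your argument (finite-index kernels, Mal'cev separability, Proposition \ref{lem:approximating finitely induced}, Lemma \ref{lem:continuity of induction}, Lemma \ref{lem:induction in stages}) runs as in the paper.
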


\begin{proof}
Let $\varphi$ be any character of the group $G$. In light of the Hadwin--Shulman criterion formulated in  Corollary \ref{cor:Hadwin Shulman} it will suffice to show that the character $\varphi$ is a pointwise limit of finite dimensional traces on the group $G$.

According to our study of the character theory of virtually polycyclic groups (Theorem \ref{thm:characters of virPoly, refinement})   there exist a finite index subgroup $H \le G$,  a quotient  $L$ of the subgroup $H$    and an  $L$-invariant trace $\psi$ on the subquotient $F=\FC{\vFit{L}}$ such that  $\varphi=\Ind{G}{F}{\psi}$.

We claim that the  dynamical system $(L,\chars{F})$ has dense periodic measures. Let $L_0$ be any finite index normal subgroup of the group $L$ whose action on  every connected component of the character space $\chars{F}$   is  weakly algebraically isomorphic to the $L_0$-action on $\chars{\mathrm{Z}(\vFit{L})}$, see Proposition \ref{prop:action of dual of normal FC subgroup}.  The two subgroups $\mathrm{Z}(\vFit{L_0})$ and $ \mathrm{Z}(\vFit{L})$ are commensurable (i.e. their intersection   is of  finite index in both). Therefore the  $L_0$-actions on   $\chars{\mathrm{Z}(\vFit{L})}$ and on $\chars{\mathrm{Z}(\vFit{L_0})}$   are weakly algebraically isomorphic. The latter $L_0$-action has dense periodic measures  by the assumption. So  the dynamical system $(L_0,\chars{F})$ has dense periodic measures by the transitivity of weak algebraic isomorphisms and by statement (2) of Corollary \ref{cor:DPM-of-factors}. 
The claim follows from   Lemma  \ref{lem:DPM-finite-index}.

Let $\mu$ be the $L$-invariant probability measure on the character space $\chars{F}$ determined by the Fourier transform of the trace $\psi$.  The above claim says that  there is a sequence  of $L$-invariant probability measures $\mu_n \in \mathrm{Prob}(\chars{F})$ with finite supports that   converges   to the measure $\mu$ in the weak-$*$ topology. We may  assume without loss of generality that the corresponding traces $\psi_n=\widehat{\mu}_n \in \traces{F}$ satisfy $\left[F:\ker \psi_n\right] < \infty$ in light of   Lemma \ref{lem:finite index kernel for FC}. Denote $\varphi_n = \Ind{G}{F}{\psi_n}$ for all $n \in \NN$.


Recall that any subgroup of the virtually polycyclic group $H$ is profinitely closed \cite{malcev1951} (see also \cite[1.3.10]{lennox2004theory}). 
Therefore we may  apply  Proposition \ref{lem:approximating finitely induced} and deduce that each trace $\varphi_n$ is a pointwise limit of finite dimensional traces on the group $G$. As the sequence of traces $\psi_n  $ converges pointwise to the trace $\psi$, the sequence of traces $\varphi_n$ 
converges pointwise  to the trace $\varphi$   by  the continuity of induction (Lemma \ref{lem:continuity of induction}).  The desired conclusion follows.
\end{proof}

\begin{proof}[Proof of Corollary \ref{Thm:nilpotent are HS}]
Let $G$ be a finitely generated virtually nilpotent group. The group $G$ is in particular virtually polycyclic so that the stability criterion of  Theorem \ref{thm:HS-stabilit-polycyclic-groups} applies. Any subquotient $L$ of the group $G$ is virtually nilpotent. In particular   $L = \vFit{L}$. The topological dynamical system in question $(L, \widehat{\mathrm{Z}(L)}) $ certainly has dense periodic measures as the $L$-action on the character space of its center $\mathrm{Z}(L)$ is trivial. 
\end{proof}


\begin{proof}[Proof of Theorem \ref{Thm:poly-grp-charactarization-HS-stability}]
The two implications
$(\ref{item:vPoly})\Rightarrow (\ref{item:meta})$
and
$(\ref{item:amenable-sofic})\Rightarrow (\ref{item:abelian-sofic})$  are immediate.

$(\ref{item:meta})\Rightarrow (\ref{item:abelian-sofic})$. Let $G$ be an abelian subgroup of the group of automorphisms $\GL{d}{\ZZ}$ for some $d \in \NN$. The corresponding semidirect product $\widetilde{G}=G \ltimes \ZZ^d$ is Hilbert--Schmidt  stable by assumption.  Proposition \ref{obs:intro: necc for HS} implies that the dynamical system $(\widetilde{G},\TT^d)$ has dense periodic measures. Equivalently the dynamical system   $(G,\TT^d)$ has dense periodic measures, as required.

$(\ref{item:vPoly})\Rightarrow (\ref{item:amenable-sofic})$. Let $G$ an amenable subgroup of the group of automorphisms $\GL{d}{\ZZ}$ for some $d \in \NN$. The linear group $G$ is virtually solvable by the Tits alternative \cite{tits1972free}. As such the   group $G$ is virtually polycyclic by \cite{malcev1951}, see also \cite[\S3.2]{lennox2004theory}. Therefore the semidirect product  $\widetilde{G}=G \ltimes \ZZ^d$ is   Hilbert--Schmidt stable by assumption. We conclude exactly as in the previous paragraph.

$(\ref{item:abelian-sofic})\Rightarrow (\ref{item:vPoly})$. Let $G$ be a virtually polycyclic group. We will rely on Theorem \ref{thm:HS-stabilit-polycyclic-groups}   to  deduce that the group $G$ is   Hilbert--Schmidt stable.

With this goal in mind, consider the topological dynamical system $(L,\widehat{Z})$ where 
  $L$ is a quotient of some finite index subgroup  of the group $G$ and
  $Z$ is the subquotient $\mathrm{Z}(\vFit{L})$. The $L$-action on the space $\widehat{Z}$ factors through some virtually abelian quotient. Indeed  the action of the subquotient $\vFit{L}$ on the space $\widehat{Z}$ is trivial   and the quotient $L/\vFit{L}$ is virtually abelian.   The  dynamical system $(L, \widehat{Z})$ has dense periodic measures by assumption. This completes the proof.
\end{proof}

\subsection*{Rank one type   and upper triangular matrices}
\label{sec:semisimple actions}

We study a class of polycyclic non-metabelian  groups for which we are able   to show   Hilbert--Schmidt stability relying  on the methods of this work. In particular we establish Theorem \ref{Thm:upper triangular} of the introduction.

Fix a  finite dimensional rational vector space $V$. Let $A$ be an abelian subgroup of $\mathrm{GL}(V)$ generated by semisimple transformations. It follows that   every element of the group $A$ is semisimple \cite[\S 4]{borel2012linear}. 

Consider the $\QQ$-algebra $ \mathcal{A} = \QQ A$ spanned by the group $A$ inside the $\QQ$-algebra $\mathrm{End}_\QQ(V)$. As    $\mathcal{A}$ is diagonizable it has no non-zero nilpotent elements. Moreover   $\mathcal{A}$ is  commutative and Artinian.  These conditions imply that  the $\QQ$-algebra $ \mathcal{A}$ is semisimple \cite[Corollary 2.5]{farb2012noncommutative}.

\begin{definition*}
A simple $\mathcal{A}$-module $M$ has \emph{rank one} if the restricted action of the group $A$ on $M$  factors through some homomorphism $f : A \to \ZZ$. An   $\mathcal{A}$-module $M$ has \emph{rank one type} if every simple $\mathcal{A}$-submodule of $M$
 has rank one. The group $A$ itself has \emph{rank one type} if the $\mathcal{A}$-module $V$ has rank one type.
 \end{definition*}

\begin{prop}
\label{prop:subquotient of PVC is PVC}
Let $M$ be an $\mathcal{A}$-module of rank one type. Then any $\mathcal{A}$-submodule $N \le M$ as well as any quotient $\mathcal{A}$-module $M \twoheadrightarrow L \cong M/N$ has rank one type.
\end{prop}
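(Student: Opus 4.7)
The plan is to exploit the fact that the $\QQ$-algebra $\mathcal{A}$ is semisimple, so that every $\mathcal{A}$-module splits as a direct sum of simple ones, and then reduce the quotient case to the submodule case.

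First I would dispose of the submodule case. Given a submodule $N \le M$, any simple $\mathcal{A}$-submodule $N' \le N$ is in particular a simple $\mathcal{A}$-submodule of $M$, and hence has rank one by hypothesis. So $N$ has rank one type. This step is essentially a tautology and requires no structure theory.

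For the quotient case I would invoke semisimplicity. The excerpt already establishes that $\mathcal{A}$ is commutative, Artinian, and contains no non-zero nilpotent elements. Artin--Wedderburn (in its commutative form) then yields that $\mathcal{A}$ is a finite product of fields, and consequently every $\mathcal{A}$-module is semisimple. In particular $M$ decomposes as a direct sum of simple $\mathcal{A}$-submodules, and the submodule $N$ admits an $\mathcal{A}$-module complement $N' \le M$ so that $M = N \oplus N'$. The natural projection $M \twoheadrightarrow L$ restricts to an $\mathcal{A}$-module isomorphism $N' \xrightarrow{\sim} L$. Hence $L$ is isomorphic to a submodule of $M$, and the submodule case proved in the previous paragraph applies to give that $L$ has rank one type.

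The only mild subtlety is verifying the semisimplicity of $\mathcal{A}$ as a ring, but this is already recorded in the excerpt just before the proposition. Once that observation is in hand, the proof is entirely formal and the quotient case reduces mechanically to the submodule case via a choice of complement. I do not anticipate any genuine obstacle.
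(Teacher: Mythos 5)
Your proof is correct. The route differs slightly from the paper's: the paper decomposes $M = \bigoplus_{i=1}^l M_i$ into isotypic components, observes that any submodule $N$ splits as $\bigoplus_i N_i$ with $N_i \le M_i$, and reads off both conclusions from $L \cong \bigoplus_i M_i/N_i$ (each summand being isotypic of a type already occurring in $M$). You instead note that the submodule case is a pure tautology needing no structure theory at all, and then handle the quotient case by using semisimplicity of $\mathcal{A}$ (commutative Artinian with no nonzero nilpotents, hence a finite product of fields) only to produce a complement $N'$ with $M = N \oplus N'$ and $L \cong N'$, reducing the quotient case to the submodule case. Both arguments rest on the same fact -- semisimplicity of $\mathcal{A}$, which the paper records just before the proposition -- so the difference is one of packaging: your version isolates exactly where semisimplicity is used and is a touch more economical, while the paper's isotypic decomposition gives slightly more explicit structural information (the precise form of $N$ and of $L$), though none of it is needed for the statement. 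One small point worth making explicit in your write-up is that the rank one property of a simple module is invariant under $\mathcal{A}$-module isomorphism, which is what lets you transfer it from simple submodules of $N' \le M$ to simple submodules of $L$; this is immediate from the definition, since the $A$-action on isomorphic simple modules factors through the same homomorphism $A \to \ZZ$.
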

\begin{proof}
The $\mathcal{A}$-module $M$ has a  unique decomposition   $M = \bigoplus_{i=1}^l M_i$ into its isotypic components $M_1,\ldots,M_l$.  Any $\mathcal{A}$-submodule $N$ of $M$ must be of the form $N = \bigoplus_{i=1}^l N_i$ for some $\mathcal{A}$-submodules  $0 \le N_i \le M_i$. It follows from this description that the $\mathcal{A}$-submodule $N$ as well as the quotient $\mathcal{A}$-module 
\begin{equation}
L \cong M/N \cong \bigoplus_{i=1}^l M_i/N_i    
\end{equation}
have rank one type.
\end{proof}

\begin{lemma}
\label{lemma:PVC has dense periodic measures}
Let $A$ be a commutative subgroup of   
$ \mathrm{GL}_d(\ZZ)$ generated by semisimple transformations. If the group $A$ has rank one type then   the dynamical system $(A, \TT^d)$ has dense periodic measures.
\end{lemma}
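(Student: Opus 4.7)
The plan is to reduce the statement, via the isotypic decomposition of $V = \QQ^d$, to applying Marcus's theorem on each factor and then combining the pieces.

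First, using that $\mathcal{A} = \QQ A$ is commutative and semisimple, hence isomorphic to a product $\prod_{i=1}^l K_i$ of number fields, I would decompose $V = \bigoplus_i V_i$ into the corresponding isotypic components, with $V_i \cong K_i^{n_i}$ as a $K_i$-module. Setting $L_i = V_i \cap \ZZ^d$ and $L = \bigoplus_i L_i$ gives a finite-index $A$-invariant sublattice of $\ZZ^d$. The natural equivariant map $\RR^d/L \twoheadrightarrow \RR^d/\ZZ^d = \TT^d$ has finite fibers, so by Corollary \ref{cor:DPM-of-factors}(\ref{cor:DPM-of-factors item:finite-fibers}) it suffices to establish dense periodic measures for the product system $(A, \prod_i V_i(\RR)/L_i)$.

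Next, I would analyze each factor using rank one type. On $V_i(\RR)/L_i$, the $A$-action factors through a homomorphism $f_i : A \to \ZZ$, and when this image is nontrivial it is generated by an element $\alpha_i \in A$ acting on $V_i \cong K_i^{n_i}$ as multiplication by some $u_i \in K_i^\times$. Since $\alpha_i$ preserves the lattice $L_i$, the scalar $u_i$ is an algebraic unit in $\mathcal{O}_i^\times$; since $f_i(\alpha_i)$ has infinite order, $u_i$ is not a root of unity, hence none of its $\QQ$-conjugates are roots of unity either (conjugates of a root of unity are themselves roots of unity). Therefore the torus automorphism induced by $\alpha_i$ has no eigenvalue that is a root of unity and is ergodic with respect to the Haar measure. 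Marcus's theorem (Theorem \ref{thm:marcus}) then yields dense periodic measures for each $(\langle \alpha_i \rangle, V_i(\RR)/L_i)$. The trivial-image case requires no argument, since then the $A$-action on the corresponding factor is trivial.

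The main obstacle will be the combining step, since dense periodic measures for each factor does not automatically imply the corresponding statement for the product under the subgroup action of $A$. My approach is to first pass, via Lemma \ref{lem:DPM-finite-index}, to a finite-index free abelian subgroup $A' \cong \ZZ^r$ of $A$, and then to exploit the existence of a \emph{generic} element $\alpha \in A'$ with $f_i(\alpha) \neq 0$ for every index $i$; such elements exist because each $f_i$ is a nontrivial linear form on $A'$ so their zero sets are proper hyperplanes. The chosen $\alpha$ acts as $\bigoplus_i \alpha_i^{f_i(\alpha)}$, each component being a nonzero power of an ergodic semisimple automorphism and hence itself ergodic, so $\alpha$ is a single ergodic automorphism of the total torus $\RR^d/L$. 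Marcus's theorem for $\alpha$ then provides, for any $A'$-invariant probability measure $\mu$, a sequence of finitely supported $\langle \alpha \rangle$-invariant approximants; these are supported on torsion points, which have finite $A'$-orbits. Symmetrizing each approximant over the relevant $A'$-orbits produces finitely supported $A'$-invariant probability measures, and the key remaining point is to verify weak-$*$ convergence to $\mu$. I expect this to follow from the pointwise ergodic theorem for the amenable group $A'$ together with a perturbation argument approximating a $\mu$-generic point by an $\alpha$-periodic (hence torsion) point, in the spirit of the proof of Proposition \ref{prop:Bernoulli shift has dense periodic measures}.
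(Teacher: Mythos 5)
Your skeleton is the paper's: decompose $\QQ^d$ into isotypic components over the commutative semisimple algebra $\QQ A$, pass to a finite-index $A$-invariant sublattice so that a torus finitely covering $\TT^d$ splits into $A$-invariant subtori, note that rank one type makes the action on each subtorus factor through a single automorphism, feed ergodic factors to Marcus's theorem (Theorem \ref{thm:marcus}), and descend via Corollary \ref{cor:DPM-of-factors} and Lemma \ref{lem:DPM-finite-index}. However, your per-factor dichotomy is flawed: the inference \enquote{$f_i(\alpha_i)$ has infinite order, hence $u_i$ is not a root of unity} does not follow, since rank one only says the action factors through \emph{some} homomorphism $A \to \ZZ$, and the composite $\ZZ \to K_i^\times$ may have kernel. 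Concretely, the cyclic group generated by $\mathrm{diag}(-1,B)$ with $B \in \GL{2}{\ZZ}$ hyperbolic has rank one type, yet acts on the circle factor by $x \mapsto -x$, which is neither trivial nor ergodic. The paper handles this by passing to a finite index subgroup of $A$ for which every factor is either ergodic or acted on by the identity (a single factor $T_{i_0}$), and then invoking Lemma \ref{lem:DPM-finite-index}; your passage to $A'$ is used only to get a free abelian group and does not repair this case.

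The more serious gap is the combining step. Your mechanism requires a generic $\alpha$ that is a single ergodic automorphism of the whole torus, but trivial factors are allowed by the hypotheses (and are forced once you fix the root-of-unity issue as above), and on them $\alpha$ acts as the identity, so $\alpha$ is not ergodic on the total torus and Marcus does not apply to it. Even when every factor is ergodic, the convergence of your symmetrized approximants is not automatic: for the $\ZZ^2$-action generated by $B\times I$ and $I\times C$ on $\TT^2\times\TT^2$ and the invariant measure $\mathrm{Haar}\times\delta_0$, Marcus approximants for $B\times C$ may sit on torsion points whose second coordinates have large, spread-out $C$-orbits, so averaging over the full group need not converge to the target measure. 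This is exactly the point you defer to \enquote{ergodic theorem plus perturbation}, and it is the substantive content still missing from your argument; one must either choose the periodic approximants compatibly with the whole group action or argue factor by factor (keeping the identity factor separate), which is how the paper's proof is organized after its finite-index reduction.
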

\begin{proof}
Consider the rational vector space $V = \QQ^d$ regarded as a module over the semisimple $\QQ$-algebra $\mathcal{A} = \QQ A$. There is a uniquely determined decomposition $V = \bigoplus_{i=1}^l V_i$ into isotypic components for some $l \in \NN$. Each $\mathcal{A}$-submodule $V_i$ is  in particular   a rational subspace of the rational vector space $V$.

There is a $d$-dimensional torus $T$ admitting a   product decomposition $T = \prod_{i=1}^l T_i$ into $A$-invariant factors  as well as an $A$-equivariant finite-sheeted covering map $T \to \TT^d$   such that each sub-torus $T_i$ covers the image of the rational subspace $V_i$ inside the torus $T$.

The action of the group $A$ on each torus $T_i$ factors through a cyclic group $\ZZ$ generated by a single semisimple transformation $A_i \in \mathrm{Aut}(T_i)$.
Up to passing to a finite index subgroup of the group $A$ and reindexing, we may assume that each transformation $A_i$ is ergodic on its respective torus $T_i$ with respect to the Haar measure, expect possibly for 
a single index $i_0$ for which     $A_{i_0}$ is the identity transformation.

We conclude that the dynamical system $(A,T)$ has dense periodic measures by   relying on Theorem \ref{thm:marcus}.
The density of periodic measures for the dynamical system $(A,\TT^d)$ follows from Corollary \ref{cor:DPM-of-factors} concerning finite factors.
\end{proof}

Let $N$ be a finitely generated torsion-free nilpotent group. Recall that there is a well-defined   rational Lie algebra $\mathcal{L} = \mathrm{Lie}(N)$ associated to the group $N$ as well as an $\mathrm{Aut}(N)$-equivariant bijection $\log : N \to \mathcal{N}$ with inverse $\exp : \mathcal{N} \to N$, see \cite[\S 6]{segal2005polycyclic}.

\begin{theorem}
\label{thm:DPM of PVC}
Let $A$ be an abelian subgroup of $\mathrm{Aut}(N)$. If
$A$ is generated by semisimple transformations and has rank one type when regarded as a subgroup of $\mathrm{GL}(\mathcal{L})$ then the polycyclic group $G = A \ltimes N$ is Hilbert--Schmidt stable.
\end{theorem}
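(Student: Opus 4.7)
The plan is to apply Theorem \ref{thm:HS-stabilit-polycyclic-groups}. Before doing so I would verify that $G = A\ltimes N$ is itself polycyclic: since $A$ acts faithfully on the rational vector space $\mathcal{L}$ and the $\mathcal{A}$-module $\mathcal{L}$ has rank one type, decomposing $\mathcal{L}$ into its isotypic components $V_1,\dots,V_\ell$ realizes $A$ as a subgroup of $\bigoplus_{i=1}^{\ell}\ZZ$ (one factor per isotypic component). Hence $A$ is finitely generated free abelian and $G$ is polycyclic.

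Next I would reduce to a toral dynamical problem. Given a finite index subgroup $H \le G$ and a quotient $L$ of $H$, after shrinking $H$ I may assume $H = A_0 \ltimes N_0$ with $A_0 \le A$ of finite index and $N_0 \le N$ an $A_0$-invariant subgroup of finite index. Writing $N_L$ for the image of $N_0$ in $L$, the subgroup $N_L$ is nilpotent and normal in $L$, while $L/N_L$ is abelian (being a quotient of $A_0$). Therefore $N_L \le \vFit{L}$ and $[\vFit{L}:N_L]<\infty$, so that $\mathrm{Z}(\vFit{L})$ is a finitely generated abelian group whose Pontryagin dual $\chars{\mathrm{Z}(\vFit{L})}$ is a finite disjoint union of finite-dimensional tori.

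Then I would pin down the dynamics on this dual. The $L$-action on $\mathrm{Z}(\vFit{L})$ is trivial on $\vFit{L}$ and thus factors through the virtually abelian crystallographic quotient $L/\vFit{L}$. Choosing a finite index free abelian subgroup $A' \le L/\vFit{L}$ and applying Lemma \ref{lem:DPM-finite-index}, it suffices to verify that the $A'$-action on $\chars{\mathrm{Z}(\vFit{L})}$ has dense periodic measures. A finite-index torsion-free subgroup of $N_L$ has a rational Lie algebra that is canonically a subquotient of the $\mathcal{A}$-module $\mathcal{L}$ (via the Mal'cev correspondence), and the center of this Lie algebra covers $\mathrm{Z}(\vFit{L})$ up to finite kernel and cokernel. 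Through the $\log$ map the $A'$-action on the torus part of $\chars{\mathrm{Z}(\vFit{L})}$ becomes the dual of a linear $A'$-action on this rational subquotient of $\mathcal{L}$. By Proposition \ref{prop:subquotient of PVC is PVC} this subquotient still has rank one type, and it remains generated by semisimple transformations since semisimplicity descends to subquotients of $\mathcal{A}$-modules. Lemma \ref{lemma:PVC has dense periodic measures} then delivers dense periodic measures for the toral factor, while on the finite-component part the action factors through a finite group and the conclusion is automatic; Corollary \ref{cor:DPM-of-factors} assembles these into dense periodic measures for the full system $(L,\chars{\mathrm{Z}(\vFit{L})})$. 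Theorem \ref{thm:HS-stabilit-polycyclic-groups} then yields Hilbert--Schmidt stability of $G$.

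The main obstacle I expect is the bookkeeping around the Mal'cev functoriality: identifying $\mathrm{Z}(\vFit{L})$ (modulo a finite group) with a lattice in the center of the Lie algebra of a torsion-free finite-index subgroup of $N_L$, and verifying that the $A'$-action on this lattice is indeed the restriction of the $A$-action on $\mathcal{L}$ to an appropriate subquotient module. Once this functoriality is in place, the rank one type hypothesis propagates cleanly through all the finite-index and quotient reductions, and the toral density statement follows from the ergodic torus dynamics encapsulated in Lemma \ref{lemma:PVC has dense periodic measures}.
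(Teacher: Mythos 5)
Your overall strategy is the same as the paper's: reduce via Theorem \ref{thm:HS-stabilit-polycyclic-groups} to showing dense periodic measures for $(L,\chars{\mathrm{Z}(\vFit{L})})$, identify this with a toral system dual to a rank-one-type subquotient of the $\mathcal{A}$-module $\mathcal{L}$, and conclude with Proposition \ref{prop:subquotient of PVC is PVC} and Lemma \ref{lemma:PVC has dense periodic measures}. But there is a genuine gap at the step where you assert that $[\vFit{L}:N_L]<\infty$ and hence that $\mathrm{Z}(\vFit{L})$ is covered, up to finite kernel and cokernel, by the centre of the Lie algebra of a finite-index torsion-free subgroup of $N_L$. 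Abelianity of $L/N_L$ only gives $N_L\le\vFit{L}$; after passing to a quotient $L=H/K$ the image of $A_0$ may well land inside $\vFit{L}$. Concretely, take $N=\ZZ^2\times\ZZ$ with $A=\ZZ$ acting by an irreducible hyperbolic matrix on $\ZZ^2$ and trivially on the last factor (this $A$ is semisimple and of rank one type), $H=G$ and $K=\ZZ^2$: then $L\cong\ZZ\times\ZZ$ is abelian, so $\vFit{L}=L$, the image $N_L\cong\ZZ$ has infinite index in $\vFit{L}$, and $\mathrm{Z}(\vFit{L})=L$ is not commensurable with any subgroup of the image of $N$. So your identification of the dual system with the dual of a subquotient of $\mathcal{L}$ breaks down: in general $\mathrm{Z}(\vFit{L})$ has directions that do not come from $N$ at all, and your argument says nothing about the action on those directions.

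Handling exactly those extra directions is the core of the paper's proof. There one takes a torsion-free finite-index subgroup $\overline{C}$ of $\overline{Z}=\mathrm{Z}(\vFit{H/K})$, singles out the sub-lattice $\overline{M}=\overline{C}\cap (NK\cap H)/K$ coming from $N$, and uses semisimplicity of $\mathcal{A}=\QQ A$ to split the rational Lie algebra as $\mathcal{C}=\mathcal{M}\oplus\mathcal{N}$. The decisive extra ingredient, missing from your proposal, is the computation that for $g=anK\in\overline{C}$ (with $a\in A$, $n\in N$) and $\alpha\in A_0$ one has $g^{-1}\alpha(g)=n^{-1}\alpha(n)K\in\overline{M}$, whence the $A_0$-action on the complement $\mathcal{N}$ is trivial; only then does the whole module $\mathcal{C}$ have rank one type, so that Lemma \ref{lemma:PVC has dense periodic measures} applies to the full torus $\chars{\overline{C}}$ and the conclusion passes to $\chars{\overline{Z}}$ by weak algebraic isomorphism. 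A secondary, fixable point: \enquote{after shrinking $H$ I may assume $H=A_0\ltimes N_0$} also replaces the quotient $L$ by a finite-index subgroup, so you would need to relate $(L,\chars{\mathrm{Z}(\vFit{L})})$ to the corresponding system for that subgroup (commensurability of the centres plus Lemma \ref{lem:DPM-finite-index} and Corollary \ref{cor:DPM-of-factors}); the paper avoids this by working with an arbitrary pair $K\lhd H\le G$ directly.
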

\begin{proof}
Denote $\mathcal{A} = \QQ A$ so that $\mathcal{A}$ is a  semisimple $\QQ$-algebra. The assumption of the theorem says that the rational Lie algebra $\mathcal{N}$ is a rank one type  $\mathcal{A}$-module.

Fix an arbitrary finite index subgroup $H \le G$ and a normal subgroup $K \lhd H$. Denote $\overline{H} = H/K$ and $\overline{Z} = \mathrm{Z}(\vFit{\overline{H}})$. We will conclude  that the group $G$ is Hilbert--Schmidt stable     by relying on Theorem
\ref{thm:HS-stabilit-polycyclic-groups} and   showing that the topological dynamical system $(\overline{H}, \chars{\overline{Z}})$ has dense periodic measures.

The subquotient $\overline{Z}$ is a finitely generated abelian group. Therefore  $\overline{Z}$   admits some  \emph{torsion-free} characteristic subgroup $\overline{C}$ satisfying  $\left[\overline{Z}:\overline{C}\right] < \infty$. 
Consider the subquotient
\begin{equation}
\overline{M} = \overline{C} \cap (NK \cap H)/K.
\end{equation}

 Let $\mathcal{C} = \mathrm{Lie}(\overline{C})$ and $\mathcal{M} = \mathrm{Lie}(\overline{M})$  be   the rational Lie algebras  of the finitely generated torsion-free abelian subquotients $\overline{C}$ and $\overline{M}$ respectively. Clearly $\mathcal{M} \le \mathcal{C}$. Let $A_0 = \mathrm{N}_{A}(H) \cap \mathrm{N}_{A}(K)$ so that  $A_0$  is a   finite index subgroup of the abelian group $A$. In particular   $A_0$ spans the $\QQ$-algebra $\mathcal{A}$, i.e.  $\mathcal{A} = \QQ A_0$.
This   implies that  both $\mathcal{M}$ and $\mathcal{C}$ are $\mathcal{A}$-modules. 
 The fact that   the $\QQ$-algebra $\mathcal{A}$ is semisimple implies that the $\mathcal{A}$-submodule $\mathcal{M}$ admits a complement in $\mathcal{C}$, i.e.  there exists some  $\mathcal{A}$-submodule $\mathcal{N} \le \mathcal{C}$  satisfying 
$\mathcal{C} = \mathcal{M} \oplus \mathcal{N}$. 

We claim that   the action of the subgroup $A_0$ on the module $\mathcal{N}$ obtained by restricting the action of the algebra $\mathcal{A}$ is trivial.  To see this consider any element  $x \in \mathcal{C}$ with $\exp x = g \in \overline{C}$. Write   $  g = anK$ for some pair of elements   $a \in A, n \in N$. Any element $\alpha \in A_0 $ satisfies
\begin{equation}
g^{-1} \alpha(g)  = n^{-1} a^{-1} \alpha(an) K = n ^{-1} a^{-1}a \alpha(n) K = n^{-1} \alpha(n) K.
\end{equation}
It follows that
\begin{equation}
\exp(x)^{-1} \alpha(\exp(x)) \in \overline{M}.    
\end{equation}
The $A_0$-equivariance of the exponential map gives $\alpha(x) -  x \in \mathcal{M}$ for all elements $x\in \mathcal{C}$ and $\alpha \in A_0$. Therefore $\alpha(y) - y \in \mathcal{M} \cap \mathcal{N} = \{0\}$ for all elements $y \in \mathcal{N}$. The above claim follows.



The $\mathcal{A}$-module $\mathcal{M}$ is a subquotient of the   $\mathcal{A}$-module $\mathcal{L}$ and as such has rank one type by Proposition \ref{prop:subquotient of PVC is PVC}. The $\mathcal{A}$-module $\mathcal{N}$   certainly has rank one type for the restricted action of the subgroup $A_0$ on it is trivial. Therefore the  $\mathcal{A}$-module $\mathcal{C} = \mathcal{M} \oplus \mathcal{N}$ has rank one type as well.

The $\overline{H}$-actions on the two dual spaces $\chars{C}$ and $\chars{Z}$ are weakly algebraically isomorphic (in the sense discussed in \S\ref{sec:fc induced}). 
Since the $\mathcal{A}$-module $\mathcal{C}$ has rank one type the $\overline{H}$-action on the space $\chars{\overline{Z}}$   has dense periodic measures according to the two Lemmas \ref{lem:DPM-finite-index} and \ref{lemma:PVC has dense periodic measures}.
\end{proof}

We are ready to prove that upper triangular groups over certain rings of algebraic integers are Hilbert--Schmidt stable.

\begin{proof}[Proof of Theorem \ref{Thm:upper triangular}]
Let $\mathcal{O}$ be the ring of algebraic integers in some number field. Assume   that the group of units $\mathcal{O}^*$ satisfies  $\mathrm{rank}( \mathcal{O}^*) = 1$. 
Let $A$ and $N$ respectively denote the diagonal and upper triangular subgroups of the matrix group $\mathrm{GL}_d(\mathcal{O}) $ for some fixed  $d\in \NN$.  The group $N$ is finitely generated, nilpotent and torsion free (see \cite[1.2.20]{lennox2004theory}). The assumption on the group of units $\mathcal{O}^*$ implies that  the action of the diagonal group $A$  on the rational Lie algebra $\mathcal{N} = \mathrm{Lie}(N)$ has rank one type.  The fact that the polycyclic group $G$ is Hilbert--Schmidt stable follows from Theorem  \ref{thm:DPM of PVC}.
\end{proof}

\subsection*{The higher-rank measure rigidity conjecture} 

We discuss the relationship between the higher-rank measure rigidity conjecture and stability. 


\begin{prop}
\label{prop:properties of semi direct product fields}
Let $\mathcal{O}$ be the ring of algebraic integers in the number field $k$. Assume that $k$ is totally real   and that $\mathrm{rank}(\mathcal{O}^*) \ge 2$. Then the dual action of group of units $\mathcal{O}^*$ on the dual torus $\widehat{\mathcal{O}}$ is almost minimal, i.e. there are no invariant    proper closed infinite subsets.
\end{prop}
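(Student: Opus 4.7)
Since $k$ is totally real of degree $d = [k:\QQ]$, Dirichlet's unit theorem gives $\mathrm{rank}(\mathcal{O}^*) = d-1$, so the hypothesis $\mathrm{rank}(\mathcal{O}^*) \ge 2$ yields $d \ge 3$; the Pontryagin dual $\widehat{\mathcal{O}}$ is the torus $\TT^d$, and the dual $\mathcal{O}^*$-action is by toral automorphisms whose eigenvalues on $\mathcal{O}\otimes\RR$ are the real Galois conjugates $\sigma_1(u),\ldots,\sigma_d(u)$ of each unit $u$. My plan is in two steps: first classify the closed $\mathcal{O}^*$-invariant \emph{subgroups} of $\widehat{\mathcal{O}}$ via Pontryagin duality, and then upgrade this classification to arbitrary closed invariant \emph{subsets} by invoking Berend's theorem on multi-invariant sets on tori.

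For the subgroup step, a closed $\mathcal{O}^*$-invariant subgroup of $\widehat{\mathcal{O}}$ is the annihilator of an $\mathcal{O}^*$-invariant additive subgroup $I \le \mathcal{O}$; it is finite (respectively, all of $\widehat{\mathcal{O}}$) iff $I$ has finite index in $\mathcal{O}$ (respectively, $I = 0$). The rational span $V = I \otimes \QQ \subseteq k$ is a $\QQ$-subspace invariant under multiplication by every unit. The crux is to show that the $\QQ$-subalgebra of $k$ generated by $\mathcal{O}^*$ is all of $k$, or equivalently that $\mathcal{O}^*$ is not contained in any proper subfield $F \subsetneq k$: since any such $F$ is automatically totally real, Dirichlet applied to $F$ would force $\mathrm{rank}(\mathcal{O}^*) \le \mathrm{rank}(\mathcal{O}_F^*) = [F:\QQ] - 1 < d - 1$, contradicting $\mathrm{rank}(\mathcal{O}^*) = d-1$. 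Hence $V$ is a $k$-subspace of the field $k$, so $V = 0$ or $V = k$; dually, every closed $\mathcal{O}^*$-invariant subgroup of $\widehat{\mathcal{O}}$ is either finite or the whole $\widehat{\mathcal{O}}$.

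To extend this to arbitrary closed invariant subsets I would apply Berend's theorem on multi-invariant sets on tori. Its hypotheses in our setting amount to: (a) the existence of a hyperbolic (in particular ergodic) automorphism in the action; (b) triviality of closed invariant subgroups modulo the finite ones (the previous step); and (c) on every common $\CC$-eigendirection of the action, some element acts by a scalar of absolute value $\ne 1$. For (a), the logarithm map embeds $\mathcal{O}^*$ as a rank $d-1 \ge 2$ lattice in the hyperplane $\{x \in \RR^d : \sum x_i = 0\}$, which must meet the complement of the union of the $d$ coordinate hyperplanes, yielding a unit $u$ with $|\sigma_i(u)| \ne 1$ for all $i$. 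For (c), the $\mathcal{O}^*$-action on $\mathcal{O}\otimes\RR$ is simultaneously diagonalized by the real embeddings into the $d$ one-dimensional eigenlines, on each of which the hyperbolic unit from (a) acts by a scalar of absolute value $\ne 1$. The main obstacle is locating and applying the precise version of Berend's theorem and checking that its hypotheses persist under passage to finite-index subgroups of $\mathcal{O}^*$, which holds because every such subgroup still has rank $d-1$ and so the same field-theoretic reasoning applies.
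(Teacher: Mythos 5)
Your overall route is the same as the paper's -- reduce to Berend's theorem on multi-invariant sets -- and your verifications of hyperbolicity and of expansion/contraction in each eigendirection are fine (in fact, in the totally real case one can argue more simply: a real Galois conjugate of modulus $1$ is $\pm 1$, which by irreducibility of the minimal polynomial would force $u=\pm1$, so \emph{every} infinite-order unit is hyperbolic). The problem is in how you state Berend's hypotheses. Berend's criterion does not take as input the triviality of closed invariant \emph{subgroups} of the joint action; its first and crucial hypothesis is the existence of a \emph{single} element $\sigma$ of the acting semigroup such that $\sigma^n$ is rationally irreducible for \emph{every} $n\ge 1$, i.e.\ in our setting a single unit $u$ with $\QQ(u^n)=k$ for all $n$. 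Your step~1 (annihilator duality plus ``$\mathcal{O}^*$ is not contained in a proper subfield'') only gives \emph{joint} irreducibility of the whole group, which is strictly weaker than this single-element condition and is not one of Berend's hypotheses; as written, the appeal to Berend's theorem is therefore not justified, and this missing condition is exactly the point the paper spends its effort on.

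The gap is fixable with the same rank-count ingredient you already use, but it needs an extra covering argument: there are only finitely many intermediate fields $\QQ\subset l\subsetneq k$; for each proper subfield $l$ the set of units some power of which lies in $l$ is a subgroup of $\mathcal{O}^*$ of rank at most $\mathrm{rank}(\mathcal{O}_l^*)=[l:\QQ]-1<d-1$ (your Dirichlet count; the paper quotes Parry's non-CM criterion for the same inequality); and a finitely generated abelian group of rank $d-1$ cannot be covered by finitely many subgroups of smaller rank (equivalently, of infinite index). Hence there is a unit $u$ no power of which lies in a proper subfield, which is precisely Berend's first condition. With that condition in place (together with two multiplicatively independent units, immediate from $\mathrm{rank}(\mathcal{O}^*)\ge 2$, and the eigendirection condition you checked), the application of Berend's theorem goes through as in the paper.
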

\begin{proof}
Consider the dual action of the group of units  $\mathcal{O}^*$ on the dual torus $\widehat{\mathcal{O}}$. 
According to \cite{berend1983multi} this action is almost minimal provided that  the following three conditions are satisfied:
\begin{enumerate}
    \item There is a unit     $u \in \mathcal{O}^*$ such that multiplication by the element $u^n$ is \emph{rationally irreducible}, i.e.  admits  no non-trivial invariant rational subspace in $k$ for all $n \in \NN$ (see \cite[Proposition 2.2.6]{katok2011rigidity} for several equivalent formulations of this statement).
    \item There is a unit $u \in \mathcal{O}^*$ such that multiplication by the element $u$ is represented by a hyperbolic matrix.
    \item There is a pair of units $u,v \in \mathcal{O}^*$ such that $u^n \neq v^m$ for all non-zero $n,m \in \NN$.
\end{enumerate}

In light of \cite[\S11, Lemma 2]{segal2005polycyclic} the dual action of a given unit $u \in \mathcal{O}^*$ is totally  irreducible if and only if $k = \QQ(u)$. Note that   there are finitely many   intermediate number fields $ \QQ \subset l \subset k$. On the other hand, since $k$ is totally real and  is in particular not a CM-field, the group of units $\mathcal{O}^*$   satisfies   $\mathrm{rank}(\mathcal{O}^* \cap l) < \mathrm{rank}(\mathcal{O}^*)$ for each proper number field $l \subsetneq k$, see
 \cite{parry1975units}. This shows that it is possible to choose some unit $u\in\mathcal{O}^*$ as required in Statement (1).

Statement (2) says that there is some unit $u \in \mathcal{O}^*$ whose dual action on the torus $\widehat{\mathcal{O}}$ is represented by a hyperbolic matrix, i.e. all   eigenvalues of the corresponding matrix are \emph{not} unimodular. These eigenvalues are the algebraic conjugates of $u$, i.e. the roots of the minimal polynomial of $u$. Each     eigenvalue has multiplicity  $\left[k:\QQ(u)\right]$. See \cite[VI,\S5]{lang2012algebra}. Since the number field $k$ is totally real any infinite order unit has the required property.

Statement (3) follows from the fact that $\mathrm{rank}(\mathcal{O}^*) \ge 2$. 
This concludes the proof.
\end{proof}

We are ready to complete the proof of the stability  result stated in the introduction conditional on the measure rigidity  conjecture.

\begin{proof}[Proof of Proposition \ref{prop:application of conjecture to stability}]
Let $\mathcal{O}$ be the ring of algebraic integers in some totally real number field $k$. We wish to show that  the metabelian polycyclic group $G = \mathcal{O}^* \ltimes \mathcal{O}$ is Hilbert--Schmidt stable.
If $\mathrm{rank}(\mathcal{O}^*) = 1$ then  the group
$G$ is Hilbert--Schmidt stable by Theorem \ref{thm:DPM of PVC}. Therefore we will assume     that  $\mathrm{rank}(\mathcal{O}^*) \ge 2$ from now on. 

The conditions of Proposition \ref{prop:properties of semi direct product fields} are satisfied so that the dual action of group of units $\mathcal{O}^*$ on the dual torus $\widehat{\mathcal{O}}$ is almost minimal.  By conditionally relying   on the higher rank measure rigidity conjecture \cite{margulis2000problems} (see the discussion in \S\ref{sec:intro}) we may assume   that any non-atomic $\mathcal{O}^*$-invariant  Borel probability measure on the dual group $\widehat{\mathcal{O}}$ is   the Haar measure.

Let $H$ be some normal subgroup of the group $G$ satisfying $
\mathcal{O} \le H \le G$ and
consider its abelianization $H^\textrm{ab}$. The fact that the action of $\mathcal{O}^*$ is irreducible implies that the derived subgroup $\left[H,H\right]$ is either equal to the normal subgroup $\mathcal{O} \lhd G$ or is trivial.
 In the first case the $G$-action on the abelianization $H^\textrm{ab}$ is trivial.  In the second case we obtain   that $\widehat{H}^\textrm{ab} = \widehat{\mathcal{O}}$. This completes the proof by relying on the higher rank measure rigidity conjecture combined with the criterion given in Theorem  \ref{thm:stability for metabelian}.
\end{proof}

\bibliographystyle{alpha}
\bibliography{stability}

\end{document}